\newcommand{\eps}{\varepsilon}
\newcommand{\eqcolon}{\mathrel{\mathord{=}\raise.2\p@\hbox{:}}}
\newcommand{\coloneq}{\mathrel{\raise.2\p@\hbox{:}\mathord{=}}}
\newcommand{\der}{\delta}
\newcommand{\dd}{\mathrm{d}}
\newcommand{\C}{\mathbb C}
\newcommand{\T}{\mathbb T}
\newcommand{\E}{\mathbb E}
\newcommand{\Z}{\mathbb Z}
\newcommand{\R}{\mathbb R}
\newcommand{\N}{\mathbb N}
\newcommand{\PP}{\mathbb{P}}
\newcommand{\cF}{\mathscr{F}}
\newtheorem{theorem}{Theorem}[section]
\newtheorem{corollary}[theorem]{Corollary}
\newtheorem{definition}[theorem]{Definition}
\newtheorem{lemma}[theorem]{Lemma}
\newtheorem{Proposition}[theorem]{Proposition}
\newtheorem{remark}[theorem]{Remark}
\begin{document}
\title{The continuous Anderson hamiltonian in dimension two}
\author{Romain Allez, Khalil Chouk\\WIAS Berlin, H.U Berlin\\romain.allez@gmail.com, khalil.chouk@gmail.com}
\maketitle

\begin{abstract}
We define the Anderson hamiltonian on the two dimensional torus $\R^2/\Z^2$. This operator is formally defined as 
$\mathscr H:= -\Delta + \xi$ where $\Delta$ is the Laplacian operator and  where $\xi$ belongs to a general class of 
singular potential which includes the Gaussian white noise distribution. 
We use the notion of paracontrolled distribution as introduced by Gubinelli, Imkeller and Perkowski in \cite{gip}. 
We are able to define the Schr\"odinger operator $\mathscr H$ as an unbounded self-adjoint operator on $L^2(\T^2)$ and we prove that its real spectrum is discrete 
with no accumulation points for a general class of singular potential $\xi$. We also establish that the spectrum is a continuous function 
of a sort of enhancement $\Xi(\xi)$ of the potential $\xi$. 
As an application, we prove that a correctly renormalized smooth approximations $\mathscr H_\eps:= -\Delta + \xi_\eps+c_\eps$ 
(where $\xi_\eps$ is a smooth mollification of the Gaussian white noise $\xi$ and $c_\eps$ an explicit diverging renormalization constant)
converge in the sense of the resolvent towards the singular operator $\mathscr H$. 
In the case of a Gaussian white noise $\xi$, we obtain exponential tail bounds for the minimal eigenvalue 
(sometimes called ground state) of the operator $\mathscr H$ as well as its order of magnitude $\log L$ 
when the operator is considered on a large box $\T_L:= \R^2/(L\Z^2)$ with $L\to \infty$. 
\end{abstract}

\tableofcontents
\section{Introduction and main results}

The aim of this paper is to define and study the spectral statistics of the so called Anderson hamiltonian
which is a random linear operator on the torus $\mathbb T^d :=    \mathbb R^d/ \mathbb Z^d$,
formally defined as 
\begin{align}\label{def-H-xi}
\mathscr{H}=-\Delta+\xi
\end{align}
where $\Delta$ is the Laplacian operator with periodic boundary conditions and $\xi$ a real  
white noise distribution on $\mathbb T^d$, i.e. a centered Gaussian random field with covariance function given by 
\begin{align*}
\mathbb E[\xi(x)\xi(y)]=\der(x-y)
\end{align*}
where $\der$ is the Dirac delta distribution. We are interested in 
the two dimensional case $d=2$ for which the operator $\mathscr H$ 
is ill-defined, as we shall see. The Anderson hamiltonian has been defined for $d=1$ in ~\cite{fukushima} and we briefly review
the main spectral properties in this case (see section \ref{dimension-one}) to be compared with the properties we establish for 
the two dimensional case (see below).  

The Parabolic Anderson model (PAM) is at the heart of an active research area both in mathematics and theoretical physics. 
This model refers to the (linear) Cauchy problem  
\begin{align}\label{pam}
\partial_t u(t,x)-\Delta u(t,x)=u(t,x)\xi(x),\quad u(0,x)=u_{0}(x)
\end{align}
for $x\in \T^2$ and where the function $u_0\in L^2 (\mathbb T^2)$. 
The PAM has connections with questions on random motions in random potential, directed polymers, trapping of random paths, 
branching processes in random medium, Anderson localization, etc. Of course, the solution $u$ 
to the Parabolic Anderson equation may be written for $x\in \T^2$ as a function of the operator $\mathscr H$ as 
\begin{align}\label{spectral_decomposition}
u(t,x) = \exp(-t\mathscr H) u_0(x) := \sum_{n=0}^{+\infty} \exp(-t \Lambda_n) \langle e_n, u_0\rangle_{L^2(\T^2)} e_n(x)\,, 
\end{align}
provided one is able to define the operator $\mathscr H$ and prove that it 
has a discrete real spectrum $(\Lambda_n)\in \R^\N$ and associated orthonormal eigenvectors $(e_n)_{n\in \N}$
in $L^2(\T^2)$. This fact is not trivial at all in dimension two 
where the operator $\mathscr H$ is ill-defined due to the irregularity of the white noise distribution $\xi$.

The equation \eqref{pam} and the associated operator $\mathscr{H}$ have been investigated in 
many papers before for general dimension $d$ in a discrete setting. 
In this case, the Laplacian is discrete on a grid (for example $\Z^2$) with a 
fixed mesh size and the white noise $\xi$ is a sequence
of independent and identically distributed (i.i.d.) random variables indexed by $\Z^2$.  
The operator $\mathscr{H}$ is first defined on a finite box of $\Z^2$  
with some boundary conditions (either periodic, Dirichlet or Neumann). The main challenge is then to consider 
the case of a large volume and establish the limiting properties of the model when the box size tends to infinity. 
In our continuous setting, a similar situation holds. We work in a finite volume where the Anderson 
hamiltonian is restricted to the two dimensional torus (our results may easily be extended to other boundary conditions such as 
Dirichlet or Neumman). Our main results provide the construction of the Anderson hamiltonian and we also establish that 
this operator displays a discrete real spectrum with an orthonormal family of eigenvectors in $L^2(\T^2)$. We are also able 
to give partial results on the limiting statistics of its ground state (i.e. the minimal eigenvalue) of the Anderson hamiltonian 
in the limit of a large volume, considering the growing family of torus $\T_L^2:= \R^2/(L^{-1} \Z^2)$ for $L\to +\infty$.   

One should not confuse our continuous setting with a finite volume with the infinite volume case of the discrete setting. 
In particular, an important conjecture is that the limiting spectrum (when the volume tends to infinity) 
 in the discrete setting contains only isolated (pure) points in $\R$ 
(counting multiplicity) associated to localized eigenvectors. 
Equivalently, the solution of the parabolic Anderson equation \eqref{pam} on $\mathbb R^2$ is expected to be 
intermittent i.e. with a support that is localized on a few isolated islands that are far apart from each other,
for large time (see~\cite{kg,kg1,HTN,Hu}
or the forthcoming book of W. K\"onig~\cite{wk} for a state of the art review on the discrete setting).  
%
%
In our continuous setting (the mesh size is $0$), we do prove that 
the spectrum is discrete when the phase space has a finite volume but this situation does not correspond to the previous one
(discrete setting with infinite volume for the phase space). 
Regarding the limit of infinite volume in our continuous setting, we conjecture that  
the spectrum becomes continuous (with a limiting density) when the volume tends to infinity, 
in contrast with the conjecture made 
in the discrete setting for the infinite volume with a finite mesh size.

As mentioned before, the main difficulty lies in handling the singularity of the white noise distribution $\xi$. 
If $\xi_\eps$ is a smooth function,  
the definition of the operator $\mathscr{H}_\eps:=-\Delta+ \xi_\eps$ is elementary and it is 
classical to prove using the spectral theory for operators with compact resolvent that it is a self-adjoint 
operator with a discrete spectrum $(\Lambda_n^\eps)_{n\in \N}$ 
and orthonormal eigenfunctions $(e_n^\eps)_{n \in \N}$.    

With a rough potential $\xi$ such as the two-dimensional white noise, the situation is much more delicate
as one has to make sense of $\mathscr{H}f:=-\Delta f + \xi f$ 
for a sufficiently large class of functions $f$ 
to contain the eigenvectors of the operator $\mathscr{H}$. 
The eigenfunctions of $\mathscr{H}$ ought to have H\"older regularity $1^-$ (they barely fail to be differentiable)
and for such functions, the product $\xi f$ is not well defined. 
This is the classical problem which motivated It\^o's theory of stochastic integrals. 
Powerful tools to make sense of such products have recently been provided by 
paracontrolled distributions introduced in \cite{gip} by Gubinelli, Imkeller and Perkowsky. Our approach relies on 
their results.  
The theory of regularity structures developed by M. Hairer in \cite{hairer} in 2013 could also have 
been used for our purpose.

Our construction of the operator involves two steps: in the first one, which is purely analytic,
we construct the Schr\"odinger operator $\mathscr H$ for a general class of rough potentials $\xi$ living in a space of  H\"older distributions. 
The important point in this deterministic construction is that the knowledge of the distribution $\xi$ is actually not sufficient 
to define the operator $\mathscr H$ (as an unbounded operator of $L^2(\T^2)$). We will in fact need another piece of information $\Xi_2$ 
which is (roughly speaking) the ill-defined part of the product $\xi(1-\Delta)^{-1}\xi$ (this is explained in more details below). 
The operator $\mathscr{H}$ is then defined on an explicit domain $\mathscr{D}_\Xi \subset L^2(\mathbb T^2, \R)$ of 
functions $f:\T^2\to \R$ which depends on an enhancement $\Xi:=(\xi,\Xi_2)$ of the rough distribution $\xi$ containing the additional information 
$\Xi_2$, necessary to make sense of the ill-defined product $\xi f$ between two distributions. 
In the second part of our work (see section \ref{sec:Stoch}), 
we show that the Gaussian white noise fits 
in the analytic framework developed in section \ref{construct-order-1}. More precisely, we prove that one can construct $\Xi_2$ in a robust way via 
smooth approximation techniques. Note that this part is somehow purely stochastic and relies on classical stochastic analysis techniques. 
 
We first give the results obtained in the deterministic setting where $\xi$ is a general rough distribution living in a Sobolev space with index $\alpha <-1$ defined as
\begin{align*}
H^\alpha(\mathbb T_L^2,\R) := \{f \in \mathscr {S}' ( \mathbb T_L^2,\mathbb R): \sum_{k\in \Z_L^2} (1+|k|^2)^\alpha \, |\hat{f}(k)|^2 < +\infty\}
\end{align*}   
where $\mathbb T_L^2:=\R/ (L^{-1}\mathbb Z^2)$, $\mathscr{S}'(\T_L^2,\R)$ is the Schwartz space of tempered distributions and
where for $k \in \mathbb Z_L^2$, $\hat{f}(k)$ denotes the $k$-th Fourier coefficient of the distribution $f$,
\begin{align*}
\hat{f}(k):= \langle f,L^{-1} \exp(-i 2\pi  \langle k,\cdot\rangle\rangle =\frac{1}{L}
\int\limits_{\mathbb T_L^2}\exp(-i 2\pi  \langle k,x\rangle)   \, f(x)  \, dx \,.
\end{align*}
The Fourier transform will sometimes be denoted $\mathscr {F}$ so that $ \mathscr{F} f (k) = \hat{f}(k)$ for $k\in \Z_L^2$ and 
$f\in \mathscr{S}'$. Let also denote by $\mathscr C^{\alpha}$ the H\"older-Besov space (see below in section \ref{sec:bony} for a reminder of the definitions of those spaces).

The following Theorem describes the results obtained in the first analytical part of our work. Because we are interested 
in the limiting spectral properties of the operator $\mathscr H$ when considered on a Torus or domain with large volume, we will 
enunciate this Theorem for the Torus $\T_L^2:= \R^2/(L^{-1} \Z^2)$ of size $L$. We note that the bounds we obtain 
are uniform in $L$. This property shall be useful later on, for the asymptotic study of the spectrum of $\mathscr H$ in 
the limit of large volume $L\to \infty$.   

\begin{theorem}
\label{main}
Let $\alpha\in (-\frac{4}{3},-1)$. Then, there exists a Banach space 
$\mathscr X^\alpha(\mathbb T^2_L)\subset \mathscr C^{\alpha}(\T_L^2)\times\mathscr C^{2\alpha+2}(\T_L^2)$ 
such that for all $\Xi=(\xi,\Xi_2)\in\mathscr X^\alpha$, there exists a Hilbert space $\mathscr D_{\Xi}\subset L^2(\T_L^2)$  
(which is dense in $L^2(\T_L^2)$) and a unique self-adjoint operator $\mathscr H(\Xi):\mathscr D_{\Xi}\to L^2(\T_L^2)$ with the following 
properties:
\begin{enumerate}
\item If $\xi$ is a smooth function, then we can choose $\Xi_2$  such that: 
$$
\mathscr D_{(\xi,\Xi_2+c)}=H^2(\mathbb T_L^2),\quad\mathscr H(\Xi)f=-\Delta f+f(\xi+c)
$$
for all $f\in H^2(\mathbb T_L^2)$ and $c\in\mathbb R$.
\item The spectrum $(\Lambda_n(\Xi))_{n\in\mathbb N^{*}}$  of $\mathscr H(\Xi)$ is real, 
discrete without any accumulation point and satisfy $\Lambda_n(\Xi)\to+\infty$ when $n\to\infty$, 
$$
\Lambda_1(\Xi)\quad \leq \quad  \Lambda_2(\Xi) \quad  \leq \quad  \cdots\quad  \leq\quad \Lambda_n(\Xi)
$$
and $dim(\Lambda_n(\Xi)-\mathscr H(\Xi))<+\infty$. Moreover, $L^2(\T_L^2)=\bigoplus_{n}\text{ker}(\Lambda_n(\Xi)-\mathscr H(\Xi))$.
\item The eigenvalues $(\Lambda_n)_{n\in \N}$ are solution of 
a min-max principle (see Lemma~\ref{lemma:min-max} for a more precise statement).
\item For each $n\in\mathbb N$, the map $\Xi\to \Lambda_n(\Xi)$ is locally-Lipschitz. More precisely, there exists two positive constants 
$C$ and $M$ which do not depend on $L$ such that,
for all $ \alpha \in (-4/3,-1)$, $\gamma <\alpha+2$, $n\in \N $, $\Xi, \tilde \Xi \in \mathscr X^\alpha$, 
$$
|\Lambda_n(\Xi)-\Lambda_n(\tilde\Xi)|\leq Cn\left(1+n^{\frac{2\gamma-\alpha}{\alpha+2}}+(1+\Lambda_n(0))^{2\gamma}\right)^2\|\Xi-\tilde\Xi\|_{\mathscr X^\alpha}(1+\|\tilde\Xi\|_{\mathscr X^\alpha}+\|\Xi\|_{\mathscr X^\alpha})^M
$$ 
where $\Lambda_n(0)$ is the $n$-lowest eigenvalue of the 
Laplacian operator $-\Delta$. 
\item For all $a\in \mathbb R\setminus\{\Lambda_n(\Xi),n\geq 1\}$, the resolvent map $\Xi\to \mathcal G_a(\Xi)=(a+\mathscr H(\Xi))^{-1}$ is locally Lipschitz.
\end{enumerate}
\end{theorem}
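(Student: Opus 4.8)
\emph{Proof strategy.} The construction rests on the theory of paracontrolled distributions, so I would begin by recalling the analytic toolkit of \cite{gip} on the torus $\T_L^2$: the Littlewood--Paley decomposition and the Besov--H\"older spaces $\mathscr C^s$, Bony's paraproducts $\prec,\succ$ and the resonant product $\circ$ together with their continuity estimates, the Schauder estimate for $\mathcal Q:=(1-\Delta)^{-1}$, the commutator between $\mathcal Q$ and a left paraproduct, and the resonance (commutator) lemma controlling $C(f,g,h):=(f\prec g)\circ h-f\,(g\circ h)$. The one point deserving care is that all constants can be taken uniform in $L\ge 1$: this follows by rescaling to $\T^2$ and checking that the powers of $L$ produced are harmless.

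With this in hand I would set up the objects of the statement. The enhancement space $\mathscr X^\alpha(\T_L^2)$ is defined as the closure, inside $\mathscr C^\alpha\times\mathscr C^{2\alpha+2}$, of the set of canonical lifts $\{(\theta,\theta\circ\mathcal Q\theta+c):\theta\in C^\infty(\T_L^2),\ c\in\R\}$; for a general $\Xi=(\xi,\Xi_2)$ the second component plays the role of the resonant product $\xi\circ\mathcal Q\xi$, which is genuinely ill-defined when $\xi$ is the two-dimensional white noise. The domain $\mathscr D_\Xi$ is the space of \emph{paracontrolled} functions: $f\in L^2(\T_L^2)$ admitting an expansion $f=f\prec\mathcal Q\xi+f^\sharp$ (possibly iterated once more to account for the roughness transmitted to $f^\sharp$) with a remainder in a fixed Besov class of regularity close to $2$, the exact class being dictated by the requirement that the expression for $\mathscr H(\Xi)f$ read off below lie in $L^2$. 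Equipped with the natural graph-type Hilbert norm, $\mathscr D_\Xi$ contains $C^\infty(\T_L^2)$ and is dense in $L^2$. On $\mathscr D_\Xi$ one then \emph{defines} $\mathscr H(\Xi)f$ by substituting the paracontrolled ansatz into $-\Delta f+\xi f$ and expanding $\xi f=\xi\prec f+f\prec\xi+\xi\circ f$: the rough paraproduct $f\prec\xi$ cancels against $-\Delta f$ via the $\mathcal Q$--paraproduct commutator, while the resonant term is rewritten, using $\xi\circ(f\prec\mathcal Q\xi)=f\,(\xi\circ\mathcal Q\xi)+C(f,\mathcal Q\xi,\xi)$ with $\xi\circ\mathcal Q\xi$ replaced by (a renormalisation of) $\Xi_2$, as a sum of terms each controlled by the continuity estimates, using the standing hypothesis $3\alpha+4>0$. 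Symmetry of $\mathscr H(\Xi)$ on $\mathscr D_\Xi$ then holds because the associated bilinear form is, after these manipulations, manifestly symmetric.

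The analytic heart is the resolvent. For $M$ large I would solve $(M+\mathscr H(\Xi))f=g$, $g\in L^2$, as a fixed point: writing the equation with $\mathcal Q_M:=(M+1-\Delta)^{-1}$ in place of $\mathcal Q$ and imposing the paracontrolled structure, one obtains a contraction because the high-frequency smoothing of $\mathcal Q_M$ puts negative powers of $M$ in front of all the rough contributions; this yields $\mathcal G_M(\Xi)=(M+\mathscr H(\Xi))^{-1}$ as a bounded operator from $L^2$ into $\mathscr D_\Xi$. Since $\mathscr D_\Xi\hookrightarrow\mathscr C^{\alpha+2}(\T_L^2)$ with $\alpha+2>0$, and the latter embeds compactly into $L^2(\T_L^2)$ on the (finite-volume) torus, the resolvent is compact; combined with symmetry, the lower bound $\mathscr H(\Xi)\ge-M$ (proved for the same $M$ from semiboundedness of the form), and surjectivity of $M+\mathscr H(\Xi)$, the standard criterion gives that $\mathscr H(\Xi)$ is self-adjoint with compact self-adjoint resolvent — uniqueness being immediate once $\mathscr D_\Xi$ and the action are fixed, and consistent with the description of $\mathscr H(\Xi)$ as the strong-resolvent limit of the smooth operators along any approximating sequence for $\Xi$. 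Property (2) is then the spectral theorem for self-adjoint operators with compact resolvent; property (3) follows from the closed semibounded quadratic form $\mathcal E_\Xi$ canonically associated with $\mathscr H(\Xi)$ together with the Courant--Fischer formula; property (1) is checked by hand, since for smooth $\xi$ one may take $\Xi_2=\xi\circ\mathcal Q\xi$ (no renormalisation), the paracontrolled condition collapses to $f^\sharp\in H^2\Leftrightarrow f\in H^2$, and replacing $\Xi_2$ by $\Xi_2+c$ changes $\mathscr H(\Xi)$ into $\mathscr H(\Xi)+c$ by inspection.

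Finally, for properties (4) and (5): the contraction estimate above is stable under perturbations of $\Xi$, which gives local Lipschitz dependence of $\mathcal G_{-M}(\Xi)$ on $\Xi$, extended to every $a\notin\operatorname{spec}\mathscr H(\Xi)$ by the resolvent identity — this is (5), and qualitatively $\Xi\mapsto\Lambda_n(\Xi)$ is then Lipschitz because $\Lambda_n(\Xi)$ is recovered from the $n$-th eigenvalue of the compact self-adjoint operator $\mathcal G_M(\Xi)$ and $|\mu_n(A)-\mu_n(B)|\le\|A-B\|$. For the quantitative bound in (4) I would instead run the min-max principle directly: estimate $|\mathcal E_\Xi(f)-\mathcal E_{\tilde\Xi}(f)|$ for $f$ in the span of the first $n$ eigenfunctions in terms of $\|\Xi-\tilde\Xi\|_{\mathscr X^\alpha}$ and $\|f\|_{\mathscr C^\gamma}^2$, bound $\|e_k\|_{\mathscr C^\gamma}$ for $k\le n$ by a polynomial in $\Lambda_k(\Xi)$ via the paracontrolled elliptic regularity established above, and control $\Lambda_k(\Xi)$ through the Weyl asymptotics $\Lambda_k(0)\sim k$ for $-\Delta$; tracking the exponents $\gamma<\alpha+2$, $\alpha$ and $\alpha+2$ produces exactly the factor $(1+n^{(2\gamma-\alpha)/(\alpha+2)}+(1+\Lambda_n(0))^{2\gamma})^2$. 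The main obstacles, in my view, are the precise identification of $\mathscr D_\Xi$ together with the bookkeeping that makes $\mathscr H(\Xi)f$ genuinely land in $L^2$ and forces $\mathscr H(\Xi)$ to be self-adjoint (surjectivity of $M+\mathscr H(\Xi)$, not merely essential self-adjointness on a core), all with constants uniform in $L$; and, for the sharp estimate in (4), the exponent bookkeeping combining the min-max principle, elliptic/Schauder bounds on eigenfunctions, and the Weyl law.
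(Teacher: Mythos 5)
Your overall strategy for the construction coincides with the paper's: the space $\mathscr X^\alpha$ is the closure of smooth canonical lifts, the domain is a two-layer paracontrolled space (the paper names them $\mathscr D_\xi^\gamma$ and the smaller $\mathscr D_\Xi$, where the remainder $f^\flat$ lands in $H^2$ precisely so that $\mathscr H(\Xi)f\in L^2$), the resolvent $\mathcal G_a$ is built via a contraction in a weighted paracontrolled space using the smoothing of $\sigma_a(D)=(a-\Delta)^{-1}$, compactness follows from Rellich--Kondrachov, and self-adjointness plus Properties (1)--(3), (5) are extracted from the spectral theorem for the compact resolvent together with an approximation by the smooth operators $\mathscr H_\eps$. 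One technical point worth noting: the paper works with Sobolev scales $H^\gamma=\mathscr B^\gamma_{2,2}$ (not $\mathscr C^\gamma$) for the paracontrolled remainder, precisely to keep $\mathscr D_\xi^\gamma$ and $\mathscr D_\Xi$ Hilbert, which is what makes the $L^2$-based spectral theory clean; your ``Besov class of regularity close to $2$'' is in the right spirit but the Hilbertian choice is load-bearing.

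Where you diverge, and where there is a genuine gap, is Property (4). You propose to estimate $|\mathcal E_\Xi(f)-\mathcal E_{\tilde\Xi}(f)|$ directly over the span of the first $n$ eigenfunctions of $\mathscr H(\Xi)$ and then bound $\|e_k\|$ by paracontrolled elliptic regularity. The obstacle is that the eigenfunctions of $\mathscr H(\Xi)$ are paracontrolled by $\xi$, hence live in $\mathscr D_\xi^\gamma$ (or $\mathscr D_\Xi$), while the quadratic form $\mathcal E_{\tilde\Xi}$ is only defined on $\mathscr D_{\tilde\xi}^\gamma$ (or $\mathscr D_{\tilde\Xi}$): the resonant term $e_k\circ\tilde\xi$ has no meaning without first re-expanding $e_k$ along $\sigma(D)\tilde\xi$. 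This forces an explicit ``transport'' map between the two domains (Proposition~\ref{prop:interpolation} in the paper provides one, but it is nontrivial and introduces further dependence on $\|\Xi-\tilde\Xi\|$), and your sketch does not account for it. The paper sidesteps this entirely: it first proves a resolvent Lipschitz bound $\|\mathcal G_a(\Xi)-\mathcal G_a(\tilde\Xi)\|_{\mathcal L(L^2,L^2)}\lesssim\|\Xi-\tilde\Xi\|$ (Lemma~\ref{lemma:cont}), which is an estimate between bounded operators on the \emph{same} space $L^2$ and thus immune to the domain-change issue; then the min-max principle applied to the compact resolvents gives $\left|\tfrac{1}{\Lambda_n(\Xi)+a}-\tfrac{1}{\Lambda_n(\tilde\Xi)+a}\right|\le\|\mathcal G_a(\Xi)-\mathcal G_a(\tilde\Xi)\|$, and to convert this into a bound on $|\Lambda_n(\Xi)-\Lambda_n(\tilde\Xi)|$ one needs a polynomial growth bound on $\Lambda_n(\Xi)$, which the paper obtains (Proposition~\ref{prop:growth}) by plugging into the min-max a nearly orthonormal family $f_1,\dots,f_n\in\mathscr D_\xi^\gamma$ built by paracontrolled correction of the Laplacian eigenfunctions $e_k(0)$, with $\|f_i\|_{\mathscr D_\xi^\gamma}$ controlled by $(1+\Lambda_i(0))^{2\gamma}$ and $n^{(2\gamma-\alpha)/(\alpha+2)}\|\xi\|_{\mathscr C^\alpha}$. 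Your invocation of Weyl asymptotics is the right heuristic for where the $\Lambda_n(0)$-powers come from, but the resolvent detour plus explicit one-sided test functions is what makes the quantitative bound close without having to compare forms across domains.
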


Let us emphasize that the Gaussian white noise on the Torus satisfies the assumption of Theorem \ref{main} since for any $\alpha <-1$, 
we have 
$\xi\in \mathscr C^\alpha$ almost surely.  

The conclusions of Theorem \ref{main} follow from the spectral Theorem applied to the resolvent operator $\mathcal{G}_a:= (a+\mathscr{H})^{-1}$ 
which is shown to exist for $a$ sufficiently large (with a fixed point argument) and to be a compact self-adjoint operator.

We see at least two interesting applications of Theorem \ref{main}. 
The first one concerns the Parabolic Anderson model \eqref{pam} in two dimension considered on the Torus.    
The operator $\mathscr{H}$ is simply the hamiltonian associated to the linear stochastic partial differential equation \eqref{pam}
and Theorem \ref{main} leads to the spectral decomposition \eqref{spectral_decomposition} 
of the solution $u(x,t)$ of \eqref{pam} which was first constructed in \cite{gip}. 

The second application we see  is about the Schr\"odinger equation 
 \begin{align}\label{schrodinger}
\partial_t u = i \left(\Delta u - \xi u\right) \,, \quad u(x,0) = u_0(x) 
\end{align}
considered with periodic boundary conditions and where  $u_0\in L^2(\T^2,\R )$. 
The solution of this singular stochastic partial differential equation (SPDE) has not been constructed so far 
(to the best of our knowledge). 
This is due to the fact that the imaginary factor $i$ kills the Schauder's estimate 
which is usually available in the presence of a Laplacian term in a singular SPDE. 
Theorem \ref{main} provides again a spectral decomposition of the solution $u(x,t)$ to the Schr\"odinger equation \eqref{schrodinger}
\begin{align*}
u(x,t) = \sum_{n=1}^{+\infty} \exp(-i \Lambda_n t)\,  \langle e_n, u_0\rangle_{L^2} \,  e_n(x)\,. 
\end{align*}

Our construction is straightforward to extend on a more general domain 
with Dirichlet or Neumann boundary conditions (see also ~\cite{ism} where the authors study singular PDEs on general domains). 

\begin{remark}
Let us notice that the well posedness of the parabolic Anderson equation \eqref{pam} 
which was proven in \cite{gip} 
implies the second point of Theorem~\ref{main}. Indeed, as pointed out in~\cite{HL}, the global well posedness result 
of the Parabolic Anderson equation ensures that the heat kernel $\mathscr K_t u_0:=u(t,\cdot)$ is a compact self-adjoint operator of 
$L^2(\T_L^2)$ which satisfies $\mathscr K_t\mathscr K_s=\mathscr K_{t+s}$. Therefore, we can define the operator $\mathscr H$ as
$$
\mathscr H:=\frac{1}{t}\log\mathscr K_t
$$  
where the logarithm is understood in the sense of functional analysis. From a classical spectral analysis result (see~\cite{davie}), it is well known that the compactness of  $\mathscr K_t$ implies the compactness of the resolvent of  $\mathscr H$ so that the 
 the spectral theorem applies. Our work in this paper can be seen as the converse of this approach in the sense that we define the operator $\mathscr H$ on an explicit domain and we recover the well posedness result by taking $u(t,\cdot)=e^{-t\mathscr H}f$. 
 Our construction for the operator has the advantage of being more explicit. 
\end{remark}
\begin{remark}
Let us emphasize that Theorem~\ref{main} can be generalized to the $d$ dimensional Torus. 
Moreover, the reader who is familiar with the theory of regularity structure \cite{hairer}, 
may guess that the condition $\alpha>-\frac{4}{3}$ 
could be improved to $\alpha>-2$. This extension would allow one to handle even rougher potentials $\xi$ such as 
the Gaussian white noise on the three dimensional torus.      
\end{remark}

As mentioned previously, the two-dimensional Gaussian white noise $\xi$ satisfies the assumptions of 
Theorem~\ref{main}. In particular, 
the operator $\mathscr H$ has in this case a discrete real spectrum which is continuous 
with respect to the enhanced Gaussian
white noise $(\xi,\Xi_2)$. 
As explained below, Theorem~\ref{main} shall also permit one to obtain a smooth approximation result for
the operator $\mathscr H$ associated 
to the Gaussian white noise $\xi$. 
We now explain this approximation result in more details. 
If $\hat\theta_\eps=\eps^{-2}\hat\theta(\frac{\cdot}{\eps})$ is an approximation of the identity 
and $\xi_\eps=\xi\star\hat\theta_\eps$ is a mollification of the Gaussian white noise $\xi$, then the operator 
$$
\mathscr H_\eps:=-\Delta+\xi_\eps
$$ 
is an unbounded operator of $L^2(\T_L^2)$ whose domain is the Sobolev space $H^2(\T^2)$.  
Its resolvent $\mathcal{G}_a^\eps:= (a+\mathscr{H}_\eps)^{-1}: L^2\to H^2$, 
which is well defined for $a$ large enough, is compact and we can apply the spectral theorem. Note that 
we know from Theorem \ref{main}
that there is a choice of $\Xi_2^\eps$ such that 
$\mathscr H_\eps=\mathscr H(\xi_\eps,\Xi^\eps_2)$. 
Our approximation result can be enunciated as follows. 
\begin{theorem}\label{th:conv}
Let $\alpha<-1$, $\xi$ be a Gaussian white noise, $\xi_\eps:=\xi\star\hat\theta_\eps$ be a smooth mollification 
of $\xi \in \mathscr C^\alpha$ 
and $\Xi_2^{\eps}$ as given in Theorem~\ref{main} such that 
$\mathscr H(\xi_\eps,\Xi^\eps_2)= \mathscr H_\eps$. Then, 
there exists $\Xi^{wn}=(\xi,\Xi^{wn}_2)\in\mathscr X^\alpha(\mathbb T_L^2)$  
and a constant $c_\eps:=c_\eps(\theta)
\to+\infty$ as $\eps \to 0$ such that the following convergence holds 
$$
(\xi_\eps,\Xi_2^\eps+c_\eps)\quad \underset{\eps\to0}{\longrightarrow} \quad (\xi,\Xi^{wn}_2)
$$
in $L^p(\Omega,\mathscr C^{\alpha}\times\mathscr C^{2\alpha+2})$ for all $p>0$ and 
almost surely in $\mathscr C^{\alpha}\times\mathscr C^{2\alpha+2}$. Moreover, 
the limiting distribution $\Xi$ does not depend on the mollification function $\theta$ 
and the normalizing constant $c_\eps$ has the following asymptotic expansion 
\begin{align}\label{expansion_c_eps}
c_\eps=\frac{1}{2\pi}\log(\frac{1}{\eps}) + O(1),
\end{align}
where $O(1)$ refers to any fixed constant, independent of $\eps$. 
\end{theorem}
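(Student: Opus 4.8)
The plan is to construct the enhancement $\Xi_2^{wn}$ as an explicit element of the second Wiener chaos and to prove convergence by a combination of Wick calculus and the Gaussian hypercontractivity/Kolmogorov argument that is standard in the paracontrolled/regularity-structures literature. First I would recall the precise analytic form of $\Xi_2$ dictated by Theorem~\ref{main}: it is the renormalized resonant product associated with $\xi(1-\Delta)^{-1}\xi$, i.e. $\Xi_2^\eps$ is (up to lower-order paraproduct pieces) the resonant term $\Pi_0\big(\xi_\eps,(1-\Delta)^{-1}\xi_\eps\big)$, and the claim is that $\Xi_2^\eps + c_\eps$ converges once one subtracts its expectation. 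So I would set $\Xi_2^{wn}:=\lim_{\eps\to0}\big(\Xi_2^\eps - \E[\Xi_2^\eps]\big)$ and verify that $c_\eps = -\E[\Xi_2^\eps] + O(1)$, with the $O(1)$ absorbed into a harmless finite shift (which, by point (1) of Theorem~\ref{main}, leaves the operator unchanged up to an additive constant and does not affect the stated convergence of the enhancement modulo that constant).

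Next I would carry out the second-chaos computation. Writing $\xi=\sum_{k\in\Z_L^2}\hat\xi(k)e_k$ with $\hat\xi(k)$ complex Gaussians satisfying the white-noise covariance, the resonant product $\Pi_0(\xi_\eps,(1-\Delta)^{-1}\xi_\eps)$ expands into a diagonal (deterministic) part and an off-diagonal part living in the second Wiener chaos. The diagonal part is $\E[\Xi_2^\eps]=\sum_{k}\widehat{\theta}(\eps k)^2(1+4\pi^2|k|^2)^{-1}\rho_0$-type sum, which behaves like $\frac{1}{2\pi}\log(1/\eps)$ by comparison with $\int_{|x|\le 1/\eps}\frac{dx}{1+|x|^2}\sim 2\pi\log(1/\eps)$ after normalising the Fourier-sum constants; this is exactly \eqref{expansion_c_eps}. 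For the chaos part I would estimate the $k$-th Littlewood--Paley block in $L^2(\Omega)$: using Wick's theorem the variance of $\Delta_q\big(\Xi_2^\eps-\Xi_2^{\eps'}\big)$ is a sum over frequency pairs which, thanks to the extra decay $(1+|k|^2)^{-1}$ and the spectral support constraint of the resonant product, is bounded by $C\,2^{q(2\alpha+2)}\delta(\eps,\eps')$ with $\delta\to0$; raising to the $p$-th moment by Gaussian hypercontractivity and summing dyadically gives a Cauchy sequence in $L^p(\Omega,\mathscr C^{2\alpha+2-\kappa})$ for every small $\kappa>0$, hence (after the usual loss) in $\mathscr C^{2\alpha+2}$ as stated, with the $\xi_\eps\to\xi$ convergence in $\mathscr C^\alpha$ being classical. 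The almost-sure statement follows from Borel--Cantelli along a geometric subsequence together with the moment bounds.

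Finally I would prove mollifier-independence: if $\theta,\tilde\theta$ are two admissible kernels, the difference of the two chaos parts is controlled by $\widehat\theta(\eps k)^2-\widehat{\tilde\theta}(\eps k)^2\to0$ pointwise with a uniform bound, so it tends to $0$ in every $\mathscr C^{2\alpha+2}$, while the difference of the diagonal parts is $O(1)$ and is precisely the discrepancy between $c_\eps(\theta)$ and $c_\eps(\tilde\theta)$; hence the limit $\Xi^{wn}$ is canonical. Membership $\Xi^{wn}\in\mathscr X^\alpha$ is then immediate since $\mathscr X^\alpha$ is closed in $\mathscr C^\alpha\times\mathscr C^{2\alpha+2}$ and the approximants $(\xi_\eps,\Xi_2^\eps+c_\eps)$ lie in it for smooth $\xi_\eps$ by the first point of Theorem~\ref{main}. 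I expect the main obstacle to be the bookkeeping in the second-chaos variance estimate — keeping track of all the paraproduct and commutator pieces that make up $\Xi_2^\eps$ beyond the leading resonant term, and checking that each of them either converges without renormalisation or contributes only to the $O(1)$ in $c_\eps$; this is where the restriction $\alpha>-4/3$ (equivalently $2\alpha+2>-2/3$, so the chaos term is a genuine distribution of manageable negative regularity) is used, and it is the step that requires the most care rather than the identification of the logarithmic constant, which is a short explicit computation.
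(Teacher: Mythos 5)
Your approach matches the paper's in all essentials: expand the resonant term in Fourier, separate the deterministic diagonal (expectation) contribution as $c_\eps$, control the remaining second Wiener chaos via $L^2$ variance estimates on Littlewood--Paley blocks, boost to $L^p$ by Gaussian hypercontractivity, pass through the Besov embedding, and use a Kolmogorov-type criterion for almost-sure convergence; mollifier-independence by the same pointwise comparison of symbols. Two small conceptual slips are worth flagging, though neither is fatal. First, there are no ``lower-order paraproduct and commutator pieces that make up $\Xi_2^\eps$'' to bookkeep: by construction $\Xi_2^\eps$ \emph{is} precisely $\xi_\eps \circ \sigma(D)\xi_\eps + c_\eps$, i.e.\ the resonant product plus a constant; the paraproduct and commutator pieces you have in mind enter only later, when the resonant product $f\circ\xi$ is built out of $\Xi_2$ for a paracontrolled $f$, not in the definition of $\Xi_2$ itself. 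Second, the restriction $\alpha>-4/3$ is \emph{not} used anywhere in this theorem (the statement only requires $\alpha<-1$); that constraint arises purely in the deterministic fixed-point construction of the operator (Theorem~\ref{main}), not in the probabilistic convergence of the enhancement, which works for all $\alpha<-1$ since $2\alpha+2<0$ is all that is needed for the chaos estimate to close. So the ``hard part'' you anticipate is actually trivial here; what does require care is the variance estimate itself, where the spectral support constraint of the resonant product ($|k|\sim|\ell|$ in the double Fourier sum) is exploited to get the factor $2^{q(2\alpha+2)}$ after bounding the inner frequency sum uniformly in the outer frequency --- exactly as the paper does.
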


\begin{remark}
Note that the asymptotic expansion \eqref{expansion_c_eps} is universal at the leading order when $\eps\to 0$, 
in the sense that the largest term does not depend 
on the mollification $\theta_\eps$ used for the regularization.  The second term $O(1)$ is however not universal and one can actually choose
any constant in $\R$ so that the Theorem \ref{th:conv} remains valid. As a consequence, the limiting distribution $\Xi_2$ is unique 
up to an additive constant. Theorem \ref{th:conv} was first proved in~\cite{gip,hairer} where the authors obtain 
the well posedness of the parabolic Anderson equation on the two dimensional torus. The present version is a slight modification 
of their result. 
\end{remark}

Endowed with Thoerems \ref{main} and \ref{th:conv}, we are now able to define the Schr\"odinger operator $\mathscr H$ associated 
to the Gaussian white noise potential $\xi$ simply by setting 
\begin{align}\label{def_H_gaussien}
\mathscr H:= \mathscr H(\Xi^{wn})\,. 
\end{align}
We now establish the convergence in the sense of the resolvent (convergence of the spectrum) 
of the smooth approximations $\mathscr H_\eps +c_\eps$ as defined above in Theorem \ref{th:conv} 
towards the operator $\mathscr H$, so that the definition \ref{def_H_gaussien}
makes sense. The following Theorem is the second main result of our paper.  

%

\begin{theorem}
With the same notations as in Theorem \ref{th:conv}, we denote by
\begin{align*}
\Lambda_1^\eps \quad \le \quad \Lambda_2^\eps \quad \le \quad \Lambda_3^\eps \quad \le \quad \cdots
\end{align*}
the eigenvalues of the operator $\mathscr{H}_\eps$. 
Then, for any $n\in \N$, almost surely,
\begin{align*}
\Lambda_n^\eps + c_\eps \quad \underset{\eps\to 0}{\longrightarrow}  \quad \Lambda_n(\Xi^{wn})\,,
\end{align*}
where $(\Lambda_n(\Xi^{wn}))_{n\in \N}$ denotes the discrete set of the eigenvalues of $\mathscr{H}(\Xi^{wn})$.  
\end{theorem}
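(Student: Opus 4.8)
The plan is to obtain this as a direct consequence of Theorems \ref{main} and \ref{th:conv}: Theorem \ref{th:conv} provides the almost sure convergence of the relevant enhanced potentials, and point (4) of Theorem \ref{main} turns this convergence into the convergence of each eigenvalue. Throughout I fix $\alpha\in(-\tfrac43,-1)$, which is admissible since the two-dimensional white noise belongs to $\mathscr C^\alpha$ for every $\alpha<-1$, and I fix $n\in\mathbb N$.

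First I would record that shifting the enhancement by the constant $c_\eps$ shifts the operator by $c_\eps$. Since $\xi_\eps$ is smooth, point (1) of Theorem \ref{main} gives $\mathscr D_{(\xi_\eps,\Xi_2^\eps+c_\eps)}=H^2(\mathbb T_L^2)=\mathscr D_{(\xi_\eps,\Xi_2^\eps)}$ and $\mathscr H(\xi_\eps,\Xi_2^\eps+c_\eps)f=-\Delta f+(\xi_\eps+c_\eps)f=\mathscr H_\eps f+c_\eps f$ for all $f\in H^2(\mathbb T_L^2)$; hence $\mathscr H(\xi_\eps,\Xi_2^\eps+c_\eps)=\mathscr H_\eps+c_\eps$ and consequently $\Lambda_n(\xi_\eps,\Xi_2^\eps+c_\eps)=\Lambda_n^\eps+c_\eps$.

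Next I would invoke Theorem \ref{th:conv}: on a full-measure event, $(\xi_\eps,\Xi_2^\eps+c_\eps)\to\Xi^{wn}$ in $\mathscr C^\alpha\times\mathscr C^{2\alpha+2}$ as $\eps\to0$. All the pairs $(\xi_\eps,\Xi_2^\eps+c_\eps)$ lie in $\mathscr X^\alpha$ — they are the enhancements producing $\mathscr H_\eps+c_\eps$ through Theorem \ref{main} — and so does the limit $\Xi^{wn}$; since $\mathscr X^\alpha$ is, by construction, a closed subspace of $\mathscr C^\alpha\times\mathscr C^{2\alpha+2}$ endowed with the induced norm, the convergence takes place in $\mathscr X^\alpha$. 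On this event I then fix any $\gamma<\alpha+2$ and apply point (4) of Theorem \ref{main} with $\Xi=(\xi_\eps,\Xi_2^\eps+c_\eps)$ and $\tilde\Xi=\Xi^{wn}$, obtaining
\[
\big|\Lambda_n^\eps+c_\eps-\Lambda_n(\Xi^{wn})\big|\leq Cn\Big(1+n^{\frac{2\gamma-\alpha}{\alpha+2}}+(1+\Lambda_n(0))^{2\gamma}\Big)^2\,\big\|(\xi_\eps,\Xi_2^\eps+c_\eps)-\Xi^{wn}\big\|_{\mathscr X^\alpha}\big(1+\|\Xi^{wn}\|_{\mathscr X^\alpha}+\|(\xi_\eps,\Xi_2^\eps+c_\eps)\|_{\mathscr X^\alpha}\big)^M.
\]
In the right-hand side the first bracketed factor depends only on the fixed $n$ (and on $\alpha,\gamma$ and the fixed Laplacian eigenvalue $\Lambda_n(0)$), the last factor stays bounded along $\eps$ because $\|(\xi_\eps,\Xi_2^\eps+c_\eps)\|_{\mathscr X^\alpha}$ converges, and $\|(\xi_\eps,\Xi_2^\eps+c_\eps)-\Xi^{wn}\|_{\mathscr X^\alpha}\to0$; hence the whole bound tends to $0$. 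This gives $\Lambda_n^\eps+c_\eps\to\Lambda_n(\Xi^{wn})$ almost surely for the fixed $n$, and intersecting countably many full-measure events yields the convergence simultaneously for all $n$.

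Since the statement is essentially a corollary of the two main theorems, I do not expect a serious obstacle; the points needing genuine attention are the identification in the second step (that the additive renormalization of the enhancement is exactly the additive renormalization of the operator, which is precisely point (1) of Theorem \ref{main}) and, in the last step, checking that the $\eps$-dependent prefactors in the quantitative estimate of point (4) remain bounded along the approximating sequence — which follows at once from $\sup_\eps\|(\xi_\eps,\Xi_2^\eps+c_\eps)\|_{\mathscr X^\alpha}<\infty$, itself a consequence of the convergence in $\mathscr X^\alpha$.
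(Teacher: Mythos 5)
Your proposal is correct and is exactly the argument the paper intends: the theorem is stated as a corollary of Theorems \ref{main} and \ref{th:conv}, obtained by identifying $\mathscr H(\xi_\eps,\Xi_2^\eps+c_\eps)=\mathscr H_\eps+c_\eps$ via point (1) of Theorem \ref{main}, then applying the local Lipschitz bound of point (4) to the almost surely convergent sequence of enhancements furnished by Theorem \ref{th:conv}. You have also correctly handled the one minor point that deserves care, namely that $\alpha$ must be taken in $(-4/3,-1)$ so that Theorem \ref{main} applies, and that the prefactor in the Lipschitz estimate stays bounded along the approximating sequence because $\|(\xi_\eps,\Xi_2^\eps+c_\eps)\|_{\mathscr X^\alpha}$ converges.
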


%

So far we have constructed the operator $\mathscr H(\Xi^{wn})$ associated to the two-dimensional Gaussian
white noise $\xi$ and establish the convergence of smooth approximations. 
We are now interested in the limiting 
spectral statistics of the operator  $\mathscr H(\Xi^{wn})$ 
when the volume of the torus, denoted as $L$ above, 
tends to $+\infty$. In the limit of a very large torus, it is expected that the eigenfunctions 
associated to the lowest eigenvalues will be localized (this property 
is known as the Anderson localization) as can be observed in the
picture of the eigenfunction associated to the bottom eigenvalue 
in Fig. \ref{fig-anderson-localization} for $L=10$ (this picture is obtained from a numerical 
simulation and diagonalization of the discretized operator on a grid with small mesh size).
We provide a picture of the first eigenfunction in Fig.  \ref{fig-eigenfunction-1} with $L=1$ 
for comparison and to illustrate the effect of the Gaussian white noise.  

Instead of the localization, a weaker result is 
to prove the convergence of the bottom eigenvalues in their scaling region as $L\to \infty$
towards a Poisson point process. Even in the one dimensional case 
(described below in section \ref{dimension-one}), this conjecture remains to be proved 
(see \cite{laure} for a discussion on this conjecture and \cite{laure-sine} for a related model 
where this convergence is proved).  

In the two-dimensional case, we obtain in this paper only partial results in this direction. 
Mainly, we are able to give an upper-bound on the asymptotic order of the ground state in the limit of large volume $L\to \infty$. In the one-dimensional case, McKean \cite{mckean} established the convergence in law 
of the ground state towards a Gumbel distribution, the asymptotic order of the minimal eigenvalue being (up to a multiplicative factor) $- \log(L)^{2/3}$.  

\begin{figure}\center
\includegraphics[scale=0.6]{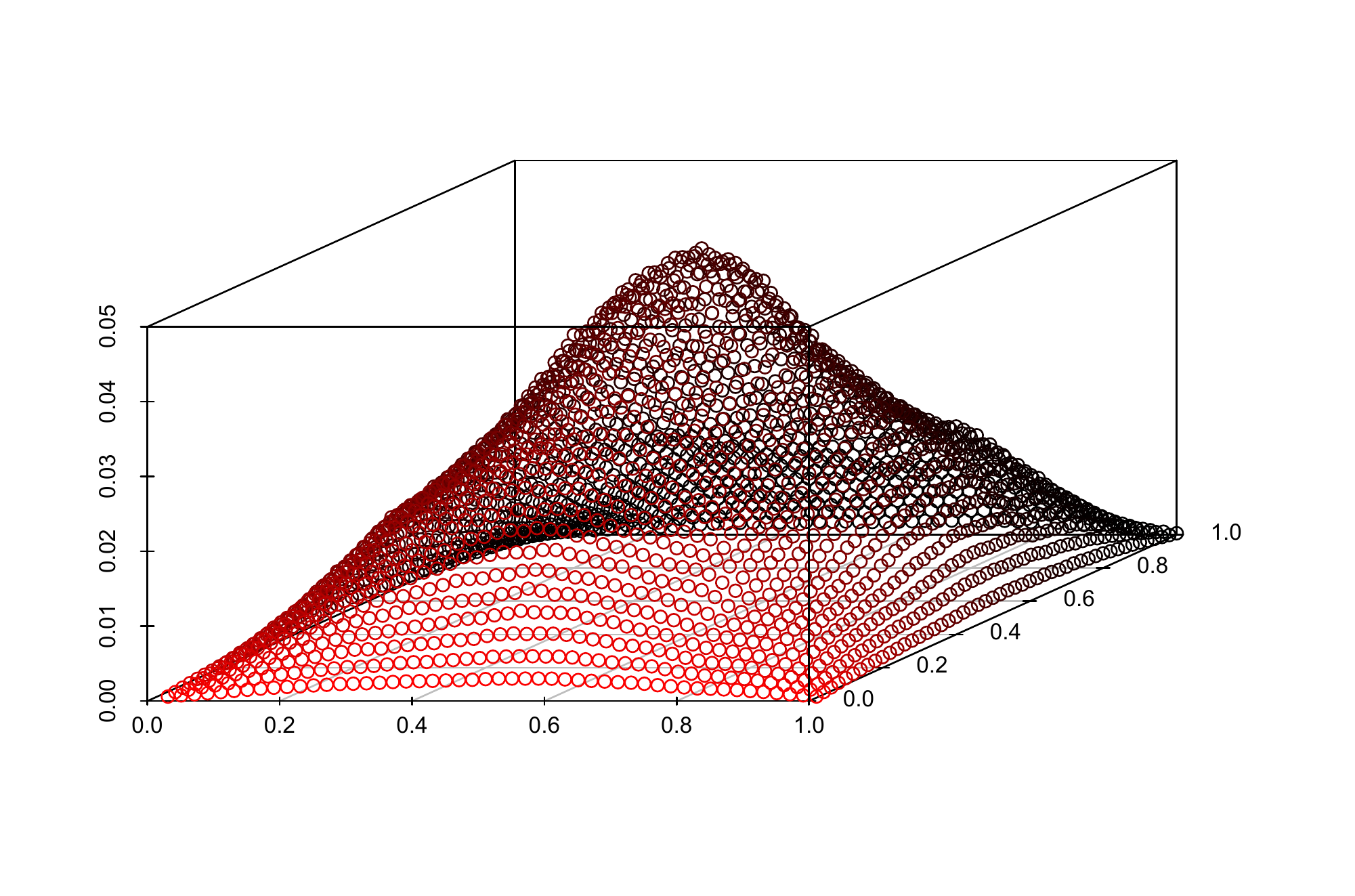}
\includegraphics[scale=0.6]{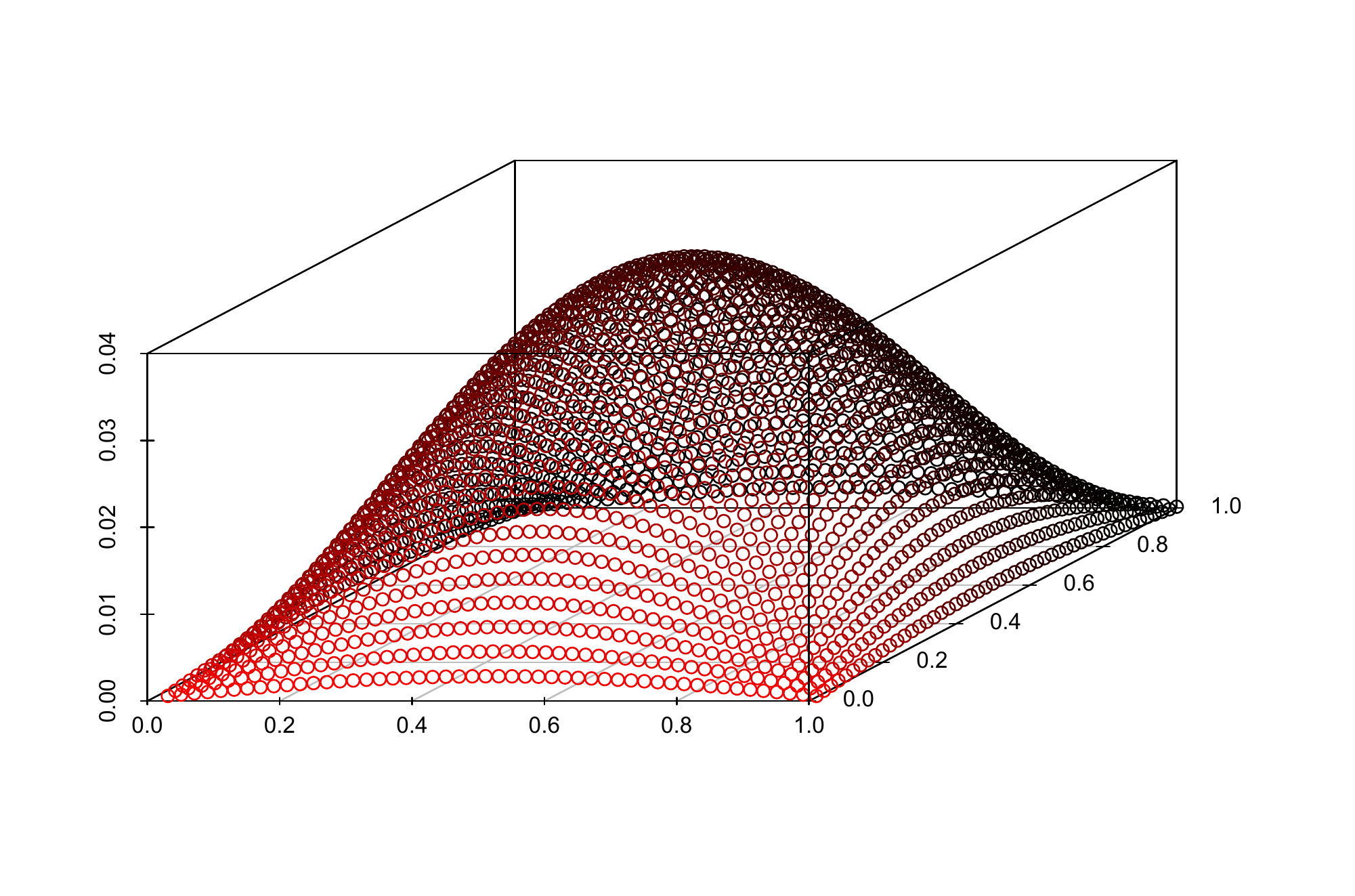}
\caption{(Color online). Sample graph of the first eigenfunction associated to the minimal eigenvalue 
of the operator $\mathscr{H}$ with Dirichlet boundary conditions and on a square of size $L=1$. 
We also display the first eigenfunction of the Laplacian operator $-\Delta$ to illustrate the effect of the noise.}
\label{fig-eigenfunction-1}
\end{figure}

\begin{figure}\center
\includegraphics[scale=0.6]{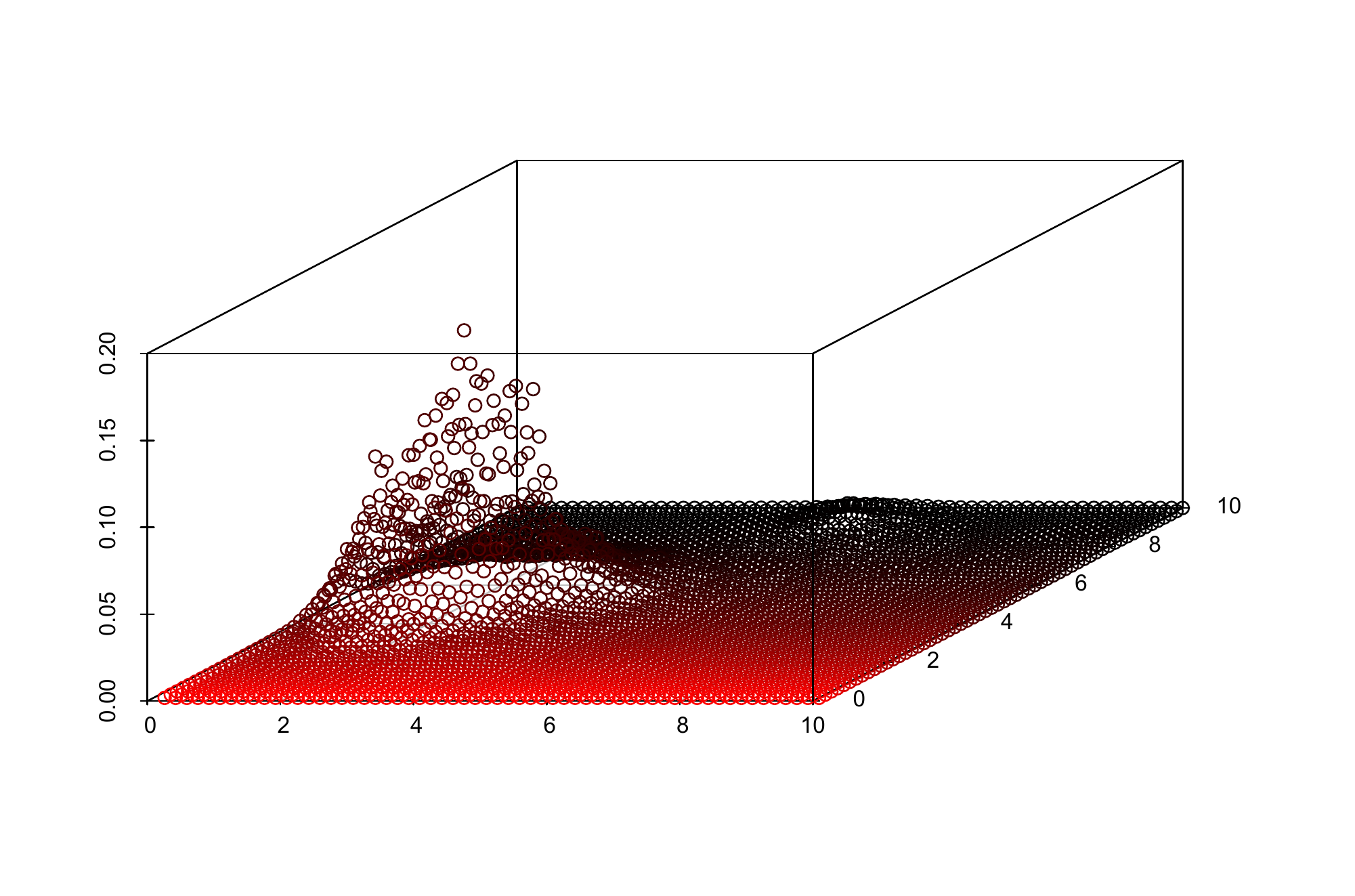}
\caption{(Color online). Anderson localization of the first eigenfunction of $\mathscr{H}$ on a larger box of size 
$L=10$. }\label{fig-anderson-localization}
\end{figure}

\begin{theorem}\label{th:spectre}
For any $n\in \N$ and $p\geq 1$, 
\begin{align}\label{eq:logestimate}
\sup_{L>0}\mathbb E\left[ \left|\frac{\Lambda_n(\Xi^{wn})}{\log L}\right|^p \right] < +\infty
\end{align}

Besides, there exist two positive constants $C_1$ and $C_2$ such that for any $x<0$, we have
\begin{align}
e^{C_2x}\leq\mathbb P(\Lambda_1(\Xi^{wn})\leq x)\leq e^{C_1x}\,. 
\end{align}
\end{theorem}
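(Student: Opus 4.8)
The plan is to establish both the moment bound \eqref{eq:logestimate} and the two-sided exponential tail estimate by combining the variational (min-max) characterization of $\Lambda_n(\Xi^{wn})$ from Theorem~\ref{main}(3) with quantitative regularity and tail information on the enhanced white noise $\Xi^{wn}=(\xi,\Xi_2^{wn})$ provided by Theorem~\ref{th:conv}. The key structural input is the locally-Lipschitz bound of Theorem~\ref{main}(4): taking $\tilde\Xi=0$ there and using that $\Lambda_n(0)$ (the $n$-th Laplacian eigenvalue on $\T_L^2$) is explicit, one obtains a deterministic estimate of the form $|\Lambda_n(\Xi^{wn})| \le C\, n\,(1+n^{\beta}+(1+\Lambda_n(0))^{2\gamma})^2\,\|\Xi^{wn}\|_{\mathscr X^\alpha}(1+\|\Xi^{wn}\|_{\mathscr X^\alpha})^M$ for an exponent $\beta=\tfrac{2\gamma-\alpha}{\alpha+2}$. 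Everything then reduces to controlling the random norm $\|\Xi^{wn}\|_{\mathscr X^\alpha(\T_L^2)}$, uniformly in $L$.

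First I would record the basic stochastic estimates: since $\xi$ restricted to $\T_L^2$ is a Gaussian white noise and $\Xi_2^{wn}$ lives in the second Wiener chaos (it is the renormalized limit of $\xi_\eps(1-\Delta)^{-1}\xi_\eps - c_\eps$, as in Theorem~\ref{th:conv}), the quantity $\|\Xi^{wn}\|_{\mathscr X^\alpha}$ is a functional on a fixed inhomogeneous Wiener chaos, so it satisfies Gaussian-type concentration: there exist constants, \emph{uniform in $L$} because the Besov norms are defined through local Littlewood--Paley blocks and the covariance of the white noise is translation invariant and $L$-independent at the relevant scales, such that
\begin{align*}
\mathbb E\big[\|\Xi^{wn}\|_{\mathscr X^\alpha(\T_L^2)}^p\big] \le C_p\,(\log L)^{p/2}\quad\text{for all }p\ge 1,\ L\ge 2,
\end{align*}
and a stretched-exponential bound $\mathbb P(\|\Xi^{wn}\|_{\mathscr X^\alpha}\ge t)\le \exp(-c\, t\,/\log L)$ for $t$ large (the $\log L$ factor coming from a union bound over the $\sim L^2$ unit cells of $\T_L^2$, controlled by the Borell--TIS inequality on each cell). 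Combined with $\Lambda_n(0)=O(n/L^2)$ for fixed $n$, the deterministic bound above then gives, for fixed $n$, $|\Lambda_n(\Xi^{wn})|\le C_n\,\|\Xi^{wn}\|_{\mathscr X^\alpha}(1+\|\Xi^{wn}\|_{\mathscr X^\alpha})^M$, and taking $p$-th moments and dividing by $(\log L)^p$ yields \eqref{eq:logestimate}, since a polynomial in a chaos variable with $\mathbb E[\,\cdot\,^q]=O((\log L)^{q/2})$ has $p$-th moment $O((\log L)^{p(M+1)/2})=o((\log L)^p)$ — wait, here one must be slightly careful about the power $M$; the point is that by iterating the Lipschitz estimate (or by a bootstrap using that the Besov norm has all Gaussian moments) the true growth of $\Lambda_n$ is only $\log L$, which is what must be extracted. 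For the \emph{lower} tail $\mathbb P(\Lambda_1(\Xi^{wn})\le x)\ge e^{C_2 x}$ I would use the min-max principle directly: $\Lambda_1 \le \langle f,\mathscr H f\rangle/\|f\|^2$ for a well-chosen trial function $f$ in $\mathscr D_\Xi$ (e.g. a smooth bump), which by the definition of $\mathscr H(\Xi)$ is bounded above by a fixed multiple of a linear functional of $\xi$ tested against $|f|^2$ plus a contribution from $\Xi_2$; forcing this to be $\le x$ is a large-deviation event for a Gaussian whose probability is $\ge e^{C_2 x}$ for $x\to-\infty$. For the \emph{upper} tail $\mathbb P(\Lambda_1\le x)\le e^{C_1 x}$ one uses instead that $\Lambda_1\ge -\|\Xi^{wn}\|_{\mathscr X^\alpha}^{1+M}\cdot C$ deterministically (again from Theorem~\ref{main}(4) with $n=1$), so $\{\Lambda_1\le x\}\subset\{\|\Xi^{wn}\|_{\mathscr X^\alpha}\ge (c|x|)^{1/(1+M)}\}$, and the stretched-exponential chaos bound gives a bound of the form $\exp(-c|x|^{1/(1+M)})$; to sharpen this to the stated $e^{C_1 x}$ one must observe that the lowest eigenvalue is in fact controlled by the \emph{negative part} of the potential tested against a localized eigenfunction, which is a single Gaussian-chaos quantity with genuinely exponential (not merely stretched-exponential) lower deviations — this requires an argument closer to the Gaussian correlation / spectral-gap structure rather than the crude norm bound.

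The main obstacle I expect is precisely this last point: obtaining the correct \emph{exponential} (rate $|x|$, not $|x|^{\theta}$ with $\theta<1$) upper bound for $\mathbb P(\Lambda_1\le x)$. The crude route through $\|\Xi^{wn}\|_{\mathscr X^\alpha}$ only sees a second-chaos variable and so gives a stretched exponential; to get the sharp rate one has to show that the ground-state energy cannot be pushed very negative except on an event where the \emph{linear} part $\xi$ is atypically negative on a region of fixed size — an event of genuinely exponential probability — while the quadratic correction $\Xi_2$ and the paracontrolled remainder only contribute lower-order (in probability) fluctuations. Concretely this means coupling the min-max upper bound for $\Lambda_1$ with a matching lower bound $\Lambda_1 \ge \inf_{\|f\|=1}\big(\|\nabla f\|^2 + \langle \xi, f^2\rangle - \text{error}(\|\Xi\|)\|f\|_{H^{1}}^2\big)$, and then running a Gaussian large-deviations estimate (Cameron--Martin shift / Borell) on the linear form $f\mapsto\langle\xi,f^2\rangle$ over a suitable compact family of test functions, absorbing the paracontrolled error terms via the already-established moment bounds on $\|\Xi^{wn}\|_{\mathscr X^\alpha}$. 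The uniformity in $L$ in \eqref{eq:logestimate} is a secondary technical point, handled by noting that all constants in Theorem~\ref{main}(4) are $L$-independent by hypothesis and that the only $L$-dependence enters through the $\log L$ from the union bound over unit cells.
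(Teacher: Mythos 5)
The key missing ingredient in your argument is the \emph{scaling relation} for the eigenvalues, which is the engine of the paper's proof of both estimates. The paper observes (first for smooth periodic potentials, then by approximation and the renormalization result) that for $r\in(0,1)$,
\begin{align*}
\Lambda_n(\Xi)=\frac{1}{r^2}\bigl(\Lambda_n(r\tilde\xi_r,\,r^2\tilde\Xi_2^r)+r^2 m_{r,L}\bigr),
\end{align*}
where $\tilde\xi_r:=r\,\xi(r\cdot)$ is \emph{again} a white noise on the rescaled torus $\T^2_{L/r}$, $\tilde\Xi_2^r$ is the associated second-chaos enhancement, and $m_{r,L}$ is an explicit deterministic constant of order $(1-r^2)\log(1/r)$. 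The crucial feature is that the rescaling $(\xi,\Xi_2)\mapsto(r\tilde\xi_r,\,r^2\tilde\Xi_2^r)$ damps the first-chaos component by $r$ and the second-chaos component by $r^2$ --- exactly matching the respective chaos orders --- while introducing only the constant $m_{r,L}$.

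This fixes both of the problems that you yourself flag. For the moment bound \eqref{eq:logestimate}, your route (Lipschitz estimate in $\Xi$ applied at $\tilde\Xi=0$, followed by Gaussian/second-chaos moment bounds on $\|\Xi^{wn}\|_{\mathscr X^\alpha}$) delivers only $|\Lambda_n|\lesssim\|\Xi\|^{M+1}$ and hence $(\log L)^{(M+1)/2}$-type growth, and you correctly note (``wait, here one must be slightly careful about the power $M$'') that this is too lossy to give $\log L$. No amount of iteration of the Lipschitz estimate repairs this, because the bound in Theorem~\ref{main}(4) is intrinsically polynomial of degree $M+1$ in the norm. The paper instead chooses $r=1/\sqrt{\log L}$: the rescaled enhancement $\Xi^r=(\tilde\xi_r/\sqrt{\log L},\,\tilde\Xi_2^r/\log L)$ then has all moments bounded \emph{uniformly in $L$} (by Lemma~\ref{lemma:white noise growth}, $\|\tilde\xi_r\|_{\mathscr C^\alpha}\sim\sqrt{\log L}$ and $\|\tilde\Xi_2^r\|_{\mathscr C^{2\alpha+2}}\sim\log L$), so $\Lambda_n(\Xi^r)$ is $O(1)$ in $L^p$ and the $\log L$ prefactor is extracted cleanly from the scaling identity --- the power $M$ never enters.

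For the tail estimates the same phenomenon occurs. You correctly identify the stretched-exponential obstruction in the crude bound $\{\Lambda_1\leq x\}\subset\{\|\Xi\|\gtrsim|x|^{1/(M+1)}\}$. But the cure you propose (a matching lower bound on the quadratic form plus Cameron--Martin on the linear part) is not what the paper does for the upper tail; the paper sets $r=|x|^{-1/2}$ so the event $\{\Lambda_1(\Xi)\leq x\}$ becomes $\{\Lambda_1(\tilde\Xi^r)\leq -1-r^2m_{1,r}\}\subset\{\Lambda_1(\tilde\Xi^r)\leq -1/2\}$, a fixed (order-one) event for the rescaled operator. By the Lipschitz continuity of $\Lambda_1$ this is contained in $\{\|\tilde\Xi^r\|_{\mathscr X^\alpha}\gtrsim 1\}$, i.e.\ $\{\|\tilde\xi_r\|_{\mathscr C^\alpha}\gtrsim 1/r\}\cup\{\|\tilde\Xi_2^r\|_{\mathscr C^{2\alpha+2}}\gtrsim 1/r^2\}$. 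The first event is sub-Gaussian at deviation $1/r$, hence $\lesssim e^{-c/r^2}=e^{c x}$; the second is second-chaos at deviation $1/r^2$, hence also $\lesssim e^{-c/r^2}=e^{cx}$. So the sharp exponential rate drops out immediately, without any Borell / spectral-gap refinement. (You can verify that Lemma~\ref{lemma:white noise growth} supplies exactly the finiteness of $\mathbb E[\exp(\lambda\|\xi\|^2_{\mathscr C^\alpha}+\|\Xi_2\|_{\mathscr C^{2\alpha+2}})]$ that this Markov step needs.) The lower tail also uses the scaling first and only then applies Cameron--Martin, shifting by a bump potential $h=b\mathbb{I}_S$ to force the rescaled ground state below $-3/2$ at Girsanov cost $\exp(-b^2/(2r^2))=\exp(-b^2|x|/2)$; your sketch for the lower bound is therefore in the right spirit, but without the scaling step it would try to push $\Lambda_1$ all the way to $x$ directly and run into the same $\Xi_2$-contribution problem you mention.

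In short: your architecture (min-max $+$ Lipschitz continuity $+$ Gaussian/chaos concentration $+$ Cameron--Martin) identifies all the right tools, but the decisive input --- the white-noise scaling invariance $\xi\mapsto r\xi(r\cdot)$ and the corresponding eigenvalue scaling identity --- is absent, and without it neither the $\log L$ growth rate nor the exponential (rather than stretched-exponential) upper tail can be obtained.
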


\begin{remark}
The estimate \eqref{eq:logestimate} proves that the asymptotic order of the ground state is larger or equal to 
$-\log L$ when $L\to\infty$ (the ground state can not be much below a multiple of $-\log L$ as $L\to +\infty$). 
The tail estimates for the ground state $\Lambda_1(\Xi^{wn})$ hold 
for any $L>0$ but unfortunately, we do not have a good control on the constants $C_1$ and $C_2$ in $L$, 
so that we are not able to extract further information on the asymptotic order of the ground state. 
We think that this question deserves to be investigated in more details. 
\end{remark}

Let us close this section with a brief remark about the three dimensional case. 
\begin{remark}
As pointed out before, our work can be generalized to the case of the three dimensional Gaussian 
white noise which lives in the space $\mathscr C^{-\frac{3}{2}-}(\mathbb T^3)$. 
According to the well-posedness result of the Parabolic Anderson equation stated in~\cite{HL}, 
the renormalization constant takes the form 
\begin{align*}
c_\eps=\frac{a}{\eps}+c\log(\frac{1}{\eps})+O(1)
\end{align*}
when $\eps\to0$ and where $a$ and $b$ are two constants. Moreover, as we shall see in 
Section~\ref{section:growth}, the growth and tail estimates for the ground state are proved with 
a scaling argument which should work also in general dimension $d\leq 3$. We expect 
the following estimates to hold in dimension $d$,
$$
\exp(-C_2(-x)^{2-\frac{d}{2}})\leq\mathbb P(\Lambda_1(\Xi^{wn})\leq x)\leq \exp(-C_1(-x)^{2-\frac{d}{2}})
$$        
for any $x<0$ and  
$$
\sup_{L}\mathbb E\left[\left|\frac{\Lambda_n(\Xi^{wn})}{(\log L)^\frac{2}{4-d}}\right|^{p}\right]<\infty
$$
for all $p\geq 1$.
\end{remark}

\noindent {\bf Acknowledgements:} 
We are very grateful to Professor M. Hairer for pointing out that the space of rough distribution is trivial in our setting 
and for his numerous comments which have permitted us to improve this manuscript. We also would like to thank Professors 
P. K. Friz and N. Perkowski 
for the numerous discussions and fruitful advices. KC is funded by the RTG 1845 and a large part of this work was carried out while K.C was employed by the T.U Berlin university. Both authors were supported by the  European Research Council 
under the European Union's Seventh Framework Programme (FP7/2007-2013) / ERC grant of Professor P.K.Friz.
agreement nr. 258237.

\section{The Anderson hamiltonian on a finite interval}\label{dimension-one}
A rigorous definition of the random operator $\mathscr{H}$ in dimension $d=1$ 
was first given in 1977 by Fukushima and Nakao in ~\cite{fukushima}. 
Although no precise formulation of the eigenvalues problem was available before this paper, 
the random spectrum of $\mathscr{H}$ was first 
studied in the physic literature by Frisch and Lloyd back in 1960 ~\cite{lloyd}
and also by Halperin in 1965 ~\cite{halperin}. 
A rigorous approach due to McKean to compute the limiting 
distribution of the ground state when the operator $\mathscr{H}$ is 
considered on a long box can be found in ~\cite{mckean}.

Let us briefly recall the construction ~\cite{fukushima}  of the operator $\mathscr{H}$ 
and the main results on the ground state 
distribution due to McKean ~\cite{mckean}.  

The authors of ~\cite{fukushima} work with Dirichlet boundary conditions and 
define the stochastic linear operator 
\begin{align*}
\mathscr{H}:= - \frac{d^2}{dx^2} + B^\prime(x)
\end{align*}
on the space of functions 
\begin{align*}
\mathscr{D} := \{f\in H^{1}([0,L],\R): f(0)= f(L)=0\} \,. 
\end{align*}
Their construction can be extended to other boundary conditions along the same lines.
If $f\in  H^{1}([0,L],\R)$, the product $B^\prime(x) f(x)$ is defined as the Schwarz derivative 
of a continuous function using integration by part:
for any $x\in[0,L]$,  
\begin{align*}
B^\prime(x) f(x):= \frac{d}{dx} \left[f(x) B(x) - \int_0 ^x f^\prime(y) B(y) dy \right]\,. 
\end{align*}
At this point, the operator $\mathscr{H}$ is defined on the domain $\mathscr{D}$ 
and takes values in the space of distribution. The eigenvalue 
problem can now be defined: we say that $(\lambda,f_\lambda) \in \R \times \mathscr{D}$
is an eigenvalue/eigenfunction pair if 
\begin{align}\label{eigenvalue-pb-1}
-f_\lambda^{\prime\prime}(x) + B^\prime(x) f_\lambda(x) = \lambda f_\lambda(x)
\end{align}

This equality can be integrated with respect to $x$ and rewritten in its equivalent integrated form 
\begin{align}\label{integrated-form}
f_\lambda^\prime(x) - f_\lambda^\prime(0) = f_\lambda(x) B(x) - \int_0^x f_\lambda(y) \, B(y)\, dy - \lambda \int_0^x f_\lambda(y) dy 
\end{align}
for any $x\in [0,L]$. 
From \eqref{integrated-form}, we see that $f_\lambda^\prime$ has the same regularity as the Brownian motion 
$B(x)$ and therefore has H\"older regularity $1/2-\varepsilon$ for any $\varepsilon >0$. 
We can conclude that the eigenfunction $f_\lambda$ itself has H\"older regularity $3/2-$
and in particular belongs to the space $H^1$, which proves that 
it is indeed sufficient 
to define $\mathscr{H}f$ for $f\in H^1$ (the space $H^1$ contains the eigenvectors of $\mathscr{H}$). 

With this definition of the stochastic linear operator $\mathscr{H}$, the authors of ~\cite{fukushima}
prove the following Theorem. 
\begin{theorem}[Fukushima, Nakao (1977)]
Let $L>0$.
The random spectrum of $\mathscr{H}$, when considered with Dirichlet boundary conditions on the finite box 
$[0,L]$, has a well defined $k$-th lowest element $\Lambda_k$. 
Furthermore, almost surely,
\begin{align*}
\Lambda_1\quad < \quad \Lambda_2 \quad < \quad \Lambda_3 \quad < \quad \cdots 
\end{align*}
The eigenvectors $(f^\star_n)_n$ are H\"older $3/2-$
and form an orthonormal basis of $L^2$. 
\end{theorem}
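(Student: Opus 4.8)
The plan is to realise $\mathscr H$ as the self-adjoint operator attached to a closed, semi-bounded quadratic form on $\mathscr D$, to deduce the existence of the eigenvalues $(\Lambda_k)$, their finite multiplicity and the eigenbasis from the compactness of the embedding $\mathscr D\hookrightarrow L^2$, to read off the Hölder regularity of the eigenfunctions directly from the integrated equation \eqref{integrated-form}, and finally to obtain the simplicity of the eigenvalues from a Wronskian computation. (An alternative, closer to the original argument, would go through Riccati/Prüfer oscillation theory, but the form approach is cleaner for self-adjointness and the eigenbasis.) Only the pathwise continuity of $x\mapsto B(x)$ and its $1/2-$ Hölder regularity are ever used, and both hold almost surely, so all statements below are almost sure.

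First I would introduce, for $f,g\in\mathscr D=\{h\in H^1([0,L],\R):h(0)=h(L)=0\}$, the symmetric form
\[
Q(f,g):=\int_0^L f'(x)g'(x)\,dx-\int_0^L B(x)\big(f(x)g'(x)+f'(x)g(x)\big)\,dx ,
\]
which is well defined by Cauchy--Schwarz and, since $\int_0^L(B'f)g=-\int_0^L B\,(fg)'$ for $f,g$ vanishing at the endpoints, is exactly the form associated to the integration-by-parts definition of $B'f$ recalled in the introduction. Using the elementary bound $\big|\int_0^L B\,(f^2)'\big|\le 2\|B\|_\infty\|f\|_{L^2}\|f'\|_{L^2}\le\eps\|f'\|_{L^2}^2+\eps^{-1}\|B\|_\infty^2\|f\|_{L^2}^2$, valid for every $\eps\in(0,1)$, one gets $Q(f,f)\ge(1-\eps)\|f'\|_{L^2}^2-\eps^{-1}\|B\|_\infty^2\|f\|_{L^2}^2$; thus the perturbation of the Dirichlet form has relative form-bound zero, and by the KLMN theorem $Q$ is closed and bounded below on $\mathscr D$. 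The representation theorem then produces a unique self-adjoint operator, bounded below, with form domain $\mathscr D$; a function $f$ lies in its operator domain precisely when the distribution $-f''+B'f$ belongs to $L^2$, equivalently when $f$ satisfies \eqref{integrated-form} with a right-hand side in $H^1$. This identifies the operator with $\mathscr H$ restricted to $\{f\in\mathscr D:\mathscr H f\in L^2\}$, which is the natural setting of the eigenvalue problem \eqref{eigenvalue-pb-1}.

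Since $[0,L]$ is bounded, the embedding $\mathscr D\hookrightarrow L^2([0,L])$ is compact (Rellich), so for $a>0$ large enough $(\mathscr H+a)^{-1}$ is a compact self-adjoint operator on $L^2([0,L])$. The spectral theorem for self-adjoint operators with compact resolvent then gives a non-decreasing sequence $\Lambda_1\le\Lambda_2\le\cdots\to+\infty$ of eigenvalues of finite multiplicity together with an orthonormal basis $(f^\star_n)_n$ of $L^2([0,L])$ of eigenfunctions; in particular every $\Lambda_k$ is a well-defined real number. For the regularity I would bootstrap once in \eqref{integrated-form}: an eigenfunction $f_\lambda$ lies in $H^1\subset\mathcal C^{1/2-}$ and $B\in\mathcal C^{1/2-}$, so the right-hand side of \eqref{integrated-form} belongs to $\mathcal C^{1/2-}$; hence $f_\lambda'\in\mathcal C^{1/2-}$ and $f_\lambda\in\mathcal C^{3/2-}$, and in particular $f_\lambda'$ is continuous on $[0,L]$.

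It remains to prove the strict inequalities $\Lambda_1<\Lambda_2<\cdots$, i.e. that every eigenvalue is simple; this is the step that genuinely uses the one-dimensional structure and the Dirichlet condition, and it is the one I expect to require the most care, since the Wronskian manipulations must be justified at low regularity and combined with a uniqueness statement for the singular integral equation. Given two eigenfunctions $f,g$ for the same $\lambda$, I would rewrite \eqref{integrated-form} as $f'(x)-f(x)B(x)=f'(0)-\int_0^x fB-\lambda\int_0^x f=:\varphi(x)$ and likewise $g'(x)-g(x)B(x)=:\psi(x)$, with $\varphi,\psi\in\mathcal C^1$; then $W:=f'g-fg'=\varphi g-f\psi$ is continuously differentiable and $W'=\varphi'g+\varphi g'-f'\psi-f\psi'=B(f'g-fg')=BW$, while $W(0)=f'(0)g(0)-f(0)g'(0)=0$ by the Dirichlet condition, so $W\equiv0$ by Gronwall. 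Finally, viewing \eqref{integrated-form} together with $f(x)=f(0)+\int_0^x f'$ as a linear Volterra equation, a Gronwall argument shows that $f$ is uniquely determined by $(f(0),f'(0))$; hence any eigenfunction $g$ with $g(0)=0$ has $g'(0)\neq0$, and $f-\tfrac{f'(0)}{g'(0)}g$ is an eigenfunction (or the zero function) vanishing together with its first derivative at $0$, so it vanishes identically. Thus every eigenspace is one-dimensional, the $(\Lambda_n)$ are pairwise distinct, and the theorem follows.
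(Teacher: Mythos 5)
Your argument is correct, but it takes a genuinely different route from the paper's, and it actually closes a gap that the paper itself leaves open. For existence of the spectrum and the orthonormal eigenbasis, the paper runs a direct Ritz minimization: it defines $\Lambda_1$ as the infimum of the Rayleigh quotient over $H^1_0$, takes a minimizing sequence, uses the explicit lower bound on the quadratic form to get a uniform $H^1$-bound, extracts a weak $H^1$ / strong $C^0$ / strong $L^2$ limit via Banach--Alaoglu and Ascoli--Arzel\`a, checks it is a minimizer satisfying the Euler--Lagrange equation, and iterates on orthogonal complements. You instead invoke the KLMN theorem (relative form-bound zero, which your elementary estimate does establish), the representation theorem, Rellich compactness of $H^1_0\hookrightarrow L^2$, and the spectral theorem for compact self-adjoint resolvents. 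Both routes are valid; the paper's is more elementary and self-contained, yours is faster and makes self-adjointness transparent from the outset. Your H\"older-regularity bootstrap from \eqref{integrated-form} matches the paper's reasoning exactly.

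Where you genuinely diverge is simplicity of the eigenvalues. The paper explicitly does \emph{not} prove $\Lambda_1<\Lambda_2<\cdots$ at this stage: it states that this ``will actually follow from the forthcoming characterization of the law of the eigenvalues in terms of a family of interacting diffusions,'' i.e.\ it defers to the Riccati/oscillation picture and never writes out a deterministic argument. Your Wronskian computation $W:=f'g-fg'$, which you correctly reduce to $W'=BW$ after subtracting the ``rough'' parts $fB$ and $gB$ (so that everything in sight is $C^1$), together with $W(0)=0$ from the Dirichlet condition, Gr\"onwall, and uniqueness of the linear Volterra system for $(f,f')$, is a complete and correct low-regularity proof of simplicity that is independent of the stochastic machinery. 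This is a real improvement in self-containedness over the paper's presentation. One small point of care you already anticipate: the Wronskian step requires $f,g\in C^1$, which you have from the bootstrap, and it requires that $\varphi,\psi$ be $C^1$, which holds because $\varphi'=-fB-\lambda f$ and $\psi'=-gB-\lambda g$ are continuous; both conditions are satisfied, so the argument goes through.
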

The proof follows the classical lines of the spectral Theorem
with a minimization of the associated quadratic form. 
Let us outline this proof here for completeness. 

The quadratic form associated to the operator $\mathscr{H}$ is well defined for $f\in H^1$ 
and satisfies
\begin{align*}
\langle f, \mathscr{H} f \rangle_{L^2} = \int_0^L f^\prime(x)^2 dx - 2 \int_0^L f^\prime(x) f(x) B(x) dx \,. 
\end{align*}
Before going into the optimization procedure, we first need to establish upper and lower bounds 
for the quadratic form. We denote by $M$ the supremum value of the Brownian path on the interval $[0,L]$,
\begin{align*}
M:=\sup_{x \in [0,L]} B(x)\,.
\end{align*}
It is easy to check that, almost surely, 
\begin{align*}
\langle f, \mathscr{H} f \rangle \le ||f'||_{L^2}^2 + \frac{M}{2} (  ||f'||_{L^2}^2 + ||f||_{L^2}^2 )  \,. 
\end{align*}
 The lower bound is slightly more involved: Using Cauchy-Schwarz inequality and minimizing a quadratic 
 form in $(||f||_{L^2}, ||f^\prime||_{L^2})$, we obtain 
 \begin{align}\label{lower-bound}
\langle f, \mathscr{H} f \rangle + (M^2+1) \, ||f||_{L^2}^2 \ge \frac{1}{M^2+3} (||f'||_{L^2}^2 + ||f||_{L^2}^2 ) \ge 0\,. 
\end{align}
We now set 
\begin{align*}
\Lambda_1:= \inf_{f\in H^1;\atop ||f||_{L^2}=1 }  \langle f, \mathscr{H} f \rangle > -\infty, 
\end{align*}
and consider a sequence $(f_n)_{n\in \N}$ of functions in $H^1$ such that $||f_n||_{L^2}=1$and 
 $ \langle f_n, \mathscr{H} f_n \rangle \to \Lambda_1$. 
 It is plain to check from the lower bound \eqref{lower-bound} that almost surely
\begin{align}\label{control-fprime_n}
\sup_{n \in \N} ||f_n^\prime||_{L^2} < +\infty\,.
\end{align}
This uniform control on the $L^2$ norms of the derivatives $f_n^\prime$ is crucial as it permits us 
to prove the existence of a limit point $f^\star_1\in H^1$ such that along a subsequence 
\begin{itemize}
\item $f_n\to f^\star_1$ uniformly in $\mathcal{C}^0$ (using the Ascoli-Arzela Theorem),
\item $f_n\to f^\star_1$ in $L^2$,
\item $f_n\to f^\star_1$ weakly in $H^1$ (using the Banach-Alaoglu Theorem). 
\end{itemize} 
Therefore, $||f_1^\star||_{L^2} = 1$ and we can prove passing to the limit along the subsequence that 
\begin{align*}
\langle f^\star_1, \mathscr{H} f^\star_1 \rangle = \Lambda_1\,. 
\end{align*}
To check that $\mathcal{H}f^\star_1 = \Lambda_1 f^\star_1$ in the sense of distributions, it suffices to write 
that the derivative of the quadratic form in the direction of any smooth function $\varphi$ is zero (as $f_1^\star$ is a minimizer): 
\begin{align*}
\left.
\frac{d}{d\varepsilon} \frac{\langle (f^\star_1+\varepsilon \varphi ), \mathscr{H}(f^\star_1+\varepsilon \varphi )\rangle }{||  f^\star_1+\varepsilon \varphi||_2^2 }\right|_{\varepsilon = 0} =0 \,.
\end{align*}

For the next eigenvalues/eigenvectors, we restrict to the orthogonal complementary 
of Vect$(f_1^\star)$ to obtain following the same method the second eigenfunction 
$f_2^\star$ such that 
$\mathcal{H}f_2^\star = \Lambda_2 f_2^\star$, $\langle f_2^\star, f_1^\star\rangle_{L^2}=0$ and $||f_2^\star||_{L^2}=1$. 
We iterate this argument to obtain the existence of an orthonormal family of eigenfunctions $(f_n^\star)$
respectively associated to eigenvalues $(\Lambda_n)$ which satisfy  
\begin{align*}
\Lambda_1\le\Lambda_2\le\Lambda_3\le\cdots 
\end{align*}
We have not shown yet that the eigenvalues have multiplicity one. 
This fact will actually follow from the forthcoming characterization of the law of the eigenvalues 
in term of a family of interacting diffusions valid 
in the special case of Dirichlet boundary conditions. 
We believe the eigenvalues are also simple with Periodic boundary conditions although 
(to the best of our knowledge) no proof of this fact is available at the time of writing this paper
\footnote{Note that the Laplacian operator has eigenvalues with multiplicity strictly greater than one 
when considered under Periodic boundary conditions. We believe that adding the Gaussian white noise 
will separate the multiple eigenvalues into several simple eigenvalues.}.

The main idea (which was used in many papers ~\cite{lloyd,halperin,virag-1,valko,laure,laure-sine}) is to 
use the Riccati transform 
to rewrite the eigenfunction differential equation 
$\mathscr{H} f_\lambda = \lambda f_\lambda$ (see also \eqref{eigenvalue-pb-1}) 
as a first order stochastic differential equation. 
More precisely, if one sets $X_\lambda(x):= f_\lambda'(x)/f_\lambda(x)$, 
the second degree equation \eqref{eigenvalue-pb-1} 
is mapped to the stochastic differential equation
\begin{align}\label{riccati}
dX_\lambda(x) = -(\lambda+ X_\lambda(x)^2 )\, dt + dB(x) \,. 
\end{align}
The initial condition imposed 
on $f_\lambda$ translates as an initial condition on $X_\lambda$. With Dirichlet boundary conditions for $f_\lambda$, 
one has \begin{equation}\label{initial-cond}
X_\lambda(0)=+\infty.\end{equation} 
The trajectory of $X_\lambda$ determines the trajectory of $f_\lambda$ 
up to a normalization factor
(the equation on $f_\lambda$ is linear).  
The function $X_\lambda$ is associated to an eigenfunction (so that at the same time $\lambda$ 
is an eigenvalue) if and only if it 
the diffusion blows up to $-\infty$ precisely at the end point $t=L$, i.e. $X_\lambda(L)=-\infty$.
under Dirichlet boundary 
condition.

%

Using this observation, we can easily deduce a characterization 
of the joint law of the eigenvalues in term 
of the family of coupled \footnote{They are all driven by the same Brownian motion 
$(B(x))_{x\ge 0}$. } diffusions $\{X_\lambda, \lambda\in \R\}$ 
such that
\eqref{riccati} and \eqref{initial-cond} hold for all $\lambda\in \R$. 
The idea is that the zeros of the eigenfunction $f_\lambda$ for $\lambda= \Lambda_k, k \in \N$
correspond to the zeros of the associated diffusion process $X_\lambda$  
and that the trajectories 
of $X_\lambda$ is a monotonic function of $\lambda$. Tuning the value of $\lambda$ permits one 
to find the eigenvalues, which correspond to the values of $\lambda$ for which the diffusion $X_\lambda$ explodes 
precisely at the end point $x=L$. 
We do not need to explicit the characterization of the joint law for the purpose of this section.  
Let us just state two simple properties regarding the law of the eigenvalues: 
\begin{itemize}
\item The distribution of the minimal eigenvalue $\Lambda_0=\Lambda_0(L)$ is characterized in terms of the family of diffusions $X_\lambda$ as 
\begin{align*}
\PP[\Lambda_0  \le \lambda] = \PP[X_\lambda \mbox{ blows up before time } L]\,.  
\end{align*}
\item The number of $\mathscr{H}$-eigenvalues below $\lambda$ is equal to the number of explosions of the diffusion 
$X_\lambda$ before time $L$. 
\end{itemize} 

Using this characterization, McKean \cite{mckean} proves the following convergence in law after a careful analysis 
of the explosion time of the diffusion $X_\lambda$ for a fixed value of $\lambda$.  
\begin{Proposition}[McKean (1994)]
In the limit $L\to \infty$, the fluctuations of 
the minimal eigenvalue $\Lambda_1$ of $\mathscr{H}$ are governed by the Gumbel 
distribution. More precisely, we have the following convergence in law as $L\to \infty$,
\begin{align*}
- 2 \cdot 3^{1/3} (\ln L)^{1/3} \left[\Lambda_1 + \left(\frac{3}{8}\ln \frac{L}{\pi}\right)^{2/3}  \right] 
\Rightarrow e^{-x} \exp(-e^{-x}) \, dx\,. 
\end{align*}
\end{Proposition}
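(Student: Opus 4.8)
The plan is to use the Riccati correspondence recalled above to rewrite $\{\Lambda_1>\lambda\}$ as a statement about the explosion time of the diffusion $X_\lambda$, to prove for that time a sharp exponential-law asymptotics as $\lambda\to-\infty$, and then to invert the resulting formula, which reveals the Gumbel normalization. First, fix $\lambda\in\R$ and let $\tau_\lambda$ be the first blow-up time to $-\infty$ of the solution of $dX_\lambda(x)=-(\lambda+X_\lambda(x)^2)\,dx+dB(x)$ with $X_\lambda(0)=+\infty$. By the characterization recalled above the event $\{\Lambda_1>\lambda\}$ coincides with $\{\tau_\lambda>L\}$, so that
\begin{align*}
\PP[\Lambda_1>\lambda]=\PP[\tau_\lambda>L].
\end{align*}
(The same characterization shows that the explosions of $X_\lambda$ on $[0,L]$ count the eigenvalues of $\mathscr{H}$ below $\lambda$ and that, by regeneration from $+\infty$, successive inter-explosion times are i.i.d.\ copies of $\tau_\lambda$; this renewal structure is what one would use for a Poisson limit of the several lowest eigenvalues, but for $\Lambda_1$ alone the displayed identity suffices.) Everything thus reduces to the law of $\tau_\lambda$ as $L\to\infty$ and $\lambda=\lambda_L\to-\infty$ tuned appropriately.

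The second step is the metastable exponential law for $\tau_\lambda$. For $\lambda$ very negative write the drift as $-V'$ with $V(x)=\lambda x+\tfrac13x^3$: this $V$ has a stable well at $x_\ast=\sqrt{-\lambda}$, an unstable saddle at $-x_\ast$, and barrier height $V(-x_\ast)-V(x_\ast)=\tfrac23(-\lambda)^{3/2}$. Started at $+\infty$ the diffusion descends into the well and relaxes there; an explosion then requires a rare excursion over the saddle at $-x_\ast$, after which the drift $-\lambda-X_\lambda^2$ carries $X_\lambda$ down to $-\infty$ in time of order one. The aim is
\begin{align*}
\PP\big[\tau_\lambda>s\,m(\lambda)\big]\longrightarrow e^{-s},\qquad m(\lambda):=\E[\tau_\lambda],
\end{align*}
uniformly for $s$ in compact subsets of $(0,\infty)$ as $\lambda\to-\infty$ — the classical exponential law for exit from a single well, which can be obtained by a coupling/regeneration argument inside the well or, spectrally, from the principal eigenvalue of the generator; the point requiring care is the uniformity in $\lambda$.

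The third step is the sharp asymptotics of $m(\lambda)$, via the exact one-dimensional expression in terms of the scale density $e^{2V}$ and speed density $2e^{-2V}$,
\begin{align*}
m(\lambda)=2\int\!\!\int_{\,y<z}e^{2V(y)-2V(z)}\,dy\,dz.
\end{align*}
As $\lambda\to-\infty$, Laplace's method localizes the outer integral at the barrier $y=-x_\ast$ (where $V''=-2x_\ast$) and the inner one at the well $z=x_\ast$ (where $V''=2x_\ast$), giving $m(\lambda)=(1+o(1))\,g(\lambda)$ with $\ln g(\lambda)=\tfrac83(-\lambda)^{3/2}+O(\ln(-\lambda))$, the leading exponent being exactly twice the barrier height and the $O(\ln(-\lambda))$ term — coming from the Gaussian widths of the two saddle points together with a uniform estimate on the tails of the double integral — carrying the constant that eventually becomes the $\pi$ inside $\ln\frac{L}{\pi}$. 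Combining the three steps, $\PP[\Lambda_1>\lambda_L]\to e^{-\ell}$ whenever $L/g(\lambda_L)\to\ell\in[0,\infty]$; it then remains to choose $\lambda_L(x)$ so that $L/g(\lambda_L(x))\to e^{-x}$, that is, to invert $\ln g(\lambda)=\ln L+x$ for $-\lambda$. Expanding in decreasing powers of $\ln L$ produces
\begin{align*}
\lambda_L(x)=-\Big(\tfrac38\ln\tfrac{L}{\pi}\Big)^{2/3}-\frac{x}{2\cdot3^{1/3}(\ln L)^{1/3}}+\cdots,
\end{align*}
the leading terms being exactly the centering and scale appearing in the statement, so that $\PP[\Lambda_1>\lambda_L(x)]\to e^{-e^{-x}}$. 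Since $t\mapsto-2\cdot3^{1/3}(\ln L)^{1/3}\big(t+(\tfrac38\ln\tfrac{L}{\pi})^{2/3}\big)$ is strictly decreasing and $\Lambda_1$ has an atomless law, the event of the statement is, up to the negligible lower-order corrections, the event $\{\Lambda_1\ge\lambda_L(x)\}$, whose probability equals $\PP[\Lambda_1>\lambda_L(x)]\to e^{-e^{-x}}$ — the Gumbel distribution function (equivalently, the limiting law has density $e^{-x}\exp(-e^{-x})$); the limit being continuous, convergence of distribution functions upgrades to convergence in law.

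The main obstacle is the sharp asymptotics of $m(\lambda)$ together with the precision it must feed into the inversion: one needs $m(\lambda)$ not merely to leading exponential order but with a $(1+o(1))$ prefactor and with enough uniformity in $\lambda$, because both the centering $(\tfrac38\ln\tfrac{L}{\pi})^{2/3}$ and the $(\ln L)^{-1/3}$ scale emerge entirely from the sub-leading behaviour of $\ln m(\lambda)$; a secondary difficulty is making the exponential law of the second step quantitative enough to survive taking $\lambda\to-\infty$ jointly with $L\to\infty$.
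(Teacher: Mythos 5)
The paper does not prove this proposition at all: it simply states it and cites McKean \cite{mckean}, remarking only that the result follows from ``a careful analysis of the explosion time of the diffusion $X_\lambda$.'' Your sketch is therefore not comparable to a proof in the paper, but it is a faithful reconstruction of McKean's strategy (Riccati correspondence $\{\Lambda_1>\lambda\}=\{\tau_\lambda>L\}$, a Kramers-type exponential law for the blow-up time $\tau_\lambda$, Laplace asymptotics of $m(\lambda)=2\iint_{y<z}e^{2V(y)-2V(z)}\,dy\,dz$, and inversion of $\ln m(\lambda)=\ln L+x$), so on the level of approach you are doing exactly what the cited reference does.

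Two points deserve correction or caution. First, a plain arithmetic slip: with $V(x)=\lambda x+\tfrac13 x^3$ and $x_\ast=\sqrt{-\lambda}$ one has $V(\pm x_\ast)=\mp\tfrac23(-\lambda)^{3/2}$, so the barrier height is $V(-x_\ast)-V(x_\ast)=\tfrac43(-\lambda)^{3/2}$, not $\tfrac23(-\lambda)^{3/2}$ as you wrote; ``twice the barrier height'' then does give the $\tfrac83(-\lambda)^{3/2}$ you use subsequently, so your final exponent is right but your stated barrier is off by a factor $2$. Second, and more substantively, the Laplace computation gives not merely $\ln m(\lambda)=\tfrac83(-\lambda)^{3/2}+O(\ln(-\lambda))$ but in fact $m(\lambda)=\tfrac{\pi}{\sqrt{-\lambda}}\,e^{\frac83(-\lambda)^{3/2}}(1+o(1))$, so $\ln m(\lambda)=\tfrac83(-\lambda)^{3/2}-\tfrac12\ln(-\lambda)+\ln\pi+o(1)$. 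Since $-\lambda\asymp(\ln L)^{2/3}$ in the inversion, the term $\tfrac12\ln(-\lambda)\asymp\tfrac13\ln\ln L$ is \emph{not} a constant in $L$: after the inversion it produces an additional $\tfrac13\ln\ln L$ shift in the centering, so your assertion that the sub-leading contribution only ``carries the constant that eventually becomes the $\pi$'' is not justified as written. Making this step rigorous is precisely the ``careful analysis'' the paper delegates to McKean, and the formula as displayed in the paper does suppress this $\ln\ln L$ correction; if you want to fill the gap rather than reproduce the cited statement verbatim you need to track the algebraic prefactor of $m(\lambda)$ explicitly rather than collecting it in an $O(\ln(-\lambda))$ term, since it alters the centering at a level that matters for the Gumbel limit.
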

\begin{remark}
Let us finish this section by pointing out the main difference between the one dimensional construction of the operator and the two dimensional one. The construction presented in this section is crucially related to the fact that we can multiply the white noise $\xi:=dB$ by any function in $H^1$. This allows one 
to define the quadratic form $\langle \mathscr Hf,f\rangle$  by duality on the space $H^1$ 
and to prove that it is lower semi-bounded. In dimension $2$, this picture is blurred, indeed in that case, 
the white noise can only be multiplied by function in $\cap_{\eps}H^{1+\eps}$ and one can think to 
take $H^{1+\eps}$ as a domain of $\mathscr H$. Unfortunately, this approach is not consistent in the sense 
that the quadratic form $\langle \mathscr H f,f\rangle$ is not lower semi-bounded any more. 
As we will see in the next section, the general idea to overcome this problem is to define the operator on a Hilbert space of irregular functions $f$ for which the most irregular part of the product $f\xi$ is compensated by the term $-\Delta f$  so that $\mathscr H f\in L^2$.
\end{remark}

\section{Besov spaces and Bony paraproducts}
\label{sec:bony}
We recall the definitions of the Bony paraproducts, the Besov and Sobolev spaces 
and collect the two main results that we will be useful throughout the paper regarding 
the products between two Schwartz distributions and the effect of differentiation on the regularity 
of the distributions.
We work on the two dimensional torus $\T_L^2:= \R^2/(L^{-1}\Z)^2$ with diameter $L$ (see~\cite{BCD-bk} for a review on this subject).   
For any $f$ in the Schwartz space $\mathscr {S}' ( \mathbb T_L^2,\mathbb R)$ of tempered distributions on $\T_L^2$, the Fourier transform of $f$ will be 
denoted  
$\hat{f} : \mathbb T_L^2\to \C$ (or sometimes $\mathscr{F}f$) and is defined for $k\in \mathbb Z_L^2$ by
\begin{align*}
\hat{f}(k) := \langle f,L^{-1}\exp(i 2\pi \langle k, \cdot\rangle) =\frac{1}{L} \int_{\T_L^2} f(x) \exp(-i 2\pi \langle k,x\rangle) dx. 
\end{align*}
Recall that for any $f\in L^2(\T_{L}^2,\R)$ and $x\in \T_L^2$, 
we have 
\begin{align}\label{decomposition-fourier}
f(x)=\frac{1}{L}\sum_{k \in \Z_L^2} \hat{f}(k) \exp(i 2\pi \langle k,x\rangle). 
\end{align}
 
The Sobolev space $H^\alpha(\mathbb T_L^2,\R)$ with index $\alpha\in \R$ is defined as  
\begin{align*}
H^\alpha(\mathbb T_L^2,\R) := \{f \in \mathscr {S}' ( \mathbb T_L^2,\mathbb R): 
\sum_{k\in (L^{-1}\Z)^2} (1+|k|^2)^\alpha \, |\hat{f}(k)|^2 < +\infty\}\,. 
\end{align*}   

Before recalling the definition of the Besov spaces, we first need to introduce the 
Littlewood-Paley blocks which permit us to decompose a distribution $f$ into an infinite series of smooth 
functions \footnote{This decomposition is more convenient than the Fourier decomposition \eqref{decomposition-fourier}}.
 We denote by $\chi$ and $\rho$ two nonnegative smooth and compactly supported 
radial functions $\R^2\to \C$ such that \footnote{The existence of two such functions in insured by 
\cite[Proposition 2.10]{BCD-bk}}
\begin{enumerate}
	\item The support of $\chi$ is contained in a ball $\{x\in \R^2: |x| \le R\}$ 
	and the support of $\rho$ is contained in an annulus $\{x\in \R^2: a\le |x| \le b\}$;
	\item For all $\xi \in \R^2$, $\chi(\xi)+\sum_{j\ge0}\rho(2^{-j}\xi)=1$;
	\item For $j\ge 1$, $\chi  \rho(2^{-j}\cdot)\equiv 0$ and $\rho(2^{-i}\cdot) \rho(2^{-j}\cdot)\equiv 0$
	for $|i-j|\ge 1$. 
\end{enumerate}
The Littlewood-Paley blocks $(\Delta_j)_{j\geq-1}$ associated to a tempered distribution $f\in\mathscr S'(\mathbb T_L^2,\R)$ 
are defined by 
\begin{align*}
\mathscr F(\Delta_{-1} f) = \chi \cF f\ \mbox{ and\ for }\ j \ge 0, \quad \mathscr F(\Delta_j f) = \rho(2^{-j}.)\cF f.
\end{align*}
Note that, for $f\in\mathscr S'(\mathbb T_L^2,\R)$, the Littlewood-Paley blocks $(\Delta_j f)_{j\ge -1}$ define 
smooth functions (their Fourier transform have compact supports). We also set, for $f\in  \mathscr S'$ and 
$j\ge -1$, 
\begin{align*}
S_j f := \sum_{i=-1}^{j-1} \Delta_i f
\end{align*} 
and note that $S_j f$ converges weakly 
to $f$ when $j\to \infty$.

The Besov space with parameters $p,q \in \R_+,\alpha \in \R$ can now be defined as  
\begin{equation}\label{eq:Besov}
\mathscr B_{p,q}^{\alpha}(\T_L^2, \R):=\left\{u\in \mathscr S'(\T_L^2,\R); \quad ||u||_{\mathscr B_{p,q}^{\alpha}}=
\left(\sum_{j\geq-1}2^{jq\alpha}||\Delta_ju||^q_{L^p} \right)^{1/q} <+\infty\right\}.
\end{equation}
We also define the Besov $\alpha$-H\"older space
\begin{align*}
\mathscr C^{\alpha}:=\mathscr B_{\infty,\infty}^{\alpha}
\end{align*}
which is naturally equipped with the norm 
$||f||_{\mathscr{C}^\alpha}:=||f||_{\mathscr B_{\infty,\infty}^{\alpha}}=\sup_{j\ge -1} 2^{j \alpha} 
||\Delta_j f||_{L^\infty}$.
Note also that the Sobolev space $H^\alpha$ coincides with $\mathscr B_{2,2}^\alpha$.

We now consider the product between two distributions 
 $f\in\mathscr C^{\alpha}$ and $g\in \mathscr C^{\beta}$.  
 At least formally, we can decompose the product $fg$ as  
\begin{align*}
fg=f\prec g+f\circ g+f\succ g
\end{align*}
where
\begin{align*}
f\prec g :=\sum_{j\ge -1} \sum_{i=-1} ^{j-2} \Delta_i f \Delta_j g, 
\quad f\succ g:= \sum_{j \ge -1} \sum_{i=-1}^{j-2} \Delta_i g \Delta_j f
\end{align*}
are usually referred as the \textit{paraproduct terms} whereas 
\begin{align*}
f\circ g:=\sum_{j\geq-1}\sum_{|i-j|\leq 1}\Delta_i f \Delta_j g 
\end{align*} 
is called the \textit{resonating term}. 
The paraproduct terms are always well defined whatever the values of $\alpha$ and $\beta$.
The resonating term is well defined if and only if $\alpha+\beta >0$. This is reminiscent to the well known 
fact that one can not generically form the product of two distributions: the two regularities 
must compensate one another in such a way that the sum is strictly positive.  
These (deterministic) facts can be summarized as in the following proposition where we give estimates 
on the regularities of the paraproducts and resonating terms.   
\begin{Proposition}[Bony estimates]
\label{prop:bony-estim}
Let $\alpha,\beta\in\mathbb R$. We have the following upper bounds: 
\begin{enumerate}
\item If $f\in L^2$ and $g\in \mathscr C^\beta$, then  
\begin{align*}
||f\prec g||_{H^{\beta-\delta}}\leq C_{\delta,\beta}||f||_{L^2}||g||_{\mathscr{C}^\beta}\,. 
\end{align*} 
for all $\delta>0$
\item if $f\in H^{\alpha}$ and $g\in L^{\infty}$ then 
\begin{align*}
||f\succ g||_{H^{\alpha}}\leq  C_{\alpha,\beta}||f||_{H^\alpha}||g||_{\mathscr{C}^\beta}\,. 
\end{align*}
\item If $\alpha<0$, $f\in H^\alpha$ and $g\in\mathscr C^\beta$, then 
\begin{align*}
||f\prec g||_{H^{\alpha+\beta}}\leq C_{\alpha,\beta}||f||_{H^\alpha}||g||_{\mathscr{C}^\beta}\,. 
\end{align*}
\item If $g\in\mathscr C^{\beta}$ and $f\in H^{\alpha}$ for $\beta<0$ then 
\begin{align*}
\|f\succ g\|_{H^{\alpha+\beta}}\leq C_{\alpha,\beta}||f||_{H^\alpha}||g||_{\mathscr{C}^\beta}
\end{align*}
\item If $\alpha+\beta>0$ and $f\in H^\alpha$ and $g\in\mathscr C^\beta$, then 
\begin{align}\label{resonating-bony}
||f\circ g||_{H^{\alpha+\beta}}\leq C_{\alpha,\beta} ||f||_{H^\alpha}||g||_{\mathscr{C}^\beta}\,. 
\end{align} 
\end{enumerate}
where $C_{\alpha,\beta}$ is a finite positive constant which does not depend on the size of the Torus.
\end{Proposition}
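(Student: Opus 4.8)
The plan is to handle all five bounds by a single two-step scheme: first estimate the $L^2$ norm of each Littlewood--Paley block $\Delta_k$ of the term in question, and then sum the resulting inequalities over the dyadic scales $k$ by recognizing the right-hand side as a discrete convolution of an $\ell^2$ sequence against an $\ell^1$ kernel, to which Young's inequality applies. The inputs are the Bernstein inequalities and the two structural facts recalled in Section~\ref{sec:bony}: if $i\le j-2$ then $\widehat{\Delta_i u\,\Delta_j v}$ is supported in a dyadic annulus of size $\sim 2^j$, whereas if $|i-j|\le 1$ then $\widehat{\Delta_i u\,\Delta_j v}$ is supported only in a \emph{ball} of radius $\lesssim 2^{j+2}$ (the low-frequency index $j=-1$ being a harmless exception treated separately). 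Consequently $\Delta_k(u\prec v)$ and $\Delta_k(u\succ v)$ receive contributions only from scales $j$ with $|j-k|\le N$ for a fixed $N=N(\chi,\rho)$, while $\Delta_k(u\circ v)$ receives contributions from \emph{every} scale $j\ge k-N$; this dichotomy is exactly why the paraproduct estimates need no positivity of $\alpha+\beta$ but the resonant estimate does. Since $\chi,\rho$ and the overlap constant $N$ are independent of $L$, and the Littlewood--Paley and Bernstein estimates hold on $\T_L^2$ with $L$-independent constants (see \cite{BCD-bk}, or rescale to a fixed torus), every constant below is uniform in $L$.

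For the paraproduct bounds I would always keep the factor carrying an $L^2$ norm on whichever of $f,g$ lies in a Sobolev (or $L^2$) space, never converting an $L^\infty$ bound into an $L^2$ one — that conversion would cost a power of $|\T_L^2|$. Thus for (1) I write $\|\Delta_k(f\prec g)\|_{L^2}\lesssim\sum_{|j-k|\le N}\|S_{j-1}f\|_{L^2}\|\Delta_j g\|_{L^\infty}\lesssim 2^{-k\beta}\|f\|_{L^2}\|g\|_{\mathscr C^\beta}$, which only places $f\prec g$ in $\mathscr B^\beta_{2,\infty}$; multiplying by $2^{k(\beta-\delta)}$ and squaring gives the convergent geometric series $\sum_k 2^{-2k\delta}$, so the arbitrarily small loss $\delta>0$ (equivalently the embedding $\mathscr B^\beta_{2,\infty}\hookrightarrow H^{\beta-\delta}$) is precisely what compensates the absence of any sign hypothesis here. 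Estimate (2) is immediate: $\|\Delta_k(f\succ g)\|_{L^2}\lesssim\sum_{|j-k|\le N}\|\Delta_j f\|_{L^2}\|S_{j-1}g\|_{L^\infty}\le \sum_{|j-k|\le N}\|\Delta_j f\|_{L^2}\|g\|_{L^\infty}$, and since $2^{k\alpha}\sim 2^{j\alpha}$ on $|j-k|\le N$, squaring and summing over $k$ reproduces $\|f\|_{H^\alpha}^2\|g\|_{L^\infty}^2$. For (3), using $\alpha<0$ one bounds $\|S_{j-1}f\|_{L^2}\le\sum_{i\le j-2}\|\Delta_i f\|_{L^2}$ and rewrites $2^{j\alpha}\|S_{j-1}f\|_{L^2}\le (b*a)_j$ with $a_i:=2^{i\alpha}\|\Delta_i f\|_{L^2}\in\ell^2$, $\|a\|_{\ell^2}=\|f\|_{H^\alpha}$, and $b_m:=2^{m\alpha}\mathbf{1}_{\{m\ge 2\}}$, which lies in $\ell^1$ exactly because $\alpha<0$; combined with $\|\Delta_j g\|_{L^\infty}\le 2^{-j\beta}\|g\|_{\mathscr C^\beta}$ and the shift $|j-k|\le N$, Young's inequality yields $\|f\prec g\|_{H^{\alpha+\beta}}\lesssim\|f\|_{H^\alpha}\|g\|_{\mathscr C^\beta}$. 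Estimate (4) is the mirror image: using $\beta<0$ one gets $\|S_{j-1}g\|_{L^\infty}\le\sum_{i\le j-2}\|\Delta_i g\|_{L^\infty}\lesssim 2^{-j\beta}\|g\|_{\mathscr C^\beta}$ (geometric sum, $\beta<0$), after which the required $\ell^2$-summability in $k$ comes for free from $(2^{j\alpha}\|\Delta_j f\|_{L^2})_j\in\ell^2$.

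For the resonant estimate (5) I would write $f\circ g=\sum_j \Delta_j f\,\widetilde\Delta_j g$ with $\widetilde\Delta_j:=\Delta_{j-1}+\Delta_j+\Delta_{j+1}$; because $\widehat{\Delta_j f\,\widetilde\Delta_j g}$ is supported in a ball of radius $\lesssim 2^{j+2}$, one has $\Delta_k(f\circ g)=\sum_{j\ge k-N}\Delta_k(\Delta_j f\,\widetilde\Delta_j g)$, whence $\|\Delta_k(f\circ g)\|_{L^2}\lesssim\sum_{j\ge k-N}\|\Delta_j f\|_{L^2}\|\widetilde\Delta_j g\|_{L^\infty}\lesssim\|g\|_{\mathscr C^\beta}\sum_{j\ge k-N}2^{-j\beta}\|\Delta_j f\|_{L^2}$. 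Multiplying by $2^{k(\alpha+\beta)}$ gives $2^{k(\alpha+\beta)}\|\Delta_k(f\circ g)\|_{L^2}\lesssim\|g\|_{\mathscr C^\beta}\,(c*a)_k$, where $a_j:=2^{j\alpha}\|\Delta_j f\|_{L^2}\in\ell^2$ and $c_m:=2^{m(\alpha+\beta)}\mathbf{1}_{\{m\le N\}}$, and $c\in\ell^1$ \emph{exactly} when $\alpha+\beta>0$; Young's inequality then gives $\|f\circ g\|_{H^{\alpha+\beta}}\lesssim\|f\|_{H^\alpha}\|g\|_{\mathscr C^\beta}$, which is the claim \eqref{resonating-bony}.

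The main obstacle — modest, since this is a classical statement — is the bookkeeping that forces the output into $H^s=\mathscr B^s_{2,2}$ rather than the weaker $\mathscr B^s_{2,\infty}$: each block estimate must be presented as an $\ell^1$-kernel convolution acting on an $\ell^2$ sequence built from $f$, and the $\ell^1$ property of that kernel is precisely what the sign conditions $\alpha<0$ in (3), $\beta<0$ in (4) and $\alpha+\beta>0$ in (5) provide; when no such condition is available, as in (1), the price is the unavoidable loss $\delta>0$. A secondary point needing one sentence is the $L$-uniformity of the constants, which follows from the scale invariance noted in the first paragraph. For the standard forms of all these inequalities I would simply refer to \cite{BCD-bk} and indicate the adaptations above.
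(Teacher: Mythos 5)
Your proof is correct; the paper itself does not prove Proposition~\ref{prop:bony-estim} but simply states it (referring to Bony paraproduct theory and the textbook \cite{BCD-bk}), so there is no in-paper argument to compare against. Your route — estimate each Littlewood--Paley block, exploit the dyadic-annulus support of $S_{j-1}u\,\Delta_j v$ and the ball support of $\Delta_i u\,\Delta_j v$ with $|i-j|\le1$, then close the sum in $k$ by writing each block bound as a convolution of an $\ell^2$ sequence against an $\ell^1$ kernel — is the standard Littlewood--Paley proof and all five kernels $b_m=2^{m\alpha}\mathbf 1_{\{m\ge2\}}$, $c_m=2^{m(\alpha+\beta)}\mathbf 1_{\{m\le N\}}$, etc.\ are summable exactly under the stated sign hypotheses, which is the right diagnosis of where $\alpha<0$, $\beta<0$, $\alpha+\beta>0$, and the $\delta$-loss each enter. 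Two small points worth making explicit: in item (2) the paper's statement mixes the hypothesis $g\in L^\infty$ with the norm $\|g\|_{\mathscr C^\beta}$ on the right; your argument correctly produces $\|g\|_{L^\infty}$, and the stated form then follows from $\mathscr C^\beta\hookrightarrow L^\infty$ for $\beta>0$. On $L$-uniformity, the clean justification is the one you give first (the Littlewood--Paley kernels on $\T_L^2$ are periodizations of Schwartz kernels on $\R^2$, so Young's inequality yields $L$-independent operator bounds, and the overlap index $N$ depends only on $\chi,\rho$); the alternative ``rescale to a fixed torus'' is not quite an argument as written, since the rescaling changes the Sobolev and H\"older norms by $L$-dependent factors on both sides, and one would have to check those cancel — so I would keep only the direct kernel argument.
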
 
\begin{remark}
We will use extensively the first, the fourth and the last estimates of this Proposition in the Section~\ref{construct-order-1} to construct our operator while the second and the third will be used in the proof of the commutation Lemma~\ref{prop:commu}  . 
\end{remark}

We end this section by describing the action of the Fourier multipliers on the Besov spaces.
Those "multiplications'' in the Fourier space correspond to differentiations (respectively ``integrations'') 
of the distributions 
in the Besov spaces and the following proposition quantifies the loss (resp. gain) of regularity obtained 
by differentiating (resp. ``integrating'') distributions. 
\begin{Proposition}[Schauder estimate]
\label{prop:multiply}
Let $\alpha,n \in\mathbb R$ and $\sigma:\mathbb R^2\setminus\{0\}\to \R $ 
be an infinitely differentiable function 
such that $|D^k\sigma(x)|\leq C(1+|x|)^{-n-k}$ for all $x\in \R^2$. 
For $f\in H^\alpha$ (respectively $\mathscr C^{\alpha}$), we define the distribution $\sigma(D)f$ obtained from applying the 
differentiation operator $\sigma(D)$ to $f$ as 
\begin{align*}
\sigma(D)f:=\mathscr F^{-1}(\sigma \mathscr{F}f)\,. 
\end{align*}
Then, $ \sigma(D)f \in H^{\alpha+n}$ (respectively $\mathscr C^{\alpha+n}$) and
\begin{align*}
||\sigma(D)f||_{H^{\alpha+n}}\leq C_{n,\alpha} C||f||_{H^{\alpha}}\,. 
\end{align*}
and  the same bound hold for the H\"older space $\mathscr C^\alpha$. Moreover the constant $C_{n,\alpha}$ does not depend on $L$ 
\end{Proposition}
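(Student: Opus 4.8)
\emph{Proof proposal.} The plan is to reduce everything to a single Littlewood--Paley block estimate of the form
\begin{align*}
\|\Delta_j(\sigma(D)f)\|_{L^\infty}\ \le\ C_n\,C\,2^{-jn}\,\|\Delta_j f\|_{L^\infty}\qquad\text{for all }j\ge-1,
\end{align*}
with $C_n$ depending only on $n$ and on the fixed cut-offs $(\chi,\rho)$ — crucially not on $j$, on $f$, nor on $L$ — together with the analogous $L^2$-version. Granting this, the H\"older statement is immediate:
\begin{align*}
\|\sigma(D)f\|_{\mathscr C^{\alpha+n}}=\sup_{j\ge-1}2^{j(\alpha+n)}\|\Delta_j(\sigma(D)f)\|_{L^\infty}\le C_nC\,\sup_{j\ge-1}2^{j\alpha}\|\Delta_j f\|_{L^\infty}=C_nC\|f\|_{\mathscr C^\alpha},
\end{align*}
and the Sobolev statement follows verbatim after replacing $\sup_j$ by the $\ell^2_j$-norm, since $H^\alpha=\mathscr B^\alpha_{2,2}$. (For the Sobolev case one may also argue directly in Fourier variables: $\|\sigma(D)f\|_{H^{\alpha+n}}^2=\sum_k(1+|k|^2)^{\alpha+n}\,|\sigma(k)|^2\,|\hat f(k)|^2\le C^2\sum_k(1+|k|^2)^{\alpha}|\hat f(k)|^2$, using $|\sigma(k)|^2\le C^2(1+|k|)^{-2n}\lesssim(1+|k|^2)^{-n}$; this makes the independence on $L$ completely transparent.)

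For the block estimate I would proceed as follows. Since $\sigma(D)$ and $\Delta_j$ are both Fourier multipliers they commute, and for $j\ge0$ the symbol of $\Delta_j(\sigma(D)\,\cdot\,)$ is $\sigma(\xi)\rho(2^{-j}\xi)=2^{-jn}m_j(2^{-j}\xi)$ with $m_j(\eta):=2^{jn}\sigma(2^j\eta)\rho(\eta)$. The function $m_j$ is supported in the fixed annulus $\{a\le|\eta|\le b\}$, and the hypothesis $|D^k\sigma(x)|\le C(1+|x|)^{-n-k}$ yields, on that annulus and for every $j\ge0$, the bound $2^{jn}2^{jk}(1+2^j|\eta|)^{-n-k}\le C'_k$ with $C'_k$ independent of $j$; by the Leibniz and chain rules this gives $\|m_j\|_{C^k(\R^2)}\le C\,C_k$ uniformly in $j$. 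Hence $\check m_j:=\mathscr F^{-1}m_j$ decays faster than any polynomial, with constants uniform in $j$ (repeated integration by parts, using $\|m_j\|_{C^{2N}(\R^2)}\le C\,C_{2N}$ and that $m_j$ lives on a fixed compact set); in particular $\|\check m_j\|_{L^1(\R^2)}\le C\,C_0$. The convolution kernel of $\Delta_j(\sigma(D)\,\cdot\,)$ on $\R^2$ is then $2^{-jn}2^{2j}\check m_j(2^j\cdot)$, of $L^1(\R^2)$-norm at most $C\,C_0\,2^{-jn}$, and Young's inequality gives the claimed block bound. The block $j=-1$ is treated in the same way, the symbol being now the fixed function $\chi\sigma$, which is compactly supported and smooth up to the origin — indeed $|D^k\sigma(x)|\le C(1+|x|)^{-n-k}\le C$ near $0$ forces $\sigma$ to extend $C^\infty$ there — so its inverse Fourier transform lies in $L^1$ with norm $\le C\,C_0$. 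Finally, to transfer the estimates from $\R^2$ to the torus $\T_L^2$, note that the action of the multiplier $\sigma$ on the dual lattice is convolution with the periodisation $K^L_j$ of the above $\R^2$-kernel, and $\|K^L_j\|_{L^1(\T_L^2)}\le\|K_j\|_{L^1(\R^2)}$; since this bound does not see $L$, neither does the operator norm.

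I expect the only genuinely non-formal step to be the rescaling/Bernstein computation showing $\|m_j\|_{C^k}\le C\,C_k$ uniformly in $j$, hence that the dyadically rescaled kernels $2^{2j}\check m_j(2^j\cdot)$ have $L^1$-norms bounded independently of $j$; everything else is bookkeeping. The two spots where one must stay careful are the behaviour of $\sigma$ at the origin (handled by the smooth-extension remark) and the passage to $\T_L^2$ (handled by the periodisation inequality), both needed to keep the constants free of $L$.
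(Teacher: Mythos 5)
The paper does not actually prove Proposition~\ref{prop:multiply}: it is stated without proof and is treated as a standard Fourier-multiplier (Schauder/Bernstein) estimate from the Littlewood--Paley literature, essentially~\cite{BCD-bk}, with the only point the authors emphasize being the uniformity of the constant in $L$. Your argument is a correct and complete proof of precisely that standard fact, and it follows the textbook route: rescale the symbol on each dyadic block to a fixed annulus, check that the rescaled symbols $m_j$ have $C^k$-norms bounded uniformly in $j$, deduce uniform rapid decay of $\check m_j$ by integration by parts, bound the resulting $L^1$ kernel norm by $\lesssim 2^{-jn}$, and conclude by Young's inequality; the passage to $\T^2_L$ via periodisation and $\|K_j^L\|_{L^1(\T_L^2)}\le\|K_j\|_{L^1(\R^2)}$ is exactly what makes the constant $L$-independent, which you correctly identify as the crux. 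Two small remarks. First, the line ``$\|\check m_j\|_{L^1(\R^2)}\le C\,C_0$'' should invoke $C_N$ for some $N\ge 2$ (you need at least two integrations by parts to get integrability in $\R^2$), but this is a notational slip, not a gap — you set up the $C^{2N}$ bounds correctly just before. Second, for the Sobolev case, your parenthetical direct Fourier computation $\|\sigma(D)f\|_{H^{\alpha+n}}^2=\sum_k(1+|k|^2)^{\alpha+n}|\sigma(k)|^2|\hat f(k)|^2\lesssim C^2\|f\|_{H^\alpha}^2$ is actually the cleanest route and makes $L$-independence immediate; the Littlewood--Paley machinery is only genuinely needed for the $\mathscr C^\alpha$ case. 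The observation that bounded derivatives of all orders near the origin force a $C^\infty$ extension of $\sigma$ across $0$ (needed for the $j=-1$ block and for evaluating $\sigma$ at lattice point $k=0$) is a legitimate and necessary point that is often glossed over; you handle it correctly.
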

Further important technical results related to Besov spaces and Bony paraproducts 
(which will be used in the following) are gathered in the appendix. 

\section{Schr\"odinger operator with singular potential}

\subsection{Definition on a space of paracontrolled distributions}
\label{construct-order-1}
Our goal in this section is to define the linear operator $\mathscr{H}$ defined in \eqref{def-H-xi}. 
More precisely, starting from a test function $f\in L^2(\T_L^2,\R)$, we would like to define the function $g$ such that 
\begin{align*}
g = \mathscr{H}f= - \Delta f + \xi f\,. 
\end{align*}
This operation is ill defined for a generic element $f \in  L^2(\T^2,\R)$ because of the product $\xi f$ which makes sense only if 
$f$ is sufficiently smooth. 
Indeed, we know from Young's integration theory that the product $fg$ between two distributions $f$ and $g$ 
with respective H\"older-Besov regularities $\alpha$ and $\beta$ (see Appendix \ref{sec:appendix} for a reminder on Besov-H\"older spaces)
 is well defined if and only if the sum $\alpha+\beta > 0$. 

Here, we need to define the linear operator $\mathscr{H}$ on a functional space large enough to contain the eigenfunctions
of $\mathscr{H}$.   
The eigenvalue problem associated to the operator $\mathscr{H}$ simply writes 
\begin{align}\label{eigenvalue-pb-2}
\mathscr{H}f_\lambda = \lambda f_\lambda
\end{align}
or in the more explicit form
$$
(1-\Delta)f_\lambda=-f_\lambda\xi+(\lambda+1)f_\lambda
$$
where we seek for an eigenfunction $f_\lambda$ associated to the eigenvalue $\lambda$ such that 
\eqref{eigenvalue-pb-2} holds together with the boundary conditions under consideration. 

Let us fix $\alpha < -1$ such that the white noise $\xi$, which may be seen as a random distribution, belongs to the 
Besov-H\"older space $\mathscr{C}^\alpha$ (we know that $\xi \in \mathscr{C}^{-1-\varepsilon}$ almost surely for any $\varepsilon>0$). 

Then, if $f$ satisfies equation \eqref{eigenvalue-pb-2} , we expect from standard (heuristic) 
arguments the regularity of $f$ to be $\alpha+2$ (thanks to the additional regularity induced by the Laplacian), 
which barely prevents us from defining the product $\xi f$ using standard Young integration (this is the classical problem which motivated 
It\^o's theory of stochastic integrals). 
Powerful tools to make sense of such products in the present stochastic context 
have been recently developed by Gubinelli, 
Imkeller and Perkowsky in ~\cite{gip}. 
 Another alternative more general theory has been independently developed by Hairer in ~\cite{hairer}, the so called {\it theory of regularity structures}, 
 which permits one to tackle the same problems, with applications to singular stochastic partial differential equations, in a broader context. 
 This later theory has received much attention recently.
 
For our study, we shall use the Fourier approach introduced in ~\cite{gip} as it is somehow more basic and does not require a large background of harmonica analysis. 
Working in Sobolev spaces will turn out to be crucial for us because they are Hilbert spaces.

We come back to the eigenvalue problem \eqref{eigenvalue-pb-2} which may be rewritten, using Fourier's multipliers 
and thanks to the {\it Bony paraproduct 
decomposition} (see again Appendix \ref{sec:appendix} for a reminder on this), as  
\begin{align}\label{eigenvalue-pb-FM}
f&=f\prec \sigma(D)\xi-f^\sharp, 
\\ f^\sharp&=\sigma(D)( (\lambda+1) f + f\circ\xi+f\succ\xi)+\sigma(D)(f\prec\xi)-f\prec\sigma(D)\xi\notag 
\end{align}
where $D$ is the differentiation operator (see Appendix \ref{sec:appendix}) and $\sigma(k)=-(1+|k|^{2})^{-1}$ for $k \in \Z_L^2$.

Thanks to a formal analysis of \eqref{eigenvalue-pb-FM}, we expect $f\in H^{\alpha+2}$ 
if $\xi\in  \mathscr{C}^{\alpha}$. Indeed, auto consistently, if $f\in H^{\alpha+2}$ 
and $\xi \in \mathscr{C}^\alpha\subseteq H^{\alpha}$, then we know using the Schauder's estimate Proposition \ref{prop:multiply} that the 
regularity of $(\lambda+1) f + f\circ\xi+f\succ\xi\in H^{2\alpha+2}$ 
increases by $2$ when multiplied by $\sigma(D)$
and also that 
$\sigma(D)(f\prec\xi)-f\prec\sigma(D)\xi \in H^{2 \alpha+ 4}$, so that finally $f^\sharp  \in H^{2\alpha+4}$.  
Those heuristic remarks motivate, following ~\cite{gip}, the next definition which shall permit us 
to make sense of the resonating term $f\circ\xi$ (yet ill defined) and define the operator 
$\mathscr{H}$ on the space of paracontrolled distribution $\mathscr D_\xi^\gamma$ introduced.  

\begin{definition}\label{def-D-xi}
Let $\alpha<-1$ and $\xi\in \mathscr C^{\alpha}(\mathbb T_L^2)$. For $\gamma \le \alpha+2$, 
we define the space of distributions  which are paracontrolled by $\sigma(D)\xi$, i.e.
\begin{align}\label{def-D-xi}
\mathscr D_\xi^\gamma
=\left\{f\in H^{\gamma}(\mathbb T_L^2), \quad f^\sharp:=f-f\prec \sigma(D)\xi \in H^{2\gamma}\right\}\,. 
\end{align}
The space $\mathscr D_\xi^\gamma$ equipped with the scalar product $\langle \cdot,\cdot\rangle_{\mathscr D_\xi^\gamma}$, 
defined for $f,g\in\mathscr D_\xi^\gamma$, by
\begin{align*}
\langle f,g\rangle_{\mathscr D_\xi^\gamma}=\langle f,g\rangle_{H^\gamma}+\langle f^\sharp,g^\sharp\rangle_{H^{2\gamma}}.
\end{align*}
is a Hilbert space.
\end{definition}

\begin{remark}
The Hilbert space $(\mathscr D_\xi^\gamma,\langle\cdot,\cdot\rangle_{\mathscr D_\xi^\gamma})$ is continuously embedded in $L^2(\mathbb T_L^2)$.
\end{remark}

Now we claim that we can give a meaning to the resonating term $f\circ \xi$ 
for any $f\in\mathscr D_\xi^\gamma$ with $\gamma \le \alpha+2$ in a ``robust way'', provided we enhance (consistently) the information 
contained in the white noise $\xi$ (see below).

Using \eqref{eigenvalue-pb-FM},  we can decompose $f\circ \xi$ as 
\begin{align}
f\circ\xi&=(f\prec \sigma(D) \xi)\circ\xi+f^\sharp\circ\xi\notag
\\&=f(\xi\circ \sigma(D) \xi)+\mathscr{R}(f,\sigma(D) \xi,\xi)+f^\sharp\circ\xi \label{decomp-reson}
\end{align}
where 
\begin{align*}
\mathscr{ R}(f,\sigma(D) \xi,\xi)=(f\prec \sigma(D) \xi)\circ\xi-f(\xi\circ \sigma(D) \xi)\,. 
\end{align*}
If $f\in \mathscr D_\xi^\gamma$ with $\gamma \le \alpha+2$, then $f^\sharp\in  H^{2 \gamma}$ and the Bony estimate
(see Proposition \ref{prop:bony-estim} Eq. \eqref{resonating-bony}) insures that $f^\sharp\circ \xi $ 
is well defined with regularity $2\gamma+\alpha>0$. 

The key result of ~\cite{gip} yields that the trilinear operator $(f,g,h) \to \mathscr{R}(f,g,h)$, 
which is well defined for smooth test functions
 $f,g,h$, can be continuously extended to any product space
 $\mathscr C^{\alpha}\times\mathscr C^{\beta}\times\mathscr C^{\gamma}$ provided that $\alpha+\beta+\gamma>0$. 
We need to modify slightly this so called {\it commutation} Lemma  ~\cite[Proposition 4.7]{gip} because we work with 
$f\in H^{\alpha+2}$ instead of $\mathscr{C}^{\alpha+2}$.
This commutation Lemma is a crucial tool as it permits us to 
define the resonating term $f\circ \xi$ for a general $f\in \mathscr{D}_\xi^\gamma$ and $\xi \in \mathscr{C}^\alpha$ 
thanks to the knowledge of $\xi\circ\sigma(D) \xi (=\Xi_2) \in \mathscr{C}^{2\alpha+2}$.

\begin{Proposition}\label{prop:commu}
Given $\alpha\in(0,1)$, $\beta,\gamma\in\mathbb R$ such that $\beta+\gamma<0$ and $\alpha+\beta+\gamma>0$, the following trilinear operator 
$\mathscr{R}$ defined for any smooth functions $f,g,h$ by
\begin{align*}
\mathscr R(f,g,h):=(f\prec g)\circ h-f(g\circ h)
\end{align*}
can be extended continuously to the product space $H^\alpha\times\mathscr C^\beta\times\mathscr C^\gamma$.  
Moreover, we have the following bound 
\begin{align*}
||\mathscr R(f,g,h)||_{H^{\alpha+\beta+\gamma-\delta}}\lesssim||f||_{H^\alpha}||g||_{\mathscr C^\beta}||h||_{\mathscr C^\gamma}
\end{align*}
for all $f\in H^\alpha$, $g\in\mathscr C^\beta$ and $h\in\mathscr C^\gamma$, and every $\delta>0$ where the last bound is uniform in $L$.
\end{Proposition}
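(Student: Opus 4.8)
The plan is to adapt the Littlewood--Paley proof of the commutation lemma \cite[Proposition 4.7]{gip}, the only new feature being that $f$ is taken in $H^\alpha$ (hence in $L^2$) rather than in $\mathscr C^\alpha$. Throughout I would keep every factor originating from $f$ in an $L^2$ norm and every factor originating from $g$ or $h$ in an $L^\infty$ norm, so that all triple products are estimated by $\|uvw\|_{L^2}\le\|u\|_{L^2}\|v\|_{L^\infty}\|w\|_{L^\infty}$ with no factor of the torus volume; this is what gives the uniformity in $L$. By trilinearity and density of smooth functions it suffices to prove the bound for smooth $f,g,h$, and since the bound will in fact be obtained in the slightly finer space $\mathscr B^{\alpha+\beta+\gamma}_{2,\infty}$, the embedding $\mathscr B^{s}_{2,\infty}\hookrightarrow\mathscr B^{s-\delta}_{2,2}=H^{s-\delta}$ absorbs the arbitrary loss $\delta>0$. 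The first step is to expand $f\prec g=\sum_j S_{j-1}f\,\Delta_j g$ and to use that $S_{j-1}f\,\Delta_j g$ has Fourier support in a fixed dyadic annulus around $2^j$, so that $\Delta_n(f\prec g)=\sum_{|n-j|\le c_0}\Delta_n(S_{j-1}f\,\Delta_j g)$ for a fixed integer $c_0$. Writing $\Delta_n(S_{j-1}f\,\Delta_j g)=S_{j-1}f\,\Delta_n\Delta_j g+[\Delta_n,S_{j-1}f]\Delta_j g$ and using $\Delta_n\Delta_j g=0$ for $|n-j|\ge 2$, one obtains $(f\prec g)\circ h = A+B$ with
\[
A=\sum_j S_{j-1}f\sum_{|n-j|\le 1}\Delta_n\Delta_j g\,\bar\Delta_n h,\qquad
B=\sum_n\sum_{|k-n|\le 1}\sum_{|n-j|\le c_0}[\Delta_n,S_{j-1}f]\Delta_j g\,\Delta_k h,
\]
where $\bar\Delta_m:=\Delta_{m-1}+\Delta_m+\Delta_{m+1}$, and the comparison object is $f\,(g\circ h)=\sum_j f\,\Delta_j g\,\bar\Delta_j h$.

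The term $B$ is the easy one. By the classical commutator estimate $\|[\Delta_n,a]b\|_{L^2}\lesssim 2^{-n}\|\nabla a\|_{L^2}\|b\|_{L^\infty}$ combined with $\|\nabla S_{j-1}f\|_{L^2}\lesssim\sum_{i\le j-2}2^{i(1-\alpha)}\|f\|_{H^\alpha}\lesssim 2^{j(1-\alpha)}\|f\|_{H^\alpha}$ --- and it is precisely here that the hypothesis $\alpha<1$ is used, to make the geometric series converge --- one gets $\|[\Delta_n,S_{j-1}f]\Delta_j g\|_{L^2}\lesssim 2^{-n}2^{j(1-\alpha)}2^{-j\beta}\|f\|_{H^\alpha}\|g\|_{\mathscr C^\beta}$. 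Since in $B$ the indices $n,j,k$ are all within a bounded distance of each other, each summand then has frequency support in a ball of radius $\lesssim 2^j$ and $L^2$ norm $\lesssim 2^{-j(\alpha+\beta+\gamma)}\|f\|_{H^\alpha}\|g\|_{\mathscr C^\beta}\|h\|_{\mathscr C^\gamma}$, so summing over $j$ gives an element of $\mathscr B^{\alpha+\beta+\gamma}_{2,\infty}$ with the required norm, using $\alpha+\beta+\gamma>0$.

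It then remains to estimate $A-f(g\circ h)$. Using $\sum_{|n-j|\le1}\Delta_n\Delta_j g=\Delta_j g$, this splits as $U_1+U_2$ with $U_1:=-\sum_j (f-S_{j-1}f)\,\Delta_j g\,\bar\Delta_j h=-\sum_j\sum_{i\ge j-1}\Delta_i f\,\Delta_j g\,\bar\Delta_j h$ and $U_2:=\sum_j S_{j-1}f\sum_{n=j\pm1}\Delta_n\Delta_j g\,(\bar\Delta_n-\bar\Delta_j)h$. For $U_1$ I would split the inner sum at $i=j+c$ for a suitable fixed $c$: when $i\le j+c$ one has $i\sim j$, each term has frequencies $\lesssim 2^j$ and $L^2$ norm $\lesssim 2^{-j(\alpha+\beta+\gamma)}\|f\|_{H^\alpha}\|g\|_{\mathscr C^\beta}\|h\|_{\mathscr C^\gamma}$, summable since $\alpha+\beta+\gamma>0$; when $i>j+c$ one reindexes by $i$ to get $-\sum_i\Delta_i f\,\big(\sum_{j\le i-c}\Delta_j g\,\bar\Delta_j h\big)$, and the bracket (being at frequency $\ll 2^i$) has $L^\infty$ norm $\lesssim\sum_{j\le i-c}2^{-j(\beta+\gamma)}\lesssim 2^{-i(\beta+\gamma)}\|g\|_{\mathscr C^\beta}\|h\|_{\mathscr C^\gamma}$ --- and here one uses the hypothesis $\beta+\gamma<0$ to sum the geometric series --- so the whole term sits at frequency $\sim 2^i$ with $L^2$ norm $\lesssim 2^{-i(\alpha+\beta+\gamma)}\|f\|_{H^\alpha}\|g\|_{\mathscr C^\beta}\|h\|_{\mathscr C^\gamma}$, again summable.

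The delicate term is $U_2$: the factor $S_{j-1}f$ carries no decay in $j$ (only $\|S_{j-1}f\|_{L^2}\le\|f\|_{L^2}$ is available), whereas the remaining product sits at scale $2^j$, so a single term of $U_2$ is \emph{not} summable once $\beta+\gamma<0$. This is resolved by a telescoping cancellation: expanding $(\bar\Delta_{j\pm1}-\bar\Delta_j)h$ into single Littlewood--Paley blocks $\Delta_k h$ and shifting the summation index $j$ so that the $g$- and $h$-blocks of the matching terms coincide, the terms of $U_2$ combine in pairs, the partial-sum operators collapsing through $S_j f-S_{j-1}f=\Delta_{j-1}f$; up to finitely many low-frequency contributions one is left with sums of the form $\pm\sum_j\Delta_{j-1}f\,(\Delta_{j+1}\Delta_j g)\,\Delta_k h$ with $|k-j|\le 2$, in which $\|\Delta_{j-1}f\|_{L^2}\lesssim 2^{-j\alpha}\|f\|_{H^\alpha}$ supplies the missing decay, so these series converge in $\mathscr B^{\alpha+\beta+\gamma}_{2,\infty}$ just as before. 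Collecting $B$, $U_1$ and the telescoped $U_2$ and disposing of the finitely many low-frequency ($j\le 0$) terms by hand then yields $\|\mathscr R(f,g,h)\|_{\mathscr B^{\alpha+\beta+\gamma}_{2,\infty}}\lesssim\|f\|_{H^\alpha}\|g\|_{\mathscr C^\beta}\|h\|_{\mathscr C^\gamma}$ for smooth inputs (the Bony estimates of Proposition~\ref{prop:bony-estim} and the Bernstein inequalities being the only standard tools invoked), and density gives the continuous extension with values in $H^{\alpha+\beta+\gamma-\delta}$. I expect this telescoping in $U_2$, together with the frequency-support bookkeeping needed to set up $A$ and $B$, to be the main obstacle, precisely because the pieces carrying $S_{j-1}f$ are individually non-summable and must never be estimated in isolation.
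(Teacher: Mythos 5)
Your proof is correct, but it follows a genuinely different route from the paper's. The paper first proves a commutator estimate at the level of the full paraproduct, $R_j(f,g):=\Delta_j(f\prec g)-f\Delta_j g$ with $\|R_j(f,g)\|_{L^2}\lesssim 2^{-j(\alpha+\beta)}\|f\|_{H^\alpha}\|g\|_{\mathscr C^\beta}$ (built on the kernel bound $\|\Delta_j(fg)-f\Delta_j g\|_{L^2}\lesssim 2^{-j\alpha}\|f\|_{H^\alpha}\|g\|_{L^\infty}$ of Lemma~\ref{elementary-commutation-lemma}, which is where $\alpha\in(0,1)$ enters), and then $(f\prec g)\circ h-f(g\circ h)$ decomposes into \emph{only two} error terms: a low-frequency tail $\sum_{k}\Delta_kf\sum_{i\sim j\ll k}\Delta_i g\,\Delta_j h$ handled with $\beta+\gamma<0$, and $\sum_{i\sim j}R_i(S_{i-1}f,g)\,\Delta_j h$ handled with the $R_i$ bound — each summand already localised in frequency, so no further algebraic manipulation is required. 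You instead use the raw block-wise commutator $[\Delta_n,S_{j-1}f]$ (also needing $\alpha<1$, via $\|\nabla S_{j-1}f\|_{L^2}\lesssim 2^{j(1-\alpha)}\|f\|_{H^\alpha}$), which is more elementary, but it produces a third error term $U_2$ from the index mismatch between the two resonating blocks, and you correctly recognise that $U_2$ is not summable term-by-term once $\beta+\gamma<0$ because $\|S_{j-1}f\|_{L^2}$ carries no decay. The telescoping you propose does resolve this: pairing, for instance, $-S_{j-1}f\,\Delta_{j+1}\Delta_j g\,\Delta_{j-1}h$ with the $j\mapsto j+1$ shift of $S_{j-1}f\,\Delta_{j-1}\Delta_j g\,\Delta_{j-2}h$ collapses $S_j f-S_{j-1}f=\Delta_{j-1}f$ and leaves series in $\Delta_{j-1}f\,\Delta_{j+1}\Delta_j g\,\Delta_k h$ (with $k\sim j$) carrying the needed $2^{-j(\alpha+\beta+\gamma)}$ decay. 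In short: both arguments hinge on $\alpha<1$ and $\beta+\gamma<0$ at the same two junctures, but the paper's tailored commutator $R_j$ absorbs the troublesome index mismatch in a single lemma, while your block-wise commutator avoids a bespoke lemma at the price of a telescoping step the paper never needs — a step that is correct but must be written out with some care (shifts, boundary $j\le 0$ terms) to be fully convincing.
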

\begin{remark}
Let us point out that a general version of this commutation Lemma was proved recently in~\cite{dp} for the Besov space $\mathscr B_{2,\infty}^{\alpha}$. However the result presented in~\cite{dp} does not cover our special case but with a slight modification of the proof we are able to proof the needed result  ( see in the Appendix~\ref{sec:appendix} for the proof).
\end{remark}
Applying Proposition \ref{prop:commu} in our case with $f\in H^{\alpha+2}$, $g= \sigma(D) \xi \in \mathscr{C}^{\alpha+2}$ and
$h=\xi \in \mathscr{C}^{\alpha}$ for $\alpha <-1$, we deduce that $\mathscr{ R}(f,\sigma(D) \xi,\xi) \in  H^{3\alpha+4}$
is well defined almost surely.   

Now we still have to define the resonating term $\xi\circ \sigma(D) \xi$ which appears in \eqref{decomp-reson}. 
This is where the enhancement of $\xi\in \mathscr{C}^\alpha$ is needed. 
Actually from the equation~\eqref{decomp-reson} we can see the resonating term as a continuous functional of $(\xi,\xi\circ\sigma(D)\xi)$ which push us to introduce the following definition 
  \begin{definition}
Let  $\alpha<-1$ and $\mathscr E^\alpha=\mathscr C^{\alpha}\times\mathscr C^{2\alpha+2}$. 
Then, the space of rough distributions $\mathscr X^\alpha$ is defined as the closure of the set 
\begin{align}\label{closure-set}
\left\{(\xi,\xi\circ \sigma(D) \xi+c),\quad \xi\in C^{\infty}(\mathbb T_L^2),c\in\mathbb R\right\}
\end{align}
for the topology of the Banach space $\mathscr E^\alpha$. A generic element of $\mathscr X^\alpha$ will be denoted by $\Xi=(\Xi_1,\Xi_2)$. 
If $\xi\in\mathscr C^\alpha$ is such that $\xi=\Xi$, we say that $\Xi$ is an enhancement (or lift) of $\xi$. 
\end{definition}

\begin{remark}
One should notice that if $\xi_\varepsilon$ is a mollification of a two dimensional white noise $\xi$ then 
the expectation of the regularized resonating term  $\xi_\eps\circ \sigma(D)\xi_\eps$ blows up when $\varepsilon \to 0$ as   
\begin{align*}
\mathbb E[\xi_\eps\circ \sigma(D)\xi_\eps]\sim - \log(\eps).
\end{align*} 
Therefore, there is no hope to define $\xi\circ \sigma(D)\xi$ as the limit of $\xi_\eps\circ\sigma(D)\xi_\eps$ when $\varepsilon\to 0$.
One should subtract the diverging expectation so that $\xi_\eps\circ \sigma(D)\xi_\eps-\mathbb E[\xi_\eps\circ \sigma(D)\xi_\eps]$ 
converges, as we shall see in section~\ref{sec:Stoch}. This is precisely the reason why we need to introduce the constants $c\in \R$ 
to form the closure of the set of smooth functions introduced in \eqref{closure-set}.
\end{remark}
\begin{remark}
Let us point that in general the space of rough distribution have a complicated algebraic structure however in our special case $\mathscr X^\alpha$ turn out to be the closure of the couple of smooth function in the space $\mathscr E^{\alpha}$ which of course a linear space. See Lemma~\ref{lemma:rd} for the exact statement.  
\end{remark}

Endowed with this definition, we can now define the resonating term $f\circ \xi$ simply by postulating 
the value of $\xi\circ \sigma(D)\xi= \Xi_2$ from the enhancement $\Xi$ of $\xi$. We have the following Proposition. 

\begin{Proposition}\label{prop:reson}
Let  $-4/3<\alpha<-1$ and $-\frac{\alpha}{2}<\gamma \le \alpha+2$. 
Denote by $\Xi=(\xi,\Xi_2)\in\mathscr X^\alpha$ an enhancement of $\xi\in \mathscr{C}^\alpha$ and let $f\in\mathscr D_\xi^{\gamma}$. 
We can now define  
$f\circ\xi$ as 
\begin{equation}
\label{eq:def-reson}
f\circ\xi=f\Xi_2+\mathscr R(f,\sigma(D)\xi,\xi)+f^\sharp\circ\xi\,. 
\end{equation}

We have the following bound 
\begin{equation}
\label{eq:bound-reson}
||f\circ\xi||_{H^{2\alpha+2}}\lesssim||f||_{\mathscr D_\xi^\gamma}||\Xi||_{\mathscr E^\alpha}(1+||\Xi||_{\mathscr E^\alpha})\,. 
\end{equation}
\end{Proposition}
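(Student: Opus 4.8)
The plan is to bound each of the three terms on the right-hand side of the decomposition \eqref{eq:def-reson} separately, in the Sobolev scale, and check that the regularity exponent $2\alpha+2$ is achievable for all three under the stated hypotheses $-4/3<\alpha<-1$ and $-\alpha/2<\gamma\le\alpha+2$. First I would handle the term $f\Xi_2$: since $f\in\mathscr D_\xi^\gamma\subset H^\gamma$ and $\Xi_2\in\mathscr C^{2\alpha+2}$, I would split $f\Xi_2=f\prec\Xi_2+f\circ\Xi_2+f\succ\Xi_2$. The paraproducts $f\prec\Xi_2$ and $f\succ\Xi_2$ are controlled by parts (3) and (4) of Proposition~\ref{prop:bony-estim}, landing in $H^{\gamma+2\alpha+2}\subset H^{2\alpha+2}$ (using $\gamma\ge 0$, which follows from $\gamma>-\alpha/2>0$). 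The resonating piece $f\circ\Xi_2$ requires $\gamma+(2\alpha+2)>0$ for Proposition~\ref{prop:bony-estim}(5) to apply; since $\gamma>-\alpha/2$ this amounts to $-\alpha/2+2\alpha+2>0$, i.e. $\tfrac{3}{2}\alpha+2>0$, i.e. $\alpha>-4/3$, which is exactly our range. This is the point where the constraint $\alpha>-4/3$ gets used, and also where one sees why $\gamma$ cannot be taken too small.

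Next I would treat $\mathscr R(f,\sigma(D)\xi,\xi)$ using Proposition~\ref{prop:commu}. Here I set the three slots to be $f\in H^\gamma$ with $\gamma>0$, $g=\sigma(D)\xi\in\mathscr C^{\alpha+2}$ (by Schauder, Proposition~\ref{prop:multiply}, with $\alpha+2\in(0,1)$ in our range, so the Hölder-index hypothesis $\alpha\in(0,1)$ of Proposition~\ref{prop:commu} is met), and $h=\xi\in\mathscr C^\alpha$. The hypotheses of Proposition~\ref{prop:commu} demand $\beta+\gamma<0$, here $(\alpha+2)+\alpha=2\alpha+2<0$ since $\alpha<-1$, and $\alpha_{\text{comm}}+\beta+\gamma>0$, here $\gamma+(\alpha+2)+\alpha=\gamma+2\alpha+2>0$, again equivalent to $\gamma>-2\alpha-2$, which holds because $\gamma>-\alpha/2$ and $-\alpha/2>-2\alpha-2\Leftrightarrow \tfrac{3}{2}\alpha>-2\Leftrightarrow\alpha>-4/3$. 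So $\mathscr R(f,\sigma(D)\xi,\xi)\in H^{\gamma+2\alpha+2-\delta}$ for every $\delta>0$; choosing $\delta$ small and using $\gamma>0$ we get $H^{2\alpha+2}$ embedding. Finally the term $f^\sharp\circ\xi$: by definition of $\mathscr D_\xi^\gamma$ we have $f^\sharp\in H^{2\gamma}$, and Proposition~\ref{prop:bony-estim}(5) gives $f^\sharp\circ\xi\in H^{2\gamma+\alpha}$ provided $2\gamma+\alpha>0$, i.e. $\gamma>-\alpha/2$ — precisely the stated hypothesis — and $2\gamma+\alpha\ge 2\alpha+2$ is automatic here once $2\gamma+\alpha>0$ combined with the embedding, but to be careful one just needs $2\gamma+\alpha>0\ge 2\alpha+2$ is false; rather $2\alpha+2<0$ so any space $H^{2\gamma+\alpha}$ with $2\gamma+\alpha>0$ embeds into $H^{2\alpha+2}$. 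Good.

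Assembling, each summand lies in $H^{2\alpha+2}$ and the definition \eqref{eq:def-reson} is consistent (one should remark that on smooth $\xi$ it coincides with the genuine product, which is how the extension is pinned down, by density of smooth enhancements in $\mathscr X^\alpha$). For the quantitative bound \eqref{eq:bound-reson}, I would track constants through the above: the $f\Xi_2$ term contributes $\|f\|_{H^\gamma}\|\Xi_2\|_{\mathscr C^{2\alpha+2}}\lesssim\|f\|_{\mathscr D_\xi^\gamma}\|\Xi\|_{\mathscr E^\alpha}$; the commutator term contributes $\|f\|_{H^\gamma}\|\sigma(D)\xi\|_{\mathscr C^{\alpha+2}}\|\xi\|_{\mathscr C^\alpha}\lesssim\|f\|_{\mathscr D_\xi^\gamma}\|\Xi\|_{\mathscr E^\alpha}^2$ after Schauder, giving the quadratic factor $(1+\|\Xi\|_{\mathscr E^\alpha})$; and $f^\sharp\circ\xi$ contributes $\|f^\sharp\|_{H^{2\gamma}}\|\xi\|_{\mathscr C^\alpha}\le\|f\|_{\mathscr D_\xi^\gamma}\|\Xi\|_{\mathscr E^\alpha}$. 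Summing yields \eqref{eq:bound-reson}. The main obstacle, as the computation shows, is not any single estimate but the bookkeeping of exponents: one must verify that both the lower bound $\gamma>-\alpha/2$ and the range $\alpha>-4/3$ are simultaneously what is needed to make every resonating product (in $f\Xi_2$, in the commutator, and in $f^\sharp\circ\xi$) well defined, and that the worst of the three resulting regularities still dominates $2\alpha+2$; the uniformity in $L$ is then inherited termwise from Propositions~\ref{prop:bony-estim}, \ref{prop:multiply} and \ref{prop:commu}, all of whose constants are $L$-independent.
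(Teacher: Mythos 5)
Your proof follows essentially the same route as the paper: bound each of the three summands in \eqref{eq:def-reson} separately via the Bony estimates, the commutator Proposition~\ref{prop:commu}, and the Schauder estimate, track the exponent arithmetic to isolate the constraints $\gamma>-\alpha/2$ and $\alpha>-4/3$, and observe that the worst of the three resulting regularities is $2\alpha+2$. One small misattribution worth correcting: in Proposition~\ref{prop:commu} the constraint ``$\alpha\in(0,1)$'' bears on the \emph{Sobolev} index of the first slot, i.e.\ here on $\gamma$, not on the H\"older index $\alpha+2$ of $\sigma(D)\xi$; since $\gamma\in(-\alpha/2,\alpha+2]\subset(1/2,1)$ in the stated range the hypothesis is still met and your conclusion is unaffected, but the justification as written points at the wrong parameter.
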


\begin{proof}
If $f\in\mathscr D_\xi^\gamma$ and $\Xi_2\in \mathscr C^{2\alpha+2}$, we know from Proposition \ref{prop:bony-estim} by Bony 
that,  if $\gamma+ 2\alpha+ 2 >0$, then the product $f\Xi_2$ is well defined with regularity $\min(\gamma,2\alpha+2,\gamma+2\alpha+2) = 2\alpha +2$ (under our assumptions on $\alpha$ and $\gamma$) 
i.e.  $f\Xi_2 \in H^{2\alpha+2}$.   Moreover, we have 
\begin{align}\label{bound-Xi_2}
||f\Xi_2||_{H^{2\alpha+2}}\lesssim||f||_{H^\gamma}||\Xi_2||_{\mathscr C^{2\alpha+2}}\,. 
\end{align}
  
 From Proposition \ref{prop:commu}, the second term  $\mathscr R(f,\sigma(D)\xi,\xi)$ is also well defined if 
 $\gamma+ 2\alpha+ 2 >0$ and 
 $\mathscr R(f,\sigma(D)\xi,\xi) \in H^{\gamma+ 2\alpha+ 2}$. Using in addition the Schauder's estimate 
 Proposition \ref{prop:multiply},  we obtain the following bound 
 \begin{align}\label{bound-R}
||\mathscr{R}(f,\sigma(D)\xi,\xi)||_{H^{\gamma+2\alpha+2}}\lesssim||f||_{H^{\gamma}}||\sigma(D) \xi ||_{\mathscr C^{\alpha+2}}
||\xi||_{\mathscr C^{\alpha}}\lesssim||f||_{H^\gamma}||\xi||^2_{\mathscr C^{\alpha}}\,. 
\end{align}

A sufficient condition for 
the existence of a positive number $\gamma >0$ such that $\gamma \le \alpha+2$ and $\gamma+ 2\alpha+2 > 0$ is 
 $-4/3 < \alpha <-1$. 
Under this condition, it is sufficient for us to pick any $\gamma$ such that $2/3< \gamma \le \alpha+ 2$. 

The resonating term $f^\sharp \circ \xi$ is also well defined 
because $f^\sharp\in H^{2\gamma},\xi \in \mathscr C^{\alpha}$
with $\alpha+2\gamma>0$ and we have thanks to Bony's Proposition \ref{prop:bony-estim}
the upper bound 
\begin{align}\label{bound-res-sharp}
||f^\sharp\circ\xi||_{H^{2\gamma+\alpha}}\lesssim||\xi||_{\mathscr C^\alpha}||f^\sharp||_{H^{2\gamma}}\,. 
\end{align}

Gathering the three upper bounds \eqref{bound-Xi_2}, \eqref{bound-R} and \eqref{bound-res-sharp}, 
we obtain \eqref{eq:bound-reson}. 
 
\end{proof}

The bound \eqref{eq:bound-reson} on the norm of the resonating term implies the following crucial 
continuity approximation of $f\circ\xi$ with smooth functions. 

\begin{corollary}
Let $-4/3<\alpha<-1, 2/3<\gamma < \alpha+2$ 
and $ \Xi:=(\xi, \Xi_2) \in \mathscr X^\alpha$ and  
$\Xi^\eps:=(\xi_\eps,\xi_\eps\circ \sigma(D)\xi_\eps-c_\eps), \eps >0, c_\eps \in \R$ 
 a family of smooth functions such that 
\begin{align*}
\Xi^\eps\underset{\varepsilon\to 0}{\longrightarrow} \Xi \quad \mbox{ in } \quad \mathscr E^\alpha\,.
\end{align*}Let also $f_\varepsilon$ be a smooth approximation of $f\in \mathscr D_\xi^\gamma$ such that 
\begin{align*}
||f-f_\eps||_{H ^\gamma}+||f_\eps^\sharp-f^\sharp||_{H^{2\gamma}}\underset{\varepsilon\to 0}{\longrightarrow} 0
\end{align*}
where $f_\varepsilon^\sharp$ is the smooth function defined from $f_\eps$ and $\xi_\eps$ as  
$f_\varepsilon^\sharp:= f_\varepsilon- f_\varepsilon \prec \sigma(D) \xi_\eps$.

Then, we have the following continuity approximation of the resonating term $f\circ\xi$  by smooth functions
\begin{align*}
||f_\eps\circ \xi_\eps+c_\eps f-f\circ\xi||_{H^{2\alpha+2}}\underset{\varepsilon\to 0}{\longrightarrow} 0\,. 
\end{align*} 
\end{corollary}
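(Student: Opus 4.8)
The plan is to reduce the statement, via an algebraic identity valid for smooth data, to the decomposition \eqref{eq:def-reson} together with the multilinear continuity bounds \eqref{bound-Xi_2}, \eqref{bound-R} and \eqref{bound-res-sharp} already established. Since $f_\eps$ and $\xi_\eps$ are smooth, every paraproduct and resonant product occurring below is an ordinary pointwise product, so in particular $\mathscr R(f_\eps,g,h)=(f_\eps\prec g)\circ h-f_\eps(g\circ h)$ holds as a genuine identity; combining it with $f_\eps=f_\eps\prec\sigma(D)\xi_\eps+f_\eps^\sharp$ (the definition of $f_\eps^\sharp$) and with the symmetry $\xi_\eps\circ\sigma(D)\xi_\eps=\sigma(D)\xi_\eps\circ\xi_\eps$ yields
\begin{align*}
f_\eps\circ\xi_\eps=f_\eps\,(\xi_\eps\circ\sigma(D)\xi_\eps)+\mathscr R(f_\eps,\sigma(D)\xi_\eps,\xi_\eps)+f_\eps^\sharp\circ\xi_\eps,
\end{align*}
the smooth-level counterpart of \eqref{eq:def-reson}. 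Since the second component of $\Xi^\eps$ is $\Xi_2^\eps=\xi_\eps\circ\sigma(D)\xi_\eps-c_\eps$, we have $\xi_\eps\circ\sigma(D)\xi_\eps=\Xi_2^\eps+c_\eps$, so the first term splits as $f_\eps\Xi_2^\eps$ plus the divergent contribution $c_\eps f_\eps$, and it is precisely this contribution that the constant term in the statement is designed to remove. After this cancellation, proving the corollary amounts to showing that the three increments
\begin{align*}
&f_\eps\Xi_2^\eps-f\Xi_2,\\
&\mathscr R(f_\eps,\sigma(D)\xi_\eps,\xi_\eps)-\mathscr R(f,\sigma(D)\xi,\xi),\\
&f_\eps^\sharp\circ\xi_\eps-f^\sharp\circ\xi
\end{align*}
each tend to $0$ in $H^{2\alpha+2}(\T_L^2)$, $f\circ\xi$ being by definition (Proposition~\ref{prop:reson}) the sum of the three corresponding limits.

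Each increment I would handle by multilinearity and the matching bound. For the first, $(f_\eps-f)\Xi_2^\eps+f(\Xi_2^\eps-\Xi_2)$, estimate \eqref{bound-Xi_2} gives a control in $H^{2\alpha+2}$ by $\|f_\eps-f\|_{H^\gamma}\|\Xi_2^\eps\|_{\mathscr C^{2\alpha+2}}+\|f\|_{H^\gamma}\|\Xi_2^\eps-\Xi_2\|_{\mathscr C^{2\alpha+2}}$, which vanishes as $\eps\to0$ because $\Xi^\eps\to\Xi$ in $\mathscr E^\alpha$ (so $\|\Xi_2^\eps\|_{\mathscr C^{2\alpha+2}}$ stays bounded and $\|\Xi_2^\eps-\Xi_2\|_{\mathscr C^{2\alpha+2}}\to0$) and $\|f_\eps-f\|_{H^\gamma}\to0$ by hypothesis. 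For the second, telescope
\begin{align*}
\mathscr R(f_\eps,\sigma(D)\xi_\eps,\xi_\eps)-\mathscr R(f,\sigma(D)\xi,\xi)&=\mathscr R(f_\eps-f,\sigma(D)\xi_\eps,\xi_\eps)+\mathscr R(f,\sigma(D)(\xi_\eps-\xi),\xi_\eps)\\
&\quad+\mathscr R(f,\sigma(D)\xi,\xi_\eps-\xi),
\end{align*}
and apply Proposition~\ref{prop:commu} with regularities $(\gamma,\alpha+2,\alpha)$ — admissible because $\gamma\in(2/3,1)$, $2\alpha+2<0$ and $\gamma+2\alpha+2>0$ on the prescribed range of $\alpha,\gamma$ — together with the Schauder bound $\|\sigma(D)(\xi_\eps-\xi)\|_{\mathscr C^{\alpha+2}}\lesssim\|\xi_\eps-\xi\|_{\mathscr C^\alpha}$ of Proposition~\ref{prop:multiply}. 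The three terms are then bounded respectively by $\|f_\eps-f\|_{H^\gamma}$, $\|\xi_\eps-\xi\|_{\mathscr C^\alpha}$ and $\|\xi_\eps-\xi\|_{\mathscr C^\alpha}$ times factors that remain bounded, hence all $\to0$. For the third, $(f_\eps^\sharp-f^\sharp)\circ\xi_\eps+f^\sharp\circ(\xi_\eps-\xi)$, the resonant estimate \eqref{resonating-bony} (with exponents $2\gamma,\alpha$, legitimate since $2\gamma+\alpha>0$) yields a bound $\lesssim\|f_\eps^\sharp-f^\sharp\|_{H^{2\gamma}}\|\xi_\eps\|_{\mathscr C^\alpha}+\|f^\sharp\|_{H^{2\gamma}}\|\xi_\eps-\xi\|_{\mathscr C^\alpha}\to0$, using the assumed convergences of $f_\eps^\sharp$ and of $\xi_\eps$.

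Finally one checks that each increment lands in a space embedding continuously in $H^{2\alpha+2}$: the first in $H^{2\alpha+2}$ directly, the second in $H^{\gamma+2\alpha+2-\delta}\hookrightarrow H^{2\alpha+2}$ for $\delta<\gamma$, the third in $H^{2\gamma+\alpha}\hookrightarrow H^{2\alpha+2}$ since $2\gamma+\alpha>2\alpha+2$ — these are exactly the inequalities used in the proof of Proposition~\ref{prop:reson}, and this is where the constraints $-4/3<\alpha<-1$ and $\gamma>2/3$ (so $\gamma>-\alpha/2$) enter. Summing the three convergences then proves the corollary. I expect the only genuinely delicate point to be the bookkeeping of the divergent constant $c_\eps$ — verifying that it renormalizes precisely the term $f_\eps(\xi_\eps\circ\sigma(D)\xi_\eps)$ and nothing else — and ensuring that the strict inequality $\gamma<\alpha+2$ leaves enough room for the loss $\delta>0$ in Proposition~\ref{prop:commu}; the telescoping estimates themselves are routine given the bounds recalled above.
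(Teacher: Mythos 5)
Your overall strategy is exactly the paper's: expand both $f_\eps\circ\xi_\eps$ (via the smooth version of the identity \eqref{decomp-reson}) and $f\circ\xi$ (via Proposition~\ref{prop:reson}), telescope the three pieces, and apply the product estimate \eqref{bound-Xi_2}, the commutator bound Proposition~\ref{prop:commu}, and the resonant bound \eqref{resonating-bony}. Your regularity bookkeeping (the admissibility ranges for Proposition~\ref{prop:commu}, the embeddings $H^{\gamma+2\alpha+2-\delta}\hookrightarrow H^{2\alpha+2}$ and $H^{2\gamma+\alpha}\hookrightarrow H^{2\alpha+2}$) is sound. The paper itself compresses all this into a single displayed inequality and the phrase ``we easily check that''; you have supplied the justification.

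There is, however, one place where your argument does not actually close and you should not have waved it through. You observe that, with the paper's convention $\Xi_2^\eps=\xi_\eps\circ\sigma(D)\xi_\eps-c_\eps$, the smooth decomposition gives $f_\eps\circ\xi_\eps=f_\eps\Xi_2^\eps+c_\eps f_\eps+\mathscr R(f_\eps,\sigma(D)\xi_\eps,\xi_\eps)+f_\eps^\sharp\circ\xi_\eps$, and you then say the term $c_\eps f_\eps$ ``is precisely what the constant term in the statement is designed to remove.'' But the statement adds $+c_\eps f$, which does not remove $+c_\eps f_\eps$: after substitution one is left with $c_\eps(f_\eps+f)$, which diverges, not with your clean three-term telescope. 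Even with the sign in the definition of $\Xi_2^\eps$ flipped to $+c_\eps$ (which is what Theorem~\ref{th:conv} uses and is almost certainly the intended convention), one is still left with the residual $c_\eps(f-f_\eps)$, whose vanishing is not a consequence of the hypotheses when $c_\eps\to\infty$. The honest conclusion of your own computation is that the corollary holds with $c_\eps f_\eps$ (or $-c_\eps f_\eps$, matching the sign convention) in place of $c_\eps f$; this is presumably a typo in the paper, whose one-line proof carries the same unstated assumption. You identified the right flash point (``the bookkeeping of the divergent constant $c_\eps$'') but should have stated explicitly that the identity as written does not cancel, rather than asserting that it does.
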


\begin{proof}
Using the bilinearity of $(f,\xi) \to f\circ\xi$ and the trilinearity of $\mathscr{R}$, we easily check that 
\begin{align*}
||f_\eps & \circ\xi_\eps+c_\eps f- f\circ\xi||_{H^{2\alpha+2}}\\ &\lesssim (||f-f_\eps||_{H^\gamma}
+||f_\eps^\sharp-f^\sharp||_{H^{2\gamma}}) 
(1+||\Xi||_{\mathscr E^\alpha})^2+||\Xi_\eps-\Xi||_{\mathscr E^\alpha}(1+||\Xi||_{\mathscr E^\alpha})||f||_{\mathscr D_\xi^{\gamma}}\,,
\end{align*}
which yields the result. 
\end{proof}

We are finally ready to define the linear operator $\mathscr{H}$ on the space $\mathscr{D}_\xi$ of paracontrolled distribution.
\begin{definition}
\label{def:Op}
Let $\alpha\in(-4/3,-1)$, $\frac{-\alpha}{2}<\gamma\le \alpha+2$  and $\Xi=(\xi,\Xi_2)\in\mathscr X^\alpha$. 
We introduce the linear operator $\mathscr H:\mathscr D_\xi^{\gamma}\to H^{\gamma-2}$ such that 
for $f\in \mathscr D_\xi^{\gamma}$, 
\begin{align*}
\mathscr H f :=-\Delta f+f\xi
\end{align*}
where $f\xi$ is defined through the Bony decomposition 
\begin{align*}
f\xi:=f\prec\xi+f\circ\xi+f\succ\xi
\end{align*}
where the resonating term 
$f\circ\xi\in H^{2\alpha+2}$ is defined thanks to Proposition~\ref{prop:reson}. Then $\mathscr H$ can be seen 
as an unbounded operator on $H^{\gamma-2}(\mathbb T_L^2)$ with domain $\mathscr D^{\gamma}_\xi$. 
\end{definition}
\begin{remark}
Note that the operator $\mathscr H$ as introduced in Definition \ref{def:Op} 
depends on the enhancement $\Xi=(\xi,\Xi_2)\in\mathscr X^\alpha(\mathbb T_L)$.
\end{remark}

As explained in the heuristic discussion which motivated Definition \ref{def-D-xi}, we expect 
the eigenfunctions of the linear  operator $\mathscr{H}$  
(which will later be shown to be self-adjoint) 
to belong to the spaces $\mathscr{D}_\xi^\gamma$ 
for $\gamma< \alpha+2$. 
Those eigenfunctions will form an orthonormal basis of $L^2(\T_L^2)$ and it is therefore natural 
to expect the spaces  $\mathscr{D}_\xi^\gamma$ for $\gamma < \alpha+2$ to be dense in $L^2(\T_L^2)$. This is the content 
of the following Lemma.

\begin{lemma}\label{Lemma: density}
Let $\alpha <-1, \xi \in \mathscr C^{\alpha}$ and  $\frac{2}{3}<\gamma<\alpha+2$. Then, 
the space of paracontrolled distributions $\mathscr D_\xi^{\gamma}$ is dense in $L^2(\mathbb T_L^2,\R)$.
\end{lemma}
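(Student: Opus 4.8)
The plan is to exhibit a dense subset of $L^2(\T_L^2,\R)$ that is contained in $\mathscr D_\xi^\gamma$. The natural candidate is the space $C^\infty(\T_L^2,\R)$ of smooth functions, which is certainly dense in $L^2$. So it suffices to show $C^\infty(\T_L^2,\R)\subset \mathscr D_\xi^\gamma$. First I would observe that any smooth $f$ belongs to $H^\gamma$ trivially (smooth functions lie in every Sobolev space). The only thing to check is that $f^\sharp := f - f\prec\sigma(D)\xi$ lies in $H^{2\gamma}$. For this I would estimate the two pieces separately: $f\in C^\infty \subset H^{2\gamma}$ automatically, and for the paraproduct term $f\prec \sigma(D)\xi$ I would use the Bony estimate (Proposition \ref{prop:bony-estim}, item 3, with the roles of regularities chosen appropriately) together with the Schauder estimate (Proposition \ref{prop:multiply}). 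Since $\xi\in\mathscr C^\alpha$ implies $\sigma(D)\xi\in\mathscr C^{\alpha+2}$, and $f\in H^s$ for every $s$, one gets $f\prec\sigma(D)\xi\in H^{s+\alpha+2}$ for any $s$; choosing $s$ large enough (e.g. $s=2\gamma$, or even $s = 2\gamma - \alpha - 2$, which is finite) gives $f\prec\sigma(D)\xi\in H^{2\gamma}$. Hence $f^\sharp\in H^{2\gamma}$ and $f\in\mathscr D_\xi^\gamma$.

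An alternative, slightly softer route avoids even this computation: take the subspace of trigonometric polynomials (finite Fourier sums), which is dense in $L^2$. Any trigonometric polynomial $f$ has compactly supported Fourier transform, so $f\in H^s$ for all $s\in\R$. Moreover $f\prec\sigma(D)\xi$ is given by a Littlewood–Paley sum in which, because $f$ has only finitely many frequencies, only finitely many low-frequency blocks $\Delta_i f$ contribute, while the high-frequency content comes entirely from $\sigma(D)\xi$; applying the paraproduct and Schauder estimates as above shows $f\prec\sigma(D)\xi\in H^{2\gamma}$, hence $f^\sharp\in H^{2\gamma}$. Either way, one concludes $\mathscr D_\xi^\gamma\supseteq C^\infty(\T_L^2,\R)$ (or the trigonometric polynomials), and density in $L^2$ follows.

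There is no serious obstacle here: the statement is essentially a regularity bookkeeping exercise. The one point that requires a moment of care is making sure the exponent arithmetic is consistent, i.e. that the hypotheses $\tfrac23<\gamma<\alpha+2$ with $\alpha<-1$ are compatible (they force $\alpha>-4/3$, so the interval for $\gamma$ is nonempty precisely in the regime where the rest of the construction works) and that the index $2\gamma$ demanded of $f^\sharp$ is indeed achieved — which it is, with room to spare, since smooth (or polynomial) $f$ supplies arbitrarily high Sobolev regularity and the paraproduct only loses $|\alpha+2|$ derivatives off of something arbitrarily smooth. I would therefore expect the proof to be short: state that $C^\infty(\T_L^2,\R)$ is dense in $L^2(\T_L^2,\R)$, verify the two defining conditions of $\mathscr D_\xi^\gamma$ for smooth $f$ using Propositions \ref{prop:bony-estim} and \ref{prop:multiply}, and conclude.
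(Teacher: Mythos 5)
There is a genuine gap, and it is fatal to the proposed route. You claim that for smooth $f$ one has $f \prec \sigma(D)\xi \in H^{s+\alpha+2}$ with $s$ arbitrarily large, hence $f^\sharp := f - f\prec\sigma(D)\xi \in H^{2\gamma}$. This is not how paraproducts behave: $f\prec g$ can never have regularity \emph{better} than that of $g$, regardless of the smoothness of $f$. Writing $f\prec g = \sum_j S_{j-1}f\,\Delta_j g$, each term is frequency-localized in an annulus of radius $\sim 2^j$ and has $L^\infty$-size $\lesssim 2^{-j\beta}\|f\|_{L^\infty}\|g\|_{\mathscr C^\beta}$, so the paraproduct has exactly the regularity $\beta$ of $g$ once $f$ is bounded. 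The ``gain'' of regularity in item 3 of Proposition~\ref{prop:bony-estim} holds only under $\alpha<0$, i.e.\ $f$ of \emph{negative} regularity — it is a loss, $\alpha+\beta<\beta$ — and for $f\in L^2$ (let alone smooth) the applicable bound is item~1, which yields only $f\prec g\in H^{\beta-\delta}$. Concretely, for smooth $f$, $f\prec\sigma(D)\xi$ lies in $\mathscr C^{\alpha+2}$ and no better, and since the hypotheses force $\gamma>2/3>(\alpha+2)/2$, i.e.\ $2\gamma>\alpha+2$, one has $f^\sharp\notin H^{2\gamma}$ for generic rough $\xi$. So $C^\infty(\T_L^2)\not\subset\mathscr D_\xi^\gamma$, and the same objection kills the trigonometric-polynomial variant: the high-frequency content of $f\prec\sigma(D)\xi$ comes entirely from $\sigma(D)\xi$, so restricting $f$ to finitely many frequencies does not help.

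Because no fixed classical space of smooth functions sits inside $\mathscr D_\xi^\gamma$, the paper takes a different tack: given a smooth target $g$, it solves the fixed-point equation $f_a=\sigma_a(D)(f_a\prec\xi)+g$ with $\sigma_a(k)=-(1+a+|k|^2)^{-1}$, for $a$ large enough that the map is a contraction in $H^\gamma$ (Bony plus Schauder). The fixed point $f_a$ is \emph{not} smooth, but the fixed-point relation forces
\begin{align*}
f_a^\sharp \;=\; \bigl[\sigma_a(D)(f_a\prec\xi) - f_a\prec\sigma_a(D)\xi\bigr] \;+\; f_a\prec(\sigma_a-\sigma)(D)\xi \;+\; g\,,
\end{align*}
where the commutator in brackets gains two derivatives (cf.\ Proposition~\ref{schauder-estimate-2}) and $(\sigma_a-\sigma)(D)\xi\in\mathscr C^{\alpha+4}$ gains four; all three terms land in $H^{2\gamma}$, so $f_a\in\mathscr D_\xi^\gamma$. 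One then sends $a\to\infty$: since $\sigma_a(D)\xi\to0$ in $\mathscr C^{\alpha-\delta}$, the uniform bounds give $\|f_a-g\|_{H^\gamma}\to0$, hence convergence in $L^2$. This paracontrolled resolvent construction is the essential content of the lemma; it is not a regularity bookkeeping exercise, and cannot be replaced by the argument you propose.
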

\begin{proof}

It is sufficient to prove that $\mathscr D_\xi^{\gamma}$ is dense in $C^{\infty}(\mathbb T_L^2)$. 
Let $g\in C^{\infty}(\mathbb T_L^2)$, define the Fourier multiplier  $\sigma_a(k):=-\frac{1}{1+a+|k|^2}$ for $a>0$ and 
consider the map $\Gamma: H^{\gamma}\to H^{\gamma}$ defined as:
\begin{align*}
\Gamma(f)=\sigma_a(D)(f\prec \xi) + g\,. 
\end{align*}
The idea is to first prove that the map $\Gamma$ admits a fixed point $f_a \in \mathscr{D}_\xi^\gamma$. 
It is then straightforward to deduce that $f_a\to g$ in $L^2(\T_L^2,\R)$ from the fact that $ \sigma_a(D) \xi \to 0$ in $\mathscr{C}^{\alpha-\delta}$ 
when $a \to \infty$ for all $\delta>0$.    

We can bound the Fourier multiplier $\sigma_a$: for any $k\in \Z_L^2$, $r\in\mathbb N^2$ and $\theta\in [0,1]$, 
\begin{align}\label{bound-concave}
|\partial^r\sigma_a(k)| \lesssim \frac{1}{a^{1-\theta} (|k|+1)^{2\theta+|r|}}\,. 
\end{align}
Using \eqref{bound-concave}, the Bony estimate \eqref{prop:bony-estim} and the Schauder's inequality 
Proposition \ref{prop:multiply}, we obtain   
\begin{align}
||\Gamma(f_1)-\Gamma(f_2)||_{H^\gamma}&\lesssim a^{(\gamma - (\alpha+2))/2 }
||\xi||_{\mathscr{C}^\alpha}||f_1-f_2||_{H^\gamma}\notag \\ 
 \label{bound-Gamma}
\end{align}

We now fix $a$ large enough such that $a^{(\gamma - (\alpha+2))/2 } ||\xi||_{\mathscr{C}^\alpha} < 1/2$. 
For such an $a$, the  map 
$\Gamma$ is a contraction and therefore admits a unique fixed point $f_a$. 
With the same arguments as above, 
we easily check that 
$\sup_{a\ge 0}||f_a||_{H^\gamma}\lesssim ||g||_{H^\gamma}$ and
we eventually obtain  
\begin{align*}
||f_a-g||_{H^{\gamma}} \lesssim  a^{\frac{1}{2}(\gamma - (\alpha+2))} ||g||_{H^\gamma}  ||\xi||_{\mathscr C^{\alpha}}
\end{align*}
which permits us to conclude that $f_a \to g$ in $H^\gamma$ when $a$ goes to infinity and thus in $L^2(\mathbb T_{L}^2,\R)$.
  
We still have to prove that $f_a\in\mathscr D^{\gamma}_{\xi}$. By definition, $f_n\in H^\gamma$ and we just have to check that 
$f_a - f_a \prec \sigma(D) \xi \in H^{2\gamma}$. We can decompose this function as
\begin{align*}
f_a-f_a\prec \sigma(D) \xi &= \sigma_a(D) (f_a \prec \xi) 
- f_a \prec \sigma_a(D) \xi \\ &+  f_a\prec( \sigma_a(D) - \sigma(D)) \xi +g\,. 
\end{align*} 
From Proposition~\ref{prop:multiply}, we know that the second term $(\sigma_a(D)(f_n\prec\xi)-f_n\prec\sigma_a(D)\xi) \in H^{2\gamma}$ 
with the following upper-bound  
\begin{align*}
||\sigma_a(D)(f_a\prec\xi)-f_a\prec\sigma_a(D)\xi||_{H^{2\gamma}}\lesssim ||f_n||_{H^\gamma}||\xi||_{\mathscr{C}^\alpha}
\end{align*}
For the last term, we need to bound the derivatives of the Fourier multiplier $\sigma_a(k)-\sigma(k)=-\frac{a}{(|k|^2+1)(a+1+|k|^2)}$: 
 \begin{align}
\label{deriv-sigma0-sigma-a}
\left| \partial^m (\sigma_a-\sigma)(k)\right| \lesssim_{a} (1+|k|)^{-4-|m|} 
\end{align}
for $k \in \Z_L^2$ and $m \in \N^2$, where we have used the standard multi-index notation: if $m=(m_1,m_2), \partial^m= \partial^{|m|}/\partial^{m_1} \partial^{m_2}$.  
Then, thanks to the Schauder's estimate Proposition \ref{prop:multiply}, we know that 
the regularity increases by four when applying the operator $\sigma_a(D) - \sigma(D)$ so that  
$( \sigma_a(D) - \sigma(D)) \xi \in \mathscr{C}^{\alpha+ 4}$ with the following upper-bound
\begin{align*}
||f_a\prec (\sigma_a(D)-\sigma(D))\xi||_{H^{\alpha+4}}\lesssim_{a}||f_a||_{H^\gamma}||\xi||_{\mathscr{C}^\alpha}\,. 
\end{align*}
Gathering the above arguments, we can conclude that $f_a \in \mathscr{D}_\xi^\gamma$
 and the Lemma is proved.
\end{proof}

We now construct the resolvent operator 
$\mathcal{G}_a: L^2\to \mathscr{D}_\xi^\gamma$
and establish that it is bounded operator for $a>0$ sufficiently large. 
The following Proposition basically proves that the (punctual) spectrum of $\mathscr{H}$ is almost surely 
bounded from below (in the sense that it has an almost surely finite lowest element). 
The resolvent operator $\mathcal{G}_a$ shall play a crucial role in the next section to define the spectrum 
of $\mathscr{H}$.  

\begin{Proposition}
\label{prop:resolvent}
Let $-4/3<\alpha<-1$, $2/3<\gamma<\alpha+2$, $\rho\in(\gamma-\frac{\alpha+2}{2},1+\frac{\alpha}{2})$ 
and $\Xi=(\xi,\Xi_2)\in\mathscr X^\alpha$.  
Then, there exists $A:=A(||\Xi||_{\mathscr X^\alpha})$  such that 
for all $a\geq A$ and $g\in H^{2\gamma-2}(\mathbb T_L^2)$, the equation
\begin{align*}
(\mathscr H
+a)f=g
\end{align*}
admits a unique solution $f_a\in \mathscr D^\gamma_\xi$. In addition, the maps 
$\mathcal{G}_a:g\in L^2(\mathbb T_{L}^2)\mapsto \mathcal{G}_ag=f_a\in \mathscr D_\xi^\gamma, a\ge A$ is uniformly 
bounded: 
For any $g\in H^{-\delta}$ with $\delta\in[0,2-2\gamma]$ and $a\ge A$,  
\begin{equation}
\label{boundedness-G-a}
||\mathcal{G}_a g ||_{\mathscr D_\xi^\gamma}\lesssim  a^{-1+\gamma+\frac{\delta}{2}}\|g\|_{H^{-\delta}}
\end{equation}
\end{Proposition}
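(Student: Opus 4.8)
I would realise $\mathcal{G}_a$ by a Banach fixed–point argument inside $\mathscr D_\xi^\gamma$, the contraction being produced — exactly as in the proof of Lemma~\ref{Lemma: density} — by a strictly negative power of $a$ coming from the bound \eqref{bound-concave} on the Fourier multiplier $\sigma_a(D)=-(1+a-\Delta)^{-1}$. For $f\in\mathscr D_\xi^\gamma$ the equation $(\mathscr H+a)f=g$ is equivalent to
$$
f \;=\; -\sigma_a(D)\bigl(f+g\bigr)+\sigma_a(D)(f\prec\xi)+\sigma_a(D)(f\circ\xi)+\sigma_a(D)(f\succ\xi),
$$
where $f\circ\xi\in H^{2\alpha+2}$ is the resonant term defined and bounded in Proposition~\ref{prop:reson} (the only place where the enhancement $\Xi_2$ enters), $f\succ\xi\in H^{\gamma+\alpha}$ by the Bony estimates of Proposition~\ref{prop:bony-estim}, and $\sigma_a(D)$ gains two derivatives (Proposition~\ref{prop:multiply}). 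After the commutator splitting $\sigma_a(D)(f\prec\xi)=f\prec\sigma_a(D)\xi+C_a(f)$, the singular part of the right–hand side is exactly $f\prec\sigma_a(D)\xi$, which is the correct ``reference'': since $\sigma_a(D)\xi-\sigma(D)\xi$ is smooth (its symbol obeys a bound of the type \eqref{deriv-sigma0-sigma-a}), using $\sigma_a(D)\xi$ in place of $\sigma(D)\xi$ in Definition~\ref{def-D-xi} yields the same Hilbert space, with equivalent norm.

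\textbf{Reduction to the remainder.} I would then take the remainder $f^\sharp:=f-f\prec\sigma_a(D)\xi$ as the genuine unknown. The displayed equation becomes $f^\sharp=\mathscr S_a(f)$ with
$$
\mathscr S_a(f):=C_a(f)-\sigma_a(D)(f+g)+\sigma_a(D)(f\circ\xi)+\sigma_a(D)(f\succ\xi),
$$
and, because $\sigma_a(D)\xi\to0$ in every $\mathscr C^{\beta}$ with $\beta<\alpha+2$ (again by \eqref{bound-concave}), the map $T_a:f\mapsto f-f\prec\sigma_a(D)\xi$ is a bijection of $H^\gamma$ for $a$ large, with $\|T_a^{-1}\|\le2$. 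Hence $(\mathscr H+a)f=g$ is equivalent to $f=\Psi_a(f):=T_a^{-1}\mathscr S_a(f)$, an affine map of $\mathscr D_\xi^\gamma$ whose $\sharp$–part is precisely $\mathscr S_a(f)$. Counting regularities, using throughout $\gamma<\alpha+2$ and $\gamma>\tfrac23>-\tfrac\alpha2$, each of the four terms in $\mathscr S_a(f)$ lands in $H^{2\gamma}$ when $g\in H^{2\gamma-2}$, so $\Psi_a$ indeed maps $\mathscr D_\xi^\gamma$ into itself.

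\textbf{Contraction, existence, uniqueness and the bound.} Estimating $\mathscr S_a$ with Propositions~\ref{prop:bony-estim}, \ref{prop:multiply}, \ref{prop:reson}, \ref{prop:commu} and running the $\sigma_a$–paraproduct commutator bound with the $a$–gain of \eqref{bound-concave} (one retains part of the order of $\sigma_a$ as a factor $a^{-(1-\theta)}$, $\theta\in[0,1]$, choosing $\theta$ so that the output lands exactly in $H^{2\gamma}$), one obtains $\|\mathscr S_a(f_1)-\mathscr S_a(f_2)\|_{H^{2\gamma}}\lesssim a^{-\kappa}\|\Xi\|_{\mathscr X^\alpha}\|f_1-f_2\|_{\mathscr D_\xi^\gamma}$ with $\kappa>0$; the admissible interval for the auxiliary exponent $\rho$ in the hypotheses is exactly what guarantees that all the interpolation parameters needed can be taken in $[0,1)$ (equivalently $\gamma<\alpha+2$ and $\alpha>-2$), so that these powers of $a$ are negative. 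Combined with $\|T_a^{-1}\|\le2$ this makes $\Psi_a$ a $\tfrac12$–contraction of $\mathscr D_\xi^\gamma$ once $a\ge A:=A(\|\Xi\|_{\mathscr X^\alpha})$; being affine, $\Psi_a$ then has a unique fixed point $f_a=\mathcal{G}_a g\in\mathscr D_\xi^\gamma$, the unique solution of $(\mathscr H+a)f=g$ in $\mathscr D_\xi^\gamma$. For \eqref{boundedness-G-a} one uses that the linear part of $\Psi_a$ has norm $\le\tfrac12$, so $\|\mathcal{G}_a g\|_{\mathscr D_\xi^\gamma}\lesssim\|\sigma_a(D)g\|_{\mathscr D_\xi^\gamma}$; for $g\in H^{-\delta}$, $\delta\in[0,2-2\gamma]$, the $H^{2\gamma}$–norm of $\sigma_a(D)g$ — the dominant contribution, the correction $(\sigma_a(D)g)\prec\sigma(D)\xi$ being of the same order — costs $a^{-1+\gamma+\delta/2}$ by \eqref{bound-concave} with $\theta=\gamma+\tfrac\delta2\le1$ (this is where $\delta\le2-2\gamma$ is used), while the $H^\gamma$–norm costs the smaller $a^{-1+(\gamma+\delta)/2}$; uniformity in $L$ follows since the constants in Propositions~\ref{prop:bony-estim}, \ref{prop:multiply}, \ref{prop:commu}, \ref{prop:reson} are.

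\textbf{Main obstacle.} I expect the delicate point to be the $H^{2\gamma}$–level bookkeeping of $\mathscr S_a(f)$ — checking that the commutator $C_a(f)$, the smooth terms coming from $\sigma_a(D)\xi-\sigma(D)\xi$, $\sigma_a(D)(f\circ\xi)$, $\sigma_a(D)(f\succ\xi)$ and $\sigma_a(D)(f+g)$ all land in $H^{2\gamma}$ \emph{and} carry a strictly negative power of $a$, the two requirements together pinning down the admissible ranges of $\gamma$ and $\rho$. In particular the commutator estimate has to be run with the $a$–dependent bound \eqref{bound-concave} rather than the crude version used in Lemma~\ref{Lemma: density}; this refinement is the one genuinely new analytic ingredient.
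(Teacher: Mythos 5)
Your argument is correct, and it reaches the conclusion by a genuinely different — and arguably cleaner — route than the paper. The paper works in a space of $a$-indexed families $(f_a,f'_a)_{a\geq A}$ equipped with an $a^{-\rho}$-weighted norm on the $\sharp$-part, carries along an auxiliary ``derivative'' $f'$ (used to define the resonant product at the previous iterate), and — because the map $\mathcal M(f,f')=(M(f,f'),f)$ merely copies $f$ into the second slot and so is not contracting — iterates twice and shows $\mathcal M^2$ is a contraction. You instead fix $a$, take the remainder $f^\sharp_a=f-f\prec\sigma_a(D)\xi$ as the genuine unknown, and observe that $T_a:f\mapsto f-f\prec\sigma_a(D)\xi$ is a small perturbation of the identity on $H^\gamma$ for large $a$ (with $\|T_a^{-1}\|\leq 2$ by \eqref{bound-concave} and Bony), which lets you write the problem as a fixed point of $\Psi_a=T_a^{-1}\mathscr S_a$ directly in $\mathscr D_\xi^\gamma$ and obtain a contraction in one step. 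This eliminates the auxiliary ``derivative'', the $a$-indexed family, the $\rho$-weight, and the $\mathcal M^2$ iteration: what the $\rho$-interval $(\gamma-\tfrac{\alpha+2}{2},1+\tfrac\alpha2)$ records in the paper is, as you note, essentially just the nonemptiness condition $\gamma<\alpha+2$ (equivalently $\alpha>-2$ plus the interval structure), and in your proof the quantity $\rho$ plays no role. Two small places where you should be explicit: (i) the equivalence of the $\sigma_a$-based and $\sigma$-based paracontrolled norms is \emph{not} uniform in $a$ (the conversion factor grows like $a^{\gamma-\frac{\alpha+2}{2}}$), so the contraction must be run in the $a$-dependent norm and the final bound then translated — as you implicitly do — by checking that the correction $(\sigma_a(D)g)\prec(\sigma-\sigma_a)(D)\xi$ is of strictly smaller order in $a$ than the main term, not ``the same order'' as you write; (ii) the step from $\|\mathscr S_a(f_1)-\mathscr S_a(f_2)\|_{H^{2\gamma}}$ to the full $\mathscr D_\xi^\gamma$-contraction uses the uniform (in $L$) embedding $\|\cdot\|_{H^\gamma}\leq\|\cdot\|_{H^{2\gamma}}$, which is worth stating since uniformity in $L$ is part of the claim.
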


\begin{proof}
Our proof is based on a fixed point argument. For $A>0$, we introduce 
the following auxiliary Banach space \footnote{$C([A,+\infty),H^\gamma(\mathbb T^2))$ denotes the
space of continuous functions $[A,+\infty) \to H^\gamma(\mathbb T_L^2)$. } 
\begin{align*}
\tilde{\mathscr D}_\xi^{\gamma,\rho,A}=\left\{(f_a,f'_a)_{a\ge A} \in C\left([A,+\infty),H^\gamma \right) ^2;\quad  ||(f,f')||_{\tilde{\mathscr D}_\xi^{\gamma,\rho,A}}<+\infty\right\}
\end{align*}
with : 
\begin{align*}
||(f,f')||_{\tilde{\mathscr D}_\xi^{\gamma,\rho,A}}=\sup_{a\geq A}||f_a'||_{H^\gamma}+\sup_{a\geq A}\frac{||f^\sharp_a||_{H^{2\gamma}}}{a^{\rho}}+\sup_{a\geq A}||f_a||_{H^\gamma}.
\end{align*}
For $f\in \tilde{\mathscr D}_\xi^{\gamma,\rho,A}$ and $(\xi,\Xi_2)\in \mathscr{X}^\alpha$, we can define 
the product $f_a\cdot\xi$ for any $a\ge A$ as
\begin{align}
f_a\cdot \xi:=f_a\prec\xi + f_a\circ\xi + f_a\succ\xi\,,\quad f_a\circ\xi:=f_a' \, \Xi_2+\mathscr{R}(f_a',\sigma(D)\xi,\xi)+
f_a^\sharp \circ \xi\,. \label{new-product}
\end{align}

Let us introduce the map 
$\mathcal{M}$ defined for any $(f,f')\in \tilde{\mathscr D}_\xi^{\gamma,\rho,A}$  as
\begin{align*}
\mathcal{M}(f,f'):=(M(f,f'),f)\,,\quad M(f,f')_a:=\sigma_a(D)(f_a\cdot\xi-g)
\end{align*}
where $\sigma_a(k):=-1/(a+|k|^2)$ for $a>2$.
It is sufficient to prove that the map $\mathcal{M}$ admits a unique 
fixed point in the space $\tilde{\mathscr D}_\xi^{\gamma,\rho,A}$.

We first prove that $\mathcal{M}(\tilde{\mathscr D}_\xi^{\gamma,\rho,A}) \subseteq \tilde{\mathscr D}_\xi^{\gamma,\rho,A}$ 
by checking that $M(f,f')_a\in H^{\gamma}$ and that if $(f,f')\in \tilde{\mathscr D}_\xi^{\gamma,\rho,A}$, then we have 
$M(f,f')^{\sharp} := M(f,f') - f \prec\sigma(D)\xi \in H^{2\gamma}$ (using the notations introduced, we have 
$M(f,f')'=f$). 
 
We have the following decomposition 
\begin{align*}
M(f,f')_a= \sigma_a(D)(f_a\prec\xi ) + \sigma_a(D) (f_a\circ \xi + \xi \prec f_a) -\sigma_a(D) g \,. 
\end{align*}
Using the fact that $|\partial^m \sigma_a(k)| \le a^{\theta-1} (|k|+1)^{-2\theta-|m|}$ for any $m\in \Z^2$, 
we use again the Schauder and Bony estimates which give the following upper-bounds 
\begin{align*}
||\sigma_a(D) (f_a\prec \xi)||_{H^\gamma} &\lesssim a^{(\gamma-(\alpha+2))/2} ||f_a||_{H^\gamma} ||\xi||_{\mathscr{C}^\alpha}\,,\\
||\sigma_a(D) (f_a \circ \xi+ f_a\succ \xi)||_{H^\gamma} &\lesssim a^{-1-\alpha/2}  (  ||f_a ||_{H^\gamma} || \xi||_{\mathscr{C}^\alpha}+ || f_a \circ \xi||_{H^{2\alpha+2}} ) \,, \\
|| \sigma_a(D) g ||_{H^\gamma} &\lesssim a^{\gamma/2-1} ||g||_{L^2}\,. 
\end{align*}
To bound $|| f_a \circ \xi||_{H^{2\alpha+2}}$, we write
$f_a \circ \xi = f_a\circ \xi -f_a^\sharp\circ \xi + f_a^\sharp \circ \xi$ and, using also 
\eqref{new-product},
\begin{align}\label{control-diff}
||f_a\circ\xi-f^\sharp_a\circ\xi||_{H^{2\alpha+2}}
&\lesssim ||f'_a||_{H^\gamma}\left(||\Xi_2||_{\mathscr C^{2\alpha+2}}+||\xi||^2_{\mathscr C^{\alpha}}\right) \,.
\end{align}
and
\begin{align}\label{control-f-sharp}
||f^\sharp_a\circ\xi||_{H^{2\gamma+\alpha}}\lesssim ||f^\sharp_a||_{H^{2\gamma}}||\xi||_{\mathscr C^\alpha}
\lesssim a^\rho\, ||(f,f')||_{\tilde{\mathscr D}_\xi^{\gamma,\rho,A}} ||\xi||_{\mathscr{C}^\alpha} \,. 
\end{align}
Gathering those upper-bounds, we obtain
\begin{align}
\label{eq:boundmap}
||M(f,f')_a||_{H^\gamma}
\lesssim a^{\max(\frac{\gamma-(\alpha+2)}{2}, \rho- 1- \frac{\alpha}{2})}||(f,f')||_{\tilde{\mathscr D}^\gamma_{\xi}}(||\xi||_{\mathscr C^\alpha}+||\xi||^2_{\mathscr C^\alpha}+||\Xi_2||_{\mathscr C^{2\alpha+2}})+a^{\gamma/2-1}||g||_{L^2}\,. 
\end{align}
We now prove that 
$M(f,f')^{\sharp} := M(f,f') - f \prec\sigma(D)\xi \in H^{2\gamma}$. 
 For this, we decompose $M(f,f')^{\sharp}$ as follows for $a\ge A$,
\begin{align*}
M(f,f')_a^{\sharp}&=\sigma_a(D)(f_a\prec\xi)-f_a\prec\sigma_a(D)\xi + f_a\prec(\sigma_a-\sigma)(D)\xi \\
&+\sigma_a(D)(f_a\circ\xi -f^\sharp_a \circ\xi)  +\sigma_a(D) (f^\sharp_a \circ\xi) 
+\sigma_a(D) (f\prec\xi)- \sigma_a(D)g\,. 
\end{align*}
Using the Schauder estimate recalled in Proposition \ref{prop:multiply}, we can easily see that 
\begin{align*}
||\sigma_a(D)(f_a\prec\xi)-f_a\prec\sigma_a(D)\xi||_{H^{2\gamma}} &\lesssim a^{(\gamma- (\alpha+2))/2}
||f_a||_{H^\gamma}||\xi||_{\mathscr C^\alpha}\,.
\end{align*}
For the last term, we use the fact that 
$|\partial^m(\sigma_a-\sigma)(k)| \lesssim a^\theta \, |k|^{-2-2\theta-|m|}$ for any $m\in \N^2$ and the Schauder estimate to 
obtain
\begin{align*}
||f_a\prec(\sigma_a-\sigma)(D)\xi||_{H^{2\gamma}}\lesssim 
a^{\gamma-\frac{\alpha+2}{2}}||f_a||_{H^\gamma}||\xi||_{\mathscr C^\alpha}\,. 
\end{align*}
The Schauder estimate and the bound $|\partial^m \sigma_a(k)| \le a^{\theta-1} |k|^{-2\theta-|m|}$ valid for any $m\in \N^2$
permits us to obtain the following upper bounds (using \eqref{control-diff} and \eqref{control-f-sharp})
\begin{align*}
||\sigma_a(D)(f_a\prec \xi ) ||_{H^{2\gamma}} &\lesssim  a^{\gamma- \frac{\alpha+2}{2} }\, ||f_a||_{H^\gamma} 
||\xi||_{\mathscr C^{\alpha}} \,,\\
||\sigma_a(D)(f_a\circ\xi-f^\sharp_a\circ\xi) ||_{H^{2\gamma}} & \lesssim a^{\gamma-(\alpha+2)} \, 
||f'_a||_{H^\gamma} \, \left(||\Xi_2||_{\mathscr C^{2\alpha+2}}+||\xi||^2_{\mathscr C^{\alpha}}\right) \,, \\ 
||\sigma_a(D)(f^\sharp_a\circ\xi) ||_{H^{2\gamma}}& \lesssim a^{ -1-\alpha/2}  ||f_a^\sharp||_{H^{2\gamma}}
||\xi||_{\mathscr C^{\alpha}} \,.  \, 
\end{align*}
We deduce (using also the inequality $||f_a^\sharp||_{H^{2\gamma}} \le a^\rho \, ||(f,f')||_{\tilde{\mathscr D}^{\gamma,\rho,A}_{\xi}}$) that  for all $a>1$, 
\begin{align*}
\label{eq:boundrem}
&a^{-\rho}||M(f,f')_a^\sharp||_{H^{2\gamma}}
\lesssim \\ &||(f,f')||_{\tilde{\mathscr D}^{\gamma,\rho,A}_{\xi}}
\left(a^{\gamma -(\alpha+2)}(||\Xi_2||_{\mathscr C^{2\alpha+2}}+||\xi||^2_{\mathscr C^\alpha})+
a^{\max(\gamma-\frac{\alpha+2}{2}-\rho,- 1- \frac{\alpha}{2})}||\xi||_{\mathscr C^\alpha}\right)+ 
a^{\gamma-1-\rho}||g||_{L^2} \,. 
\end{align*}
Gathering the above inequalities, we can finally establish the following upper-bound for any $a\ge A$, 
 \begin{equation} \label{eq:fix-bound}
 ||\mathcal{M}(f,f')||_{\tilde{\mathscr D}_\xi^{\gamma,\rho,A}}\lesssim A^{-\varrho}||(f,f')||_{\tilde{\mathscr D}^{\gamma,\rho,A}_{\xi}}(||\xi||_{\mathscr C^\alpha}+||\xi||^2_{\mathscr C^\alpha}+||\Xi_2||_{\mathscr C^{2\alpha+2}})+
\sup_{a\ge A} ||f_a||_{H^\gamma} + A^{\gamma/2-1}||g||_{L^2}
 \end{equation}
where $\varrho:=\min ( \frac{\alpha+2-\gamma}{2},1+\frac{\alpha}{2}-\rho , \rho - \gamma+ \frac{\alpha+2}{2})>0$ (under the constraints imposed on the parameters) 
and conclude that 
$\mathcal{M}(f,f')\in \tilde{\mathscr D}_\xi^{\gamma,\rho,A}$.

Using the linearity of $(f,f')\mapsto \mathcal{M}(f,f')$, we get,
for any $(f,f'),(h,h')\in \tilde{\mathscr D}_\xi^{\gamma,\rho,A}$, the inequality
\begin{align}\label{affine-bound-M}
||\mathcal{M}(f,f')-\mathcal{M}(h,h')||_{\tilde{\mathscr D}_\xi^{\gamma,\rho,A}}&\lesssim A^{-\varrho}||(f,f')-(h,h')||_{\tilde{\mathscr D}^{\gamma,\rho,A}_{\xi}}(||\xi||_{\mathscr C^\alpha}+||\xi||^2_{\mathscr C^\alpha}+||\Xi_2||_{\mathscr C^{2\alpha+2}})
\\&+\sup_{a\ge A}  ||f_a-h_a||_{H^\gamma}\,. \notag
\end{align}
The function $\mathcal{M}$ fails to be contracting. This issue can be circumvented 
by iterating $\mathcal{M}$ one more time: Using \eqref{eq:boundmap} and \eqref{affine-bound-M},   
we see that $\mathcal{M}^2(f,f'):=(M(M(f,f'),f),M(f,f'))$ satisfies  
\begin{align*}
||\mathcal{M}^2(f,f')-\mathcal{M}^2(h,h')||_{\tilde{\mathscr D}_\xi^{\gamma,\rho,A}}
&\lesssim A^{-2\varrho} ||(f,f')-(h,h')||_{\tilde{\mathscr D}^{\gamma,\rho,A}_{\xi}}(||\xi||_{\mathscr C^\alpha}+||\xi||^2_{\mathscr C^\alpha}+||\Xi_2||_{\mathscr C^{2\alpha+2}})^2
\\&+\sup_{a\ge A} ||M(f,f')_a-M(h,h')_a||_{H^\gamma}
\\&\lesssim A^{-\varrho}||(f,f')-(h,h')||_{\tilde{\mathscr D}^{\gamma,\rho,A}_{\xi}} (1+||\xi||_{\mathscr C^\alpha}+||\xi||^2_{\mathscr C^\alpha}+||\Xi_2||_{\mathscr C^{2\alpha+2}})^2\,. 
\end{align*}
If $A$ is large enough so that 
$A^{-\varrho}(1+||\xi||_{\mathscr C^\alpha}+||\xi||^2_{\mathscr C^\alpha}+||\Xi_2||_{\mathscr C^{2\alpha+2}})^2\ll1$, 
the map $\mathcal{M}^2:\tilde{\mathscr D}_\xi^{\gamma,\rho,A} \to \tilde{\mathscr D}_\xi^{\gamma,\rho,A}$ is a contraction. The fixed point Theorem insures that 
the map $\mathcal{M}$ admits a unique fixed point $(f,f')\in \tilde{\mathscr D}_\xi^{\gamma,\rho,A}$. 
Observing that $f'=f$,  we deduce that for all $a\geq A$, $f_a\in\mathscr D_{\xi}^{\gamma}$ and 
Eq. \eqref{eq:fix-bound} shows that for any $a\ge A$,  
\begin{equation}
\label{eq:resolvent bound}
||\mathcal{G}_a g||_{\mathscr D^\gamma_\xi} = ||f_a||_{\mathscr D^\gamma_\xi}\lesssim a^{\gamma-1}||g||_{L^2}\,,
\end{equation}
so that the map $\mathcal{G}_a$ is bounded.
\end{proof}

\begin{remark}
Proposition \ref{prop:resolvent} implies that the spectrum of $\mathscr{H}$ 
is contained in the interval $(-A(\Xi),+\infty)$. Moreover using the resolvent identity $\mathcal G_a-\mathcal G_b=(a-b)\mathcal G_a\mathcal G_b$ gives us by induction that 
$$
\|\mathcal G_af\|_{\mathscr D^{\gamma}_{\xi}}\lesssim \exp(C(A(\Xi)-a))\|f\|_{L^2}
$$
for all point $a<A(\Xi)$ which is not contained in the spectrum and where $C$ is a positive constant.
\end{remark}

The maps $\mathcal{G}_a, a\ge A(\Xi)$ are constructed through a fixed point procedure. We can 
deduce continuity bounds with respect to the rough distribution $(\xi,\Xi_2)\in \mathscr{X}^\alpha$ 
(noise) for those maps. 
 
\begin{lemma}
\label{lemma:cont}
Let $\alpha\in(-4/3,-1)$, $\gamma\in(\frac{2}{3},\alpha+2)$.
Then there exist two constants 
$C,\varrho>0$ such that for all $\Xi=(\xi,\Xi_2),\tilde{\Xi}=(\tilde{\xi},\tilde{\Xi}_2)\in\mathscr X^\alpha$  
and $a\geq C(1+||\Xi||+||\tilde{\Xi}||)^{\frac{2}{\varrho}}$, we have the following continuity bound  
\begin{align*}
||\mathcal{G}_a(\Xi)g-\mathcal{G}_a(\tilde{\Xi})g||_{H^\gamma}\lesssim ||g||_{L^2}||\Xi-\tilde{\Xi}||_{\mathscr E^\alpha}(1+||\Xi||_{\mathscr E^\alpha}+||\tilde{\Xi}||_{\mathscr E^\alpha})^2 
\end{align*}
where $\mathcal{G}_a(\Xi):L^2 \to \mathscr{D}_{\xi}^\gamma$ (respectively $\mathcal{G}_a(\tilde\Xi):L^2 
\to \mathscr{D}_{\xi}^\gamma$) is the resolvent operator
associated to the rough distribution $\Xi\in \mathscr{X}^\alpha$ (resp. $\tilde{\Xi}$)
as constructed in Proposition~\ref{prop:resolvent}.
\end{lemma}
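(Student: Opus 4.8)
The plan is to re-run the fixed-point scheme of Proposition~\ref{prop:resolvent}, now applied to the \emph{difference} of the two resolvents. First I would fix $C$ large and set $A=C(1+||\Xi||_{\mathscr E^\alpha}+||\tilde\Xi||_{\mathscr E^\alpha})^{2/\varrho}$, so that for every $a\ge A$ both families $f_\cdot:=\mathcal G_\cdot(\Xi)g$ and $\tilde f_\cdot:=\mathcal G_\cdot(\tilde\Xi)g$ are defined as the fixed points of $\mathcal M$ (built respectively from $\Xi$ and $\tilde\Xi$) in the spaces $\tilde{\mathscr D}_\xi^{\gamma,\rho,A}$ and $\tilde{\mathscr D}_{\tilde\xi}^{\gamma,\rho,A}$, and satisfy the uniform a priori bound $||f_a||_{\mathscr D_\xi^\gamma}+||\tilde f_a||_{\mathscr D_{\tilde\xi}^\gamma}\lesssim||g||_{L^2}$ coming from \eqref{eq:resolvent bound} (here $a^{\gamma-1}\le 1$ since $\gamma<1$). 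Both obey $h_a=\sigma_a(D)(h_a\cdot\xi-g)$ with $h_a\cdot\xi$ given by \eqref{new-product}, so subtracting yields
\begin{align*}
u_a:=f_a-\tilde f_a=\sigma_a(D)\big(f_a\cdot_{\Xi}\xi-\tilde f_a\cdot_{\tilde\Xi}\tilde\xi\big),
\end{align*}
and everything reduces to estimating the right-hand side in $H^\gamma$.

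The step I expect to be the main obstacle is that $f_a$ and $\tilde f_a$ are paracontrolled by the \emph{different} reference distributions $\sigma(D)\xi$ and $\sigma(D)\tilde\xi$; hence $u_a$ does not belong to $\mathscr D_\xi^\gamma$ (its $\xi$-remainder $u_a-u_a\prec\sigma(D)\xi$ only lies in $H^\gamma$), and the right auxiliary variable to propagate alongside $u_a$ is the difference of the two remainders, each taken with respect to its own noise,
\begin{align*}
v_a:=f_a^\sharp-\tilde f_a^\sharp=\big(f_a-f_a\prec\sigma(D)\xi\big)-\big(\tilde f_a-\tilde f_a\prec\sigma(D)\tilde\xi\big),
\end{align*}
which does lie in $H^{2\gamma}$, being a difference of two elements of $H^{2\gamma}$. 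With this choice I would expand $f_a\cdot_{\Xi}\xi-\tilde f_a\cdot_{\tilde\Xi}\tilde\xi$ using the explicit form of \eqref{new-product} and telescope in the unknown and in the noise. The terms produced are: the paraproduct differences $u_a\prec\xi+u_a\succ\xi$ and $\tilde f_a\prec(\xi-\tilde\xi)+\tilde f_a\succ(\xi-\tilde\xi)$, always controlled by Proposition~\ref{prop:bony-estim}; the products $u_a\,\Xi_2$ and $\tilde f_a\,(\Xi_2-\tilde\Xi_2)$, controlled by Proposition~\ref{prop:bony-estim} thanks to $\gamma+2\alpha+2>0$; the commutator differences $\mathscr R(u_a,\sigma(D)\xi,\xi)$, $\mathscr R(\tilde f_a,\sigma(D)(\xi-\tilde\xi),\xi)$ and $\mathscr R(\tilde f_a,\sigma(D)\tilde\xi,\xi-\tilde\xi)$, controlled by the commutation Lemma~\ref{prop:commu} together with Proposition~\ref{prop:multiply}; and the resonating terms $v_a\circ\xi$ and $\tilde f_a^\sharp\circ(\xi-\tilde\xi)$, controlled by Proposition~\ref{prop:bony-estim} thanks to $2\gamma+\alpha>0$. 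Crucially, no ill-defined product ever appears, so all these bounds hold directly for $\Xi,\tilde\Xi\in\mathscr X^\alpha$ and are expressed through $\mathscr E^\alpha$-norms only; each contribution is linear in $||\Xi-\tilde\Xi||_{\mathscr E^\alpha}$, carries an a priori factor $\lesssim||g||_{L^2}$ from $f_a,\tilde f_a$, and picks up at most one extra power of $(1+||\Xi||_{\mathscr E^\alpha}+||\tilde\Xi||_{\mathscr E^\alpha})$, coming from the commutator terms.

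After that I would apply $\sigma_a(D)$ and keep track of the powers of $a$ exactly as in the proof of Proposition~\ref{prop:resolvent} (using $|\partial^m\sigma_a(k)|\le a^{\theta-1}(|k|+1)^{-2\theta-|m|}$ and $|\partial^m(\sigma_a-\sigma)(k)|\lesssim a^\theta(|k|+1)^{-2-2\theta-|m|}$), together with the identity $v_a=\big(u_a-u_a\prec\sigma(D)\xi\big)-\tilde f_a\prec\sigma(D)(\xi-\tilde\xi)$ and Proposition~\ref{prop:multiply} to close the estimate for $v_a$, in the same way $f^\sharp$ was closed there. Setting $\mathcal N:=\sup_{a\ge A}||u_a||_{H^\gamma}+\sup_{a\ge A}a^{-\rho}||v_a||_{H^{2\gamma}}$, this produces a linear inequality
\begin{align*}
\mathcal N\ &\lesssim\ A^{-\varrho}\big(1+||\Xi||_{\mathscr E^\alpha}+||\tilde\Xi||_{\mathscr E^\alpha}\big)^{2}\,\mathcal N\\
&\quad+\ ||g||_{L^2}\,||\Xi-\tilde\Xi||_{\mathscr E^\alpha}\,\big(1+||\Xi||_{\mathscr E^\alpha}+||\tilde\Xi||_{\mathscr E^\alpha}\big).
\end{align*}
For $C$ large enough the first term on the right is absorbed by the left-hand side (iterating the map once, exactly as for $\mathcal M^2$ in Proposition~\ref{prop:resolvent}, should the linear part fail to be directly contracting), which gives
\begin{align*}
\sup_{a\ge A}||\mathcal G_a(\Xi)g-\mathcal G_a(\tilde\Xi)g||_{H^\gamma}\ \lesssim\ ||g||_{L^2}\,||\Xi-\tilde\Xi||_{\mathscr E^\alpha}\,\big(1+||\Xi||_{\mathscr E^\alpha}+||\tilde\Xi||_{\mathscr E^\alpha}\big),
\end{align*}
which in particular implies the stated bound, since its right-hand side only increases when the exponent $1$ is raised to $2$. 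Apart from the a priori bounds inherited from Proposition~\ref{prop:resolvent}, the argument uses only Propositions~\ref{prop:bony-estim} and \ref{prop:multiply} and the commutation Lemma~\ref{prop:commu}; the one genuinely delicate point is the choice of the auxiliary variable $v_a=f_a^\sharp-\tilde f_a^\sharp$ (remainders with respect to the respective noises), which is exactly what makes every resonating-type product occurring in the difference legitimate and reduces the remainder of the proof to a verbatim repetition of the fixed-point argument of Proposition~\ref{prop:resolvent}.
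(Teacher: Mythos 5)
Your proposal matches the paper's proof essentially verbatim: the paper also starts from the fixed-point identities $f_a=\sigma_a(D)(f_a\cdot\xi-g)$ and $\tilde f_a=\sigma_a(D)(\tilde f_a\cdot\tilde\xi-g)$, subtracts them, propagates precisely the pair $(f_a-\tilde f_a,\,f_a^\sharp-\tilde f_a^\sharp)$ with each $\sharp$-remainder taken with respect to its own noise, telescopes the products and commutators using Propositions~\ref{prop:bony-estim}, \ref{prop:multiply} and \ref{prop:commu}, and absorbs the self-referential term by choosing $a$ (equivalently $C$) large. The only cosmetic difference is that you present it as a single fixed-point estimate on the norm $\mathcal N$ while the paper first closes the $H^{2\gamma}$ estimate for $f_a^\sharp-\tilde f_a^\sharp$ and then the $H^\gamma$ estimate for $f_a-\tilde f_a$, which amounts to the same computation.
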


\begin{proof}
From Proposition \ref{prop:resolvent}, the operator $\mathcal{G}_a^{\Xi}$ is well defined for 
$a\geq A(||\Xi||_{\mathscr E^\alpha})$ and satisfies  
\begin{align*}
||\mathcal{G}_a(\Xi)g||_{H^\gamma}\lesssim  a^{\gamma/2-1}||g||_{L^2}\,.
\end{align*}
Let $a\geq A(||\Xi||_{\mathscr E^\alpha})+A(||\tilde{\Xi}||_{\mathscr E^\alpha})$ and, to simplify notations, 
set $f_a=\mathcal{G}_a(\Xi)g$ (resp. $\tilde{f}_a:= \mathcal{G}_a(\tilde\Xi) g$).  Using the relations 
$f_a=\sigma_a(D)(f_a\xi-g)$ satisfied by $f_a$ and $\tilde{f}_a$, we deduce that 
\begin{align*}
f_a-\tilde{f}_a= & \sigma_a(D)(f_a\prec\xi-\tilde{f}_a\prec\tilde{\xi}+
f_a\succ\xi -\tilde{f}_a\succ\tilde{\xi}
+f_a\Xi_2-\tilde{f}_a\tilde{\Xi}_2
+f_a^\sharp\circ\xi-\tilde{f}_a^\sharp\circ\tilde{\xi})\\
&+\sigma_a(D)(\mathscr{R}(f_a,\xi,\sigma(D)\xi)-\mathscr{R}(\tilde{f}_a,\tilde{\xi},\sigma(D)\tilde{\xi}))\,. 
\end{align*}
and 
\begin{equation*}
\begin{split}
f_a^{\sharp}-\tilde{f}_a^{\sharp}=&\sigma_a(D)(f_a\succ\xi - \tilde{f}_a\succ\tilde{\xi}
+f_a\Xi_2-\tilde{f}_a\tilde{\Xi}_2+f_a^\sharp\circ\xi-\tilde{f}_a^\sharp\circ\tilde{\xi})\\
&+\sigma_a(D)(\mathscr{R}(f_a,\xi,\sigma(D)\xi)-\mathscr{R}(\tilde{f}_a,\tilde{\xi},\sigma(D)\tilde{\xi}))
+\mathcal{C}_a(f_a,\xi)-\mathcal{C}_a(\tilde{f}_a,\tilde{\xi})
\\&+ f_a\prec(\sigma_a-\sigma)(D)\xi-\tilde{f}_a\prec(\sigma_a-\sigma)(D)\tilde{\xi}
\end{split}
\end{equation*}
where $\mathcal{C}_a(f_a,\xi)=\sigma_a(D)(f_a\prec\xi)-f_a\prec\sigma_a(D)\xi$. 
Therefore using the same argument as in the proof of Proposition~\ref{prop:resolvent} 
(based on the Schauder, Bony and commutator estimates) 
and the bilinearity of $\mathscr C_a$, we obtain
\begin{align*}
&a^{-\rho}||f_a^{\sharp}-\tilde{f}_a^\sharp||_{H^{2\gamma}}
\\&\lesssim  a^{-(1+\alpha/2)}\left(a^{-\rho}||f_a^{\sharp}-\tilde{f}_a^{\sharp}||_{H^{2\gamma}}||\xi||_{\mathscr C^{\alpha}}+a^{-\rho}||\tilde{f}_a^\sharp||_{H^{2\gamma}}||\tilde{\xi}-\xi||_{\mathscr C^{\alpha}}\right)
\\&+a^{-\min(\frac{2+\alpha-\gamma}{2}+\rho,\rho-\gamma+\frac{\alpha+2}{2})}||f_a-\tilde{f}_a||_{H^\gamma}(||\xi||_{\mathscr C^\alpha}+||\xi||^2_{\mathscr C^\alpha}+||\Xi_2||_{\mathscr C^{2\alpha+2}})
\\&+a^{-\min(\frac{2+\alpha-\gamma}{2}+\rho,\rho-\gamma+\frac{\alpha+2}{2})}||\tilde{f}_a||_{H^\gamma}(||\xi-\tilde{\xi}||_{\mathscr C^\alpha}(1+||\xi||_{\mathscr C^\alpha})+||\tilde{\xi}||_{\mathscr C^\alpha})+||\Xi_2-\tilde{\Xi}_2||_{\mathscr C^{2\alpha+2}})\,. 
\end{align*}
Then choosing $a$ large enough such that $a^{-(1+\alpha/2)}||\xi||\ll1$ and using the fact 
that $||\tilde{f}_a||_{\mathscr D_{\tilde{\xi}}^{\gamma}}\lesssim a^{\gamma/2-1} ||g||_{L^2}$ we obtain that 
\begin{equation*}
\begin{split}
&a^{-\rho}||f_a^{\sharp}-\tilde{f}_a^\sharp||_{H^{2\gamma}}\lesssim  a^{\gamma/2-1}||g||_{L^2}||\xi-\tilde{\xi}||_{\mathscr C^{\alpha}}
\\&+a^{-\min(\frac{2+\alpha-\gamma}{2}+\rho,\rho-\gamma+\frac{\alpha+2}{2})}||f_a-\tilde{f}_a||_{H^\gamma}||\Xi||_{\mathscr E^\alpha}(1+||\xi||_{\mathscr C^\alpha})
\\&+a^{-\min(\frac{2+\alpha-\gamma}{2}+\rho,\rho-\gamma+\frac{\alpha+2}{2})+\gamma/2-1}||g||_{H^\gamma}(||\xi-\tilde{\xi}||_{\mathscr C^\alpha}(1+||\xi||_{\mathscr C^\alpha}+||\tilde{\xi}||_{\mathscr C^\alpha})+||\Xi_2-\tilde{\Xi}_2||_{\mathscr C^{2\alpha+2}}).
\\&\lesssim a^{\gamma/2-1}||g||_{L^2}||\Xi-\tilde{\Xi}||_{\mathscr E^\alpha}(1+||\xi||_{\mathscr C^\alpha}+||\tilde{\xi}||_{\mathscr C^\alpha})
\\&+a^{-\min(\frac{2+\alpha-\gamma}{2}+\rho,\rho-\gamma+\frac{\alpha+2}{2})}||f_a-\tilde{f}_a||_{H^\gamma}||\Xi||_{\mathscr E^\alpha}(1+||\xi||_{\mathscr C^\alpha})\,.
\end{split}
\end{equation*}
Thus it suffices to bound the term $||f_a-f_a||_{H^\gamma}$ which can be treated easily with the same argument and we 
finally obtain 
\begin{equation*}
\begin{split}
&||f_a-\tilde{f}_a||_{H^\gamma}\lesssim a^{-\min(1+\alpha/2,\frac{\alpha+2-\gamma}{2})}||f_a-\tilde{f}_a||_{H^\gamma}(||\xi||_{\mathscr C^\alpha}+||\xi||^2_{\mathscr C^\alpha}+||\Xi_2||_{\mathscr C^{2\alpha+2}})
\\&+a^{-\min(1+\alpha/2,\frac{\alpha+2-\gamma}{2})-1+\gamma}||\tilde{f}_a||_{H^\gamma},. 
\end{split}
\end{equation*}
It is now plain to deduce that for $a$ large enough
$$
||f_a-\tilde{f}_a||_{H^\gamma}\lesssim a^{-\min(1+\alpha/2,\frac{\alpha+2-\gamma}{2})-1+\gamma}||\tilde{f}_a||_{H^\gamma}
$$
and then since $\|f_a^{\sharp}-\tilde{f}_a^\sharp\|_{H^{2\gamma}}$ is controlled by $||f_a-\tilde{f}_a||_{H^\gamma}$, 
the needed result follows.
\end{proof}
\begin{remark}
Another important remark we shall use later 
is that the resolvent $\mathcal G_a$ allows us to describe the space $\mathscr D^\gamma_{\xi}$, indeed if   
$\alpha,\gamma$, $\Xi$ and $A(\Xi)$ satisfy the assumptions of Proposition~\ref{prop:resolvent}, then 
for all $a\geq A$ we have that $\mathcal G_aH^{2\gamma-2}=\mathscr D_{\xi}^{\gamma}$. 
\end{remark}

\subsection{Restriction on a space of strongly paracontrolled distribution}
So far, we have constructed the operator $\mathscr{H}$ on the space $\mathscr{D}_\xi^\gamma$ with values 
in the Sobolev space $H^{\gamma-2}$. The space $\mathscr{D}_\xi^\gamma$ depends on
the realization of the noise $\xi$ and our construction also uses an enhancement $(\xi,\Xi_2)\in \mathscr X^\alpha$ 
($\Xi_2$ is an additional input) 
of the noise $\xi\in \mathscr{C}^\alpha$.  At this point, the function $g:= \mathscr{H} f \in H^{\gamma-2}$
is a distribution with regularity $\gamma-2 \le \alpha$.

In this subsection, we define a smaller space $\mathscr{D}_\Xi^\gamma$ (continuously embedded in $\mathscr{D}_\xi^\gamma$)
such that, when restricted to the subspace $\mathscr{D}_\Xi^\gamma$, the operator $\mathscr{H}$, as constructed in Definition 
\ref{def:Op}, takes values in $L^2(\mathbb T_L^2)$. 
We shall also establish that the resolvent $\mathcal{G}_a:L^2 \to \mathscr{D}_\Xi^\gamma$ 
is a bounded (continuous) operator. 

Before giving the definition of the subspace $\mathscr{D}_\Xi^\gamma$, let us  analyze the regularity of the eigenvectors of $\mathscr{H}$ 
such that $\mathscr{H}f=\lambda f$ for some $\lambda\in \R$. 
 
First, if $f\in \mathscr{D}_\xi^\gamma$ with $2/3<\gamma < \alpha+2$, it is easy to see that 
\begin{align*}
-\Delta f=-\Delta f^\sharp-2\, \nabla f\prec\nabla (\sigma(D) \xi) +(1-\Delta)f\prec \sigma(D) \xi -f\prec\xi+
\end{align*}
where $\nabla$ is the gradient. It follows that  
\begin{align*}
\mathscr H f =-\Delta f^\sharp-2\nabla f\prec\nabla(\sigma(D) \xi)+ (1-\Delta) f\prec \sigma(D) \xi+f\Xi_2+\mathscr{R}(f,\xi,\sigma(D) \xi)
+f^\sharp\circ\xi+f\succ\xi\,. 
\end{align*}
Checking the regularity of each term in this later expression, we see that $\mathscr H f\in H^{2\gamma-2}$. We note that 
$2\gamma-2 \in (-2/3,0)$ if $2/3<\gamma<\alpha+2$. 

Now, coming back to the eigenvalue problem, we see that, if $f\in \mathscr{D}_\xi^\gamma$ (for $\gamma<\alpha+2$) 
is an eigenvector of 
$\mathscr{H}$, then the associated remainder $f^\sharp$ satisfies 
\begin{align*}
(1-\Delta) f^\sharp=\lambda f+ 2\nabla f\prec\nabla (\sigma(D) \xi) -(1-\Delta)f\prec \sigma(D) \xi -f\succ\xi-f\Xi_2-\mathcal{ R}(f,\xi,\sigma(D) \xi)-f^\sharp\circ\xi+f^{\sharp}\,. 
\end{align*}
Rewriting this equality with Fourier multipliers, we get 
\begin{align*}
 f^\sharp= \sigma(D) \left(   2\nabla f\prec\nabla (\sigma(D) \xi)-(1-\Delta) f\prec \sigma(D) \xi -f\succ\xi-f\Xi_2 \right) + f^\flat
\end{align*}
where 
\begin{align*}
f^\flat= \sigma(D)(\lambda f- \mathscr{R}(f,\xi,\sigma(D)\xi ) - f^\sharp\circ\xi+f^{\sharp})\,. 
\end{align*}
It is easy to check from this last expression that $f^\flat \in H^{3\gamma} \subseteq H^2$ (where we have noticed that $3\gamma >2$). 
As a conclusion of this analysis, we have proved that, if $f\in \mathscr{D}_\xi^\gamma$ is an eigenvector 
of $\mathscr{H}$, then it admits the following paracontrolled expansion  
\begin{align*}
f = f\prec \sigma(D) \xi+  B(f,\Xi) + f^\flat 
\end{align*}
where $B$ is the a bilinear form defined as
\begin{align*}
B: (f,\Xi) \in H^{\gamma} \times \mathscr X^\alpha \longrightarrow 
\sigma(D)(2\nabla f\prec\nabla (\sigma(D) \xi)-(1-\Delta)f\prec \sigma(D) \xi -f\succ\xi - f\Xi_2) \in H^{2\gamma} 
\end{align*}
and where the remainder term $f^\flat\in H^{3\gamma}$. 

This discussion motivates the following definition for the subspace 
$\mathscr{D}_\Xi^\gamma \subseteq \mathscr{D}_\xi^\gamma$ 
which is such that $\mathscr{D}_\Xi^\gamma$ contains the eigenvectors of $\mathscr{H}$. 

\begin{definition}\label{def-paracontrolled-2}
Let $-4/3<\alpha<-1$, $-\frac{\alpha}{2}<\gamma \le \alpha+2$  and $\Xi=(\xi,\Xi_2)\in\mathscr X^\alpha$, 
we define the space of strong paracontrolled distribution
\begin{align*}
\mathscr D_{\Xi}^{\gamma}:=\left\{f\in H^\gamma;\quad f^\flat:=f- f\prec \sigma(D)\xi - B(f,\Xi) \in H^2\right\}
\end{align*}
 equipped with the following scalar product 
\begin{align*}
\langle f,g\rangle_{\mathscr D_{\Xi}^{\gamma}}=\langle f,g\rangle_{H^\gamma}+\langle f^\flat,g^\flat\rangle_{H^2}\,. 
\end{align*}
\end{definition}
\begin{remark}
Of course, the Hilbert space $\mathscr D_{\Xi}^{\gamma}$ is continuously 
embedded in $\mathscr D_\xi^\alpha$ as $f^\sharp:= f-f\prec\sigma(D)\xi= 
B(f,\Xi) + f^\flat\in H^{2\gamma}$ for $f\in \mathscr D_{\Xi}^{\gamma} $. 
It is also clear that actually $\mathscr D_{\Xi}^{\gamma} \subset H^{\alpha+2}$ 
so that the spaces $\mathscr D_{\Xi}^{\gamma}, \gamma \in [2/3 ;\alpha+2] $ are all equal. 
We drop the super script $\gamma$ for the space 
\begin{align*}
\mathscr D_{\Xi}=\left\{f\in H^{\alpha+2} \;, \quad f^\flat:=f- f\prec \sigma(D)\xi - B(f,\Xi) \in H^2\right\}\,. 
\end{align*}
If $\xi$ is a smooth function, then $\mathscr D_{\Xi}=H^2$. 
\end{remark}

The following technical estimates on the bilinear form $B$ will be useful in the sequel. 
\begin{lemma}
\label{lemma: bbound}
Let $-4/3<\alpha<-1$, $2/3<\gamma\le \alpha+2$.
For $a\ge 2$,  we define the bilinear form $B_a$ such that for $(f,\Xi)\in H^\gamma\times\mathscr X^\alpha$, 
\begin{align*}
B_a(f,\Xi):=\sigma_a(D)(2\nabla f\prec\nabla (\sigma(D) \xi)-(1-\Delta)\Delta f\prec \sigma(D) \xi -f\succ\xi - f\Xi_2)
\end{align*}
where for $a>2$,  $\sigma_a(k):=\frac{1}{a+|k|^2}$. Then we have the following estimates 
\begin{align*}
||B_a(f,\Xi)||_{H^{2\gamma}}\lesssim a^{-\frac{2-\gamma+\alpha}{2}}||f||_{H^\gamma}||\Xi||_{\mathscr E^\alpha},\quad||(B-B_a)(f,\Xi)||_{H^{2\gamma+2}}\lesssim a||f||_{H^\gamma}||\Xi||_{\mathscr E^\alpha}
\end{align*}
\end{lemma}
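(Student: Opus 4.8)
The plan is to expand $B_a(f,\Xi)$ (and likewise $(B-B_a)(f,\Xi)=(\sigma_a(D)-\sigma(D))(\cdots)$) as a sum of the four paraproduct contributions coming from $2\nabla f\prec\nabla(\sigma(D)\xi)$, from $(1-\Delta)f\prec\sigma(D)\xi$, from $f\succ\xi$ and from $f\Xi_2$, and to estimate each of them separately. For every such contribution I would first read off, using the Bony estimates of Proposition~\ref{prop:bony-estim} together with the Schauder estimate of Proposition~\ref{prop:multiply} (which in particular gives $\|\sigma(D)\xi\|_{\mathscr C^{\alpha+2}}\lesssim\|\xi\|_{\mathscr C^\alpha}\le\|\Xi\|_{\mathscr E^\alpha}$), the Sobolev exponent of the term \emph{before} the Fourier multiplier is applied; and then I would apply the multiplier, using that the bound $|\partial^m\sigma_a(k)|\lesssim a^{\theta-1}(|k|+1)^{-2\theta-|m|}$, valid for all $\theta\in[0,1]$, implies via Proposition~\ref{prop:multiply} that $\sigma_a(D):H^s\to H^{s+2\theta}$ with operator norm $\lesssim a^{\theta-1}$, while, as in the proof of Proposition~\ref{prop:resolvent}, the estimate $|\partial^m(\sigma_a-\sigma)(k)|\lesssim a^{\theta}(|k|+1)^{-2-2\theta-|m|}$ gives $\sigma_a(D)-\sigma(D):H^s\to H^{s+2+2\theta}$ with norm $\lesssim a^\theta$. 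The free parameter $\theta$ will be chosen in each case so as to land exactly in the desired target space.

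For the first inequality: since $-4/3<\alpha<-1$ and $2/3<\gamma\le\alpha+2$ we have $\gamma-1<0$ and $\gamma-2<0$, so the third item of Proposition~\ref{prop:bony-estim} applies to the first two terms and yields $\nabla f\prec\nabla(\sigma(D)\xi)\in H^{\gamma+\alpha}$ and $(1-\Delta)f\prec\sigma(D)\xi\in H^{\gamma+\alpha}$, both with norm $\lesssim\|f\|_{H^\gamma}\|\xi\|_{\mathscr C^\alpha}$; the fourth item gives $f\succ\xi\in H^{\gamma+\alpha}$ with the same bound; and, exactly as in the proof of Proposition~\ref{prop:reson} (this is where $\gamma+2\alpha+2>0$ is used, together with $2\alpha+2<0$), $f\Xi_2\in H^{2\alpha+2}$ with norm $\lesssim\|f\|_{H^\gamma}\|\Xi_2\|_{\mathscr C^{2\alpha+2}}$. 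Applying $\sigma_a(D)$ to the first three terms with $\theta=\tfrac{\gamma-\alpha}{2}\in[0,1]$ maps $H^{\gamma+\alpha}$ into $H^{2\gamma}$ with the gain $a^{\theta-1}=a^{-\frac{2-\gamma+\alpha}{2}}$, while applying it to $f\Xi_2$ with $\theta=\gamma-\alpha-1\in[0,1]$ maps $H^{2\alpha+2}$ into $H^{2\gamma}$ with the strictly better gain $a^{\gamma-\alpha-2}\le a^{-\frac{2-\gamma+\alpha}{2}}$ (since $2-\gamma+\alpha>0$). Summing the four bounds and using $\|\xi\|_{\mathscr C^\alpha},\|\Xi_2\|_{\mathscr C^{2\alpha+2}}\le\|\Xi\|_{\mathscr E^\alpha}$ gives the first estimate.

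For the second inequality one repeats the argument with $\sigma_a(D)-\sigma(D)$ in place of $\sigma_a(D)$ and the choice $\theta=1$: the three terms in $H^{\gamma+\alpha}$ are sent into $H^{\gamma+\alpha+4}\subseteq H^{2\gamma+2}$ (the inclusion using $\gamma\le\alpha+2$) and $f\Xi_2\in H^{2\alpha+2}$ is sent into $H^{2\alpha+6}\subseteq H^{2\gamma+2}$, each with norm $\lesssim a\,\|f\|_{H^\gamma}\|\Xi\|_{\mathscr E^\alpha}$, which is the claim. The only genuinely delicate aspect is the bookkeeping: one must verify at every step that the Sobolev and H\"older indices handed to Proposition~\ref{prop:bony-estim} lie in its admissible ranges (in particular $\gamma-1<0$, $\gamma-2<0$, $\gamma+2\alpha+2>0$ and $2\alpha+2<0$, all immediate consequences of the standing assumptions on $\alpha$ and $\gamma$), and that the smoothing exponent $\theta$ extracted from the multiplier bound stays in $[0,1]$ while yielding exactly the advertised power of $a$; there is no analytic obstacle beyond this careful tracking of exponents.
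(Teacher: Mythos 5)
Your proof is correct and follows essentially the same route as the paper's: estimate the Sobolev exponent of each paraproduct term via Proposition~\ref{prop:bony-estim}, then use the parametrized multiplier bounds $|\partial^m\sigma_a(k)|\lesssim a^{\theta-1}(1+|k|)^{-2\theta-|m|}$ and $|\partial^m(\sigma_a-\sigma)(k)|\lesssim a^{\theta}(1+|k|)^{-2-2\theta-|m|}$ together with the Schauder estimate, choosing $\theta=(\gamma-\alpha)/2$ (respectively $\theta=1$) to land in $H^{2\gamma}$ (respectively $H^{2\gamma+2}$). The paper's proof states these two multiplier bounds and the choices of $\theta$ without spelling out the term-by-term bookkeeping, but the argument is the same; you also correctly read $(1-\Delta)\Delta f$ in the lemma's display as the typo for $(1-\Delta)f$ appearing in the earlier definition of $B$.
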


\begin{proof}
Towards the first estimate, it suffices to notice that, for any $\theta\in (0,1)$ and $m\in \mathbb N^2$, $|\partial^m\sigma_a(k)|\lesssim a^{-(1-\theta)}(|k|+1)^{-2\theta-|m|}$ and the result follows from the Schauder estimate Proposition~\ref{prop:multiply} applied for 
the particular value $\theta=(\gamma-\alpha)/2$.   

The second inequality is proved with the same method recalling Eq. \eqref{deriv-sigma0-sigma-a}
\begin{align*}
|\partial^m(\sigma-\sigma_a)(k)|\lesssim a\, (|k|+1)^{-4-|m|}
\end{align*}
for all $m\in\mathbb N^2$ and using the Schauder estimate again.
\end{proof}

We can finally prove that, when restricted to the space  $\mathscr{D}_\Xi$
 of strongly paracontrolled distributions, the linear operator $\mathscr{H}$ as defined in Definition \ref{def:Op}, 
 takes values in $L^2$. 
 \begin{Proposition}
If $f\in \mathscr D_{\Xi}$, then $\mathscr{H} f \in L^2(\T_L^2)$.  
\end{Proposition}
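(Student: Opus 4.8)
The plan is to use the paracontrolled structure of $f \in \mathscr{D}_\Xi$ directly. By definition, $f = f \prec \sigma(D)\xi + B(f,\Xi) + f^\flat$ with $f \in H^{\alpha+2}$, $B(f,\Xi) \in H^{2\gamma}$, and $f^\flat \in H^2$. First I would write $\mathscr{H}f = -\Delta f + f\xi$ and expand $f\xi = f \prec \xi + f \circ \xi + f \succ \xi$. The resonating term $f \circ \xi \in H^{2\alpha+2}$ is already well defined by Proposition~\ref{prop:reson}, and $f \succ \xi \in H^{\alpha+\gamma}$ by the Bony estimates, but neither of these is in $L^2$ on its own; the point is that the worst-behaved contributions must cancel against $-\Delta f$. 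So the key computation is to expand $-\Delta f$ using the paracontrolled decomposition, exactly as was done in the analysis preceding Definition~\ref{def-paracontrolled-2}:
\begin{align*}
-\Delta f = -\Delta f^\flat - \Delta B(f,\Xi) - \Delta(f \prec \sigma(D)\xi)
\end{align*}
and then use the paraproduct identity $-\Delta(f \prec \sigma(D)\xi) = (1-\Delta)f \prec \sigma(D)\xi - f \prec \sigma(D)\xi - 2\nabla f \prec \nabla(\sigma(D)\xi) + (\text{smoother commutator terms})$, with the commutator $[\Delta, \text{paraproduct}]$ landing in a space of regularity strictly better than $0$.

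The next step is to collect terms. The contribution $(1-\Delta)f \prec \sigma(D)\xi$ has regularity only $\alpha + \gamma$ (bad), but recall $\sigma(D) = -(1-\Delta)^{-1}$, so $(1-\Delta)f \prec \sigma(D)\xi$ combined with the paraproduct $f \prec \xi$ coming from $f\xi$ — wait, more carefully: the term $f \prec \xi$ in $f\xi$ together with the $-f \prec \sigma(D)\xi$ and the $(1-\Delta)f \prec \sigma(D)\xi$ pieces assemble precisely into the structure that $B(f,\Xi)$ was designed to absorb. Indeed $B(f,\Xi) = \sigma(D)\big(2\nabla f \prec \nabla(\sigma(D)\xi) - (1-\Delta)f \prec \sigma(D)\xi - f \succ \xi - f\Xi_2\big)$, so $-\Delta B(f,\Xi) = (1-\Delta)B(f,\Xi) - B(f,\Xi) = -\big(2\nabla f \prec \nabla(\sigma(D)\xi) - (1-\Delta)f \prec \sigma(D)\xi - f \succ \xi - f\Xi_2\big) - B(f,\Xi)$. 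Feeding this in, the terms $2\nabla f \prec \nabla(\sigma(D)\xi)$, $(1-\Delta)f \prec \sigma(D)\xi$, $f \succ \xi$ cancel against their counterparts from $-\Delta(f\prec\sigma(D)\xi)$ and from $f\xi$, while $f\Xi_2$ cancels the $f\Xi_2$ hidden inside $f\circ\xi$ (via \eqref{eq:def-reson}, $f\circ\xi = f\Xi_2 + \mathscr{R}(f,\sigma(D)\xi,\xi) + f^\sharp\circ\xi$). What survives is
\begin{align*}
\mathscr{H}f = -\Delta f^\flat - B(f,\Xi) + \mathscr{R}(f,\sigma(D)\xi,\xi) + f^\sharp \circ \xi + (\text{commutator remainders})\,.
\end{align*}

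Finally I would check that every surviving term lies in $L^2$. We have $-\Delta f^\flat \in L^2$ since $f^\flat \in H^2$; $B(f,\Xi) \in H^{2\gamma} \subset L^2$ since $2\gamma > 0$; $\mathscr{R}(f,\sigma(D)\xi,\xi) \in H^{3\alpha+4}$ (by Proposition~\ref{prop:commu} with $f \in H^{\alpha+2}$, $\sigma(D)\xi \in \mathscr{C}^{\alpha+2}$, $\xi \in \mathscr{C}^\alpha$), which is positive for $\alpha > -4/3$, hence in $L^2$; $f^\sharp \circ \xi \in H^{2\gamma+\alpha}$ by the resonating Bony estimate \eqref{bound-res-sharp}, positive since $\gamma > -\alpha/2$, hence in $L^2$; and the Schauder/commutator remainder terms gain two derivatives and are comfortably in $L^2$. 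The main obstacle — really the only subtlety — is to carry out the bookkeeping of the $[\Delta, \prec]$ commutator precisely, i.e. to verify the Leibniz-type identity $\Delta(f \prec g) = (\Delta f) \prec g + 2\nabla f \prec \nabla g + f \prec (\Delta g) + \text{lower-order}$ at the level of Littlewood–Paley blocks, with the remainder genuinely smoother; this is a standard paraproduct manipulation but must be done carefully so that the cancellations are exact and nothing with negative regularity is left over. Once the algebra is confirmed, membership in $L^2$ follows termwise from the estimates already established in this section.
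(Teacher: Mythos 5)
Your approach is the same as the paper's: expand $f$ via the strong paracontrolled decomposition $f = f\prec\sigma(D)\xi + B(f,\Xi) + f^\flat$, push the Laplacian through, and observe that the singular pieces cancel so that what remains is termwise in $L^2$. The paper's own proof is a one-display computation arriving at
\begin{align*}
\mathscr{H}f = -\Delta f^\flat - f\prec\sigma(D)\xi - B(f,\Xi) + \mathscr{R}(f,\xi,\sigma(D)\xi) + (B(f,\Xi)+f^\flat)\circ\xi \,,
\end{align*}
which is in substance what you write.

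There is however one concrete algebraic slip in the step you yourself flag as the only subtlety, and it is worth fixing because it affects which cancellation kills the singular paraproduct. The Leibniz identity for $\prec$ is \emph{exact} (no commutator remainder), because on each Littlewood--Paley block $\Delta(\Delta_i f\,\Delta_j g) = (\Delta\Delta_i f)\Delta_j g + 2\nabla\Delta_i f\cdot\nabla\Delta_j g + \Delta_i f\,\Delta\Delta_j g$, so that
\begin{align*}
\Delta(f\prec g) = (\Delta f)\prec g + 2\nabla f\prec\nabla g + f\prec\Delta g\,.
\end{align*}
With $g = \sigma(D)\xi$ and $(1-\Delta)\sigma(D) = -\mathrm{Id}$, the third term is $f\prec\Delta\sigma(D)\xi = f\prec\xi + f\prec\sigma(D)\xi$. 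Hence
\begin{align*}
-\Delta(f\prec\sigma(D)\xi) = (1-\Delta)f\prec\sigma(D)\xi - 2\nabla f\prec\nabla(\sigma(D)\xi) - f\prec\xi - 2f\prec\sigma(D)\xi\,.
\end{align*}
The term $-f\prec\xi$ is not one of your ``smoother commutator terms'' --- it lives in $H^{\alpha+\gamma}$ with $\alpha+\gamma<0$, i.e.\ it is exactly as singular as the $f\prec\xi$ coming from the Bony decomposition of $\xi f$, and it is this pair that cancels directly. Your narration instead says that $f\prec\xi$ ``assembles into the structure that $B(f,\Xi)$ was designed to absorb,'' which is not right: $B(f,\Xi)$ only absorbs $(1-\Delta)f\prec\sigma(D)\xi$, $2\nabla f\prec\nabla(\sigma(D)\xi)$, $f\succ\xi$ and $f\,\Xi_2$; the paraproduct $f\prec\xi$ never enters $B$. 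Also the coefficient of $f\prec\sigma(D)\xi$ should be $-2$ rather than $-1$, though since $f\prec\sigma(D)\xi\in H^{\alpha+2-\delta}\subset L^2$ that inaccuracy is harmless. Once you correct the Leibniz display to include $-f\prec\xi$ explicitly, the rest of your term-by-term verification ($-\Delta f^\flat\in L^2$, $B(f,\Xi)\in H^{2\gamma}\subset L^2$, $\mathscr{R}\in H^{3\alpha+4}\subset L^2$, $f^\sharp\circ\xi\in H^{2\gamma+\alpha}\subset L^2$) is correct and matches the paper.
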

\begin{proof}
If $f\in \mathscr D_{\Xi}$, we have 
\begin{align}
\mathscr{H} f  &=(1 -\Delta)(f\prec \sigma(D)\xi) +(1- \Delta) B(f,\Xi)- \Delta f^\flat +-f\prec\sigma(D)\xi-B(f,\Xi)+\xi f\\
&=  - \Delta f^\flat-f\prec\sigma(D)\xi-B(f,\Xi)
\label{eq:expansion-2}
\\&+\mathscr{R}(f,\xi,\sigma(D)\xi) + (B(f,\Xi)+f^\flat) \circ \xi \in H^{3\alpha+4} \subset L^2(\T^2)\,. \notag
\end{align}
\end{proof}

In the following Proposition, we prove that any  element $f\in \mathscr{D}_\Xi$ 
can be approximated with smooth functions $(f_\varepsilon)_{\varepsilon>0}$ in $H^2$.  
\begin{Proposition}\label{prop:interpolation}
Let $-4/3<\alpha<-1$, $-\frac{\alpha}{2}<\gamma < \alpha+2$ 
and $\Xi:=(\xi, \Xi_2),\tilde \Xi=(\tilde \xi,\tilde \Xi_2) \in \mathscr X^\alpha$   
Then, for any $f\in \mathscr{D}^\alpha_{\Xi}$, there exists a function 
$g\in\mathscr D_{\tilde\Xi}^\alpha$  such that 
\begin{align*}
||f-g||_{H^{\gamma}} + || f^\flat - g^\flat||_{H^2}\lesssim
||f||_{H^\gamma}(1+||\tilde\Xi||_{\mathscr E^\alpha})||\Xi-\tilde\Xi||_{\mathscr E^\alpha}
\end{align*}
where $ g^\flat:=g -g \prec \sigma(D)\tilde\xi - B(f,\tilde\Xi)$. In particular, if $(\xi_\eps,c_\eps)\in C^{\infty}(\mathbb T^2)\times\mathbb R$ is a sequence  such that $(\xi_\eps,\xi_\eps\circ\sigma(D)\xi_\eps-c_\eps)$ converges 
to $\Xi$ in $\mathscr E^\alpha$ as $\eps\to 0$, then there exists a sequence $(f_\eps)$ in $H^2$ such that 
$$
||f_\eps-f||_{H^\gamma}+||f_\eps^\flat-f^\flat||_{H^2}\underset{\eps\to0}{\to}0
$$   
with $f_\eps^\flat:=f_\eps-f_\eps\prec\sigma(D)\xi_\eps-B(f_\eps,(\xi_\eps,\xi_\eps\circ\sigma(D)\xi_\eps-c_\eps))$.
\end{Proposition}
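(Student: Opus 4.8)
The plan is to realise $g$ through the resolvent of $\mathscr H(\tilde\Xi)$ and to import the continuity estimates already established. Since $f\in\mathscr D_\Xi$, the Proposition just proved (that $\mathscr H(\Xi)f\in L^2(\T_L^2)$ whenever $f\in\mathscr D_\Xi$) applies, and reading the norm off the expansion \eqref{eq:expansion-2} — writing $f^\sharp=B(f,\Xi)+f^\flat$ and using Lemma~\ref{lemma: bbound} for the $B$-term — one gets $\|\mathscr H(\Xi)f\|_{L^2}$ bounded by a power of $(1+\|\Xi\|_{\mathscr E^\alpha})$ times $\|f\|_{H^\gamma}+\|f^\flat\|_{H^2}$. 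I would fix $a\ge A(\Xi)$ large (later enlarged so that Lemma~\ref{lemma:cont} applies to both $\Xi$ and $\tilde\Xi$), set $h:=(a+\mathscr H(\Xi))f\in L^2$ — so that $f=\mathcal G_a(\Xi)h$ by the uniqueness in Proposition~\ref{prop:resolvent} and $\|h\|_{L^2}$ is controlled in terms of $\|f\|_{H^\gamma}$, $\|f^\flat\|_{H^2}$ and $\|\Xi\|_{\mathscr E^\alpha}$ — and define $g:=\mathcal G_a(\tilde\Xi)h$. Then $(a+\mathscr H(\tilde\Xi))g=h$, hence $\mathscr H(\tilde\Xi)g=h-ag\in L^2$; running the computation of the previous subsection backwards — which uses exactly the hypothesis $\gamma>-\alpha/2$ to see that $\sigma(D)(g^\sharp\circ\tilde\xi)\in H^2$ — shows $g^\flat:=g-g\prec\sigma(D)\tilde\xi-B(g,\tilde\Xi)\in H^2$, i.e. $g\in\mathscr D_{\tilde\Xi}$. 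The first half of the estimate is then immediate from Lemma~\ref{lemma:cont}: $\|f-g\|_{H^\gamma}=\|\mathcal G_a(\Xi)h-\mathcal G_a(\tilde\Xi)h\|_{H^\gamma}\lesssim\|h\|_{L^2}\|\Xi-\tilde\Xi\|_{\mathscr E^\alpha}(1+\|\Xi\|_{\mathscr E^\alpha}+\|\tilde\Xi\|_{\mathscr E^\alpha})^2$.

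The $H^2$ control of the remainders is the technical heart, since Lemma~\ref{lemma:cont} only bounds $f^\sharp-g^\sharp$ in $H^{2\gamma}$, which is too weak (here $2\gamma<2$). I would upgrade that lemma by repeating the fixed-point argument of Proposition~\ref{prop:resolvent} on the larger auxiliary Banach space that also keeps track of the \emph{strong} remainder $f_a^\flat\in H^2$, and not merely of $f_a^\sharp\in H^{2\gamma}$, writing $f^\sharp=B(f,\Xi)+f^\flat$ throughout and using the bilinear bounds of Lemma~\ref{lemma: bbound} for $B$ and for $B-B_a$. The decisive point is that in this $\sigma_a(D)$-based formulation the self-referential term $\|f^\flat-g^\flat\|_{H^2}$ enters its own bound with a coefficient $a^{-\varrho}$ and can be absorbed once $a$ is large, whereas expressing $f^\flat$ directly through $\sigma(D)$ as in Definition~\ref{def-paracontrolled-2} produces a circular estimate with an $O(1)$ coefficient. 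Concretely one subtracts the resulting $\sigma_a(D)$-fixed-point relations for $f^\flat$ and for $g^\flat$ and estimates the differences of the source terms — $f\prec\sigma_a(D)\xi$ against $g\prec\sigma_a(D)\tilde\xi$, $B_a(f,\Xi)$ against $B_a(g,\tilde\Xi)$, $f\Xi_2$ against $g\tilde\Xi_2$, the commutators $\sigma_a(D)(f\prec\xi)-f\prec\sigma_a(D)\xi$, and the $\mathscr R$-terms — by the Bony estimates (Proposition~\ref{prop:bony-estim}), the Schauder estimate (Proposition~\ref{prop:multiply}), the commutation Lemma (Proposition~\ref{prop:commu}) and Lemma~\ref{lemma: bbound}, feeding the bound on $\|f-g\|_{H^\gamma}$ from the previous step into the $(f-g)$-type contributions and $\|\Xi-\tilde\Xi\|_{\mathscr E^\alpha}$ into the rest. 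Exactly as Lemma~\ref{lemma:cont} follows from Proposition~\ref{prop:resolvent}, the Lipschitz-in-$\Xi$ dependence of this strengthened fixed point then yields $\|f^\flat-g^\flat\|_{H^2}\lesssim\|h\|_{L^2}\|\Xi-\tilde\Xi\|_{\mathscr E^\alpha}(1+\|\Xi\|_{\mathscr E^\alpha}+\|\tilde\Xi\|_{\mathscr E^\alpha})^M$; together with the $H^\gamma$ bound and the estimate on $\|h\|_{L^2}$ this gives the stated inequality, with the implicit constant (and the exponent of $\|\Xi\|_{\mathscr E^\alpha}$) allowed to depend on $\Xi$ and $\|f\|_{H^\gamma}$ read as the full $\mathscr D_\Xi$-norm of $f$. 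I expect this bookkeeping — building the strong auxiliary space and checking term by term that every dangerous contribution carries a negative power of $a$ — to be the main obstacle, rather than any new conceptual ingredient.

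For the last assertion, take $\tilde\Xi=(\xi_\eps,\xi_\eps\circ\sigma(D)\xi_\eps-c_\eps)$ with $\xi_\eps$ smooth. Then $\mathscr D_{\tilde\Xi}=H^2$ by the remark following Definition~\ref{def-paracontrolled-2}, so the $g$ produced above is a function $f_\eps\in H^2$, with $f_\eps^\flat=f_\eps-f_\eps\prec\sigma(D)\xi_\eps-B(f_\eps,(\xi_\eps,\xi_\eps\circ\sigma(D)\xi_\eps-c_\eps))$ as required. Since $(\xi_\eps,\xi_\eps\circ\sigma(D)\xi_\eps-c_\eps)\to\Xi$ in $\mathscr E^\alpha$ as $\eps\to0$ while $\|\tilde\Xi\|_{\mathscr E^\alpha}$ stays bounded and $f$ is fixed, the main estimate applied with this $\tilde\Xi$ gives $\|f_\eps-f\|_{H^\gamma}+\|f_\eps^\flat-f^\flat\|_{H^2}\to0$.
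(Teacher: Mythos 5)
Your proof is essentially correct, but it takes a genuinely different route from the paper's. You realise $g$ as $\mathcal G_a(\tilde\Xi)\bigl(a+\mathscr H(\Xi)\bigr)f$ and import the resolvent continuity estimates, whereas the paper never touches the resolvent here: it constructs $g$ directly as the unique fixed point of the affine map
$\Gamma(g)=g\prec\sigma_a(D)\tilde\xi+B_a(g,\tilde\Xi)+f-f\prec\sigma_a(D)\xi-B_a(f,\Xi)$,
which is a contraction for $a$ large depending only on $\|\tilde\Xi\|_{\mathscr E^\alpha}$, and then reads off both the $H^\gamma$ and $H^2$ estimates from the fixed-point equation by Bony, Schauder and Lemma~\ref{lemma: bbound}. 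The difference matters quantitatively. Because the source function in your scheme is $h=(a+\mathscr H(\Xi))f$ and, by \eqref{eq:expansion-2}, $\mathscr H(\Xi)f$ contains $-\Delta f^\flat$ and $f^\flat\circ\xi$, your $\|h\|_{L^2}$ is necessarily controlled by $\|f\|_{H^\gamma}+\|f^\flat\|_{H^2}$, not by $\|f\|_{H^\gamma}$ alone — so you only get the estimate with $\|f\|_{\mathscr D_\Xi^\gamma}$ on the right, which you candidly flag. The paper's $\Gamma$ is engineered so that $f$ enters the source term only through $f-f\prec\sigma_a(D)\xi-B_a(f,\Xi)$, and the cancellation in the difference $f-\Gamma(g)$ eliminates this term entirely; this is how the paper gets the bound with only $\|f\|_{H^\gamma}$ and with the exponent $1$ on $(1+\|\tilde\Xi\|_{\mathscr E^\alpha})$. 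Your weaker bound still suffices for the one application in the paper (the approximation by smooth $f_\eps$, where $f$ is fixed), so the argument is not wrong, merely less sharp. One further remark: the "upgrade of Lemma~\ref{lemma:cont} by enlarging the auxiliary fixed-point space to track $f^\flat$" that you identify as the technical heart is more machinery than is needed. As in the proof of Proposition~\ref{prop:resolvent-2}, $f^\flat$ is given by the explicit formula \eqref{f_a-flat} once $f=\mathcal G_a(\Xi)h$ is constructed; subtracting that formula for $f^\flat$ and for $g^\flat$ and using the $\|f-g\|_{H^\gamma}$ and $\|f^\sharp-g^\sharp\|_{H^{2\gamma}}$ bounds already obtained in the proof of Lemma~\ref{lemma:cont} (both quantities are controlled there, even though only the first appears in the statement) yields the $H^2$ difference estimate directly, with no new fixed point and no circularity to absorb.
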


\begin{proof}
Let $f\in \mathscr{D}_\Xi$ and define the map $\Gamma: H^{\gamma}\to H^{\gamma}$ such that  
\begin{align*}
\Gamma (g) := g\prec \sigma_a(D)\tilde\xi + B_a(g,\tilde\Xi) +f - f\prec \sigma_a(D)\xi - B_a(f,\Xi)
\end{align*}
where for $a>2$,  $\sigma_a(k):=\frac{1}{a+|k|^2}$ and 
\begin{align*}
B_a(g,\tilde \Xi)&= \sigma_a(D)(2\nabla g\prec\nabla (\sigma(D) \tilde\xi) +\Delta g \prec \sigma(D) \tilde\xi -g\succ\tilde\xi - 
g\tilde \Xi_2)\,, \\ 
B_a(f,\Xi) &= \sigma_a(D)(2\nabla f\prec\nabla (\sigma(D) \xi) +\Delta f \prec \sigma(D) \xi -f\succ\xi - 
f\Xi_2)  \,. 
\end{align*}
Using bilinearity and the Schauder's estimate Proposition \ref{prop:multiply}, we have the following bound 
for any $g_1,g_2\in H^\gamma$, 
\begin{align*}
||\Gamma(g_1) - \Gamma(g_2)||_{H^{\gamma}}& \lesssim a^{(\gamma-\alpha-2)/2}  (1+ ||\tilde\Xi||_{\mathscr{E}^\alpha}) 
||g_1-g_2||_{H^\gamma}\\
&\lesssim a^{(\gamma-\alpha-2)/2}  (1+ ||\tilde\Xi||_{\mathscr{E}^\alpha}) 
||g_1-g_2||_{H^\gamma}\,. 
\end{align*}
For $a$ sufficiently large such that $a^{(\gamma-\alpha-2)/2}  (1+ ||\tilde \Xi||_{\mathscr{E}^\alpha})<1$, 
the map $\Gamma$ is a contraction and therefore admits a unique fixed point 
$a$ such that  
\begin{align*}
g & = g \prec \sigma_a(D)\tilde\xi + B_a(g,\tilde\Xi) 
+f - f \prec \sigma_a(D)\xi - B_a(f,\Xi)\\ 
& =g \prec \sigma_a(D)\tilde\xi + B_a(g,\tilde\Xi) 
+f ^\flat +f\prec(\sigma-\sigma_a)(D)\xi+ B(f,\Xi) - B_a(f,\Xi)  \,. 
\end{align*}
Recalling that $\sigma(k)-\sigma_a(k)= \frac{a}{(|k|^2+1)(a+|k|^2)}$ and Eq. \eqref{deriv-sigma0-sigma-a}, 
it is easy to check that 
\begin{align*}
B(f,\Xi)- B_a(f,\Xi) = (\sigma(D) - \sigma_a(D)) (2\nabla f\prec\nabla (\sigma(D) \xi) +\Delta f \prec \sigma(D) \xi -f\succ\xi - 
f\Xi_2) \in H^{2\gamma+2}.
\end{align*} 
Therefore, using also the definition of $f\in \mathscr{D}_\Xi$ which implies that $f^\flat\in H^2$, we deduce that  $g \in \mathscr D_{\tilde \Xi}^\alpha$.  

Now, using the Bony and Schauder's Propositions \ref{prop:bony-estim} and \ref{prop:multiply}, we obtain 
\begin{align*}
||f-g ||_{H^{\gamma}}  &\lesssim ||( g -f) \prec \sigma_a(D) \tilde\xi ||_{H^\gamma} + 
||f \prec \sigma_a(D) (\tilde \xi -\xi) ||_{H^\gamma} \\ & + 
||B_a(g-f,\tilde\Xi) ||_{H^\gamma} + ||B_a(f,\tilde\Xi-\Xi) ||_{H^\gamma}  \\
&\lesssim a^{(\gamma-\alpha-2)/2} (1+||\Xi||_{\mathscr{E}^\alpha})
||g-f||_{H^\gamma} + ||f||_{H^\gamma} ||\tilde\Xi - \Xi ||_{\mathscr{E}^\alpha}.
\end{align*}
Fixing  $a$ sufficiently large, we have the bound 
\begin{align*}
||f-g||_{H^{\gamma}}\lesssim
||f||_{H^\gamma} || \Xi - \tilde\Xi ||_{\mathscr{E}^\alpha}
 \,.  
\end{align*}
Observing that :
\begin{align*}
f^\flat-g^\flat &=g\prec(\sigma_a-\sigma)(D)\tilde\xi - f \prec(\sigma_a-\sigma)(D)\xi
+ B_a(g,\tilde\Xi)- B_a(f,\Xi) + B(f,\Xi)- B(f,\tilde\Xi) 
\end{align*}
Recalling that $(\sigma_a(D)-\sigma(D)) \xi \in \mathscr{C}^{\alpha+4}$ (see end of the proof of Lemma \ref{density}),
we can finally prove that :
$$
||f^\flat-g^\flat||_{H^2}\lesssim ||f-g||_{H^\gamma}||\tilde\Xi||_{\mathscr E^\alpha}+||f||_{H^\gamma}||\Xi-\tilde\Xi||_{\mathscr E^\alpha}\lesssim||f||_{H^\gamma}(1+||\tilde\Xi||_{\mathscr E^\alpha})||\Xi-\tilde\Xi||_{\mathscr E^\alpha}
$$

\end{proof}

\begin{Proposition}
Let $-4/3<\alpha<-1$, $2/3< \gamma\le \alpha+2$, $\Xi=(\xi,\Xi_2)$,  $\tilde{\Xi}=(\tilde{\xi},\tilde{\Xi}_2)\in\mathscr X^\alpha$. 
Following Definition \ref{def:Op}, we introduce the two linear operators $\mathscr{H}^\Xi:= -\Delta+\xi$ and 
$\mathscr{H}^{\tilde{\Xi}}:= -\Delta+\tilde{\xi}$  
with respective domains $\mathscr{D}_{\Xi}$ and $\mathscr{D}_{\tilde\Xi}$. 
We have the following continuity upper-bound for any $f\in \mathscr{D}_{\Xi}$,   $g\in    \mathscr{D}_{\tilde{\Xi}}$,
\begin{align}\label{eq:continuity}
||\mathscr{H}(\Xi) f-\mathscr{H}(\tilde{\Xi}) g||_{L^2}\lesssim &
\, \left( || f-g||_{H^{\gamma}} + ||f^\flat - g^\flat||_{H^2} + ||\Xi-\tilde{\Xi}||_{\mathscr{E}^\alpha}\right) \\
&\times (1+ ||\tilde{\Xi}||_{\mathscr{E}^\alpha} )^2 \times (1+ ||g||_{H^{\alpha+2}} + ||g^\flat|| _{H^2}) \notag\,. 
\end{align}
Moreover, the operator $\mathscr{H}:\mathscr{D}_{\Xi} \to L^2(\mathbb T_L)$ is symmetric in $L^2$ in the sense that, 
for any $f,g\in \mathscr{D}_{\Xi} $, we have
\begin{align*}
\langle \mathscr{H} f,g\rangle_{L^2}=\langle f,\mathscr{H} g\rangle _{L^2} \,.
\end{align*}
\end{Proposition}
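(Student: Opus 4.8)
## Proof Proposal

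The plan is to split the statement into two independent parts: the continuity estimate \eqref{eq:continuity}, and the symmetry of $\mathscr H$ on $\mathscr D_\Xi$.

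\textbf{Continuity estimate.} First I would start from the explicit $L^2$-expansion \eqref{eq:expansion-2} that was established just above the statement, namely, for $f\in\mathscr D_\Xi$,
\begin{align*}
\mathscr H(\Xi)f = -\Delta f^\flat - f\prec\sigma(D)\xi - B(f,\Xi) + \mathscr R(f,\xi,\sigma(D)\xi) + (B(f,\Xi)+f^\flat)\circ\xi,
\end{align*}
and the analogous expression for $\mathscr H(\tilde\Xi)g$ with $\tilde\xi,\tilde\Xi,g,g^\flat$. Then I would take the difference term by term and estimate each difference in $L^2$ (in fact in a slightly better Sobolev space, e.g.\ $H^{3\alpha+4}$, which embeds in $L^2$ under the constraint $\alpha>-4/3$). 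The tool box is exactly the one used repeatedly before: the Bony estimates of Proposition~\ref{prop:bony-estim}, the Schauder estimate of Proposition~\ref{prop:multiply}, the commutator estimate of Proposition~\ref{prop:commu}, and the bilinearity/trilinearity of the paraproduct, the resonant product, $\mathscr R$ and $B$. For instance, $-\Delta f^\flat+\Delta g^\flat$ is controlled by $\|f^\flat-g^\flat\|_{H^2}$; the term $f\prec\sigma(D)\xi - g\prec\sigma(D)\tilde\xi$ is split as $(f-g)\prec\sigma(D)\tilde\xi + f\prec\sigma(D)(\xi-\tilde\xi)$ and bounded using Bony plus Schauder by $\|f-g\|_{H^\gamma}\|\tilde\Xi\|_{\mathscr E^\alpha} + \|f\|_{H^\gamma}\|\Xi-\tilde\Xi\|_{\mathscr E^\alpha}$; the resonant differences $(B(f,\Xi)+f^\flat)\circ\xi - (B(g,\tilde\Xi)+g^\flat)\circ\tilde\xi$ are handled by first splitting in the three slots ($B$, $f^\flat$, $\xi$) and using the bound $\|B_a\|$ from Lemma~\ref{lemma: bbound} together with Proposition~\ref{prop:bony-estim}\eqref{resonating-bony}; the $\mathscr R$-difference is handled with the trilinearity of $\mathscr R$ and Proposition~\ref{prop:commu}. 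To close, I would use $\|g\|_{H^\gamma}\le \|g\|_{H^{\alpha+2}}$ and organize the resulting products of norms to match the right-hand side of \eqref{eq:continuity}; the factor $(1+\|\tilde\Xi\|_{\mathscr E^\alpha})^2$ comes from the two $\xi$-slots that survive in the resonant and $\mathscr R$ terms, and $(1+\|g\|_{H^{\alpha+2}}+\|g^\flat\|_{H^2})$ from the slots containing $g$.

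\textbf{Symmetry.} For the second claim, I would argue by approximation. Given $f,g\in\mathscr D_\Xi$, use Proposition~\ref{prop:interpolation} to produce smooth approximations $f_\eps,g_\eps\in H^2$ with $\|f_\eps-f\|_{H^\gamma}+\|f_\eps^\flat-f^\flat\|_{H^2}\to0$ and similarly for $g$, where here the mollified noise $\xi_\eps$ and renormalization $c_\eps$ are chosen so that $(\xi_\eps,\xi_\eps\circ\sigma(D)\xi_\eps-c_\eps)\to\Xi$ in $\mathscr E^\alpha$ (this is possible by the definition of $\mathscr X^\alpha$ as the closure of the smooth enhancements). For the smooth, renormalized operator $\mathscr H_\eps:=-\Delta+\xi_\eps-c_\eps$ on $H^2$, symmetry is the elementary classical fact that $\langle-\Delta f_\eps,g_\eps\rangle_{L^2}=\langle\nabla f_\eps,\nabla g_\eps\rangle_{L^2}=\langle f_\eps,-\Delta g_\eps\rangle_{L^2}$ and that multiplication by the real function $\xi_\eps-c_\eps$ is symmetric; note the constant $c_\eps$ cancels in the symmetry identity anyway. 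Then I would pass to the limit: by the first point (applied with $\tilde\Xi$ replaced by $(\xi_\eps,\xi_\eps\circ\sigma(D)\xi_\eps-c_\eps)$ and $g$ replaced by $f_\eps$, and using point~1 of Theorem~\ref{main} which identifies $\mathscr H(\xi_\eps,\cdots)=\mathscr H_\eps$ on $H^2$), we get $\mathscr H_\eps f_\eps\to\mathscr H(\Xi)f$ in $L^2$ and likewise $\mathscr H_\eps g_\eps\to\mathscr H(\Xi)g$; since $f_\eps\to f$ and $g_\eps\to g$ in $L^2$ as well, passing to the limit in $\langle\mathscr H_\eps f_\eps,g_\eps\rangle_{L^2}=\langle f_\eps,\mathscr H_\eps g_\eps\rangle_{L^2}$ yields $\langle\mathscr H(\Xi)f,g\rangle_{L^2}=\langle f,\mathscr H(\Xi)g\rangle_{L^2}$.

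\textbf{Main obstacle.} The routine part is bookkeeping, but the genuine difficulty is to make sure every difference term in the $L^2$-expansion actually lands in $L^2$ (or better), with no hidden loss of regularity; this is where the constraint $\alpha\in(-4/3,-1)$ and the careful choice of $\gamma$ with $-\alpha/2<\gamma\le\alpha+2$ are used, exactly so that $3\alpha+4>0$ and $2\gamma+\alpha>0$ simultaneously. The resonant terms involving $B(f,\Xi)$ and $f^\flat$ are the delicate ones, since one must track that $B(f,\Xi)\in H^{2\gamma}$ and $f^\flat\in H^2$ pair against $\xi\in\mathscr C^\alpha$ with $2\gamma+\alpha>0$, and that the corresponding \emph{differences} inherit the same bounds via bilinearity — this is the step I would write out most carefully.
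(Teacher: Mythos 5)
Your proposal is correct and follows essentially the same strategy as the paper: the continuity bound is obtained by differencing the $L^2$-expansion \eqref{eq:expansion-2} term by term using the Schauder, Bony and commutator estimates together with (bi/tri)linearity of $\prec$, $\circ$, $\mathscr R$ and $B$; symmetry is obtained by approximating $f,g$ and $\Xi$ by smooth data via Proposition~\ref{prop:interpolation}, using the elementary symmetry of $-\Delta+\xi_\eps-c_\eps$ on $H^2$, and passing to the $L^2$ limit through the continuity estimate.
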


\begin{proof}
The estimate \eqref{eq:continuity} follows from the expansion obtained in 
\eqref{eq:expansion-2} for $\mathscr{H}^1 f$ and $\mathscr{H}^2 g$, from
the bilinearity of the maps $\mathscr{R}$ and $B$, from Proposition \ref{prop:commu} (continuity bound for $\mathscr{R}$) 
and Proposition \ref{lemma: bbound} (continuity bound for $B$) 
and from the Schauder's and Bony's estimates Propositions \ref{prop:multiply} 
and \ref{prop:bony-estim} for paraproducts. 

Towards the symmetry of the operator $\mathscr{H}$ 
defined on the domain $\mathscr{D}_\Xi$ for $\Xi\in \mathscr X^\alpha$, 
we introduce a family of smooth functions  
 $\Xi_\varepsilon:= (\xi_\varepsilon,\xi_\varepsilon\circ 
\sigma(D)\xi_\eps-c_\eps), \eps>0$ such that $\Xi_\varepsilon$ converges to $\Xi$ in $\mathscr{E}^\alpha$ (this family exists 
by definition of the space $\mathscr X^\alpha$). Then, we know from Proposition \ref{prop:interpolation} that there exists 
a family of functions $f_\eps,\eps>0$ in $H^2$
 such that 
 \begin{align}\label{conv-f-eps}
||f-f_\eps||_{H^\gamma} + ||f^\flat -f_\eps^\flat||_{H^2} \underset{\eps\to 0}{\longrightarrow} 0
\end{align}
for any $2/3<\gamma< \alpha+ 2$ 
where $f_\eps^\flat:= f_\eps-f_\eps\prec\sigma(D)\xi_\eps-B(f,\Xi_\eps)$. 

Thanks to the smoothness of $\xi_\eps$, it is straightforward to define 
the operator $\mathscr{H}_\eps:=-\Delta+\xi_\eps$ on the domain $H^2$.  
 
From the estimates \eqref{eq:continuity} and \eqref{conv-f-eps}, we deduce that 
$\mathscr{H_\eps}f_\eps$ converges towards $\mathscr{H} f$ in $L^2$ for any $f\in \mathscr{D}_\Xi$. 
If $g\in \mathscr{D}_\Xi$ and $(g_\eps)_{\eps >0}\in H^2$ are such that  
$\mathscr{H_\eps}g_\eps\to\mathscr{H} g$ in $L^2$ , it is plain to see that 
\begin{align*}
\langle \mathscr{H}_\eps f_\eps,g_\eps\rangle_{L^2}=\langle f_\eps,\mathscr{H}_\eps g_\eps \rangle_{L^2}\,.  
\end{align*}
We obtain the symmetry of $\mathscr{H}$ by sending $\eps\to 0$ in the last equality.
\end{proof}

We now come back to the study of the resolvent operator $\mathcal{G}_a$
by establishing that $\mathcal{G}_a$ takes values in the space of strongly paracontrolled distributions 
$\mathscr{D}_\Xi$. We also prove that the resolvent $\mathcal{G}_a$ is a 
self-adjoint and compact operator $L^2\to L^2$. 
Proposition \ref{prop:resolvent-2} will be used later to establish the main properties 
of the spectrum of the operator $\mathscr{H}$.

\begin{Proposition}
\label{prop:resolvent-2}
Let  $-4/3 <\alpha< -1$, $2/3 < \gamma<\alpha+2$ and $A:=A(||\Xi||_{\mathscr{X}^\alpha})$ as
 introduced in Proposition~\ref{prop:resolvent}.  

Then, for all $a\geq A$, the operator $\mathscr{H}+a:\mathscr D_{\Xi} \to L^2$ is invertible 
with inverse $\mathcal{G}_a:L^2 \to \mathscr D_{\Xi}$. 
In addition, the operator $\mathcal{G}_a:L^2\to L^2$ is bounded, self-adjoint and compact.  
\end{Proposition}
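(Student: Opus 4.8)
The plan is to proceed in three stages. First I would upgrade the regularity of $\mathcal G_a g$ from the space $\mathscr D_\xi^\gamma$, where Proposition~\ref{prop:resolvent} places it, to the strong space $\mathscr D_\Xi$. Then I would deduce that $\mathscr H+a$ is a bijection $\mathscr D_\Xi\to L^2$ whose inverse is exactly $\mathcal G_a$. Finally I would read off boundedness, self-adjointness and compactness of $\mathcal G_a$ viewed as an operator on $L^2$, using the smoothing provided by Proposition~\ref{prop:resolvent}, the symmetry of $\mathscr H$ on $\mathscr D_\Xi$ already established, and Rellich compactness on the torus.

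For the regularity upgrade, fix $g\in L^2$, which (since $\gamma<\alpha+2<1$ forces $2\gamma-2<0$) lies in $H^{2\gamma-2}$, and set $f_a:=\mathcal G_a g\in\mathscr D_\xi^\gamma$, legitimate by Proposition~\ref{prop:resolvent}. Since $f_a$ solves $\mathscr H f_a=g-a f_a$ with right-hand side in $L^2\subset H^{2\gamma-2}$, the computation performed just before Definition~\ref{def-paracontrolled-2} for an eigenfunction applies verbatim with $\lambda f$ replaced by $g-a f_a$, yielding the expansion $f_a=f_a\prec\sigma(D)\xi+B(f_a,\Xi)+f_a^\flat$ with
\[
f_a^\flat=\sigma(D)\bigl(g-a f_a-\mathscr R(f_a,\xi,\sigma(D)\xi)-f_a^\sharp\circ\xi+f_a^\sharp\bigr).
\]
Then I would check each contribution: $\sigma(D)$ gains two degrees of regularity by the Schauder estimate (Proposition~\ref{prop:multiply}), so $\sigma(D)(g-a f_a)\in H^2$ because $g\in H^0$ and $f_a\in H^\gamma$; the remaining terms are precisely the ones appearing in the eigenfunction analysis, which land in $H^{3\gamma}\subset H^2$ since $\gamma>2/3$ gives $3\gamma>2$, the products $\mathscr R(f_a,\xi,\sigma(D)\xi)$ and $f_a^\sharp\circ\xi$ being well defined through Propositions~\ref{prop:commu} and \ref{prop:bony-estim} exactly as in the proof of Proposition~\ref{prop:reson}. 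Hence $f_a^\flat\in H^2$, i.e. $\mathcal G_a g\in\mathscr D_\Xi$.

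With this in hand, $\mathscr H+a:\mathscr D_\Xi\to L^2$ is well defined by the preceding Proposition ($\mathscr H f\in L^2$ for $f\in\mathscr D_\Xi$); it is injective because $(\mathscr H+a)f=0$ with $f\in\mathscr D_\Xi\subset\mathscr D_\xi^\gamma$ forces $f=\mathcal G_a 0=0$ by the uniqueness part of Proposition~\ref{prop:resolvent}; and it is surjective because $\mathcal G_a g\in\mathscr D_\Xi$ solves $(\mathscr H+a)\mathcal G_a g=g$. So $\mathscr H+a$ is a bijection with inverse $\mathcal G_a$. Boundedness of $\mathcal G_a:L^2\to L^2$ follows by composing the bounded map $\mathcal G_a:L^2\to\mathscr D_\xi^\gamma$ of Proposition~\ref{prop:resolvent} with the continuous embeddings $\mathscr D_\xi^\gamma\hookrightarrow H^\gamma\hookrightarrow L^2$. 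For self-adjointness, since $\mathcal G_a$ is bounded and everywhere defined it suffices to verify symmetry: for $g_1,g_2\in L^2$, writing $f_i=\mathcal G_a g_i\in\mathscr D_\Xi$ and using the symmetry of $\mathscr H$ on $\mathscr D_\Xi$,
\[
\langle \mathcal G_a g_1,g_2\rangle_{L^2}=\langle f_1,(\mathscr H+a)f_2\rangle_{L^2}=\langle (\mathscr H+a)f_1,f_2\rangle_{L^2}=\langle g_1,\mathcal G_a g_2\rangle_{L^2}.
\]

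For compactness, I would note that $\mathcal G_a$ factors through $H^\gamma$ with $\gamma>0$, and the embedding $H^\gamma\hookrightarrow L^2$ on the (compact) torus is compact by the Rellich theorem, so $\mathcal G_a:L^2\to L^2$ is compact. The main obstacle is the first stage: re-running the paracontrolled bootstrap to check that the remainder $f_a^\flat$ actually lies in $H^2$, i.e. that the resolvent gains enough regularity to take values in the strong space $\mathscr D_\Xi$ rather than merely in $\mathscr D_\xi^\gamma$. Once that upgrade is secured, the remaining assertions are routine consequences of the estimates of Proposition~\ref{prop:resolvent}, the symmetry of $\mathscr H$, and Rellich compactness.
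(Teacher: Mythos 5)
Your proposal follows essentially the same route as the paper: plug the resolvent identity $(\mathscr H + a)f_a = g$ into the paracontrolled expansion developed before Definition~\ref{def-paracontrolled-2} (with $\lambda f$ replaced by $g - af_a$), check term by term that $f_a^\flat\in H^2$ using Schauder, Bony and the commutator estimate, and then read off bijectivity, boundedness, symmetry and Rellich compactness exactly as the paper does. The only cosmetic differences are that your formula for $f_a^\flat$ omits the harmless $f_a\prec\sigma(D)\xi$ term the paper carries (both versions land in $H^2$), and that the paper additionally records the quantitative bound on $\Vert \mathcal G_a g\Vert_{\mathscr D_\Xi^\gamma}$, which is not needed for the statement of the Proposition itself.
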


\begin{proof}
Let $g\in L^2$ and set $f_a:=\mathcal{G}_a g\in\mathscr D_\xi^\gamma$. By definition,  
$(\mathscr{H}+a)f_a=g$
and it is easy to check that 
\begin{align*}
(1-\Delta) f_a^\sharp=g-a f_a + 2\nabla f_a \prec\nabla (\sigma(D) \xi)  +\Delta f_a\prec \sigma(D) \xi -f_a\succ\xi-f_a\Xi_2-\mathscr{R}(f_a,\xi,\sigma(D) \xi)-f_a^\sharp\circ\xi+f_a^{\sharp} \,. 
\end{align*}
It follows that  
\begin{align}\label{f_a-flat}
f_a^\flat:= f_a^\sharp-B(f_a,\Xi)=
\sigma(D)( g-a f_a - \mathscr{R}(f_a,\xi,\sigma(D) \xi)-f_a^\sharp\circ\xi+f^{\sharp_a}-f_a\prec\sigma(D)\xi)
\in H^2
\end{align}
so that $f_a\in\mathscr D_{\Xi}$. 

We now prove that the map $\mathcal{G}_a:L^2\to\mathscr D_{\Xi}^\gamma$ is bounded. 
We have 
\begin{align*}
||\mathcal{G}_a g||_{\mathscr{D}_\Xi^\gamma} &=  ||\mathcal{G}_a g||_{H^\gamma} 
+ ||(\mathcal{G}_a g)^\flat ||_{H^2}\\
&\le ||\mathcal{G}_a g||_{\mathscr{D}_\xi^\gamma} + ||f_a^\flat ||_{H^2}\\
&\le a^{\gamma/2-1} ||g||_{L^2} +  ||f_a^\flat ||_{H^2}\,
\end{align*}
where we have used the boundedness of $\mathcal{G}_a:L^2 \to\mathscr D_{\xi}^\gamma$ (see \eqref{boundedness-G-a}). It suffices to upper-bound the $H^2$ norm of the remainder $f_a^\flat$. 
Using \eqref{f_a-flat}, we have 
\begin{align*}
||f_a^\flat||_{H^2} &\lesssim 
(a ||f_a||_{H^\gamma}+||f^\sharp_a||_{H^{2\gamma}})(1+||\xi||_{\mathscr C^\alpha})^2+||g||_{L^2} \\
&\lesssim a||f_a||_{\mathscr D_\xi^\gamma}(1+||\xi||_{\mathscr C^\alpha})^2+||g||_{L^2}\\
&\lesssim \left(1+ a^{ \gamma/2}  (1+||\xi||_{\mathscr C^\alpha})^2\right) ||g||_{L^2} 
\end{align*}
and we can finally deduce that for $a>2$,
\begin{align*}
||\mathcal{G}_a g||_{\mathscr{D}_\Xi^\gamma} \lesssim 
\left(a^{\gamma/2-1}+ a^{ \gamma/2}  (1+||\xi||_{\mathscr C^\alpha})^2\right) ||g||_{L^2} \,. 
\end{align*}

The fact that the resolvent operator $\mathcal{G}_a:L^2\to L^2$ is self-adjoint follows from the symmetry 
of $\mathscr{H}$. 
The resolvent operator  $\mathcal{G}_a: L^2 \to L^2$ is the composition 
of the bounded operator $\mathcal{G}_a:L^2\to H^\gamma$ ($\mathcal{G}_a:L^2\to \mathscr{D}_\Xi^\gamma$ is bounded and $||\cdot||_{H^\gamma}\le ||\cdot||_{ \mathscr{D}_\Xi^\gamma}$)
with the compact injection
operator $i: H^\gamma\to L^2$ (this fact follows from Rellich-Kondrachov Theorem, see Appendix~\ref{lemma:rellich}
for a reminder) and therefore is a compact operator.    
\end{proof}
\begin{remark}
Before proceeding with the study of the operator $\mathscr H$,  let us point out that the space of strong paracontrolled distributions $\mathscr D_{\Xi}$ is dense in the space $\mathscr D^{\gamma}_{\xi}$. Indeed, we have $\mathscr D_\xi^\gamma=\mathcal G_aH^{2\gamma-2}$ and $\mathscr D_{\Xi}=\mathcal G_aL^2$ and the result follows from the fact that $H^{2\gamma-2}$ is dense in $L^2$.
\end{remark}
A simple application of the spectral Theorem to the operator $\mathcal{G}_a$ yields the following properties 
for the spectrum of the operator $\mathscr{H}$.  
\begin{corollary}\label{spectrum-general-noise}
Let  $\alpha\in(-4/3,-1)$ and $\Xi=(\xi,\Xi_2)\in\mathscr X^\alpha$. 
Then the operator $\mathscr{H}$ is self-adjoint has a discrete spectrum (only pure points) 
$(\Lambda_n)_{n\geq1}$ which is such that 
\begin{align*}
\Lambda_1\le\Lambda_2\le\cdots\le\Lambda_n \le\cdots 
\end{align*}
with no accumulation points except in $+\infty$. In addition, we have 
\begin{align*}
L^2=\bigoplus_{n\in\mathbb N}\text{Ker }(\Lambda_n-\mathscr{H})
\end{align*} 
and for any $n$, the dimension of the subspace $\text{Ker }(\Lambda_n-\mathscr{H})$ is finite. 
The eigenvalues $\{\Lambda_n,n\in \N\}$ satisfy the following minimax principle
\begin{equation}
\label{eq:ritz-formula}
\Lambda_n=\inf_{F}\sup_{\substack{f\in F;\\||f||_{L^2}=1}}\langle\mathscr{ H}(\Xi) f,f\rangle_{L^2}
\end{equation}
where $F$ ranges over all n-dimensional subspaces of $\mathscr D_{\Xi}$. 
\end{corollary}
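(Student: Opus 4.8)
The plan is to obtain all the stated properties of $\mathscr{H}$ by applying the spectral theorem for compact self-adjoint operators to the resolvent $\mathcal{G}_a$ of Proposition~\ref{prop:resolvent-2} and then transferring the conclusions. Fix $\alpha\in(-4/3,-1)$, choose some admissible $\gamma\in(2/3,\alpha+2)$ and $a\geq A(\|\Xi\|_{\mathscr X^\alpha})$ so that, by Proposition~\ref{prop:resolvent-2}, $\mathcal{G}_a\colon L^2(\T_L^2)\to L^2(\T_L^2)$ is bounded, self-adjoint, compact, and is the inverse of $\mathscr{H}+a\colon\mathscr D_{\Xi}\to L^2$; in particular its range $\mathscr D_{\Xi}$ is dense in $L^2$. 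First I record that $\mathcal{G}_a$ is injective: $\mathcal{G}_a g=0$ implies $g=(\mathscr{H}+a)\mathcal{G}_a g=0$. Hence $0$ is not an eigenvalue of $\mathcal{G}_a$, and since the closure of its range is all of $L^2$, the spectral theorem furnishes an at most countable family of nonzero real eigenvalues $(\mu_n)$, each with finite-dimensional eigenspace, accumulating only at $0$, together with an orthonormal basis $(e_n)$ of $L^2$ made of eigenvectors and the decomposition $L^2=\bigoplus_n\ker(\mathcal{G}_a-\mu_n)$.

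Next I transfer this to $\mathscr{H}$. Every eigenvector $e$ of $\mathcal{G}_a$ with $\mathcal{G}_a e=\mu e$, $\mu\neq0$, lies in the range $\mathscr D_{\Xi}$ and satisfies $(\mathscr{H}+a)e=\mu^{-1}e$, i.e. $\mathscr{H}e=(\mu^{-1}-a)e$; conversely, if $f\in\mathscr D_{\Xi}$ and $\mathscr{H}f=\lambda f$, then applying $\mathcal{G}_a$ to $(\mathscr{H}+a)f=(\lambda+a)f$ gives $f=(\lambda+a)\mathcal{G}_a f$, i.e. $\mathcal{G}_a f=(\lambda+a)^{-1}f$ (here $\lambda+a\neq0$ since $\mathscr{H}+a'$ is invertible for every $a'\geq A$, which forces $\lambda>-A$, as already observed in the remark after Proposition~\ref{prop:resolvent}). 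Thus the spectrum of $\mathscr{H}$ equals $\{\mu_n^{-1}-a\}$, it consists of pure points of finite multiplicity, and it does not depend on $a$ (one checks from the resolvent identity $\mathcal{G}_a-\mathcal{G}_b=(b-a)\mathcal{G}_a\mathcal{G}_b$ that $\mathcal{G}_a$ and $\mathcal{G}_b$ have the same eigenvectors). Since $\mu_n^{-1}=\Lambda_n+a\geq a-A>0$, the $\mu_n$ are positive; being positive and accumulating only at $0$ they may be ordered $\mu_1\geq\mu_2\geq\cdots\to 0$, so that $\Lambda_n:=\mu_n^{-1}-a$ satisfies $\Lambda_1\leq\Lambda_2\leq\cdots$ with $\Lambda_n\to+\infty$ and no finite accumulation point, and $L^2=\bigoplus_n\ker(\Lambda_n-\mathscr{H})$ because the $e_n$ are exactly the eigenvectors of $\mathscr{H}$.

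It remains to prove the minimax formula \eqref{eq:ritz-formula}, which I would do directly in the eigenbasis. For $f\in\mathscr D_{\Xi}$ write $f=\sum_k c_k e_k$ with $c_k=\langle f,e_k\rangle_{L^2}$; since $\mathscr{H}f\in L^2$ and $\mathscr{H}$ is symmetric on $\mathscr D_{\Xi}$, $\langle\mathscr{H}f,e_k\rangle_{L^2}=\langle f,\mathscr{H}e_k\rangle_{L^2}=\Lambda_k c_k$, whence $\langle\mathscr{H}f,f\rangle_{L^2}=\sum_k\Lambda_k|c_k|^2$ (the series converges absolutely by Cauchy--Schwarz, using $\sum_k\Lambda_k^2|c_k|^2=\|\mathscr{H}f\|_{L^2}^2<\infty$), while $\|f\|_{L^2}^2=\sum_k|c_k|^2$. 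Taking $F=\mathrm{span}(e_1,\dots,e_n)\subset\mathscr D_{\Xi}$ gives $\sup_{f\in F,\,\|f\|_{L^2}=1}\langle\mathscr{H}f,f\rangle_{L^2}=\Lambda_n$, hence the infimum over $n$-dimensional subspaces of $\mathscr D_{\Xi}$ is at most $\Lambda_n$; conversely, for any $n$-dimensional $F\subset\mathscr D_{\Xi}$ a dimension count yields a unit vector $f\in F$ with $c_k=0$ for $k<n$, so $\langle\mathscr{H}f,f\rangle_{L^2}=\sum_{k\geq n}\Lambda_k|c_k|^2\geq\Lambda_n$, giving the reverse inequality. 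The only point requiring care in this scheme is the identity $\langle\mathscr{H}f,f\rangle_{L^2}=\sum_k\Lambda_k|c_k|^2$ on all of $\mathscr D_{\Xi}$, i.e. that $\mathscr D_{\Xi}$ is simultaneously small enough that $\mathscr{H}f\in L^2$ and large enough to contain a complete eigenbasis; but this is exactly what the previous subsection established, and everything else is routine compact-operator spectral theory.
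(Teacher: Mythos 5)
Your proof is correct and follows exactly the route the paper indicates (the paper's own ``proof'' is the single sentence preceding the corollary: apply the spectral theorem for compact self-adjoint operators to $\mathcal{G}_a$, then transfer). You have fleshed out all the steps the authors leave implicit — injectivity of $\mathcal{G}_a$, the eigenvalue correspondence $\mu\mapsto\mu^{-1}-a$ and its independence of $a$, positivity of the $\mu_n$, and the direct eigenbasis derivation of the min-max formula — and each is sound. One small slip: you write $\mu_n^{-1}=\Lambda_n+a\geq a-A>0$, which fails if $a=A$; but since $-A$ is not an eigenvalue (because $\mathscr{H}+A$ is invertible) the inequality $\Lambda_n>-A$ is strict and $\Lambda_n+a>0$ holds anyway, or one can simply take $a>A$. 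You might also note explicitly that the invertibility and self-adjointness of $\mathcal{G}_a$ yield self-adjointness (not merely symmetry) of $\mathscr{H}$, since $\mathscr{H}+a=\mathcal{G}_a^{-1}$ is the inverse of a bounded self-adjoint injective operator; this is part of what the corollary asserts.
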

The min-max principle~\eqref{eq:ritz-formula} is a very useful variational tool to study the eigenvalues.  
however the fact that the supremum is taken on the linear space $\mathscr D_{\Xi}$ can give a rise to a very complicated computation. As pointed out previously the operator $\mathscr H$ can be defined on the more simpler space $\mathscr D^{\gamma}_{\xi}$, the problem then is that the operator has value in the space $H^{2\gamma-2}(\mathbb T^{2}_L)$ and not in $L^2(\mathbb T^2_L)$. Fortunately this fact turns out to be sufficient to make sense of the quadratic form $\langle \mathscr H(\Xi)f,f\rangle$ by duality. Namely if $f\in\mathscr D^{\gamma}_{\xi}$ and $\gamma>2/3$ then the following bound : 
$$
|\langle\mathscr H(\Xi)f,f\rangle|\leq \|\mathscr H(\Xi)f\|_{H^{2\gamma-2}}\|f\|_{H^{\gamma}}\lesssim\|f\|_{\mathscr D^{\gamma}_{\xi}}\|f\|_{H^{\gamma}}(1+\|\xi\|_{\mathscr C^{\alpha}}+\|\xi\|^2_{\mathscr C^{\alpha}}+\|\Xi_2\|_{\mathscr C^{2\alpha+2}})
$$  
hold.
Now the point is that we can replace the space $\mathscr D_{\Xi}$ by $\mathscr D^{\gamma}_{\xi}$ in the min-max equation~\ref{eq:ritz-formula}. More precisely we have the following statement: 
\begin{lemma}\label{lemma:min-max}[min-max principle]
Given $2/3<\gamma<\alpha+2$ we have the following eigenvalue representation :
$$
\Lambda_n(\Xi)=\inf_{F\subseteq\mathscr D^{\gamma}_{\xi},\text{dim}(F)=n}\sup_{\substack{f\in F;\\||f||_{L^2}=1}}\langle\mathscr{ H}(\Xi) f,f\rangle_{L^2}
$$
\end{lemma}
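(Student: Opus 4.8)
\emph{Proof sketch.} Write $\tilde\Lambda_n(\Xi)$ for the quantity on the right-hand side of the claimed identity, the bracket $\langle\mathscr H(\Xi)f,f\rangle_{L^2}$ being understood through the duality pairing between $H^{2\gamma-2}$ and $H^{2-2\gamma}$; this is legitimate because $\gamma>2/3$ forces $2-2\gamma<\gamma$, so that $H^\gamma\hookrightarrow H^{2-2\gamma}$ and the pairing is bounded on $\mathscr D_\xi^\gamma$ by the estimate displayed just before the statement. The plan is to prove separately the two inequalities $\tilde\Lambda_n(\Xi)\le\Lambda_n(\Xi)$ and $\tilde\Lambda_n(\Xi)\ge\Lambda_n(\Xi)$. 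The first one is free: the remark following Definition~\ref{def-paracontrolled-2} gives $\mathscr D_{\Xi}\subset\mathscr D_\xi^\gamma$, so the infimum defining $\tilde\Lambda_n$ runs over a larger family of $n$-dimensional subspaces than the one entering the Ritz formula~\eqref{eq:ritz-formula}, and an infimum over a larger set can only decrease.

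For the converse inequality I would first collect three facts, all established above. First, $\mathscr H(\Xi)$ maps $\mathscr D_\xi^\gamma$ boundedly into $H^{2\gamma-2}$ --- this is precisely the content of the expansion \eqref{eq:expansion-2} combined with the Bony and Schauder estimates, equivalently the bound displayed before the statement --- so that $q(f):=\langle\mathscr H(\Xi)f,f\rangle$ is a continuous quadratic form on the Hilbert space $\mathscr D_\xi^\gamma$, which moreover coincides with the genuine $L^2$ inner product $\langle\mathscr H(\Xi)f,f\rangle_{L^2}$ whenever $f\in\mathscr D_{\Xi}$ (where $\mathscr H(\Xi)f\in L^2$). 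Secondly, $\mathscr D_\xi^\gamma$ embeds continuously (and injectively) into $L^2$. Thirdly, $\mathscr D_{\Xi}$ is dense in $\mathscr D_\xi^\gamma$ (the remark after Proposition~\ref{prop:resolvent-2}).

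Now fix an $n$-dimensional subspace $F\subset\mathscr D_\xi^\gamma$ with basis $(e_1,\dots,e_n)$. By density, choose $e_i^\eps\in\mathscr D_{\Xi}$ with $\|e_i^\eps-e_i\|_{\mathscr D_\xi^\gamma}\to0$ and let $T_\eps\colon F\to\mathscr D_{\Xi}$ be the linear map sending $e_i$ to $e_i^\eps$. Since all norms on the finite-dimensional space $F$ are equivalent, $T_\eps\to\mathrm{Id}_F$ in operator norm; hence for $\eps$ small $T_\eps$ is injective, so $F_\eps:=T_\eps F\subset\mathscr D_{\Xi}$ is again $n$-dimensional, and, using the continuous embedding into $L^2$, one has $\|T_\eps f\|_{L^2}\ge\tfrac12\|f\|_{L^2}$ for all $f\in F$. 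Writing the supremum over the $L^2$-unit sphere of $F_\eps$ as $\sup_{f\in F\setminus\{0\}}q(T_\eps f)/\|T_\eps f\|_{L^2}^2$, and invoking the continuity of $q$ and of $\|\cdot\|_{L^2}$ on $\mathscr D_\xi^\gamma$ together with the uniform bound $\|T_\eps f\|_{\mathscr D_\xi^\gamma}\lesssim_F\|f\|_{\mathscr D_\xi^\gamma}$, one obtains that $\sup_{f\in F_\eps,\,\|f\|_{L^2}=1}q(f)$ converges, as $\eps\to0$, to $\sup_{f\in F,\,\|f\|_{L^2}=1}q(f)$. Since $F_\eps\subset\mathscr D_{\Xi}$, the Ritz formula~\eqref{eq:ritz-formula} gives $\sup_{f\in F_\eps,\,\|f\|_{L^2}=1}q(f)\ge\Lambda_n(\Xi)$ for every $\eps$; letting $\eps\to0$ yields $\sup_{f\in F,\,\|f\|_{L^2}=1}\langle\mathscr H(\Xi)f,f\rangle\ge\Lambda_n(\Xi)$, and taking the infimum over all admissible $F$ gives $\tilde\Lambda_n(\Xi)\ge\Lambda_n(\Xi)$, which closes the argument.

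The only point that is not purely mechanical is the limit $\sup_{F_\eps}q\to\sup_F q$: one needs the estimates on $q(T_\eps f)-q(f)$ and on $\|T_\eps f\|_{L^2}^2-\|f\|_{L^2}^2$ to be uniform in $f$ over the (compact) $L^2$-unit sphere of $F$. Since $F$ is finite dimensional and both $q$ and $\|\cdot\|_{L^2}$ are continuous for the $\mathscr D_\xi^\gamma$-topology, this uniformity is elementary, but it is exactly what legitimises interchanging the supremum with the limit, so I would spell it out rather than leave it implicit.
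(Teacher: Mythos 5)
Your proposal is correct and follows the same two-step strategy as the paper: the easy inequality from the inclusion $\mathscr D_{\Xi}\subseteq\mathscr D_\xi^\gamma$, and the converse from the density of $\mathscr D_{\Xi}$ in $\mathscr D_\xi^\gamma$. The paper's proof of the second step consists of the single sentence "the other inequality is obtained simply by the fact that the space $\mathscr D_{\Xi}$ is dense in $\mathscr D_{\xi}^{\gamma}$," whereas you have actually supplied the argument it invokes: approximating a basis of an arbitrary $n$-dimensional $F\subset\mathscr D_\xi^\gamma$ in the $\mathscr D_\xi^\gamma$-norm, checking that the approximating subspace remains $n$-dimensional, and then using finite-dimensionality to make the limit uniform over the $L^2$-unit sphere. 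This is the content the paper leaves implicit, and your write-up would make the lemma self-contained.
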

\begin{proof}
Since $\mathscr D_{\Xi}\subseteq\mathscr D_{\xi}^{\gamma}$ we have trivially the following inequality  
$$
\Lambda_{n}(\Xi)\geq \inf_{F\subseteq\mathscr D^{\gamma}_{\xi},\text{dim}(F)=n}\sup_{\substack{f\in F;\\||f||_{L^2}=1}}\langle\mathscr{ H}(\Xi)f,f\rangle_{L^2}
$$
The other inequality is obtained simply by the fact that the space $\mathscr D_{\Xi}$ is dense in $\mathscr D_{\xi}^{\gamma}$.
\end{proof}
Now we will show that the eigenvalue are continuous as function of $\Xi$. For that we will need the following result : 
\begin{Proposition}\label{prop:growth}
Given  $\alpha\in(-4/3,-1)$, $-\alpha/2<\gamma<\alpha+2$, $n\in \mathbb N$ and $\Xi=(\xi,\Xi)\in\mathscr X^\alpha$ and denoting by $\Lambda_{1}(0)\leq\Lambda_{2}(0)\leq. . .\leq\Lambda_n(0)$ the $n$
 first eigenvalues of $-\Delta$ repeated with their multiplicity. Then for all $C>0$  there exists a free family of vector $f_1,f_2,. . .,f_n\in\mathscr D_{\xi}^{\gamma}$ such that : 
\begin{enumerate}
\item $\frac{1}{2}\leq\|f_i\|_{L^2}\leq 2$
\item $|\langle f_i,f_j\rangle_{L^2}|\leq\frac{C}{4n}$ for $i\ne j$
\item $\|f_i\|_{\mathscr D_{\xi}^{\gamma}}\lesssim_{\alpha,\gamma} (1+\Lambda_i(0))^{2\gamma}+n^{\frac{2\gamma-\alpha}{2+\alpha}}\|\xi\|_{\mathscr C^{\alpha}}$ 
\end{enumerate}
And therefore from the min-max principle gives us the following bound 
\begin{equation}\label{eq:growth}
\Lambda_n(\Xi)\lesssim n((1+\Lambda_n(0))^{2\gamma}+n^{\frac{2\gamma-\alpha}{2+\alpha}}\|\xi\|^{\frac{\alpha+3}{\alpha+2}}_{\mathscr C^{\alpha}})^2(1+\|\Xi\|_{\mathscr X^\alpha})^2
\end{equation}
hold for all $n$.
\end{Proposition}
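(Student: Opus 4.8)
The plan is to produce $f_1,\dots,f_n$ as small perturbations, \emph{inside} $\mathscr D_\xi^\gamma$, of the $L^2$-orthonormal eigenfunctions $e_1,\dots,e_n$ of $-\Delta$ attached to $\Lambda_1(0),\dots,\Lambda_n(0)$ (Fourier modes, so $\|e_i\|_{H^s}\lesssim(1+\Lambda_i(0))^{s/2}$). Such a perturbation is unavoidable: a smooth $g$ only satisfies $g-g\prec\sigma(D)\xi\in H^{\alpha+2}$, whereas the hypothesis $-\alpha/2<\gamma$ forces $2\gamma>-\alpha>\alpha+2$, so no smooth function lies in $\mathscr D_\xi^\gamma$. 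Mimicking the proof of Lemma~\ref{Lemma: density}, let $h_N:=\sigma(D)\xi-S_N(\sigma(D)\xi)$ be the part of $\sigma(D)\xi$ supported on frequencies $\gtrsim 2^N$; since $\|h_N\|_{\mathscr C^{s}}\lesssim 2^{-N(\alpha+2-s)}\|\xi\|_{\mathscr C^\alpha}$ for every $s<\alpha+2$, the paraproduct estimate of Proposition~\ref{prop:bony-estim} makes the affine map $f\mapsto f\prec h_N+e_i$ a $\tfrac12$-contraction of $H^\gamma$ once $N$ is large (depending on $\|\xi\|_{\mathscr C^\alpha}$); let $f_i$ be its fixed point. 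Then $f_i\in H^\gamma$ with $\|f_i\|_{H^\gamma}\le2\|e_i\|_{H^\gamma}$, and $f_i-f_i\prec\sigma(D)\xi=e_i-f_i\prec S_N(\sigma(D)\xi)\in H^{2\gamma}$ because $S_N(\sigma(D)\xi)$ is a trigonometric polynomial, so $f_i\in\mathscr D_\xi^\gamma$. (One could equally take the rescaled resolvent $f_i:=a\,\mathcal G_ae_i$, using $\mathcal G_aH^{2\gamma-2}=\mathscr D_\xi^\gamma$, the resolvent identity $f_i-e_i=-\mathcal G_a(\mathscr He_i)$ and the bound \eqref{boundedness-G-a}.)

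Two estimates, both from Proposition~\ref{prop:bony-estim} together with Bernstein for the band-limited pieces, drive everything. First, $\|f_i-e_i\|_{L^2}=\|f_i\prec h_N\|_{L^2}\lesssim\|f_i\|_{L^2}\,2^{-N(\alpha+2-\beta)}\|\xi\|_{\mathscr C^\alpha}$ for small $\beta>0$, which is small for $N$ large. Second, since $2\gamma>\alpha+2$ one has $\|S_N(\sigma(D)\xi)\|_{\mathscr C^{2\gamma+\beta}}\lesssim 2^{N(2\gamma+\beta-\alpha-2)}\|\xi\|_{\mathscr C^\alpha}$, hence $\|f_i\|_{\mathscr D_\xi^\gamma}\lesssim\|e_i\|_{H^{2\gamma}}+\|f_i\|_{L^2}\,2^{N(2\gamma+\beta-\alpha-2)}\|\xi\|_{\mathscr C^\alpha}\lesssim(1+\Lambda_i(0))^\gamma+2^{N(2\gamma+\beta-\alpha-2)}\|\xi\|_{\mathscr C^\alpha}$. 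Raising $N$ improves the first bound and inflates the second; balancing them is the heart of the matter.

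Now fix $C$ and choose $N$ with $2^N\asymp(n\|\xi\|_{\mathscr C^\alpha})^{1/(\alpha+2-\beta)}$, so that $\|f_i-e_i\|_{L^2}\le C/(8n)$ for every $i\le n$. This gives $\tfrac12\le\|f_i\|_{L^2}\le2$ (property~1); for $i\ne j$, $|\langle f_i,f_j\rangle_{L^2}|\le\|f_i-e_i\|_{L^2}+\|f_i\|_{L^2}\|f_j-e_j\|_{L^2}\le C/(4n)$ using $\langle e_i,e_j\rangle_{L^2}=0$ (property~2); and the Gram matrix of $(f_i)_{i\le n}$ being within $\tfrac12$ of the identity, the family is free. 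Substituting this $N$ into the bound on $\|f_i\|_{\mathscr D_\xi^\gamma}$ and separating the $\Lambda_i(0)$- and $n$-parts via $xy\le x^2+y^2$ — here one uses the elementary inequalities $2\gamma\le\frac{2\gamma-\alpha}{2+\alpha}$ and $2\gamma+\beta-\alpha-2\le2\gamma-\alpha$, valid for $\alpha\in(-4/3,-1)$ and $\gamma<\alpha+2$ — gives property~3 (the precise power of $\|\xi\|_{\mathscr C^\alpha}$ here being immaterial for the final bound \eqref{eq:growth}).

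For \eqref{eq:growth}, put $F:=\mathrm{span}(f_1,\dots,f_n)\subset\mathscr D_\xi^\gamma$, of dimension $n$ by freeness, and apply the min-max formula of Lemma~\ref{lemma:min-max}. If $f=\sum_ic_if_i$ with $\|f\|_{L^2}=1$, properties~1--2 give $\sum_ic_i^2\lesssim1$, and by bilinearity and the duality bound recalled before Lemma~\ref{lemma:min-max} one has $|\langle\mathscr H(\Xi)f_i,f_j\rangle_{L^2}|\le\|\mathscr H(\Xi)f_i\|_{H^{2\gamma-2}}\|f_j\|_{H^\gamma}\lesssim\|f_i\|_{\mathscr D_\xi^\gamma}\|f_j\|_{\mathscr D_\xi^\gamma}(1+\|\xi\|_{\mathscr C^\alpha}+\|\xi\|_{\mathscr C^\alpha}^2+\|\Xi_2\|_{\mathscr C^{2\alpha+2}})$; combining with property~3 and $\sum_{i,j}|c_ic_j|\le n\sum_ic_i^2$ yields $\langle\mathscr H(\Xi)f,f\rangle_{L^2}\lesssim n\big((1+\Lambda_n(0))^{2\gamma}+n^{\frac{2\gamma-\alpha}{2+\alpha}}\|\xi\|_{\mathscr C^\alpha}\big)^2(1+\|\Xi\|_{\mathscr X^\alpha})^2$, which is \eqref{eq:growth}. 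The main obstacle is the perturbation step: one needs $f_i$ to approximate $e_i$ in $L^2$ at scale $1/n$ (so the test family stays free and near-orthonormal, which is what makes the min-max usable) while keeping $\|f_i\|_{\mathscr D_\xi^\gamma}$ polynomial in $n$; extracting the exact exponent $\tfrac{2\gamma-\alpha}{2+\alpha}$ from this competition is the delicate bookkeeping, everything else reducing to the Schauder and Bony estimates of Section~\ref{sec:bony}.
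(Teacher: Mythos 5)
Your proof is correct and follows essentially the same strategy as the paper: build $f_i$ as a fixed point of a paraproduct perturbation of the Laplacian eigenmode $e_i(0)$, with the perturbation tuned by a scale parameter that must be balanced against $n$. The only genuine difference is the choice of that regularizer. The paper sets $\Gamma_i(f)=f\prec\sigma_a(D)\xi+e_i(0)$ with $\sigma_a(k)=-1/(a+|k|^2)$ and then tunes $a\asymp n^{2/(\alpha+2)}\|\xi\|_{\mathscr C^\alpha}^{2/(\alpha+2)}$, so that $f_i^\sharp=f_i\prec(\sigma_a-\sigma)(D)\xi+e_i(0)$. You instead write $\sigma(D)\xi=S_N(\sigma(D)\xi)+h_N$ and fix-point $f\mapsto f\prec h_N+e_i$, tuning $2^N\asymp(n\|\xi\|_{\mathscr C^\alpha})^{1/(\alpha+2-\beta)}$, so that $f_i^\sharp=e_i-f_i\prec S_N(\sigma(D)\xi)$. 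These are the same thing at the level of scaling ($2^{2N}\asymp a$), and both are driven by exactly the Bony and Schauder estimates of Section~\ref{sec:bony}; your sharp-cutoff version is if anything slightly cleaner, and the resulting $n$-exponent $\frac{2\gamma-\alpha-2}{\alpha+2}$ is in fact better than the stated $\frac{2\gamma-\alpha}{\alpha+2}$, so property~3 follows a fortiori. You also handle freeness more transparently than the paper (which argues by contradiction on a linear dependence): showing that the Gram matrix is a diagonally dominant perturbation of $\mathrm{diag}(\|f_i\|_{L^2}^2)$ once $C$ is small, which is all that ``for all $C>0$'' needs. The final step (duality bound on $\langle\mathscr H f,f\rangle$, $\sum_{i,j}|c_ic_j|\le n\sum c_i^2$, and the min--max of Lemma~\ref{lemma:min-max}) is identical to the paper's. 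The only thing I'd flag as loose in your write-up is the parenthetical resolvent alternative: $e_i\notin\mathscr D_\xi^\gamma$ for any admissible $\gamma$ (as you yourself note), so $\mathscr He_i$ only makes sense at lower regularity and that route needs a bit more care to carry out; since you don't actually rely on it, this doesn't affect the proof.
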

\begin{proof}
Let $e_1(0),e_2(0),. . .,e_n(0)$ an orthonormal family of eigenvector respectively associated to the $n$ first eigenvalues of $-\Delta$ then as in the proof of the Lemma~\ref{Lemma: density} we prove that for $a^{-1}|\xi|^{\frac{2}{2+\alpha}}_{\mathscr C^{\alpha}}$ small enough the map $\Gamma_i: L^{2}\to L^2$ defined by 
$$
\Gamma_i(f)=f\prec\sigma_a(D)\xi+e_i(0)
$$
admit a unique fix point $f_i$ which satisfy $\|f_i\|_{L^2}\leq 2\|e_i(0)\|_{L^2}=2$. Now we will show that the family  $f_1,f_2,. . .,f_n$ is free for $a$ large enough. Indeed let us assume that 
$$
f_1=\sum_{i=2}^{n}\kappa_if_i
$$
using that $f_i$ is a fixed point of the map $\Gamma_i$ we get easily the following equation 
$$
(f_1-\sum_{i=2}^n\kappa_if_i)\prec\sigma_a(D)\xi=\sum_{i=1}^n\kappa_ie_i(0)-e_1(0)
$$
As usual from Bony and Schauder estimate we get 
$$
\|\sum_{i=2}^n\kappa_ie_i(0)-e_1(0)\|_{L^2}\lesssim a^{-(1+\alpha/2)}\|(f_1-\sum_{i=2}^n\kappa_if_i)\|_{L^2}\|\xi\|_{\mathscr C^{\alpha}} 
$$ 
$$
\lesssim \sqrt{n} a^{-(1+\alpha/2)}(1+\sum_i|\kappa_i|^2)\|\xi\|_{\mathscr C^{\alpha}}
$$
on the other side we have that 
$$
\|\sum_{i=2}^n\kappa_ie_i(0)-e_1(0)\|^2_{L^2}=1+\sum_i|\kappa_i|^2
$$
which is impossible if we choose $a$ such that  $a^{-1} n^{\frac{1}{\alpha+2}}\|\xi\|^{\frac{2}{\alpha+2}}_{\mathscr C^{\alpha}}$ is small enough. Now if we take $i\ne j$ is easy to see that 
$$
|\langle f_i,f_j\rangle_{L^2}|\lesssim \|f_i-e_i(0)\|+\|f_j-e_j(0)\|_{L^2}
$$
where we have used the Cauchy-Schwartz inequality and the orthogonality between $e_i(0)$ and $e_j(0)$. Once again Bony estimate and the fact that $\|f_i\|_{L^2}\leq 2$ gives :
$$
\|f_i-e_i(0)\|_{L^2}\lesssim a^{-(1+\frac{\alpha}{2})}\|\xi\|_{\mathscr C^{\alpha}}
$$
which for $a^{-1}n^{\frac{2}{\alpha+2}}\|\xi\|^{\frac{2}{\alpha+2}}_{\mathscr C^{\alpha}}$ small enough (Depending on $C$) ensure that 
$$
|\langle f_i,f_j\rangle_{L^2}|\leq \frac{C}{4n}
$$
Now we have to control the norm of our vector in the space $\mathscr D_{\xi}^{\gamma}$. To do that let us start by observing that  $f_i^{\sharp}=f\prec(\sigma_a(D)-\sigma(D))\xi+e_i(0)$ then Bony and Schauder estimates allow us to get the two following bounds :
$$
\|f_i^{\sharp}\|_{H^{2\gamma}}\lesssim a^{\frac{2\gamma-\alpha}{2}}\|\xi\|_{\mathscr C^\alpha}+(1+\Lambda_i(0))^{2\gamma}
$$
and  
$$
\|f_i\|_{H^\gamma}\lesssim \|\xi\|_{\mathscr C^{\alpha}}+(1+\Lambda_i(0))^\gamma
$$
Which gives the needed estimate for $\|f\|_{\mathscr D_\xi^{\gamma}}$ if we take $a=K  n^{\frac{2}{\alpha+2}}\|\xi\|^{\frac{2}{\alpha+2}}_{\mathscr C^{\alpha}}$ for $K>0$ a sufficiently large constant. Now we will prove the bound~\eqref{eq:growth}. Using the fact that $\text{span}\{f_1,f_2,. . .,f_n\}$ is a $n$- dimensional sub-space of $\mathscr D_\xi^{\gamma}$ and the min-max principle we get easily that:  
$$
\Lambda_n(\Xi)\leq\sup_{\substack{f\in\text{span}\{f_1,f_2,. . .,f_n\}\\ \|f\|_{L^2}=1}}\langle\mathscr H(\Xi)f,f\rangle_{L^2}\lesssim(1+\|\Xi\|_{\mathscr X^\alpha})^2\sup_{\substack{f\in\text{span}\{f_1,f_2,. . .,f_n\}\\ \|f\|_{L^2}=1}}\|f\|^2_{\mathscr D_\xi^\gamma}
$$  
 Let $f$ such that 
$$
f=\sum_{i=1}^n\kappa_if_i
$$
then
$$
\|f\|^2_{\mathscr D_\xi^\gamma}\lesssim \max_{i}\|f_i\|^2_{\mathscr D_\xi^\gamma}\big(\sum_{i=1}^n|\kappa_i|\big)^2
$$ 
on the other side we have 
$$
\|f\|_{L^2}=\sum_{i=1}^n|\kappa_i|^2\|f_i\|_{L^2}+\sum_{i\ne j}\kappa_i\kappa_j\langle f_i,f_j\rangle_{L^2}
$$ 
which can be lower-bounded using the two firsts properties of the function $f_i$. Namely
$$
\|f\|_{L^2}\geq \frac{1}{2}\sum|\kappa_i|^2-\frac{1}{4n}(\sum_{i}\kappa_i)^2\geq \frac{1}{4n}(\sum|\kappa_i|)^2
$$ 
and finally we can conclude that 
$$
\frac{\|f\|_{\mathscr D_\xi^\gamma}}{\|f\|_{L^2}}\lesssim n\max_{i}\|f_i\|^2_{\mathscr D_\xi^\gamma}
$$
which by the third property of the function $f_i$ complete the proof.
\end{proof}

\begin{Proposition}
\label{prop: continuity-eig}
Given $\alpha\in(-4/3,-1)$, $\Xi\in\mathscr X^\alpha$ and let us denote by $(\Lambda_n(\Xi))_{n\geq1}$ the set of the eigenvalue of the operator $\mathscr{H}=\mathscr{H}(\Xi)$. Then for all $n\geq 1$ the map $\Lambda_n :\mathscr X^\alpha\to\mathbb R$ is locally Lipschitz. More precisely it exist $M>0$ such that for all $n\in\mathbb N$, $\Xi,\tilde \Xi\in\mathscr X^\alpha$ and $-\alpha/2<\gamma<\alpha+2$ 
$$
|\Lambda_n(\Xi)-\Lambda_n(\tilde \Xi)|\lesssim_{\gamma,\alpha}n\|\Xi-\tilde\Xi\|_{\mathscr X^\alpha}(1+\|\Xi\|_{\mathscr X^\alpha}+\|\tilde \Xi\|_{\mathscr X^\alpha})^M\left(1+n^{\frac{2\gamma-\alpha}{\alpha+2}}+(1+\Lambda_n(0))^{2\gamma}\right)^2
$$ 
Where the last bound is uniform on $L$.
\end{Proposition}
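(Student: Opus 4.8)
I would run a perturbation argument on the min--max formula of Lemma~\ref{lemma:min-max}, transferring a near-optimal $n$-dimensional test space for $\mathscr H(\Xi)$ to one for $\mathscr H(\tilde\Xi)$ by Proposition~\ref{prop:interpolation} and estimating the change of the Rayleigh quotients via the continuity bound~\eqref{eq:continuity}. A preliminary reduction removes the case of a ``large'' difference: combining the growth bound~\eqref{eq:growth} with the lower bound $\Lambda_1(\cdot)\ge-A(\|\cdot\|_{\mathscr X^\alpha})$ (the Remark after Proposition~\ref{prop:resolvent}) one gets $|\Lambda_n(\Xi)|\lesssim n\bigl(1+n^{\frac{2\gamma-\alpha}{\alpha+2}}+(1+\Lambda_n(0))^{2\gamma}\bigr)^2(1+\|\Xi\|_{\mathscr X^\alpha})^{M'}$, and likewise for $\tilde\Xi$, so the asserted inequality holds automatically (after enlarging $M$) whenever $\|\Xi-\tilde\Xi\|_{\mathscr X^\alpha}$ is bounded below by a fixed inverse power of $1+\|\Xi\|_{\mathscr X^\alpha}+\|\tilde\Xi\|_{\mathscr X^\alpha}$; equivalently, since $\mathscr X^\alpha$ is a linear space one may join $\Xi$ to $\tilde\Xi$ by the segment $\Xi_t=(1-t)\Xi+t\tilde\Xi$ and integrate a local estimate. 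Either way it suffices to produce the bound when $\Xi$ and $\tilde\Xi$ are close.

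The core step is then the following. Fix $n$ and let $e_1,\dots,e_n\in\mathscr D_\Xi$ be an $L^2$-orthonormal family of eigenvectors of $\mathscr H(\Xi)$ associated with $\Lambda_1(\Xi)\le\cdots\le\Lambda_n(\Xi)$, so that $F:=\mathrm{span}(e_1,\dots,e_n)$ realises the infimum in Lemma~\ref{lemma:min-max}. One needs a control of $\max_{i\le n}\|e_i\|_{\mathscr D_\xi^\gamma}$ of the same shape as the one established for the auxiliary family in Proposition~\ref{prop:growth}(3), i.e. of order $1+n^{\frac{2\gamma-\alpha}{\alpha+2}}+(1+\Lambda_n(0))^{2\gamma}$ up to a power of $1+\|\Xi\|_{\mathscr X^\alpha}$: one writes $e_i=(a+\Lambda_i(\Xi))\,\mathcal G_a(\Xi)e_i$, invokes the resolvent bounds of Propositions~\ref{prop:resolvent} and~\ref{prop:resolvent-2} and the a priori bound $\Lambda_i(\Xi)\le\Lambda_n(\Xi)$ from the preliminary step, and optimises the spectral parameter over $a\gtrsim n^{2/(\alpha+2)}\|\xi\|_{\mathscr C^\alpha}^{2/(\alpha+2)}$, exactly as in the proof of Proposition~\ref{prop:growth}.

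Granting this, Proposition~\ref{prop:interpolation} produces $g_i\in\mathscr D_{\tilde\Xi}$ with $\|e_i-g_i\|_{H^\gamma}+\|e_i^\flat-g_i^\flat\|_{H^2}\lesssim\|e_i\|_{H^\gamma}(1+\|\tilde\Xi\|_{\mathscr X^\alpha})\|\Xi-\tilde\Xi\|_{\mathscr X^\alpha}$, and for $\Xi,\tilde\Xi$ close enough $\tilde F:=\mathrm{span}(g_1,\dots,g_n)\subset\mathscr D_{\tilde\Xi}$ is $n$-dimensional. Given $g=\sum_i\kappa_ig_i$ with $\|g\|_{L^2}=1$, set $f=\sum_i\kappa_ie_i$; then $\|f\|_{L^2}^2=1+O(\|\Xi-\tilde\Xi\|_{\mathscr X^\alpha})$ and
\begin{align*}
\langle\mathscr H(\tilde\Xi)g,g\rangle_{L^2}-\langle\mathscr H(\Xi)f,f\rangle_{L^2}&=\sum_{i,j}\kappa_i\kappa_j\Bigl(\langle\mathscr H(\tilde\Xi)g_i-\mathscr H(\Xi)e_i,\,g_j\rangle_{L^2}+\langle\mathscr H(\Xi)e_i,\,g_j-e_j\rangle_{L^2}\Bigr).
\end{align*}
One bounds the first pairing by~\eqref{eq:continuity} together with Proposition~\ref{prop:interpolation}, and the second by $|\Lambda_i(\Xi)|\,\|g_j-e_j\|_{L^2}$ (using $\mathscr H(\Xi)e_i=\Lambda_i(\Xi)e_i$), always feeding in the bound on $\max_i\|e_i\|_{\mathscr D_\xi^\gamma}$ from the core step; since $\bigl(\sum_i|\kappa_i|\bigr)^2\le n\sum_i|\kappa_i|^2$, summing gives $\langle\mathscr H(\tilde\Xi)g,g\rangle_{L^2}\le\langle\mathscr H(\Xi)f,f\rangle_{L^2}+E$ with $E$ of the stated form, while $\langle\mathscr H(\Xi)f,f\rangle_{L^2}=\sum_i|\kappa_i|^2\Lambda_i(\Xi)\le\Lambda_n(\Xi)\|f\|_{L^2}^2$, the discrepancy $|\Lambda_n(\Xi)|\,\bigl|\|f\|_{L^2}^2-1\bigr|$ again being a term of the same form. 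Lemma~\ref{lemma:min-max} then yields $\Lambda_n(\tilde\Xi)\le\Lambda_n(\Xi)+E$, and exchanging the roles of $\Xi$ and $\tilde\Xi$ gives the reverse inequality.

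The main obstacle is the quantitative bookkeeping: one must obtain the bound on $\max_{i\le n}\|e_i\|_{\mathscr D_\xi^\gamma}$ with exactly the exponents of Proposition~\ref{prop:growth}(3), and push the bilinear expansion through without generating any extra power of $n$ or of $1+\Lambda_n(0)$, so that the final constant is genuinely $n\bigl(1+n^{\frac{2\gamma-\alpha}{\alpha+2}}+(1+\Lambda_n(0))^{2\gamma}\bigr)^2$. This forces the $H^\gamma$-loss in the transfer of Proposition~\ref{prop:interpolation} to be offset sharply by the gain in the spectral parameter $a$ coming from the resolvent estimates. A variant avoiding the min--max altogether is to compare the compact self-adjoint resolvents $\mathcal G_a(\Xi)$ and $\mathcal G_a(\tilde\Xi)$ directly, $|\Lambda_n(\Xi)-\Lambda_n(\tilde\Xi)|\le(a+\Lambda_n(\Xi))(a+\Lambda_n(\tilde\Xi))\,\|\mathcal G_a(\Xi)-\mathcal G_a(\tilde\Xi)\|_{L^2\to L^2}$ by Weyl's eigenvalue perturbation inequality, which reduces the problem to an $a$-quantified refinement of Lemma~\ref{lemma:cont} together with a choice of $a$ balanced against the growth bound~\eqref{eq:growth}.
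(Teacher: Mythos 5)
Your primary argument --- perturbing the min--max formula of Lemma~\ref{lemma:min-max} by transferring an $n$-dimensional test space from $\mathscr D_\Xi$ to $\mathscr D_{\tilde\Xi}$ via Proposition~\ref{prop:interpolation} and estimating the Rayleigh quotients through~\eqref{eq:continuity} --- is not the route the paper takes. The ``variant'' you sketch at the very end, namely Weyl's perturbation inequality for the compact self-adjoint resolvents $\mathcal G_a(\Xi)$ and $\mathcal G_a(\tilde\Xi)$ combined with Lemma~\ref{lemma:cont} and the growth bound~\eqref{eq:growth}, \emph{is} the paper's proof, essentially verbatim: the paper writes $\bigl|\tfrac{1}{\Lambda_n(\Xi)+a}-\tfrac{1}{\Lambda_n(\tilde\Xi)+a}\bigr|\leq\|\mathcal G_a(\Xi)-\mathcal G_a(\tilde\Xi)\|_{\mathcal L(L^2,L^2)}$, feeds in Lemma~\ref{lemma:cont} for $a\gtrsim(1+\|\Xi\|+\|\tilde\Xi\|)^{2/\varrho}$, and concludes with~\eqref{eq:growth}.

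The resolvent route is decisively cleaner here, and it sidesteps exactly the step you flag as the main obstacle. There is no need to estimate $\max_{i\le n}\|e_i\|_{\mathscr D_\xi^\gamma}$ for the actual eigenvectors: Weyl's inequality packages all $n$-dependence into the two factors $(\Lambda_n(\Xi)+a)(\Lambda_n(\tilde\Xi)+a)$, which are then controlled by a single application of~\eqref{eq:growth} once $a$ is fixed as in Lemma~\ref{lemma:cont}, so the entire Lipschitz content sits in the fixed-point resolvent estimate already proved uniformly in $L$. Your primary strategy is plausible in outline, but it requires re-deriving a Proposition~\ref{prop:growth}-type bound for the eigenvectors themselves (say via $e_i=(a+\Lambda_i(\Xi))\mathcal G_a(\Xi)e_i$ and Propositions~\ref{prop:resolvent}--\ref{prop:resolvent-2}), keeping the resulting exponents in $n$ and $1+\Lambda_n(0)$ from compounding through the bilinear expansion, checking that the transferred family $(g_i)$ from Proposition~\ref{prop:interpolation} stays linearly independent with $\|g\|_{L^2}^2$ comparable to $\sum_i\kappa_i^2$, and a preliminary reduction to the ``close'' regime; every one of those headaches is absent from the resolvent comparison. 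In short, you did identify the paper's argument, but relegated it to a final aside.
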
  
\begin{proof}
The min-max principle for the resolvent $\mathcal G_a$ gives that 
$$
\big|\frac{1}{\Lambda_n(\Xi)+a}-\frac{1}{\Lambda_n(\tilde \Xi)+a}\big|\leq \|\mathcal G_a(\Xi)-\mathcal G_a(\tilde \Xi)\|_{\mathcal L(L^2,L^2)}
$$
where $\mathcal L(L^2,L^2)$ is the space of bounded operator on $L^2$ equipped on his usual norm.Then using the bound given in the Lemma~\ref{lemma:cont} we can easily deduce that 
$$
|\Lambda_n(\Xi)-\Lambda_n(\tilde \Xi)|\lesssim \|\Xi-\tilde\Xi\|_{\mathscr X^\alpha}(1+\|\Xi\|_{\mathscr X^\alpha}+\|\tilde \Xi\|_{\mathscr X^\alpha})^2(\Lambda_n(\Xi)+a)(\Lambda_n(\tilde \Xi)+a)
$$
for all $a\geq (1+\|\Xi\|_{\mathscr X^\alpha}+\|\tilde \Xi\|_{\mathscr X^\alpha})^{\frac{2}{\varrho}}$ with $\varrho$ is as in the Lemma~\ref{lemma:cont} it is sufficient to use the bound~\eqref{eq:growth}. 
\end{proof}

\section{Renormalization for the Anderson hamiltonian} 
\label{sec:Stoch}
\subsection{Renormalization for the white noise potential}
For $\alpha <-1$, we consider a smooth approximation $\xi_\eps$ 
of the white noise $\xi\in \mathscr{C}^\alpha$  and 
establish the convergence in $\mathscr{X}^\alpha$ of the mollified family
$(\xi_\eps,\xi_\eps\circ \sigma(D)\xi_\eps+c_\eps)$ for some diverging constants 
$(c_\eps)_{\eps>0}$ towards some $(\xi,\Xi_2^{wn})\in\mathscr{X}^\alpha$. 
The following Theorem is a cornerstone of our study as it permits us to handle concretely 
the relevant case of the white noise and use the approximation theory developed in the previous sections  
to this case.  

\begin{theorem}\label{conv-white-noise}
Let $\alpha<-1$ and $\xi\in \mathscr{C}^\alpha$ 
a white-noise on the two dimensional torus $\mathbb T_L^2$. We consider $\xi^{\eps}$ a smooth approximation of the white noise $\xi$ defined as
\begin{align*}
\xi_\eps(x):=\sum_{k\in\mathbb Z_L^2}\theta(\eps |k|)\hat{\xi}(k)\frac{1}{L}\exp(i 2\pi \langle k,x\rangle)
\end{align*}
where $\theta$ is a smooth function on $\mathbb R\setminus\{0\}$ with compact support, 
such that $\lim_{x\to0}\theta(x)=1$. Then, there exists a diverging sequence of constants 
$(c_\eps)_{\eps >0} = (c_\eps(\theta))_{\eps>0}$ and 
a limit point $\Xi^{wn}=(\xi,\Xi^{wn}_2)\in\mathscr X^\alpha$ such that the following convergence  
\begin{align*}
\Xi^\eps:=(\xi_\eps,\xi_\eps\circ \sigma(D)\xi_\eps+c_\eps) \underset{\eps\to0}{\longrightarrow} \Xi^{wn}
\end{align*}
holds in $L^p(\Omega,\mathscr E^\alpha)$ for all $p>1$ and almost-surely in $\mathscr E^\alpha$. 
Moreover $\Xi^{wn}$ is independent on the choice of $\theta$. 
\end{theorem}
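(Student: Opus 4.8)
The plan is to establish convergence of the two components of $\Xi^\eps=(\xi_\eps,\xi_\eps\circ\sigma(D)\xi_\eps+c_\eps)$ separately in $\mathscr E^\alpha=\mathscr C^\alpha\times\mathscr C^{2\alpha+2}$, the convergence of $\xi_\eps$ being classical and that of the renormalized resonant product being the substantial point. We fix the canonical, deterministic constant $c_\eps:=-\mathbb E[\xi_\eps\circ\sigma(D)\xi_\eps]$ and set $Y_\eps:=\xi_\eps\circ\sigma(D)\xi_\eps+c_\eps=\xi_\eps\circ\sigma(D)\xi_\eps-\mathbb E[\xi_\eps\circ\sigma(D)\xi_\eps]$, the Wick renormalization of the resonant product; since $\theta$ has compact support, $\xi_\eps$ is a trigonometric polynomial and $Y_\eps$ is a smooth random field lying in the second homogeneous Gaussian chaos generated by $\xi$ (the quadratic expression $\xi_\eps\circ\sigma(D)\xi_\eps$ has only a zeroth-chaos part, equal to its mean, and a second-chaos part, equal to $Y_\eps$). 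The argument rests on three standard tools: Nelson's hypercontractivity, which makes all $L^p(\Omega)$ norms on a fixed Wiener chaos comparable to the $L^2(\Omega)$ norm; Wick's (Isserlis') formula, which turns every probabilistic estimate into an explicit sum over the Fourier lattice $\mathbb Z_L^2$; and the Littlewood--Paley/Besov--Kolmogorov criterion, namely that a bound $\mathbb E[|\Delta_q u(x)|^2]\lesssim 2^{-2q\beta}$ uniform in $x$ yields, after hypercontractivity and the embedding $\mathscr B_{p,p}^{\beta+2/p}\hookrightarrow\mathscr C^{\beta}$ with $p$ large, that $u\in\mathscr C^{\beta-\delta}$ for every $\delta>0$ with moments of all orders, and that the same estimate carrying an extra factor $\eps^{2\kappa}$ produces $L^p(\Omega)$-Cauchy families and, via Borel--Cantelli along $\eps=2^{-n}$ together with monotonicity of the bound in $\eps$, almost sure convergence.

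For the first component, write $\xi_\eps=\Theta_\eps(D)\xi$ with $\Theta_\eps(k)=\theta(\eps|k|)$. Since $\theta$ is smooth with $\theta(0)=1$, one has $|\theta(\eps|k|)-\theta(\bar\eps|k|)|\lesssim 1\wedge(\bar\eps|k|)^{\kappa}$ for $\bar\eps\ge\eps$ and any $\kappa\in(0,1)$, and Isserlis' formula gives
\begin{align*}
\mathbb E[|\Delta_q(\xi_\eps-\xi_{\bar\eps})(x)|^2]\ \lesssim\ 2^{2q}\bigl(1\wedge(\bar\eps 2^q)^{2\kappa}\bigr),
\end{align*}
uniformly in $x$ and in $L$ once the Fourier normalisation of the white noise covariance is accounted for. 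Hypercontractivity and the Besov embedding then show that $(\xi_\eps)$ is Cauchy in $L^p(\Omega,\mathscr C^\alpha)$ for every $\alpha<-1$ and every $p$, with a positive power rate in $\bar\eps$, and converges almost surely in $\mathscr C^\alpha$; the limit is identified as $\xi$ by comparing Fourier coefficients. This is the classical regularisation statement for two-dimensional white noise.

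The core of the proof is the second component. Expanding $\xi_\eps\circ\sigma(D)\xi_\eps=\sum_{|i-j|\le1}\Delta_i\xi_\eps\,\Delta_j\sigma(D)\xi_\eps$ in Fourier and separating the diagonal part (which is exactly $\mathbb E[\xi_\eps\circ\sigma(D)\xi_\eps]$) from the off-diagonal part exhibits $Y_\eps$ as an explicit second-chaos field whose kernel is supported on pairs $(k,m)$ subject to the resonance constraint $|k|\sim|m|\sim2^\ell$, carrying the weight $\sigma(m)\sim2^{-2\ell}$ and the cut-off $\theta(\eps|k|)\theta(\eps|m|)$. Applying $\Delta_q$ forces $|k+m|\sim2^q$, hence $\ell\gtrsim q$, and Isserlis' formula together with the geometric count of such pairs (about $2^{2\ell}$ choices for $k$ and $2^{2q}$ choices for $k+m$) yields
\begin{align*}
\mathbb E[|\Delta_q Y_\eps(x)|^2]\ \lesssim\ \sum_{\ell\gtrsim q,\ 2^\ell\lesssim 1/\eps}2^{2q}\,2^{2\ell}\,2^{-4\ell}\ \lesssim\ 2^{2q}\sum_{\ell\gtrsim q}2^{-2\ell}\ \lesssim\ 1,
\end{align*}
uniformly in $q$, in $\eps$ and in $L$ (the volume factors cancel between the lattice count and the covariance normalisation), so that $Y_\eps\in\mathscr C^{2\alpha+2}$ with uniform moments. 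Replacing the cut-off $\theta(\eps|k|)\theta(\eps|m|)$ by its increment $\theta(\eps|k|)\theta(\eps|m|)-\theta(\bar\eps|k|)\theta(\bar\eps|m|)$, which is bounded by $1\wedge(\bar\eps 2^\ell)^{2\kappa}$ on $|k|\sim|m|\sim2^\ell$, the same resummation gives $\mathbb E[|\Delta_q(Y_\eps-Y_{\bar\eps})(x)|^2]\lesssim(\bar\eps 2^q)^{2\kappa}$; choosing $p$ large and $\kappa$ small so that $-2\kappa-2/p>2\alpha+2$, hypercontractivity and the Besov--Kolmogorov criterion show that $(Y_\eps)$ is $L^p(\Omega,\mathscr C^{2\alpha+2})$-Cauchy with a positive power rate in $\eps$, hence convergent in $L^p$ for all $p$ and, by Borel--Cantelli along $\eps=2^{-n}$, almost surely in $\mathscr C^{2\alpha+2}$; denote the limit $\Xi^{wn}_2$. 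Together with the first part this gives $\Xi^\eps=(\xi_\eps,Y_\eps)\to(\xi,\Xi^{wn}_2)=:\Xi^{wn}$ in $L^p(\Omega,\mathscr E^\alpha)$ for all $p>1$ and almost surely, and $\Xi^{wn}\in\mathscr X^\alpha$ since every $\Xi^\eps$ lies in the generating set \eqref{closure-set} and $\mathscr X^\alpha$ is closed. The asymptotics $c_\eps=\frac{1}{2\pi}\log(1/\eps)+O(1)$ follows by evaluating the diagonal sum defining $\mathbb E[\xi_\eps\circ\sigma(D)\xi_\eps]$ and comparing it with an integral; note that $c_\eps$ depends on $\theta$ through the $O(1)$ term while $\Xi^{wn}$ does not. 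Indeed, for two mollifiers $\theta,\theta'$ the difference $Y^\theta_\eps-Y^{\theta'}_\eps$ again lies in the second chaos, with kernel increment $\theta(\eps|k|)\theta(\eps|m|)-\theta'(\eps|k|)\theta'(\eps|m|)$ bounded by $1\wedge(\eps 2^\ell)^{2\kappa}$ on $|k|\sim|m|\sim2^\ell$ because $\theta(0)=\theta'(0)=1$, so the same computation gives $\mathbb E[|\Delta_q(Y^\theta_\eps-Y^{\theta'}_\eps)(x)|^2]\lesssim(\eps 2^q)^{2\kappa}\to0$; hence $\lim_\eps Y^\theta_\eps=\lim_\eps Y^{\theta'}_\eps$ in $\mathscr C^{2\alpha+2}$, and since the first component is obviously $\theta$-independent, $\Xi^{wn}$ does not depend on $\theta$.

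The main obstacle is the core second-moment estimate on $\Delta_q Y_\eps$: one must set up the second-chaos Fourier kernel of the resonant product correctly, carry out the bookkeeping of the resonant sum (pairs $(k,m)$ with $|k|\sim|m|$ and prescribed $|k+m|$), and verify that all volume factors cancel, which is precisely what makes the bound uniform in $L$ --- the property that is essential for the large-torus asymptotics of Theorem~\ref{th:spectre}. Once this estimate and its $\eps$-increment variant are in hand, the passage to $L^p(\Omega)$ and almost sure convergence and the independence of $\theta$ are routine consequences of hypercontractivity and Besov embeddings.
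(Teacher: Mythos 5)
Your proposal is correct and follows essentially the same route as the paper: Wick/Isserlis reduces the pointwise second moments of the Littlewood--Paley blocks to explicit lattice sums over $\mathbb Z_L^2$, the resonance constraint $|k|\sim|\ell|$ together with the weight $\sigma(\ell)\sim|\ell|^{-2}$ gives the crucial uniform-in-$q$ and uniform-in-$L$ bound, Gaussian hypercontractivity lifts this to all $L^p(\Omega)$ norms, and the Besov embedding $\mathscr B_{p,p}^{s}\hookrightarrow\mathscr C^{s-2/p}$ plus a Cauchy estimate in $\eps$ with a positive power rate (Kolmogorov's criterion) gives both $L^p$ and almost sure convergence. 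The only genuine addition in your write-up is the short argument for the $\theta$-independence of $\Xi^{wn}_2$, which the paper asserts but does not spell out; your kernel-increment estimate for $\theta,\theta'$ is the right way to do it and plugs into exactly the same resummation. One small presentational caveat: when you pass from the Cauchy estimate to almost sure convergence you invoke ``Borel--Cantelli along $\eps=2^{-n}$ together with monotonicity of the bound''; the cleaner and in fact necessary formulation (used in the paper) is the Kolmogorov continuity criterion applied to the $\mathscr C^{\alpha}$-valued (resp. $\mathscr C^{2\alpha+2}$-valued) process $\eps\mapsto\xi_\eps$, since Borel--Cantelli along a dyadic subsequence alone does not control the intermediate $\eps$ without a further oscillation estimate. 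This is easily repaired and does not affect the substance of the argument.
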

\begin{remark}
 For simplicity we will denote by $\Xi$ (instead of $\Xi^{wn}$) the rough distribution associate to the white noise. 
\end{remark}

\begin{proof}
We first prove the convergence of $\xi_\eps$ in the space $\mathscr{C}^\alpha$ for any $\alpha <-1$. 
For $i\geq -1$ and $x\in \mathbb{T}_L^2$, we have 
\begin{align*}
\Delta_i(\xi_\eps-\xi)(x) = 
\sum_{k \in \Z_L^2}  \rho(2^{-i} |k|)  \hat{\xi}(k) (\theta(\eps |k|) -1)\frac{1}{L}\exp(i 2\pi \langle k,x\rangle)
\end{align*}
where the $ \hat{\xi}(k), k\in \Z_L^2$ form a family of independent and identically distributed complex 
Gaussian variables such that, for any $k,\ell \in \Z_L^2$,  
\begin{align*}
\hat{\xi}(-k) &= \overline{\hat{\xi}(k)},  \\
\E[\hat{\xi}(k) \hat{\xi}(\ell) ] &= \delta(k+\ell) \,.  
\end{align*}
We deduce the following upper-bound valid up to a constant independent of $i\geq -1$
and for any $\delta >0,x\in \mathbb{T_L}^2$, 
\begin{align*}
\mathbb E\left[|\Delta_i(\xi_\eps-\xi)(x)|^2\right]&\lesssim \sum_{k\in\mathbb Z_L^2}|\theta(\eps |k|)-1|^2 
\rho(2^{-i}|k|)^2\\
& \lesssim 2^{(2+\der)i}\left(L^{-2}\sum_{k\in\mathbb Z_L^2 }\frac{|\theta(\eps |k|)-1|^2}{(1+|k|)^{2+\der}}\right)
\end{align*}
where we have used the fact that the function $\rho$ is supported on a compact annulus for the second line 
\footnote{The sum in the first line contains only the terms with indices $|k|\sim 2^{i}$. }.  
Before proceeding with the computation let us observe that by Riemann-sum approximation we have have the following bound 
$$
L^{-2}\sum_{k\in\mathbb Z_L^2\setminus }\frac{|\theta(\eps |k|)-1|^2}{(1+|k|)^{2+\der}}\lesssim\int_{\mathbb R^2}\frac{|\theta(\eps |y|)-1|^2}{(1+|y|)^{2+\der}}\dd y
$$
which show in particularly that our bound is uniform in  $L$. Integrating over $x\in \mathbb{T}_L^2$, 
we obtain an upper-bound for the $L^2$ norm of $\Delta_i(\xi_\eps-\xi)$ which can be generalized for any $p>1$ thanks 
to the Gaussian hypercontractivity property~\cite{janson} 
 \begin{align*}
\mathbb E\left[||\Delta_i(\xi_\eps-\xi)||^p_{L^p}\right] &\lesssim \int_{\mathbb T_L^2}\mathbb E\left[|\Delta_i(\xi_\eps-\xi)(x)|^2\right]^{\frac{p}{2}} dx \\
& \lesssim_p L^{2} 2^{ip(1+\der/2)}\left(L^{-2}\sum_{k\in\mathbb Z_L^2}\frac{|\theta(\eps |k|)-1|^2}{(1+|k|)^{2+\der}}\right)^{p/2}\,. 
\end{align*}
Multiplying both sides of this inequality by $2^{-ip(1+\frac{\delta}{2})}$ and summing over $i\geq-1$ gives 
\begin{align*}
\mathbb E\left[||\xi_\eps-\xi||^p_{\mathscr B_{p,p}^{-1-\frac{\der}{2}} }\right]\lesssim_p L^2 \left(L^{-2}\sum_{k\in\mathbb Z_L^2}\frac{|\theta(\eps |k|)-1|^2}{1+|k|^{2+\der}}\right)^{p/2}
\end{align*}
From the embedding property \ref{Proposition:Bes-emb} of the Besov spaces, we obtain 
\begin{align*}
\mathbb E\left [||\xi_\eps-\xi||^p_{\mathscr C^{-1-\frac{\der}{2}-\frac{2}{p}} }\right]\lesssim_p L^2\left(L^{-2}\sum_{k\in\mathbb Z_L^2}\frac{|\theta(\eps |k|)-1|^2}{1+|k|^{2+\der}}\right)^{p/2}\,. 
\end{align*}
For any $\delta >0$, the bounded convergence Theorem permits us to show 
that the right hand side of the last equation converges to zero and we can finally conclude that 
$\xi_\eps$ converge to $\xi$ in $L^p(\Omega,\mathscr C^\alpha)$ for any $\alpha<-1$.

Towards the almost sure convergence of $\xi_\eps$, 
 the same arguments apply to $\xi_\eps-\xi_{\eps'}$ instead of $\xi_\eps-\xi$ and we get 
\begin{align*}
\mathbb E\left [||\xi_\eps-\xi_{\eps'}||^p_{\mathscr C^{-1-\frac{\der}{2}-\frac{2}{p}}}\right]
\lesssim L^2\left(L^{-2}\sum_{k\in\mathbb Z_L^2}\frac{|\theta(\eps |k|)-\theta(\eps'|k|)|^2}{1+|k|^{2+\der}}\right)^{p/2}
\end{align*}
Using the fact that $|\theta(\eps |k|)-\theta(\eps' |k|)|\lesssim_p |\eps-\eps'|^\eta|k|^\eta$, we obtain that for any 
$0<\eta<\der$
\begin{align*}
\mathbb E\left [||\xi_\eps-\xi_{\eps'}||^p_{\mathscr C^{-1-\frac{\der}{2}-\frac{2}{p}}}\right]\lesssim |\eps-\eps'|^{p\eta/2}\,. 
\end{align*}
This proves the convergence of $(\xi_\eps)_{\eps>0}$ in $L^p(\Omega,\mathscr{C}^\alpha)$ for any $\alpha<-1$ 
and $p>0$. 

Towards the almost sure convergence, 
the Kolmogorov criterion applied for $p>2/\eta$ permits us to show 
that there exists $\kappa\in (0,\frac{\eta}{2})$ 
such that for $p$ large enough, we have almost surely, 
\begin{align*}
||\xi_\eps-\xi_{\eps'}||_{\mathscr C^{-1-\frac{\der}{2}-\frac{2}{p}}  }\lesssim |\eps-\eps'|^{\kappa}\,. 
\end{align*}
The sequence $(\xi_\eps)_{\eps>0}$ is therefore almost surely a Cauchy sequence 
in the Banach space $\mathscr{C}^{-1-\frac{\der}{2}-\frac{2}{p}}$ where $p$ (resp. $\delta$) is arbitrarily large
(resp. small).  We can conclude that $(\xi_\eps)_{\eps>0}$ converges almost surely in 
$\mathscr{C}^{\alpha}(\T_L^2)$ for any $\alpha <-1$.

Now we turn to the second chaos term which may be expanded as
\begin{equation*}
\begin{split}
 \xi_\eps \circ \sigma(D) \xi_\eps(x)
 =L^{-2}\sum_{\substack{k\in\mathbb Z_L^2 ,\ell \in\mathbb Z_L^2, \\ |i-j|\leq1}}\rho(2^{-i}|k|) \rho(2^{-j}|\ell|)
\theta(\eps |k|)\frac{\theta(\eps |\ell| )}{1+|\ell|^2} \hat{\xi}(k)\hat{\xi} (\ell) e_{k+\ell}(x)\,. 
\end{split}
\end{equation*}  
Taking the expectation in this last formula yields 
\begin{align*}
-\mathbb E \left[\xi_\eps \circ \sigma(D) \xi_\eps(x) \right]
=L^{-2}\sum_{\substack{ k\in \mathbb Z_L ^2  ,\\  |i-j| \leq1} } \rho(2^{-i}|k|) \rho(2^{-j}|k|)
\frac{|\theta(\eps |k|)|^2}{1+|k|^2}=\sum_{\substack{ k\in \mathbb Z_L ^2 } }
\frac{|\theta(\eps |k|)|^2}{1+|k|^2}
\end{align*}
so that it is  sufficient to set
\begin{align*}
c_\eps:=L^{-2} \sum_{k \in\mathbb Z_L^2 } \frac{|\theta(\eps | k |)|^2}{1+|k|^2}\,. 
\end{align*}
One can easily check that for any function $\theta$, which is smooth on $\R\setminus\{0\}$ and such that 
$\theta(x)\to 1$ when $x\to 0$, we have    
\begin{align*}
c_\eps=\frac{1}{2\pi} \log(\frac{1}{\eps})+O(1)\,. 
\end{align*}
For $x \in \T_L^2$, we set 
\begin{align*}
\Xi^\eps_2(x) := \xi_\eps \circ \sigma(D) \xi_\eps(x) +c_\eps\,. 
\end{align*}
We will now prove that the sequence 
$(\Xi^\eps_2)_{\eps>0}$ is a Cauchy convergent 
sequence in the Banach space  $\mathscr C^{2\alpha+2}(\T_L^2)$
for any $\alpha <-1$. 
We have
\begin{align*}
\Xi_2^\eps(x)=L^{-2}\sum_{\substack{k\in\mathbb Z_L^2 ,\ell \in\mathbb Z_L^2, \\ |i-j|\leq1}}\rho(2^{-i}|k|) \rho(2^{-j}|\ell|)
\theta(\eps |k|)\frac{\theta(\eps |\ell| )}{1+|\ell|^2} ( \hat{\xi}(k)\hat{\xi} (\ell) - \delta(k+\ell)) 
e_{k+\ell}(x) 
\end{align*}
We want to bound the second moments of the Littlewood-Paley blocks 
of the difference 
$\Xi_2^\eps - \Xi_2^{\eps'}$ for $\eps, \eps'>0$: For any $q\geq -1$,  
\begin{equation*}
\begin{split}
&\mathbb E[|\Delta_q(\Xi^\eps_2- \Xi_2^{\eps'}) (x)|^2]
\\&= L^{-4}\sum_{\substack{k\in\mathbb Z_L^2 ,\ell \in\mathbb Z_L^2,
\\ |i_1-j_1| \leq 1,|i_2-j_2|\leq1}}
\rho(2^{-q}|k+\ell|)^2  \left| \Pi_{m=1,2}\rho(2^{-i_m}|k|)\rho(2^{-j_m}|\ell|) \right| 
\frac{|\theta(\eps |k|)\theta(\eps |\ell|)-
\theta(\eps' |k|)\theta(\eps'|\ell|)|^2}{(1+|\ell|^2)^2}
\\
&+L^{-4}\sum_{\substack{k\in\mathbb Z_L^2 ,\ell \in\mathbb Z_L^2\setminus\{0\},
\\ |i_1-j_1| \leq 1,|i_2-j_2|\leq1}}
\rho(2^{-q}|k+\ell|)^2  \left| \Pi_{m=1,2}\rho(2^{-i_m}|k|)\rho(2^{-j_m}|\ell|) \right| 
\frac{|\theta(\eps |k|)\theta(\eps |\ell|)-
\theta(\eps' |k|)\theta(\eps'|\ell|)|^2}{(1+|k|^2)(1+ |\ell|^2)}\,. 
\end{split}
\end{equation*}
Due to the fact that $|i_1-j_1| \le 1$ and that 
the support of $\rho$ is a fixed annulus, we can deduce that the 
indices of the non zero terms of the
two sums appearing in the last formula are such that 
\begin{align*}
|k| \lesssim |\ell| \lesssim |k|
\end{align*}
where the undisplayed multiplicative constants in those inequalities do not depend on $i_1,j_1,i_2,j_2$. 
This implies that the two sums are of the same order in the sense that one can bound 
any of the two sums with the other one. 
Using the fact that $\rho$ is bounded, we can bound the sum over $i_1,j_1,i_2,j_2$ by a constant 
and we are eventually reduced to estimate the quantity 
\begin{align}\label{quantity1}
\sum_{\substack{k\in\mathbb Z_L^2, \ell \in \Z_L^2\setminus\{0\}, \\a |k|\le|\ell|\le A |k|}} \rho(2^{-q}|k+\ell|)^2
\frac{|\theta(\eps |k|)\theta(\eps |\ell|)-\theta(\eps'|k|)\theta(\eps'|\ell|)|^2}{(1+|\ell|^2)^2}
\end{align} 
for some given constants $a,A>0$ (independent on L). We set $n=k+\ell$ and note that if 
$a |k|\le|\ell|\le A |k|$, then $n \le |k|+|\ell|\le (a^{-1}+1) |\ell| $. We deduce that, up to a multiplicative constant
and denoting by $\delta>0$ a fixed (small enough) parameter, 
we can bound \eqref{quantity1} by  
\begin{align*}
\left(\sum_{n\in\mathbb Z_L^2}\frac{\rho(2^{-q}|n|)^2}{1+|n|^{2-\der}}\right) &
\sup_{n\in \Z_L^2} \left( \sum_{ \substack{ k \in \Z_L^2, \ell \in \Z_L^2: k+\ell =n, \\
 a |k|\le|\ell|\le A |k|} } \frac{|\theta(\eps |k|)\theta(\eps |\ell|)-\theta(\eps'|k|)\theta(\eps'|\ell|)|^2}{1+|\ell|^{2+\der}} \right)
 \\
 & \lesssim 2^{q \delta} \sup_{n\in \Z_L^2} \left( \sum_{ \substack{ k \in \Z_L^2, \ell \in \Z_L^2: k+\ell =n, \\
 a |k|\le|\ell|\le A |k|} } \frac{|\theta(\eps |k|)\theta(\eps |\ell|)-\theta(\eps'|k|)\theta(\eps'|\ell|)|^2}{|\ell|^{2+\der}} \right)
\end{align*}
where we have used the fact that $n\to \rho(2^{-q}|n|)$ is supported in a ball (in fact an annulus) of $\Z_L^2$
with radius $2^q$. 
Using the inequality $|\theta(\eps |k|)\theta(\eps |\ell|)-\theta(\eps'|k|)\theta(\eps'|\ell|)| \lesssim 
|\eps-\eps'|^\eta ( |k|^\eta + |\ell|^\eta) $ valid for $\eta>0$ small enough, we easily check that 
for $0<\eta < \delta/2$,
\begin{align*}
L^{-4}\sup_{n\in \Z_L^2} \left( \sum_{ \substack{ k \in \Z_L^2, \ell \in \Z_L^2: k+\ell =n, \\
 a |k|\le|\ell|\le A |k|} } \frac{|\theta(\eps |k|)\theta(\eps |\ell|)-\theta(\eps'|k|)\theta(\eps'|\ell|)|^2}{1+|\ell|^{2+\der}} \right)
 \lesssim |\eps-\eps'|^{2\eta}\,.
\end{align*}
Where the last bound is uniform in $L$. Gathering the above inequalities, we finally obtain the second moment estimate, valid with 
$0<\eta < \delta/2$,
\begin{align*}
\mathbb E\left[|\Delta_q(\Xi^\eps_2- \Xi_2^{\eps'}) (x)|^2\right] \lesssim 2^{q\delta}  |\eps-\eps'|^{2\eta}\,. 
\end{align*}
where this bound is uniform in $L$. Using as before the Gaussian hypercontractivity and the Besov embedding arguments, we deduce 
the following upper-bound 
\begin{align*}
\E\left[|| \Xi^\eps_2- \Xi_2^{\eps'} ||^p_{\mathscr C^{-\der-2/p}}\right]\lesssim L^2 |\eps-\eps'|^{p \eta}
\end{align*}
The convergence of the sequence  $(\Xi^\eps_2)_{\eps>0}$ in $L^p(\Omega,\mathscr{C}^{2\alpha+2}(\T_L^2))$ for 
$\alpha<-1$ and $p>0$ is proved.

The Kolmogorov criterion permits us to conclude 
that the sequence $(\Xi^\eps_2)_{\eps>0}$ converges almost surely 
in the space $\mathscr{C}^{2\alpha+2}(\T_L^2)$ 
for any $\alpha <-1$. 
\end{proof}

\subsection{Growth of the eigenvalue}
\label{section:growth}
In this section we are interested to quantify the growth of the eigenvalue when $L$ the size of the Torus become large. Of course to do that the first step is to control the growth of the rough distribution associate to the white noise when $L$ become large. Namely we have the following result 
 \begin{lemma}
\label{lemma:white noise growth}

Given $L\in\mathbb N^{\star}$, $\alpha<-1$ and $\xi$ a white noise on the two dimensional torus $\mathbb T^2_{L}$ of size $L$  then there exists two finite constant $C\geq 2$ and $\lambda$ such that
$$
\sup_{L}L^{-C}\mathbb E\left[\exp({\lambda\|\xi\|^2_{\mathscr C^{\alpha}}}+\|\Xi_2\|_{\mathscr C^{2\alpha+2}})\right]<+\infty
$$
 and therefore  
$$
A_{\alpha}=\sup_{L}\frac{\|\xi\|^2_{\mathscr C^\alpha(\mathbb T^2_L)}}{\log(L)}+\sup_{L}\frac{\|\Xi_2\|_{\mathscr C^{2\alpha+2}(\mathbb T^2_L)}}{\log(L)}<+\infty
$$
almost surely, Moreover $\mathbb E[ \exp(hA_{\alpha})]<+\infty$ for a sufficiently small constant $h>0$. 
\end{lemma}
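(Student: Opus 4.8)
The plan is to reduce the two claimed properties to $L$-uniform bounds on the second moments of the Littlewood--Paley blocks of $\xi$ and $\Xi_2$, to upgrade these to bounds on \emph{all} moments by Gaussian hypercontractivity, to pass from polynomial-in-$p$ control of the moments to the stated exponential moments by a Taylor expansion, and finally to obtain the almost sure assertion by a Chernoff bound followed by a summation over $L$.

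For the linear term this is immediate. Since $\mathbb{T}_L^2$ has volume $L^2$ and $\mathbb{E}[\widehat\xi(k)\overline{\widehat\xi(\ell)}]=\delta(k-\ell)$, for every $j\geq-1$ one has $\mathbb{E}[|\Delta_j\xi(x)|^2]=L^{-2}\sum_k\rho(2^{-j}|k|)^2\lesssim 2^{2j}$ uniformly in $L\geq1$, because the number of points of $(L^{-1}\mathbb{Z})^2$ in an annulus of radius $\sim 2^j$ is $\lesssim 2^{2j}L^2$. Gaussian hypercontractivity in the first chaos then gives $\mathbb{E}[|\Delta_j\xi(x)|^p]\lesssim(Cp)^{p/2}2^{jp}$; integrating over $\mathbb{T}_L^2$ and using the Besov embedding $\mathscr B_{p,p}^{\alpha+2/p}\hookrightarrow\mathscr C^\alpha$ (with an $L$-independent constant) yields, for every $p$ larger than some $p_0(\alpha)$,
$$\mathbb{E}\!\left[\|\xi\|_{\mathscr C^\alpha}^p\right]\;\lesssim\;L^2\,(C_\alpha p)^{p/2}\,.$$
For $\Xi_2$ I would reuse the computation in the proof of Theorem~\ref{conv-white-noise}: carried out for $\Xi_2$ itself rather than for a difference $\Xi_2^\eps-\Xi_2^{\eps'}$, the same argument gives $\mathbb{E}[|\Delta_q\Xi_2(x)|^2]\lesssim 2^{q\delta}$ for every $\delta>0$, uniformly in $L$. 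Since $\Xi_2$ lies in the second Wiener chaos, hypercontractivity gives $\mathbb{E}[|\Delta_q\Xi_2(x)|^p]\lesssim(Cp)^p2^{qp\delta/2}$; integrating over $\mathbb{T}_L^2$, applying the Besov embedding and summing over $q$ (legitimate for $\delta$ small and $p$ large because $2\alpha+2<0$) gives
$$\mathbb{E}\!\left[\|\Xi_2\|_{\mathscr C^{2\alpha+2}}^p\right]\;\lesssim\;L^2\,(C_\alpha' p)^p\,.$$

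Next I would turn these into exponential moments. Using $\mathbb{E}[\|\xi\|_{\mathscr C^\alpha}^{2m}]\lesssim L^2(2C_\alpha m)^m$ and $m!\geq(m/e)^m$, one gets
$$\mathbb{E}\!\left[e^{\lambda\|\xi\|_{\mathscr C^\alpha}^2}\right]=\sum_{m\geq0}\frac{\lambda^m}{m!}\,\mathbb{E}\!\left[\|\xi\|_{\mathscr C^\alpha}^{2m}\right]\;\lesssim\;L^2\sum_{m\geq0}(2eC_\alpha\lambda)^m\;\lesssim\;L^2$$
for $\lambda<(2eC_\alpha)^{-1}$, the finitely many terms with $2m<p_0(\alpha)$ being bounded by $L^2$ times a constant through H\"older's inequality; similarly $\mathbb{E}[e^{\lambda\|\Xi_2\|_{\mathscr C^{2\alpha+2}}}]\lesssim L^2$ for $\lambda$ small. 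Combining these two bounds by the Cauchy--Schwarz inequality, for $\lambda$ sufficiently small one obtains
$$\sup_{L}\,L^{-2}\,\mathbb{E}\!\left[\exp\!\big(\lambda\|\xi\|_{\mathscr C^\alpha}^2+\lambda\|\Xi_2\|_{\mathscr C^{2\alpha+2}}\big)\right]\;<\;+\infty\,,$$
which is the first assertion with $C=2$ (any larger $C$ is admissible as well).

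Finally, I would apply the Chernoff inequality to these exponential moments, which gives $\mathbb{P}(\|\xi\|_{\mathscr C^\alpha(\mathbb{T}_L^2)}^2>t)\lesssim L^2e^{-\lambda t}$ and $\mathbb{P}(\|\Xi_2\|_{\mathscr C^{2\alpha+2}(\mathbb{T}_L^2)}>t)\lesssim L^2e^{-\lambda t}$, hence $\mathbb{P}(\|\xi\|_{\mathscr C^\alpha(\mathbb{T}_L^2)}^2>t\log L)\lesssim L^{2-\lambda t}$ and likewise for $\Xi_2$. Setting $A_\alpha^{(1)}:=\sup_{L\geq2}\|\xi\|_{\mathscr C^\alpha(\mathbb{T}_L^2)}^2/\log L$ and $A_\alpha^{(2)}:=\sup_{L\geq2}\|\Xi_2\|_{\mathscr C^{2\alpha+2}(\mathbb{T}_L^2)}/\log L$, a union bound gives $\mathbb{P}(A_\alpha^{(i)}>t)\lesssim\sum_{L\geq2}L^{2-\lambda t}\lesssim 2^{-\lambda t}$ for $t$ large (the sum being dominated by $2^{4-\lambda t}\sum_{L\geq2}L^{-2}$ once $\lambda t\geq4$). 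Thus each $A_\alpha^{(i)}$ is finite almost surely by Borel--Cantelli and satisfies $\mathbb{E}[e^{hA_\alpha^{(i)}}]<+\infty$ for $h<\lambda\log2$; since $A_\alpha\leq A_\alpha^{(1)}+A_\alpha^{(2)}$, one more use of Cauchy--Schwarz gives $\mathbb{E}[e^{hA_\alpha}]<+\infty$ for $h$ small, as claimed. The one genuinely delicate point is the $L$-uniformity of the second-moment bounds of the first step: for $\xi$ it is trivial because the variance of $\Delta_j\xi$ does not depend on $L$, and for $\Xi_2$ it is exactly what the proof of Theorem~\ref{conv-white-noise} already establishes, so everything else is routine.
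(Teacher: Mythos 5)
Your proposal is correct and follows essentially the same route as the paper: uniform-in-$L$ second-moment bounds on the Littlewood--Paley blocks, upgraded to all moments by Gaussian hypercontractivity, turned into $\mathscr C^\alpha$-norm bounds by the Besov embedding, then summed into exponential moments and finally treated with a union over $L$. The only cosmetic difference is at the last step, where the paper uses a Fubini argument (showing the random variable $\sum_L L^{-a}\exp(\lambda\|\xi\|^2_{\mathscr C^\alpha(\mathbb T_L^2)})$ is integrable and hence finite almost surely) while you use a Chernoff bound followed by a union bound over $L$; both yield the same tail control for $A_\alpha$ and hence its exponential integrability.
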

\begin{proof}
Let us recall that the white noise have the following representation
\begin{equation}
\label{eq:wn-foruier}
\xi=\sum_{k\in\mathbb Z_L^2}g_k e_k^L
\end{equation}
with $e_k^L(x)=\frac{1}{L}\exp(2i\pi\langle k,x\rangle)$ with $g_k$ is an i.i.d sequence of Gaussian random variable which satisfy that $g_{-k}=\overline{g_{k}}$. Then the same computation as in the Theorem~\ref{conv-white-noise} allow us to get that 
$$
\mathbb E[|\Delta_q\xi(x)|^p]\leq  \mathbb E[|\mathcal N(0,1)|^{p}]\big( L^{-2}\sum_{k\in\mathbb Z_L^2}|\theta(2^{-q}|k|)|^2\big)^{p/2}\lesssim  \mathbb E[|\mathcal N(0,1)|^{p}]2^{qp}
$$ 
with $\mathcal N(0,1)$ is centered Gaussian random variable with unit variance. Integrating over $x$ and taking the sum on $q$ gives :
\begin{equation}
\label{eq:wn.growth}
\sup_{L}L^{-2}\mathbb E[\|\xi\|^p_{\mathscr C^{-1-\kappa}}]\lesssim_{\kappa}\mathbb E [|\mathcal N(0,1)|^{p}]
\end{equation}
for $p$ such that  $\frac{2}{p}\leq \frac{\kappa}{2}$ and where we have used the Besov embedding. 
$$
\mathbb E[e^{\lambda \|\xi\|^2_{\mathscr C^{-1-\kappa}}}]=\sum_{r=0}^{\frac{8}{\kappa}}\frac{\lambda^r}{r!}\mathbb E[\|\xi\|^{2r}_{\mathscr C^{-1-\kappa}}]+\sum_{r>\frac{8}{\kappa}}\frac{\lambda^r}{r!}\mathbb E[\|\xi\|^{2r}_{\mathscr C^{-1-\kappa}}].
$$
 Using the inequality~\eqref{eq:wn.growth} the second sum of  this last equation can be bounded in the following way : 
$$
\sum_{r>\frac{8}{\kappa}}\frac{\lambda^r}{r!}\mathbb E[\|\xi\|^{2r}_{\mathscr C^{-1-\kappa}}]\lesssim L^2 \sum_{r>\frac{8}{\kappa}}\frac{\lambda^r}{r!}\mathbb E[|\mathcal N(0,1)|^{2r}]=L^2\mathbb E[\exp(\lambda|\mathcal N(0,1))|^2)]<+\infty
$$
 under the condition that $\lambda$ is small enough. Since the first sum is a finite sum we bound  each term of it using the Jensen inequality:
$$
\sum_{r=0}^{\frac{8}{\kappa}}\frac{\lambda^r}{r!}\mathbb E[\|\xi\|^{2r}_{\mathscr C^{-1-\kappa}}]\lesssim L^{\frac{8}{\kappa}}  \sum_{r=0}^{\frac{8}{\kappa}}\frac{\lambda^r}{r!}\mathbb E[|\mathcal N(0,1)|^{\frac{\kappa r}{4}}]^{\frac{8}{\kappa}}
$$
from which we can conclude that: 
$$
\sup_{L}L^{-(\max(2,\frac{8}{\kappa}))} \mathbb E\big[\exp(\lambda \|\xi\|^2_{\mathscr C^{\alpha}(\mathbb T^2_L)})\big]<\infty
$$
for $\alpha=-1-\kappa$.  Therefore using Fubini theorem gives :
$$
\mathbb E\left[\sum_L\frac{1}{L^{a}}\exp(\lambda  \|\xi\|^2_{\mathscr C^{\alpha}(\mathbb T^2_L)})\right]<\infty
$$
for $a>\max(2,\frac{8}{\kappa})+1$, which in particularly prove that:
$$
 \|\xi\|_{\mathscr C^{\alpha}(\mathbb T^2_L)}\lesssim_{\lambda} a\sqrt{\log (L)+\log A}
$$
with $A=\sum_L\frac{1}{L^{a}}\exp(\lambda  \|\xi\|^2_{\mathscr C^{\alpha}(\mathbb T^2_L)})$ is an integrable random variable.
To complete our proof, we just have to establish the same estimate for $\|\Xi_2\|_{\mathscr C^{2\alpha+2}}$ for which 
the same computation as in Theorem~\ref{conv-white-noise} allows us to get the 
following upper bounds 
$$
\sup_{L}\mathbb E[|\Delta_q\Xi_2(x)|^2]\lesssim 2^{q\kappa}
$$
 for all $\kappa>0$ small enough. Now since $\Delta_q\Xi_2$ is in the second chaos of the white noise $\xi$ the Gaussian hypercontractivity tell us that 
$$
\mathbb E[|\Delta_q\Xi_2(x)|^p]\lesssim A_p\mathbb E[\Delta_q\Xi_2(x)]^{\frac{p}{2}}\lesssim A_p 2^{q\kappa p/2}
$$ 
for all $\delta>0$ and where $A_p=\mathbb E[|\mathcal N(0,1)|^{2p}]$. Then repeating the argument used to control 
the growth of the  white noise allows us to obtain the following integrability result
$$
\mathbb E\left[\sum_{L}\frac{1}{L^2}\exp(\lambda\|\Xi_2\|_{\mathscr C^{-\kappa}})\right]<\infty
$$
which finishes the proof.
\end{proof}
Now a crucial observation is that the eigenvalues satisfy a rescaling property. Indeed let $V$ a smooth $L$-periodic potential, $\tilde\Lambda_n(V)$ is the $n$-lowest eigenvalue of $-\Delta+V$ and $e_n(V)$  an eigenvector associate to $\tilde\Lambda_n(V)$. Then for $r>0$ we can see that the function $e_n^r(x)=e_n(V)(rx)$ is an eigenvector of the operator $-\Delta+r^2V(r\cdot)$ with eigenvalue $r^2\tilde\Lambda_n(V)$ and therefore 
\begin{equation*}\label{eq:smooth-scaling}
\tilde\Lambda_n(V)=\frac{1}{r^2}\tilde\Lambda_n(r^2V(r\cdot))
\end{equation*}
where $\tilde\Lambda_n(V(r\cdot))$ is the $n$-lowest eigenvalue of  $-\Delta+V(r\cdot)$ seen as an operator of $\mathbb T^2_{r^{-1}L}$. As the reader can guess we want to extend the identity to the irregular stetting for that we observe that the identity~\eqref{eq:smooth-scaling} can be reformulated in the following manner  
$$
\Lambda_n(V,V\circ\sigma(D)V+c)=\frac{1}{r^2}\Lambda_n(r^2V(r\cdot),r^4V(r\cdot)\circ\sigma(D)(V(r\cdot))+r^2c)
$$ 
for every $c\in\mathbb R$. Since this eigenvalue identity holds for any smooth function $V$ it can be extend to the case of the white noise easily. Indeed let $\xi$ the white noise on $\mathbb T^2_L$ and $\xi_\eps=\theta(\eps|D|)\xi$. From the fact that $\xi_\eps$ is smooth and $\xi_\eps(r\cdot)=\theta(\eps |D|)\xi(r\cdot)=(\theta(\frac{\eps|D|}{r})\xi(r\cdot))(\cdot)=(\xi(r\cdot))^{\eps}$ we get immediately the following relation  
\begin{equation*}\label{eq:scaling mollification}
\begin{split}
\Lambda_n(\xi_\eps,\xi_\eps\circ\sigma(D)\xi^{\eps}+c_\eps)&=\frac{1}{r^2}\Lambda_n(r^2\xi_\eps(r\cdot),r^4\xi_\eps(r\cdot)\circ\sigma(D)(\xi_\eps(r\cdot))+r^2c_\eps )
\\&=\frac{1}{r^2}\Lambda_n\big(r^2(\xi(r\cdot))^{\frac{\eps}{r}},r^4(\xi(r\cdot))^{\frac{\eps}{r}}\circ\sigma(D)((\xi(r\cdot))^{\frac{\eps}{r}})+r^2c_\eps\big)
\end{split}
\end{equation*}
 where we recall that $c_\eps=-\mathbb E[\xi_\eps\circ\sigma(D)\xi_\eps]$ is the diverging constant given in th Theorem~\ref{conv-white-noise}. Now the point is to take the limit when $\eps$ goes to zero in this equation, indeed the continuity of $\Lambda_n$ and the convergence result of the Theorem~\ref{conv-white-noise} tell us that the left hand side of this equality converge to $\Lambda_n(\Xi)$. On the other hand is easy to see that $\tilde\xi_r:= r\xi(r\cdot)$ is a white noise on $\mathbb T^2_{\frac{L}{r}}$ and theretofore we have that 
$$
(\xi(r\cdot))^{\frac{\eps}{r}}\circ\sigma(D)((\xi(r\cdot))^{\frac{\eps}{r}})+\frac{1}{r^2}\tilde c_{\eps}=\frac{1}{r^2}(\tilde\xi_r^{\frac{\eps}{r}}\circ\sigma(D)\tilde\xi_r^{\frac{\eps}{r}}+\tilde c_{\frac{\eps}{r}})
$$
converge almost surly in $\mathscr X^{\alpha}(\mathbb T^2_{\frac{L}{r}})$ to $\frac{1}{r^2}\tilde\Xi_2^r$ where $\tilde\Xi^r_2$  is the rough distribution associate to $\tilde\xi_r$ and $\tilde c_\eps=-\mathbb E[\tilde\xi_r^\eps\circ\sigma(D)\tilde\xi_r^\eps]$. Of course this imply in particularly that 
$$
r^4(\xi(r\cdot))^{\frac{\eps}{r}}\circ\sigma(D)((\xi(r\cdot))^{\frac{\eps}{r}})+r^2\tilde c_\frac{\eps}{r}
$$
converge to $r^2\tilde \Xi^r_2$. To handle the right side of the equation~\eqref{eq:scaling mollification} we start by observing that 
$$
\Lambda_n\big(r^2(\xi(r\cdot))^{\frac{\eps}{r}},r^4(\xi(r\cdot))^{\frac{\eps}{r}}\circ\sigma(D)((\xi(r\cdot))^{\frac{\eps}{r}})+r^2c_\eps\big)=\Lambda_n\big(r\tilde\xi_r^\frac{\eps}{r},r^2\tilde\xi_r^{\frac{\eps}{r}}\circ\sigma(D)\tilde\xi_r^{\frac{\eps}{r}}+r^2\tilde c_{\frac{\eps}{r}}\big)+r^2(c_\eps-\tilde c_\frac{\eps}{r}).
$$
At this point the continuity of the map $\Lambda_n$ imply that the first term appearing in the right hand side of this equation converge when $\eps$ goes  to $0$ toward $\Lambda_n(r\tilde\xi_r,r^2\tilde\Xi_2^{r})$. Now it remain to control the difference between the diverging constant  $c_\eps-\tilde c_\frac{\eps}{r}$ which can be written explicitly :
$$
c_\eps-\tilde c_\frac{\eps}{r}=\frac{1}{L^2}\sum_{\mathbb Z^2_{L}}\frac{|\theta(\eps|k|)|^2}{1+|k|^2}-\frac{r^2}{L^2}\sum_{\mathbb Z^2_{\frac{L}{r}}}\frac{|\theta(r^{-1}\eps|k|)|^2}{1+|k|^2}=\frac{1-r^2}{L^2}\sum_{\mathbb Z_L^2}\frac{|\theta(\eps|k|)|^2}{(1+|k|^2)(1+r^2|k|^2)}
$$
which by dominate convergence goes to  
$$
m_{r,L}=\frac{1-r^2}{L^2}\sum_{\mathbb Z_L^2}\frac{1}{(1+|k|^2)(1+r^2|k|^2)}<\infty
$$
when $\eps$ goes to zero. Therefore we can conclude that 
\begin{equation}\label{eq:scaling}
\Lambda_n(\Xi)=\frac{1}{r^2}(\Lambda_n(r\tilde\xi_r,\tilde\Xi_2^r)+r^2m_{r,L})
\end{equation}
Before proceeding with our computation let us observe that Riemann-sum approximation gives the following inequality   
\begin{equation*}\label{eq:constant}
\begin{split}
m_{L,r}\lesssim L^{-2}(1-r^2)(1+\sum_{n\geq 1}\frac{n}{(1+\frac{n}{L}^2)(1+r^2\frac{n}{L}^2)})&\lesssim \frac{1-r^2}{L^2}+(1-r^2)\int_0^{+\infty}\frac{\rho d\rho}{(1+\rho^2)(1+r^2\rho^2)}
\\&= \frac{1-r^2}{L^2}+(1-r^2)\log(\frac{1}{r})
\end{split}
\end{equation*}
where we have assumed $r\in(0,1)$. Now let us come back to our original problem which is to bound the eigenvalue $\Lambda_n(\Xi)$ for that we will use the scaling property~\ref{eq:scaling} with $r=\frac{1}{\sqrt{\log L}}$. Namely we have that 
$$
\frac{1}{\log( L)}\Lambda_n(\Xi)=\Lambda_n(\tilde\Xi^r)+\frac{1}{\log L}m_{L,r} 
$$
where 
$$
\Xi^r=(\frac{\tilde\xi_r}{\sqrt{\log L}},\frac{\tilde \Xi_2^r}{\log L})
$$
and we recall that $\tilde\xi_r$ is a white noise on $\mathbb T^2_{L\sqrt{\log L}}$.  
 Using the growth estimate for the eigenvalue given in the Proposition~\ref{prop: continuity-eig} allow us to get that 
\begin{equation}\label{eq:bound}
\begin{split}
&\mathbb E[|\frac{1}{\log( L)}\Lambda_n(\Xi)|^p]\lesssim \big(\frac{1}{\log L} m_{L,\frac{1}{\sqrt{\log L}}}\big)^p+\mathbb E[|\Lambda_n(\Xi^r)|^p]\\&
\lesssim1+\Lambda_n(0)^p+n^p\mathbb E\left[\|\Xi^r\|_{\mathscr X^\alpha(\mathbb T^2_{L\sqrt{\log L}})}(1+\|\Xi^r\|_{\mathscr X^\alpha(\mathbb T^2_{L\sqrt{\log L}})})^{pM}\right]\left(1+n^{\frac{2\gamma-\alpha}{\alpha+2}}+(1+\Lambda_n(0))^{2\gamma}\right)^{2p}
\end{split}
\end{equation}
for all $p>1$ and where we have used that $\frac{1}{\log L} m_{L,\frac{1}{\sqrt{\log L}}}\lesssim 1$. On the other hand  Lemma~\ref{lemma:white noise growth} give us the following bound: 
$$
\mathbb E[(\|\tilde\xi\|^2_{\mathscr C^\alpha(\mathbb T^2_{L\sqrt{\log L}})}+\|\tilde\Xi_2\|_{\mathscr C^{2\alpha+2}(\mathbb T_{L\sqrt{\log L}}^2)})^p]\lesssim (\log(L\sqrt{\log L})^p\lesssim (\log(L))^p. 
$$
which yield that
$$
\sup_{L}\mathbb E\|\Xi^r\|^p_{\mathscr X^\alpha}<+\infty. 
$$
 Therefore the bound~\ref{eq:bound} allow us to conclude that:  
$$
\sup_{L}\mathbb E|\frac{1}{\log( L)}\Lambda_n(\Xi)|^p<+\infty
$$
.

\subsection{Tail estimate for the minimal eigenvalue}
In this section we will assume that $L=1$ and we are interested to study the tail estimate for the minimal eigenvalue $\Lambda_1$, namely we have the following result 
\begin{Proposition}
\label{prop:tail estimates}
Let $\Lambda_1$ the first eigenvalue of the operator $\mathscr H(\Xi)$ where $\Xi$ is the rough distribution associated. Then there exist $C_1,C_2>0$  and  such that :
$$
e^{C_1x}\leq \mathbb P(\Lambda_1\leq x) \leq e^{C_2x}
$$
when $x\to-\infty$
\end{Proposition}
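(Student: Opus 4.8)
The plan is to prove the two bounds separately. The upper bound $\mathbb P(\Lambda_1\le x)\le e^{C_2x}$ will follow from a deterministic form estimate for $\Lambda_1(\Xi)$ in terms of $\|\Xi\|$ combined with the exponential integrability of Lemma~\ref{lemma:white noise growth}; the lower bound $\mathbb P(\Lambda_1\le x)\ge e^{C_1x}$ will follow from a variational upper bound on $\Lambda_1$ obtained with a localized test function at a carefully chosen scale, whose existence holds on an event controlled by a Cameron--Martin shift of the white noise. The key deterministic input for the upper bound is the estimate
$$\Lambda_1(\Xi)\ \ge\ -K\bigl(1+\|\xi\|_{\mathscr C^\alpha}^2+\|\Xi_2\|_{\mathscr C^{2\alpha+2}}\bigr),$$
with $K$ a universal constant, which is the two-dimensional counterpart of \eqref{lower-bound}. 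By Lemma~\ref{lemma:min-max} it suffices to bound $\langle\mathscr H(\Xi)f,f\rangle$ from below for $f\in\mathscr D_\Xi$ with $\|f\|_{L^2}=1$. Using the expansion \eqref{eq:expansion-2} of $\mathscr Hf$ and pairing against $f=f\prec\sigma(D)\xi+B(f,\Xi)+f^\flat$, the genuinely singular contributions are either cancelled by the paracontrolled structure (the $f\prec\xi$ term against the corresponding part of $-\Delta(f\prec\sigma(D)\xi)$) or rewritten through $\Xi_2$ and the commutator $\mathscr R$ (Propositions~\ref{prop:reson} and \ref{prop:commu}), while all remaining terms are estimated by the Bony and Schauder estimates (Propositions~\ref{prop:bony-estim} and \ref{prop:multiply}). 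One ends up with $\|\nabla f^\flat\|_{L^2}^2$ plus bi- and trilinear remainders, which are absorbed into $\tfrac12\|\nabla f^\flat\|_{L^2}^2$ by Young's inequality after interpolating $\|f\|_{H^\gamma}$ and $\|f^\flat\|_{H^2}$ against $\|f\|_{L^2}=1$ and $\|\nabla f^\flat\|_{L^2}$. The essential point, exactly as in the scalar computation \eqref{lower-bound}, is that all these terms have homogeneity two in the noise, so the constant is quadratic in $\|\xi\|_{\mathscr C^\alpha}$ and only linear in $\|\Xi_2\|_{\mathscr C^{2\alpha+2}}$; the black-box resolvent bound of Proposition~\ref{prop:resolvent} only produces a super-quadratic dependence and hence a sub-linear rate, so this sharper estimate is really needed. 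Granting it, for $x<0$,
$$\mathbb P(\Lambda_1\le x)\ \le\ \mathbb P\!\left(\|\xi\|_{\mathscr C^\alpha}^2+\|\Xi_2\|_{\mathscr C^{2\alpha+2}}\ \ge\ \tfrac{|x|}{K}-1\right)\ \le\ e^{-\lambda(|x|/K-1)}\,\mathbb E\!\left[e^{\lambda(\|\xi\|_{\mathscr C^\alpha}^2+\|\Xi_2\|_{\mathscr C^{2\alpha+2}})}\right],$$
and the last expectation is finite for $\lambda>0$ small by Lemma~\ref{lemma:white noise growth}, which gives $\mathbb P(\Lambda_1\le x)\le e^{C_2x}$ with $C_2=\lambda/K$.

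For the lower bound I would exploit $\Lambda_1(\Xi)\le\langle\mathscr H(\Xi)\psi,\psi\rangle$ for any $\psi\in\mathscr D^\gamma_\xi$ with $\|\psi\|_{L^2}=1$ (Lemma~\ref{lemma:min-max}) and construct, on a not too unlikely event, such a $\psi$ with $\langle\mathscr H(\Xi)\psi,\psi\rangle\le x$. The right scale is $\ell\sim|x|^{-1/2}$: for a bump of width $\ell$ normalized in $L^2$, the kinetic energy is of order $\ell^{-2}\sim|x|$, the interaction with the white noise is a centred Gaussian of variance of order $\ell^{-2}$, and forcing it to take a value of order $-|x|$ costs $\exp(-c|x|)$, which is precisely the target rate. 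Concretely I would first use the scaling identity \eqref{eq:scaling} with $r\sim|x|^{-1/2}$ to reduce to the operator with the small-strength noise $r\tilde\xi_r$ on the large torus $\mathbb T^2_{1/r}$, noting that the renormalisation shift $m_{r,1}$ there is only of order $\log|x|$, hence negligible; then restrict to a unit sub-box by Dirichlet domain monotonicity ($\Lambda_1^{\mathrm{per}}\le\Lambda_1^{\mathrm{Dir}}$, in the sense of the extension mentioned in the introduction); and finally apply a Cameron--Martin shift of the standard white noise on that box by $-Ng$, for a fixed nonnegative bump $g$ and $N\sim|x|^{1/2}$. Under this shift the enhanced noise changes by a smooth, essentially bounded potential (the cross resonances it generates are controlled because $g$ is smooth), the first eigenvalue of the shifted operator is pushed below $x$ by comparison with the ground state of the deterministic well, and the probabilistic cost of the shift is $\exp(-\tfrac12 N^2\|g\|_{L^2}^2)=\exp(-c|x|)$. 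Transferring from the shifted smooth picture back to the genuine enhanced operator is done with the continuity of the eigenvalues (Proposition~\ref{prop: continuity-eig}) and the approximation Proposition~\ref{prop:interpolation}, which also supplies an admissible test function in $\mathscr D_\Xi\subset\mathscr D^\gamma_\xi$.

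I expect the lower bound to be the delicate part: the localisation at scale $\ell\sim|x|^{-1/2}$ has to be carried out \emph{inside} the paracontrolled domain (smooth bumps are not in $\mathscr D_\Xi$ when $\xi$ is a true white noise), and one must simultaneously keep track of the renormalisation constant under the rescaling to $\mathbb T^2_{1/r}$. The Cameron--Martin shift must be implemented at the level of the enhanced pair $\Xi=(\xi,\Xi_2)$, and one has to verify that the shifted enhancement still lies in $\mathscr X^\alpha$ with a norm growth compatible with the linear target exponent and that the cross resonances it generates do not dominate. For the upper bound the only subtle point is obtaining the correct homogeneity in the form estimate, namely that $\|\Xi_2\|_{\mathscr C^{2\alpha+2}}$ enters to the first power, since that is exactly what matches the exponential moment bound of Lemma~\ref{lemma:white noise growth}.
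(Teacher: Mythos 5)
The overall strategy (upper bound via exponential integrability of the enhanced noise, lower bound via a localized test function and a Cameron--Martin shift at the scale $r\sim|x|^{-1/2}$) is the right one, and your description of the lower-bound heuristics is accurate. But both halves of your proposal contain a step that the paper does \emph{not} make and that you do not establish.

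For the upper bound you rest everything on the deterministic form estimate $\Lambda_1(\Xi)\ge -K\bigl(1+\|\xi\|_{\mathscr C^\alpha}^2+\|\Xi_2\|_{\mathscr C^{2\alpha+2}}\bigr)$, which you present as ``the two-dimensional counterpart of \eqref{lower-bound}.'' This estimate is nowhere in the paper, and the sketch you give for it is not yet a proof: after expanding $f=f\prec\sigma(D)\xi+B(f,\Xi)+f^\flat$, the ``interpolation of $\|f\|_{H^\gamma}$ and $\|f^\flat\|_{H^2}$ against $\|f\|_{L^2}$ and $\|\nabla f^\flat\|_{L^2}$'' is self-referential because $f^\flat$ is defined through $f$ and the two paracontrolled pieces, and the absorption by $\tfrac12\|\nabla f^\flat\|^2$ only decouples when $\|\Xi\|$ is small, which is precisely not the regime you need. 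You correctly observe that the resolvent construction of Proposition~\ref{prop:resolvent} yields a super-quadratic $A(\|\Xi\|)$, so the quadratic exponent cannot come for free, and then assert that the sharper estimate ``is really needed.'' It is not: the paper sidesteps it entirely by exploiting the law-invariance of white noise under the scaling $\tilde\xi_r(x)=r\,\xi(rx)$. The identity \eqref{eq:scaling} reduces $\{\Lambda_1(\Xi)\le -r^{-2}\}$ to $\{\Lambda_1(\tilde\Xi^r)\lesssim -1\}$ where $\tilde\Xi^r=(r\tilde\xi_r,r^2\tilde\Xi_2^r)$; the merely \emph{qualitative} Lipschitz bound of Proposition~\ref{prop: continuity-eig} at $\Lambda_1(0)=0$ implies $\|\tilde\Xi^r\|_{\mathscr X^\alpha}\gtrsim 1$ on that event; and the exponential moments of Lemma~\ref{lemma:white noise growth} for $\tilde\xi_r$ (a white noise on $\T^2_{1/r}$) give $\mathbb P(r\|\tilde\xi_r\|\ge c)\lesssim r^{-\theta}e^{-\lambda/r^2}$. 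This is a probabilistic scaling argument, not a pathwise form estimate; to make your route work you would need to actually prove the sharp form estimate, and its validity is not clear.

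For the lower bound your plan is essentially the paper's, except that you insert a Dirichlet-monotonicity step ($\Lambda_1^{\mathrm{per}}\le\Lambda_1^{\mathrm{Dir}}$ on a sub-box) which is neither proved nor needed. For the singular operator such a bracketing inequality would require constructing the Dirichlet version of $\mathscr H$ on a box, showing it is well defined with a paracontrolled domain, and proving the domain inclusion giving the comparison; none of this is in the paper. The paper avoids it: it picks $f$ smooth with support in a unit subset $S\subset\T^2_{1/r}$ and $\int_S f^2=1$, sets $b=-\|\nabla f\|_{L^2}^2+c/2$, $h=b\,\mathbb I_S$, and applies the min-max principle on $\T^2_{1/r}$ directly to get $\Lambda_1(h,h\circ\sigma(D)h)\le c/2$; the localization lives in the test function, not in a boundary condition. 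The Cameron--Martin computation and the control of the cross-resonance terms $r^{-1}\tilde\xi\circ\sigma(D)h+r^{-1}\sigma(D)\tilde\xi\circ h$ in $\tilde\Xi_2(\omega+r^{-1}h)$ are then exactly as you sketch, and you correctly identify that one must verify the shifted enhancement stays in $\mathscr X^\alpha$ with controllable norm; the paper shows these cross terms vanish a.s.\ as $r\to 0$ by the Bony/Schauder estimates and Lemma~\ref{lemma:white noise growth}.
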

\begin{proof}
\subsubsection*{Upper bound:}
As pointed out previously the following relation 
$$
r^2\Lambda_1(\Xi)\equiv\Lambda_1(\tilde\Xi^r)+r^2m_{1,r}
$$
 hold in law for every $r>0$, where $\tilde \Xi^r=(r\tilde \xi,r^2\tilde \Xi_2)$ with $\tilde\xi$ is a white noise on $\mathbb T^2_{\frac{1}{r}}$ and $\tilde \Xi_2$ the associate rough distribution. Of course this relation obliviously imply the following equality  
$$
\mathbb P(r^2\Lambda_1(\Xi)\leq-1)=\mathbb P(\Lambda_1(\tilde\Xi^r))\leq-1-r^2m_{1,r})
$$  
Since $r^2m_{1,r}\to^{r\to0} 0$ is easy to see that 
$$
\mathbb P(\Lambda_1(\tilde\Xi^r)\leq -\frac{3}{2})\leq\mathbb P(\Lambda_1(\tilde\Xi^r))\leq-1-r^2m_{1,r})\leq\mathbb P(\Lambda_1(\tilde\Xi^r))\leq-\frac{1}{2})
$$
for $r$ small enough. Using the continuity estimate  for the ground state given in the Proposition~\ref{prop: continuity-eig} we can see that the event $\left\{(\Lambda_1(\tilde\Xi^r))\leq-\frac{1}{2}\right\}$ is contained in $\left\{\|\tilde\Xi^r\|_{\mathscr X^\alpha}(1+\|\tilde \Xi^r\|_{\mathscr X^{\alpha}})^M\geq C\right\}$ for a deterministic constant $C>0$ and for $\alpha<-1$. Thus we have 
$$
\mathbb P(r^2\Lambda_1(\Xi)\leq-1)\leq\mathbb P(\|\tilde\Xi^r\|_{\mathscr X^\alpha}(1+\|\tilde \Xi^r\|_{\mathscr X^{\alpha}})^M\geq C)
$$
splitting the right hand side according of this equation to the event $\{\|\Xi\|_{\mathscr X^\alpha}\geq 1\}$ and it complementary we can finally bound our probability by 
$$
\mathbb P(r^2\Lambda_1(\Xi)\leq-\frac{1}{2})\leq\mathbb P(\|\tilde\Xi^r\|_{\mathscr X^\alpha}\geq 1)+\mathbb P(\|\tilde\Xi^r\|_{\mathscr X^{\alpha}}\geq C2^{-M})
$$
Now it suffice to observe that 
$$
\mathbb P(\|\Xi\|_{\mathscr X^\alpha}\geq 1)\leq \mathbb P(r\|\tilde \xi^r\|_{\mathscr C^{\alpha}}\geq\frac{1}{2})+\mathbb P(r^2\|\tilde\Xi^r_2\|_{\mathscr C^{2\alpha+2}}\geq \frac{1}{2})\leq r^{-\theta} e^{\frac{-2\lambda}{r^2}}\sup_{r}r^{\theta}(\mathbb E[e^{\lambda \|\tilde\xi\|^2_{\mathscr C^{\alpha}}}]+\mathbb E[e^{\lambda \|\tilde\Xi_2\|_{\mathscr C^{2\alpha+2}}}])
$$
 for $\lambda>0$ small enough, where we have used the Markov inequality, then choosing $\theta$ according to the  Lemma~\ref{lemma:white noise growth} allow us to get the needed upper bound. The term $\mathbb P(\|\Xi\|_{\mathscr X^{\alpha}}\geq C2^{-M})$ can be treated in the same way and of course if take $x=-\frac{1}{r^2}$ this bound can be reformulated in the following way 
$$
\mathbb P(\Lambda_1(\Xi)\leq x)\lesssim x^{\theta}e^{2\lambda x}
$$ 
\subsection*{Lower bound:}
\label{sec:h}
Given $c<0$, a subset $S\subset\mathbb T_{\frac{1}{r}}^2$ which have size 1 (ie: $|S|=1$), $f$ a smooth function on $\mathbb T_L^2$ with support contained in $S$ and such that $\int_{S}f^2=1$, $b=-\|\nabla f\|^2_{L^2}+\frac{c}{2}$ and $h(x)=b\mathbb I_{S}$ where $\mathbb I_{S}$ is the characteristic function of the set $S$. Then is easy to see from the min-max principle that :
 $$
\Lambda_1(h,h\circ\sigma(D)h)\leq \|\nabla f\|^2_{L^2}+b\int_{S}f^2=\frac{c}{2}
$$
and before proceeding with proof let us remark that $b$ does not depend on $r$. The continuity bound of the eigenvalue gives us that:
$$
\Lambda_1(\tilde\Xi^r)\leq \frac{c}{2}+C(\|r\tilde\xi-h\|_{\mathscr C^{\alpha}}+\|r^2\tilde\Xi_2-h\circ\sigma(D)h\|_{\mathscr C^{2\alpha+2}})(1+\|\Xi\|_{\mathscr X^{\alpha}}+\|h\|^2_{L^{\infty}})^M
$$
for a deterministic constant $C$ (which depend only on $c$). Now if $(\|r\tilde\xi-h\|_{\mathscr C^{\alpha}}+\|r^2\tilde\Xi_2-h_N\circ\sigma(D)h_N\|_{\mathscr C^{2\alpha+2}})\leq \delta$ for a fixed $\delta$ which satisfy  $C\delta(\delta+1)^M\leq -\frac{c}{4}$. Then  
$$
\Lambda_1(\tilde\Xi^r)\leq\frac{c}{4}
$$
therefore if $c=-6$  the event $\{\|r\tilde\xi-h\|_{\mathscr C^{\alpha}}+\|r^2\tilde\Xi_2-h\circ\sigma(D)h\|_{\mathscr C^{2\alpha+2}})\leq\delta\}$ is contained in $\{\Lambda_0(\tilde\Xi^r)\leq-\frac{3}{2}\}$ and this immediately implies that 
\begin{equation}
\label{eq:l}
\mathbb P(\Lambda_0(\tilde\Xi^r)\leq-\frac{3}{2})\geq\mathbb P\left(\|r\tilde\xi-h\|_{\mathscr C^{\alpha}}\leq\frac{\delta}{2};\|r^2\tilde\Xi_2-h\circ\sigma(D)h\|_{\mathscr C^{2\alpha+2}}\leq\frac{\delta}{2}\right)
\end{equation}
To get a lower bound for the right hand side of this inequality we will use the Cameron-Martin theorem, indeed :
\begin{equation}
\label{eq:cm}
\begin{split}
\small
&\mathbb P\left(\|r\tilde\xi-h_N\|_{\mathscr C^{\alpha}}\leq\frac{\delta}{2};\|r^2\tilde\Xi_2-h\circ\sigma(D)h\|_{\mathscr C^{2\alpha+2}}\leq\frac{\delta}{2}\right)
=\exp(-r^{-2}\|h\|_{L^2})\mathbb E[\exp(r^{-1}\tilde\xi(h))\mathbb I_{A^r}]
\\&=\exp(-\frac{\|h\|_{L^2}}{2r^2})\mathbb  E[\exp(r^{-1}\tilde\xi(h))|A^r]\mathbb P(A^r)
\\&\geq\exp(-\frac{b^2}{2}r^{-2})\mathbb P(A^r)
\end{split}
\end{equation}

with $A^r=\left\{\omega;\|r\tilde\xi(\omega)\|_{\mathscr C^{\alpha}}\leq\frac{\delta}{2};\|r^2\tilde\Xi_2(\omega+r^{-1}h)-h\circ\sigma(D)h\|_{\mathscr C^{2\alpha+2}}\leq\frac{\delta}{2}\right\}$ and where we have used the Jensen inequality to obtain the last lower bound. Finally to get our lower bound it suffice to control the probability that the event $A^r$ happen for $r$ large enough. On the other hand we observe that : 
\begin{equation*}
\tilde\Xi_2(\omega+r^{-1}h)=\tilde\Xi_2(\omega)+r^{-1}\tilde\xi(\omega)\circ \sigma(D)h+r^{-1}\sigma(D)\tilde\xi(\omega)\circ h
\end{equation*}
which gives the following representation
$$
A^r=\left\{r\|\tilde\xi\|_{\mathscr C^{\alpha}}\leq\frac{\delta}{2};\|r^2\tilde\Xi_2+r\tilde\xi\circ\sigma(D) h+rh\circ\sigma(D)\tilde\xi\|_{\mathscr C^{2\alpha+2}}\leq\frac{\delta}{2}\right\}.
$$ 
At this point the Lemma~\ref{lemma:white noise growth} tell us that 
$$
r\|\tilde\xi\|_{\mathscr C^{\alpha}}+r^2\|\tilde\Xi_2\|_{\mathscr C^{2\alpha+2}}\to^{r\to0}
$$
almost surely. On the other side using the fact that $\|h\|_{L^\infty(\mathbb T^2_\frac{1}{r})}\leq b$ gives us that  
$$
r\|\tilde\xi\circ \sigma(D)h\|_{\mathscr C^{2\alpha+2}}\leq r\|\tilde\xi\circ\sigma(D)h\|_{H^{2\alpha+3}}\lesssim r\|\tilde\xi\|_{\mathscr C^{\alpha}}\|h\|_{L^\infty}\to^{r\to0}0
$$
almost surely. Same type of estimate show that $r\|\sigma(D)\tilde\xi\circ h\|_{\mathscr C^{2\alpha+2}}$ vanish when r goes to $0$. All this convergence imply in particularly that $\mathbb P(A^r)\to^{r\to0}1$ which combined with the two bound~\eqref{eq:cm} and \eqref{eq:l} allow us to get:
$$
\mathbb P(\Lambda_1(\tilde\Xi^r)\leq-\frac{3}{2})\geq \frac{1}{2}\exp(-\frac{b^2}{2}r^{-2})
$$   
for all $r$ small enough, which is the needed lower bound  due to the fact that $\mathbb P(\Lambda_0(\Xi)\leq -\frac{1}{r^2})\geq\mathbb P(\Lambda_0(\tilde\Xi^r)\leq-\frac{3}{2})$ . 
\end{proof}

\appendix

\section{Other useful results on Besov spaces and Bony paraproducts} \label{sec:appendix}

The following Besov embedding property is used in the proof of the convergence of the mollified white noise 
(see Theorem \ref{conv-white-noise}). 

\begin{Proposition}\label{Proposition:Bes-emb} 
Let $1\leq p_1\leq p_2\leq +\infty$ and $1\leq q_1\leq q_2\leq +\infty$. For all $s\in \mathbb R$, 
the space $\mathscr B_{p_1,q_1}^{s}$ is continuously embedded in $\mathscr B_{p_2,q_2}^{s-d(\frac{1}{p_1}-\frac{1}{p_2})}$. In particular, we have \begin{align*}||u||_{\mathscr{C}^{\alpha-\frac{d}{p}}}\lesssim||u||_{B_{p,p}^{\alpha}}.\end{align*}
 \end{Proposition}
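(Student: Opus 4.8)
The plan is to deduce the embedding from the \emph{Bernstein inequality} for frequency-localised functions together with the elementary nesting of the sequence spaces $\ell^{q}$. First I would record the Bernstein estimate on the torus in the form I need: if $f\in\mathscr S'(\T_L^d)$ has Fourier coefficients supported in a ball $\{k:|k|\lesssim 2^j\}$ and $1\le p_1\le p_2\le\infty$, then
\begin{align*}
\|f\|_{L^{p_2}(\T_L^d)}\;\lesssim\;2^{jd(\frac1{p_1}-\frac1{p_2})}\,\|f\|_{L^{p_1}(\T_L^d)},
\end{align*}
with a constant independent of $j\ge-1$ and of $L\ge1$. This I would prove by writing $f=K_j\ast f$, where $K_j$ is the $(L^{-1}\Z)^d$-periodisation of $x\mapsto 2^{jd}g(2^jx)$ for a fixed Schwartz function $g$ whose Fourier transform equals $1$ on the (fixed) annulus carrying the support of $\rho$ (with the obvious modification using $\chi$ when $j=-1$), and then applying the periodic Young convolution inequality with the exponent $r\in[1,\infty]$ given by $1+\frac1{p_2}=\frac1r+\frac1{p_1}$, together with the scaling bound $\|K_j\|_{L^r(\T_L^d)}\lesssim 2^{jd(1-1/r)}=2^{jd(\frac1{p_1}-\frac1{p_2})}$.

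Next I would apply this to $f=\Delta_j u$ for each $j\ge-1$: by construction of the Littlewood--Paley blocks its Fourier support lies in a ball of radius $\sim 2^j$, so $\|\Delta_j u\|_{L^{p_2}}\lesssim 2^{jd(\frac1{p_1}-\frac1{p_2})}\|\Delta_j u\|_{L^{p_1}}$. Writing $s'=s-d(\frac1{p_1}-\frac1{p_2})$ and multiplying by $2^{js'}$ gives $2^{js'}\|\Delta_j u\|_{L^{p_2}}\lesssim 2^{js}\|\Delta_j u\|_{L^{p_1}}$ for every $j$. Taking the $\ell^{q_2}_j$ norm on the left and using that $\ell^{q_1}\hookrightarrow\ell^{q_2}$ for $q_1\le q_2$ (so $\|(a_j)\|_{\ell^{q_2}}\le\|(a_j)\|_{\ell^{q_1}}$ for nonnegative sequences, with the usual convention at $q=\infty$) applied to $a_j:=2^{js}\|\Delta_j u\|_{L^{p_1}}$, yields
\begin{align*}
\|u\|_{\mathscr B_{p_2,q_2}^{\,s-d(\frac1{p_1}-\frac1{p_2})}}\;\lesssim\;\|u\|_{\mathscr B_{p_1,q_1}^{\,s}}.
\end{align*}
The displayed particular case is then the instance $p_1=q_1=p$, $p_2=q_2=\infty$, recalling $\mathscr C^{\beta}=\mathscr B_{\infty,\infty}^{\beta}$.

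The only genuinely delicate point is the torus version of Bernstein's inequality: one must check that periodising the kernel $2^{jd}g(2^jx)$ does not spoil the scaling of its $L^r$ norm and that the resulting constant is uniform in $L\ge1$ and $j\ge-1$. This follows from the Schwartz decay of $g$ --- the periodised kernel differs from $2^{jd}g(2^j\cdot)$ by a tail $\sum_{n\ne0}2^{jd}g(2^j(\cdot+L^{-1}n))$ whose $L^r$ norm is $O(2^{jd(1-1/r)})$ uniformly in $L\ge1$ --- or, alternatively, one may invoke directly the torus Littlewood--Paley calculus of \cite{BCD-bk}, where precisely this Bernstein inequality and its consequence for Besov embeddings are established. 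Everything else (Young's inequality, the $\ell^q$ nesting, and the summation over $j$) is routine.
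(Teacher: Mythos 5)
Your proof is correct, and it is the standard route to the Besov embedding: Bernstein's inequality converts an $L^{p_1}$ bound on a frequency-localised block into an $L^{p_2}$ bound at the cost of the factor $2^{jd(1/p_1-1/p_2)}$, which is exactly absorbed by the shift in the regularity index $s\mapsto s-d(1/p_1-1/p_2)$, and the monotonicity $\ell^{q_1}\hookrightarrow\ell^{q_2}$ closes the argument. The paper does not actually prove this proposition; it is listed in the appendix as a recalled fact, with the torus Littlewood--Paley machinery implicitly referenced to \cite{BCD-bk}. So there is no ``paper's proof'' to compare against, but your argument is precisely the one in that reference, and you correctly flag the one point that requires care in the present setting, namely uniformity of the Bernstein constant in $j\ge -1$ and in $L\ge 1$: the Schwartz decay of the reproducing kernel $g$ ensures that the tail contribution from periodisation over $(L^{-1}\Z)^d$ is controlled uniformly for $L\ge 1$, and the $j=-1$ block is handled separately with the low-frequency cutoff $\chi$. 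This matters because the paper repeatedly invokes the uniformity-in-$L$ of its estimates (e.g.\ in Theorem~\ref{main} and in Lemma~\ref{lemma:white noise growth}). Your proposal is therefore complete.
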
   
 We will need also the following useful extension of the Schauder estimate. 
 \begin{Proposition}\label{schauder-estimate-2}
Let $f\in H^\alpha$, $g\in\mathscr C^\beta$ with $\alpha\in(0,1),\beta\in \R$ and $\sigma:\R^2 \setminus \{0\} \to \R$ 
an infinitely differentiable function 
such that $|D^k\sigma(x)|\lesssim|x|^{-n-k}$

. Set 
\begin{align*}
\mathscr C(f,g):=\sigma(D)(f\prec g)-f\prec\sigma(D)g.
\end{align*}
Then, 
\begin{align*}
||\mathscr C(f,g)||_{H^{\alpha+\beta+n-\delta}}\lesssim||f||_{H^\alpha}||g||_{\mathscr{C}^\beta}\,. 
\end{align*}
for all $\delta>0$
\end{Proposition}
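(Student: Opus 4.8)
The plan is a Littlewood--Paley argument, following the commutator lemma of \cite{gip} but keeping the $f$-factor in $L^2$: decompose the commutator into dyadic blocks, observe that each block is spectrally localized at scale $2^j$, and estimate it by a rescaled-kernel computation.

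First I would write, using $f\prec g=\sum_{j\ge-1}S_{j-1}f\,\Delta_j g$,
\[
\mathscr C(f,g)=\sum_{j\ge1}\mathscr C_j,\qquad
\mathscr C_j:=\sigma(D)\big(S_{j-1}f\,\Delta_j g\big)-S_{j-1}f\,\sigma(D)\Delta_j g,
\]
the terms $j\le0$ vanishing since $S_{j-1}f=0$ there. With the standard support conventions $S_{j-1}f$ has spectrum in a ball $\{|k|\lesssim2^j\}$ and $\Delta_j g$ in a fixed dyadic annulus $2^j\mathcal A$, so that both $S_{j-1}f\,\Delta_j g$ and $S_{j-1}f\,\sigma(D)\Delta_j g$, hence $\mathscr C_j$, are spectrally supported in one fixed dyadic annulus $\{|k|\sim2^j\}$ bounded away from the origin (so $\sigma$ is never evaluated near $0$, its only possible singularity). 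By almost orthogonality $\|\mathscr C(f,g)\|_{H^s}^2\lesssim\sum_{j\ge1}2^{2js}\|\mathscr C_j\|_{L^2}^2$, and the proposition reduces to the single-block bound
\[
\|\mathscr C_j\|_{L^2}\ \lesssim\ 2^{-j(\alpha+\beta+n)}\,\|f\|_{H^\alpha}\,\|g\|_{\mathscr C^\beta}\qquad(j\ge1),
\]
since then $\|\mathscr C(f,g)\|_{H^{\alpha+\beta+n-\delta}}^2\lesssim\sum_{j\ge1}2^{-2j\delta}\|f\|_{H^\alpha}^2\|g\|_{\mathscr C^\beta}^2<\infty$ for every $\delta>0$.

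To get the block bound I would fix a single $\Phi\in C_c^\infty(\mathbb R^2\setminus\{0\})$ equal to $1$ on the relevant annulus, set $K_j:=\mathscr F^{-1}\big(\sigma\,\Phi(2^{-j}\cdot)\big)$, and use that $\sigma(D)$ acts on the two distributions above simply as convolution with $K_j$, which yields the commutator representation
\[
\mathscr C_j(x)=\int K_j(x-y)\big(S_{j-1}f(y)-S_{j-1}f(x)\big)\,\Delta_j g(y)\,\dd y .
\]
Writing $\sigma(\zeta)\Phi(2^{-j}\zeta)=2^{-jn}m_j(2^{-j}\zeta)$ with $m_j(\omega):=2^{jn}\sigma(2^j\omega)\Phi(\omega)$, the hypothesis $|D^k\sigma(\omega)|\lesssim|\omega|^{-n-k}$ shows the $m_j$ are smooth, supported in $\mathrm{supp}\,\Phi$, with all seminorms bounded uniformly in $j$; by scaling this gives $\||z|^aK_j\|_{L^1}\lesssim2^{-j(n+a)}$ for $a=0,1$. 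Taylor-expanding the increment of $S_{j-1}f$ and applying Minkowski's integral inequality to the representation above then gives
\[
\|\mathscr C_j\|_{L^2}\ \lesssim\ \big\|\,|z|K_j\big\|_{L^1}\,\big\|\nabla S_{j-1}f\big\|_{L^2}\,\big\|\Delta_j g\big\|_{L^\infty}\ \lesssim\ 2^{-j(n+1)}\,\big\|\nabla S_{j-1}f\big\|_{L^2}\,2^{-j\beta}\|g\|_{\mathscr C^\beta},
\]
and finally Bernstein together with $\sum_{|k|\sim2^i}|k|^2|\widehat{\Delta_i f}(k)|^2\lesssim2^{2i}\|\Delta_i f\|_{L^2}^2$ and the hypothesis $\alpha<1$ give $\|\nabla S_{j-1}f\|_{L^2}^2\lesssim\sum_{i<j}2^{2i(1-\alpha)}\big(2^{i\alpha}\|\Delta_i f\|_{L^2}\big)^2\lesssim2^{2j(1-\alpha)}\|f\|_{H^\alpha}^2$. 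Combining the last two displays produces exactly the block estimate.

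The hard part is the kernel step: making rigorous that, restricted to a single dyadic block, $\sigma(D)$ is convolution against the rescaled kernel $K_j$, and then squeezing the genuine gain $\||z|K_j\|_{L^1}\lesssim2^{-j(n+1)}$ out of the symbol bounds alone, through the uniform control of the rescaled symbols $m_j$ (on the torus this goes through the usual periodization of the $\mathbb R^2$ kernel, which costs nothing). Everything else is routine Littlewood--Paley bookkeeping. Note that the decisive choice, which avoids any loss beyond the arbitrarily small $\delta$, is to keep $S_{j-1}f$ in $L^2$ and $\Delta_j g$ in $L^\infty$ in the Minkowski estimate rather than the reverse; and the only place the assumption on $\alpha$ enters is $\alpha<1$, for the geometric summation in the last display.
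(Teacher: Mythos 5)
Your argument is correct and complete. Note that the paper itself does not actually prove Proposition~\ref{schauder-estimate-2}: it merely remarks that a proof appears (in the H\"older setting) in the first version of \cite{gip} and in \cite{cc13}, and that passing from $\mathscr C^\alpha$ to $H^\alpha$ is a ``slight modification.'' What you have supplied is precisely that modification, carried out in full: the dyadic decomposition of $\prec$ into blocks $\mathscr C_j$, the observation that each block is spectrally localized in an annulus $\sim 2^j$ away from the origin (so the singularity of $\sigma$ is never seen), the rescaled kernel $K_j$ with $\||z|^a K_j\|_{L^1}\lesssim 2^{-j(n+a)}$ from the uniform seminorm bounds on $m_j(\omega)=2^{jn}\sigma(2^j\omega)\Phi(\omega)$, and the Taylor--Minkowski--Bernstein estimate that keeps $S_{j-1}f$ in $L^2$ and $\Delta_j g$ in $L^\infty$. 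It is worth pointing out one methodological contrast: in the companion Lemma~\ref{elementary-commutation-lemma} the paper handles the $L^2$--$L^\infty$ split of the single-block commutator by Cauchy--Schwarz with weight $|x-y|^{\alpha+1}$ and the Gagliardo characterization of $H^\alpha$ (which forces $\alpha\in(0,1)$), whereas you Taylor-expand the smooth piece $S_{j-1}f$ and pay for it with Bernstein on $\|\nabla S_{j-1}f\|_{L^2}$; your route only uses $\alpha<1$ and in fact, if you sum the blocks in $\ell^2$ via Fubini instead of the crude pointwise bound $\|\nabla S_{j-1}f\|_{L^2}\lesssim 2^{j(1-\alpha)}\|f\|_{H^\alpha}$ — namely $\sum_j 2^{-2j(1-\alpha)}\|\nabla S_{j-1}f\|_{L^2}^2 \lesssim \sum_i 2^{2i\alpha}\|\Delta_i f\|_{L^2}^2 \lesssim \|f\|_{H^\alpha}^2$ — you would even obtain the endpoint $\delta=0$, slightly stronger than what is stated. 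The periodization remark for the torus is also handled correctly: work with the $\mathbb R^2$ kernel $\tilde K_j$ directly in the commutator representation applied to the periodic extensions of $S_{j-1}f$ and $\Delta_j g$, so that $\||z|\tilde K_j\|_{L^1(\mathbb R^2)}$ is the only kernel quantity needed.
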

\begin{remark}
A proof of this Lemma is contained in the first version of~\cite{gip} (or in~\cite{cc13}) where $f$ is in the Besov-H\"older space $\mathscr C^{\alpha}$, however a slight modification show that the same result is true for Sobolev space.
\end{remark}
Before proving Proposition \ref{prop:commu}, 
we give an elementary commutation Lemma. 

\begin{lemma}\label{elementary-commutation-lemma}
Let $\alpha\in(0,1)$, $f\in H^\alpha$ and $g\in L^\infty$.  Then,
\begin{align*}
||\Delta_j(fg)-f\Delta_jg||_{L^2}\lesssim 2^{-j\alpha}||f||_{H^\alpha}||g||_{L^\infty}\,. 
\end{align*}
\end{lemma}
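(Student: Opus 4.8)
The plan is to expand the Littlewood--Paley block $\Delta_j$ as a convolution with a rescaled kernel and to rewrite the commutator as an integral of a difference of $f$. Write $\Delta_j u = K_j \star u$ where $K_j(x) = 2^{2j}K(2^j x)$ with $K = \mathscr F^{-1}\rho$ (Schwartz, with $\int K = 0$ for $j\ge 0$). Then
\begin{align*}
\Delta_j(fg)(x) - f(x)\Delta_j g(x) = \int K_j(y)\,\bigl(f(x-y)-f(x)\bigr)\,g(x-y)\,\dd y.
\end{align*}
Taking the $L^2$ norm in $x$ and using Minkowski's integral inequality,
\begin{align*}
\|\Delta_j(fg)-f\Delta_j g\|_{L^2} \le \int |K_j(y)|\,\|f(\cdot-y)-f(\cdot)\|_{L^2}\,\|g\|_{L^\infty}\,\dd y.
\end{align*}
The first main input is the $L^2$ modulus of continuity estimate $\|f(\cdot-y)-f\|_{L^2}\lesssim |y|^\alpha \|f\|_{H^\alpha}$, valid for $\alpha\in(0,1)$: this follows directly from Plancherel, since $\|f(\cdot-y)-f\|_{L^2}^2 = \sum_k |e^{i2\pi\langle k,y\rangle}-1|^2|\hat f(k)|^2$ and $|e^{i2\pi\langle k,y\rangle}-1|^2 \lesssim \min(1,|k|^2|y|^2) \lesssim (|k|\,|y|)^{2\alpha}$.

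Substituting this bound gives
\begin{align*}
\|\Delta_j(fg)-f\Delta_j g\|_{L^2} \lesssim \|f\|_{H^\alpha}\|g\|_{L^\infty}\int |K_j(y)|\,|y|^\alpha\,\dd y.
\end{align*}
Then by the scaling $K_j(y)=2^{2j}K(2^j y)$ one has $\int |K_j(y)|\,|y|^\alpha\,\dd y = 2^{-j\alpha}\int |K(z)|\,|z|^\alpha\,\dd z$, and the last integral is finite because $K$ is Schwartz (so $|K(z)|\,|z|^\alpha$ is integrable on $\R^2$). This yields the claimed $2^{-j\alpha}$ decay. For $j=-1$ the same argument works with $\chi$ in place of $\rho$ (one does not even need the vanishing-mean property there, and $2^{j\alpha}$ is bounded), so the estimate holds uniformly for all $j\ge -1$.

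The only mild technical point — the place to be careful rather than a genuine obstacle — is the passage to the torus $\T_L^2$: the kernel $K_j$ is defined on $\R^2$ but acts on $L$-periodic functions, so one should either periodize $K_j$ and check that the modulus-of-continuity argument survives (it does, since the periodized kernel still has total variation mass $\lesssim 1$ and first moment $\lesssim 2^{-j}$, uniformly in $L$), or equivalently run the whole computation on the Fourier side as in the $H^\alpha$ estimate above, bounding $\|\Delta_j(fg)-f\Delta_j g\|_{L^2}$ by $\|g\|_{L^\infty}$ times a discrete convolution and extracting the factor $2^{-j\alpha}$ from $\sup_{|k|\sim 2^j}$ combined with the modulus-of-continuity bound. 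Either route gives constants independent of $L$, which is all that is needed.
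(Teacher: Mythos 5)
Your proof is correct, and it takes a different route from the paper's even though both start from the same convolution representation of the commutator $\Delta_j(fg)-f\Delta_j g$. The paper applies Cauchy--Schwarz in the $y$-integral with a weight $|x-y|^{2\alpha+2}$, obtains $\int |\theta_j(y)|^2|y|^{2\alpha+2}\,dy \lesssim 2^{-2j\alpha}$ from the concentration of $\theta_j$ on a ball of radius $2^{-j}$, and then recognizes the remaining double integral $\int\int |f(y)-f(x)|^2|y-x|^{-2\alpha-2}\,dx\,dy$ as the Gagliardo--Slobodeckij seminorm of $H^\alpha$. You instead apply Minkowski's integral inequality to pull the $L^2_x$-norm inside, reduce to the $L^2$ modulus of continuity $\|f(\cdot-y)-f\|_{L^2}\lesssim |y|^\alpha\|f\|_{H^\alpha}$ proved directly on the Fourier side by Plancherel (via $\min(1,|k|^2|y|^2)\lesssim(|k||y|)^{2\alpha}$ for $\alpha\in(0,1)$), and then scale the first $\alpha$-moment of the kernel. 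Your argument avoids invoking the equivalence of the Fourier-side $H^\alpha$ norm with the double-integral seminorm, which makes it slightly more self-contained; the paper's version is shorter once one accepts that norm equivalence. Both are valid and give constants uniform in $L$, and your remark about periodization (and about $j=-1$ being trivial) correctly addresses the only technical point; the aside about $\int K=0$ is unnecessary for the algebraic identity but harmless.
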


\begin{proof}
For $y\in \T^2$, we introduce the inverse Fourier transform 
\begin{align}\label{eq-theta}
\theta_j(y):= \sum_{k \in \Z^2} \rho(2^{-j} |k|) \exp(i 2 \pi \langle k, y\rangle)
\end{align}
of the function $\rho(2^{-j}\cdot)$ introduced to define the Littlewood-Paley blocks. 
For $x\in \T^2$, we have by definition 
\begin{equation}
\begin{split}
\left|\Delta_{j}(fg)(x)-f(x)\Delta_jg(x)\right|^2&=\left|\int_{\mathbb T^2}\theta_j(x-y)g(y)(f(y)-f(x))d y\right|^2
\\&\lesssim||g||^2_{L^\infty}
\left( \int_{\mathbb T^2}|\theta_j(x-y)|^2|x-y|^{2\alpha+2} dy \right) \int_{\mathbb T^2}\frac{|f(y)-f(x)|^2}{|y-x|^{2\alpha+2}} d y\,. 
\end{split}
\end{equation}
From \eqref{eq-theta}, we see that $\theta_j(y)$ concentrates in a ball of radius 
$2^{-j}$ and is of order $2^{2j}$ for $j$ large. Therefore, we deduce that  
\begin{align*}
\int_{\mathbb T^2}|\theta_j(x-y)|^2|x-y|^{2\alpha+2} dy \lesssim 2^{-2 j  \alpha}\,. 
\end{align*}
We eventually obtain  
\begin{align*}
||\Delta_j(fg)-f\Delta_jg||_{L^2}\lesssim 2^{-j\alpha}||g||_{L^\infty}\left(\int_{\mathbb T^2\times\mathbb T^2}\frac{|f(y)-f(x)|^2}{|y-x|^{2\alpha+2}} dxdy\right)^{\frac{1}{2}}
\end{align*}
which permits us to conclude thanks to the equivalent definition of the Sobolev space $H^\alpha$.
\end{proof}

We now give a simple consequence of the previous Lemma.   
\begin{lemma}
Let $\alpha\in(0,1)$, $f\in H^\alpha$ and $g\in\mathscr C^\beta$ and set 
\begin{align*}
R_j(f,g):=\Delta_j(f\prec g)-f\Delta_j g\,. 
\end{align*}
Then
\begin{align*}
||R_j(f,g)||_{L^2}\lesssim 2^{-j(\alpha+\beta)}||f||_{H^\alpha}||g||_{\mathscr C^\beta}\,. 
\end{align*}
\end{lemma}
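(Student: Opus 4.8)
The plan is to reduce the estimate for $R_j(f,g)=\Delta_j(f\prec g)-f\Delta_j g$ to a sum of the elementary commutator terms handled by Lemma~\ref{elementary-commutation-lemma}. Recall that $f\prec g=\sum_{i\geq -1}S_{i-1}f\,\Delta_i g$, so that, using the spectral localisation of $\Delta_j$ together with the support properties of $\chi$ and $\rho$, only finitely many scales $i$ with $|i-j|\lesssim 1$ contribute to $\Delta_j(f\prec g)$. Concretely, I would write
\begin{align*}
R_j(f,g)=\sum_{|i-j|\leq N_0}\big(\Delta_j(S_{i-1}f\,\Delta_i g)-S_{i-1}f\,\Delta_j\Delta_i g\big)+\sum_{|i-j|\leq N_0}(S_{i-1}f-f)\,\Delta_j\Delta_i g
\end{align*}
for a fixed universal integer $N_0$ (the second term coming from replacing $S_{i-1}f$ by $f$ and using $\Delta_j g=\sum_{|i-j|\leq N_0}\Delta_j\Delta_i g$). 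The first group of terms is exactly of the form treated in Lemma~\ref{elementary-commutation-lemma}, applied with the $L^\infty$ function $S_{i-1}f$ replaced by... — here is the first subtlety: Lemma~\ref{elementary-commutation-lemma} requires the ``multiplier'' factor to be the $H^\alpha$ function, so I would instead apply it in the form $\|\Delta_j(\varphi\psi)-\varphi\Delta_j\psi\|_{L^2}\lesssim 2^{-j\alpha}\|\varphi\|_{H^\alpha}\|\psi\|_{L^\infty}$ with $\varphi=S_{i-1}f$ (which lies in $H^\alpha$ with norm bounded by $\|f\|_{H^\alpha}$, uniformly in $i$) and $\psi=\Delta_i g$ (which lies in $L^\infty$ with $\|\Delta_i g\|_{L^\infty}\lesssim 2^{-i\beta}\|g\|_{\mathscr C^\beta}$). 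This produces a bound $\lesssim 2^{-j\alpha}2^{-i\beta}\|f\|_{H^\alpha}\|g\|_{\mathscr C^\beta}$, and since $|i-j|\leq N_0$ we may replace $2^{-i\beta}$ by $2^{-j\beta}$ up to a constant and sum the finitely many terms.

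For the remaining group $\sum_{|i-j|\leq N_0}(S_{i-1}f-f)\Delta_j\Delta_i g$, I would use the standard estimate $\|S_{i-1}f-f\|_{L^\infty}=\|\sum_{\ell\geq i-1}\Delta_\ell f\|_{L^\infty}\lesssim \sum_{\ell\geq i-1}2^{-\ell\alpha}\cdot 2^{\ell(\alpha-\alpha)}\dots$ — more carefully, by the Besov embedding $H^\alpha\hookrightarrow \mathscr C^{\alpha-1}$ in dimension two (Proposition~\ref{Proposition:Bes-emb}) one has $\|\Delta_\ell f\|_{L^\infty}\lesssim 2^{-\ell(\alpha-1)}\|f\|_{H^\alpha}$; alternatively, and more cleanly, one directly estimates $\|(S_{i-1}-\Id)f\|_{L^2}\lesssim 2^{-i\alpha}\|f\|_{H^\alpha}$ and pairs it with $\|\Delta_j\Delta_i g\|_{L^\infty}\lesssim 2^{-j\beta}\|g\|_{\mathscr C^\beta}$ via $\|\varphi\psi\|_{L^2}\leq\|\varphi\|_{L^2}\|\psi\|_{L^\infty}$, giving again $\lesssim 2^{-j(\alpha+\beta)}\|f\|_{H^\alpha}\|g\|_{\mathscr C^\beta}$. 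Summing over the $O(1)$ values of $i$ yields the claimed bound for this piece.

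Combining the two contributions gives $\|R_j(f,g)\|_{L^2}\lesssim 2^{-j(\alpha+\beta)}\|f\|_{H^\alpha}\|g\|_{\mathscr C^\beta}$, which is the desired inequality. The main obstacle is purely bookkeeping: making the spectral-support argument precise so that only $|i-j|\leq N_0$ survives (this uses property (3) of $\chi,\rho$ — namely $\chi\rho(2^{-j}\cdot)\equiv 0$ for $j\geq 1$ and $\rho(2^{-i}\cdot)\rho(2^{-j}\cdot)\equiv 0$ for $|i-j|\geq 1$ — and the fact that $S_{i-1}f\,\Delta_i g$ has Fourier support in an annulus of size $\sim 2^i$), and being careful that the roles of the $H^\alpha$ and $L^\infty$ factors in Lemma~\ref{elementary-commutation-lemma} are assigned correctly. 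Everything else is a routine geometric-series summation using $\alpha\in(0,1)$ to control $\|S_{i-1}f\|_{H^\alpha}$ uniformly and $\beta$ arbitrary real (no sign restriction is needed here since $g$ only enters through its localised pieces $\Delta_i g$).
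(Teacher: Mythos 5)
Your proof is correct, and it takes a slightly different route from the one in the paper. Both approaches localize $\Delta_j(f\prec g)$ to the scales $|i-j|\lesssim 1$ and both reduce the estimate to Lemma~\ref{elementary-commutation-lemma}, but they differ in how the remainder after commuting is organized. The paper's proof first writes $f\prec\Delta_i g = f\Delta_i g - (f\circ\Delta_i g + f\succ\Delta_i g)$, applies the elementary commutation lemma to the full product $f\Delta_i g$ with $\varphi=f$, and then invokes the Bony paraproduct estimates of Proposition~\ref{prop:bony-estim} to dispose of the residual $\Delta_j(f\circ\Delta_i g + f\succ\Delta_i g)$. You instead apply the elementary lemma directly to the genuine paraproduct block $S_{i-1}f\,\Delta_i g$ with $\varphi=S_{i-1}f$ (which has $H^\alpha$ norm uniformly controlled by $\|f\|_{H^\alpha}$), and absorb the discrepancy between $S_{i-1}f$ and $f$ through the elementary estimate $\|(S_{i-1}-\Id)f\|_{L^2}\lesssim 2^{-i\alpha}\|f\|_{H^\alpha}$ paired with $\|\Delta_j\Delta_i g\|_{L^\infty}\lesssim 2^{-j\beta}\|g\|_{\mathscr C^\beta}$ via the pointwise product bound. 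Your version is arguably a little cleaner in that it bypasses the $\circ$ and $\succ$ paraproduct estimates entirely, using only Plancherel-type tail control and the one commutation lemma; the paper's version is more uniform with the way these estimates are typically presented in the paracontrolled calculus literature, at the cost of invoking the resonating-term estimate for $f\circ\Delta_i g$ (which works here only because $\Delta_i g$ is a single smooth block, so the $\alpha+\beta>0$ restriction is moot once one tracks the $j$-dependence correctly). Both correctly yield the advertised factor $2^{-j(\alpha+\beta)}$.
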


\begin{proof}
We have by definition that 
$$
\Delta_i(f\prec g)=\sum_{j;j\sim i}\Delta_i(f\prec\Delta_j g)=\sum_{j;j\sim i}f \Delta_j\Delta_ig+\sum_{j;j\sim i}\Delta_i(f\Delta_jg)-f\Delta_i\Delta_jg-\sum_{j;j\sim i}\Delta_i(f\succ\Delta_jg+f\circ\Delta_j g)
$$
So that the first sum over $g$ can be chosen such that $\sum_{j,j\sim i}\Delta_i\Delta_j g=\Delta_ig$. Then the Lemma~\ref{elementary-commutation-lemma} take care of the second sum of this equation and the paraproduct estimate gives the needed bound for the the last term.
\end{proof}

{\it Proof of Proposition \ref{prop:commu}}.
Let us write that 
\begin{equation}
\begin{split}
(f\prec g)\circ h&=\sum_{|i-j|\leq1,k}1_{k\lesssim i}\Delta_i(\Delta_kf\prec g)\Delta_jh=\sum_{i\sim j,k}1_{k\lesssim i}\Delta_kf\Delta_ig\Delta_jh+\sum_{i\sim j,k\lesssim i}R_{i}(\Delta_kf,g)\Delta_j h
\\&=f(g\circ h)+\sum_{i\sim j,i\leq k-N}\Delta_kf\Delta_ig\Delta_jh+\sum_{i\sim j,k\lesssim i}R_{i}(\Delta_kf,g)\Delta_j h
\end{split}
\end{equation}
Now let us remark that for fixed $k$ the sum $\sum_{i\sim j}1_{i\leq k-N}\Delta_kf\Delta_ig\Delta_jh$ is supported in a ball $2^k\mathcal B$ then is suffice to setimate the $L^2$ norm 
$$
2^{k\alpha}\left|\left|\sum_{i\sim j}1_{i\leq k-N}\Delta_kf\Delta_ig\Delta_jh\right|\right|_{L^2}\lesssim (2^{k\alpha}||\Delta_kf||_{\alpha})||g||_{\beta}||h||_{\gamma}\sum_{i\sim j;i\leq k-N}2^{-i(\beta+\gamma)}
$$
therefore using the fact that $\beta+\gamma<0$ we obtain the needed bound for this term. Now we remark that for fixed $j$ the sum $\sum_{i\sim j,k\lesssim i}R_{i}(\Delta_kf,g)\Delta_j h$ is localized in a ball of size $2^j$ then estimating the sum 
$$
\left|\left|\sum_{i\sim j,k\lesssim i}R_{i}(\Delta_kf,g)\Delta_j h\right|\right|_{L^2}\lesssim||\sum_{k\lesssim i}\Delta_kf||_\alpha||g||_{\beta}||h||_{\gamma}2^{-j\gamma}\sum_{j\sim i}2^{-i(\alpha+\beta)}
$$
which gives the needed estimates. 
\qed

We end the appendix by recalling a version of the Rellich-Kondrachov Theorem
\begin{lemma}
\label{lemma:rellich}
Let $\gamma<\beta$. Then, the injection $i:H^\beta(\mathbb T_L^2)\to H^\gamma(\mathbb T_L^2)$ is compact.
\end{lemma}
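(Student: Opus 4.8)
The plan is to use the Fourier characterization of the Sobolev spaces on the torus recalled in Section~\ref{sec:bony}, namely that $f \in H^\alpha(\mathbb{T}_L^2)$ if and only if $\|f\|_{H^\alpha}^2 = \sum_{k \in \Z_L^2} (1+|k|^2)^\alpha |\hat f(k)|^2 < +\infty$. Let $(f_n)_{n \ge 1}$ be an arbitrary bounded sequence in $H^\beta$, say $\sup_n \|f_n\|_{H^\beta} \le M$; by definition of compactness of the injection it suffices to exhibit a subsequence that converges in $H^\gamma$.

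First I would extract a subsequence along which each Fourier mode converges. Since the index set $\Z_L^2$ is countable and each sequence $(\hat f_n(k))_n$ is bounded in $\C$ (indeed $|\hat f_n(k)| \le (1+|k|^2)^{-\beta/2} M$), a diagonal argument produces a subsequence $(f_{n_j})_j$ and complex numbers $(c_k)_{k \in \Z_L^2}$ with $\hat f_{n_j}(k) \to c_k$ for every fixed $k$. Equivalently one may invoke the Banach--Alaoglu theorem in the Hilbert space $H^\beta$ to get $f_{n_j} \rightharpoonup f$ weakly. In either case, Fatou's lemma applied to the partial sums of $\sum_k (1+|k|^2)^\beta |\hat f(k)|^2$ (with $\hat f(k) := c_k$) together with the uniform bound gives $\|f\|_{H^\beta} \le M$, so that $f \in H^\beta$. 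Setting $g_j := f_{n_j} - f$, we then have $\sup_j \|g_j\|_{H^\beta} \le 2M$ and $\hat g_j(k) \to 0$ for each $k$.

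The core estimate is a low/high frequency splitting of $\|g_j\|_{H^\gamma}^2 = \sum_{k \in \Z_L^2} (1+|k|^2)^\gamma |\hat g_j(k)|^2$ at a cutoff $N$. Fix $\varepsilon > 0$. For $|k| > N$, since $\gamma - \beta < 0$ we have $(1+|k|^2)^\gamma = (1+|k|^2)^{\gamma-\beta}(1+|k|^2)^\beta \le (1+N^2)^{\gamma-\beta} (1+|k|^2)^\beta$, hence the high-frequency part is bounded, uniformly in $j$, by $(1+N^2)^{\gamma-\beta}\|g_j\|_{H^\beta}^2 \le 4 M^2 (1+N^2)^{\gamma-\beta}$, which is $< \varepsilon/2$ once $N$ is chosen large enough. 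With $N$ now fixed, the low-frequency part $\sum_{|k|\le N} (1+|k|^2)^\gamma |\hat g_j(k)|^2$ is a finite sum each term of which tends to $0$ as $j \to \infty$, so it is $< \varepsilon/2$ for all $j$ large. Therefore $\|f_{n_j} - f\|_{H^\gamma} \to 0$, which proves that $i : H^\beta(\mathbb{T}_L^2) \to H^\gamma(\mathbb{T}_L^2)$ maps bounded sets into relatively compact sets, i.e. is a compact operator.

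I do not anticipate a genuine obstacle here: the argument is the standard Fourier proof of Rellich--Kondrachov on a torus, and the only point requiring a little care is that the diagonal (or weak) limit $f$ genuinely belongs to $H^\beta$ — handled by Fatou as above — so that $g_j$ stays bounded in $H^\beta$ and the uniform high-frequency tail bound applies.
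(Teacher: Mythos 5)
Your proof is correct and follows essentially the same route as the paper's: extract a subsequence with pointwise convergent Fourier coefficients (diagonal argument or weak compactness), use Fatou to put the limit in $H^\beta$, and then conclude via a low/high frequency split of the $H^\gamma$ norm with the tail controlled by $(1+N^2)^{\gamma-\beta}$. The only difference is cosmetic — you spell out the diagonal extraction where the paper appeals to it as "well known."
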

\begin{proof}
Let $(f_n)_{n\in\mathbb N}$ be a bounded 
sequence in $H^\beta$ and set $M:=\sup_n||f_n||_{H^\beta}$. It is 
well known that there exists a subsequence of 
$(f_{n})$  (still denoted for simplicity by $f_n$) which converges in $\mathscr S'(\mathbb T_L^2)$ 
to some $f\in \mathscr S'(\mathbb T_L^2)$, or equivalently such that the Fourier coefficients converge, 
i.e.    
$\lim_n\hat f_n(k)=\hat f(k)$ for all $k\in\mathbb Z_L^2$. 
It is easy to prove that $f\in H^{\beta}$ using the Fatou Lemma
\begin{align*}
\sum_{k\in\mathbb Z_L^2}(1+|k|)^{2\beta}
|\hat f(k)|^2\leq\liminf_{n}\sum_{k\in\mathbb Z_L^2}(1+|k|)^{2\beta}|\hat f_n(k)|^2\leq M^2.
\end{align*}
Let us now prove that $(f_n)$ converges to $f$ in the space $H^{\gamma}(\T_L^2)$.  
We have 
\begin{align*}
||f_n-f||^2_{H^\gamma}&\leq
\sum_{|k|\leq N}(1+|k|)^{2\gamma}|\hat f_n(k)-\hat f(k)|^2+
\sum_{|k|> N}(1+|k|)^{2(\gamma-\beta)} (1+|k|)^{2\beta} |\hat f_n(k)-\hat f(k)|^2\\
&\leq 
\sum_{|k|\leq N}(1+|k|)^{2\gamma}|\hat f_n(k)-\hat f(k)|^2+2 N^{2(\gamma-\beta)} M^2\,.
\end{align*}
The convergence in $H^\gamma$ follows from this latter inequality. 
\end{proof} 
Now let us end by giving a more simplest description of the space $\mathscr X^\alpha$
\begin{lemma}\label{lemma:rd}
Given $\alpha<-1$ and let denote by $\mathscr C^{0,\alpha}$ (respectively  $\mathscr C^{0,2\alpha+2}$) the closure of the space of infinitely differentiable function in the space $\mathscr C^{\alpha}$ (respectively $\mathscr C^{2\alpha+2}$) then the following identity set 
$$
\mathscr X^{\alpha}=\mathscr C^{0,\alpha}\times\mathscr C^{0,2\alpha+2}
$$ 
\end{lemma}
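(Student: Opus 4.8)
The plan is to show the two inclusions defining the set equality $\mathscr X^{\alpha}=\mathscr C^{0,\alpha}\times\mathscr C^{0,2\alpha+2}$, where $\mathscr X^{\alpha}$ is by definition the closure in $\mathscr E^{\alpha}=\mathscr C^{\alpha}\times\mathscr C^{2\alpha+2}$ of the set $\{(\xi,\xi\circ\sigma(D)\xi+c):\xi\in C^{\infty}(\mathbb T_L^2),\,c\in\mathbb R\}$. The inclusion $\mathscr X^{\alpha}\subseteq\mathscr C^{0,\alpha}\times\mathscr C^{0,2\alpha+2}$ is the easy direction: for any smooth $\xi$, the first component $\xi$ is trivially smooth, hence in $\mathscr C^{0,\alpha}$; and the second component $\xi\circ\sigma(D)\xi+c$ is also smooth (a product/convolution of smooth periodic functions plus a constant), hence in $\mathscr C^{0,2\alpha+2}$. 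Since $\mathscr C^{0,\alpha}\times\mathscr C^{0,2\alpha+2}$ is a closed subspace of $\mathscr E^{\alpha}$ (being a product of closed subspaces), it contains the closure of the generating set, i.e.\ $\mathscr X^{\alpha}$.

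For the reverse inclusion I would argue that an arbitrary pair $(\eta,\zeta)\in\mathscr C^{0,\alpha}\times\mathscr C^{0,2\alpha+2}$ can be approximated in $\mathscr E^{\alpha}$ by elements of the generating set. First, pick smooth $\xi_n\to\eta$ in $\mathscr C^{\alpha}$ (possible by definition of $\mathscr C^{0,\alpha}$). The natural candidate approximants are $(\xi_n,\xi_n\circ\sigma(D)\xi_n+c_n)$, so I must choose $c_n\in\mathbb R$ (and possibly adjust) so that the second component converges to $\zeta$ in $\mathscr C^{2\alpha+2}$. The key observation is that the resonating term is only defined up to a constant: the difference $\zeta-\xi_n\circ\sigma(D)\xi_n$ lives in $\mathscr C^{2\alpha+2}$, but we are free to subtract off its ``zero Littlewood--Paley mode'' via the constant $c_n$ and then approximate the remainder. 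Concretely, I would instead proceed in two steps: (i) show that for smooth $\xi_n$ the term $\xi_n\circ\sigma(D)\xi_n$ is smooth, so $\xi_n\circ\sigma(D)\xi_n - \Delta_{-1}(\xi_n\circ\sigma(D)\xi_n)+\zeta_n$ for a smooth approximation $\zeta_n\to\zeta$ is of the correct form up to an explicit constant absorbed into $c_n$; (ii) conclude that $(\xi_n,\zeta_n)$ with the appropriate $c_n$ lies in the generating set and converges to $(\eta,\zeta)$ in $\mathscr E^{\alpha}$.

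The main subtlety — and where care is needed — is that an element of the generating set has its second component forced to be \emph{exactly} $\xi\circ\sigma(D)\xi+c$, not an arbitrary smooth function; one cannot independently prescribe the two components before taking closures. The resolution, already flagged in the remark after the definition of $\mathscr X^{\alpha}$, is that the divergence $\mathbb E[\xi_\eps\circ\sigma(D)\xi_\eps]\sim-\log\eps$ means the ``interaction'' part of the second component carries only one genuinely constrained degree of freedom at the level of the limit, namely the constant; more precisely, as $\xi_n$ ranges over smooth functions and $c_n$ over $\mathbb R$, the set of attainable smooth second components is already dense in $\mathscr C^{0,2\alpha+2}$ because $\xi_n\circ\sigma(D)\xi_n$ is bilinear and can be made to vanish in $\mathscr C^{2\alpha+2}$ (e.g.\ take $\xi_n$ highly oscillatory with $\xi_n\to\eta$ in $\mathscr C^\alpha$ but $\xi_n\circ\sigma(D)\xi_n\to0$ in $\mathscr C^{2\alpha+2}$, which is possible since $2\alpha+2<0$ only barely constrains the resonant product), after which adding the smooth function $\zeta_n$ as a perturbation $\xi_n'$ recovers the full product space. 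I would carry out this reduction carefully: the hard part is exhibiting, for given $\eta$ and $\zeta$, an explicit sequence of smooth $\xi_n$ and constants $c_n$ with $\xi_n\to\eta$ in $\mathscr C^\alpha$ and $\xi_n\circ\sigma(D)\xi_n+c_n\to\zeta$ in $\mathscr C^{2\alpha+2}$; once this is done, both inclusions are established and the Lemma follows.
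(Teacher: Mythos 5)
Your treatment of the easy inclusion $\mathscr X^\alpha\subseteq\mathscr C^{0,\alpha}\times\mathscr C^{0,2\alpha+2}$ is fine, and you have correctly identified the crux of the hard inclusion: the generating set couples the two coordinates, so one must exhibit, for each smooth pair $(\eta,\zeta)$, a sequence of smooth $\xi_n$ and constants $c_n$ with $\xi_n\to\eta$ in $\mathscr C^\alpha$ and $\xi_n\circ\sigma(D)\xi_n+c_n\to\zeta$ in $\mathscr C^{2\alpha+2}$. But the proposal stops short of this and replaces the decisive step with heuristics, some of which do not hold up. In particular: $\Delta_{-1}$ is the full low-frequency Littlewood--Paley projection, not the mean/constant mode, so $\xi_n\circ\sigma(D)\xi_n-\Delta_{-1}(\xi_n\circ\sigma(D)\xi_n)+\zeta_n$ is emphatically not ``of the correct form up to a constant absorbed into $c_n$''; the phrase ``adding the smooth function $\zeta_n$ as a perturbation $\xi_n'$'' hides the entire problem, since perturbing $\xi_n$ also perturbs the resonant product quadratically and must be controlled; and the justification ``possible since $2\alpha+2<0$ only barely constrains the resonant product'' is not a mathematical argument. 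You assert that highly oscillatory $\xi_n\to\eta$ with $\xi_n\circ\sigma(D)\xi_n\to 0$ exist but never produce them.

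The paper's proof supplies exactly the missing construction. It first reduces the problem (after a standard diagonal and bilinearity argument in the first coordinate, which is essentially your reduction) to showing $\{0\}\times\mathscr C^\infty\subseteq\mathscr X^\alpha$. It then takes the explicit oscillatory family $X^N(x)=2^N\cos(2^N\pi\langle z,x\rangle)$ and cites from \cite{cf} that $\|X^N\|_{\mathscr C^\alpha}\to 0$ while $X^N\circ\sigma(D)X^N\to -1$ in $\mathscr C^{2\alpha+2}$. Multiplying by a smooth $V$, it shows via Bony, Schauder, Proposition~\ref{schauder-estimate-2} and the commutation Lemma~\ref{prop:commu} that $VX^N\to 0$ in $\mathscr C^\alpha$ while $(VX^N)\circ\sigma(D)(VX^N)\to -V^2$ in $\mathscr C^{2\alpha+2}$. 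Since $\{c-V^2:c\in\R,\ V\in C^\infty\}$ is all of $C^\infty$ (take $c$ large and $V=\sqrt{c-f}$), this yields $\{0\}\times\mathscr C^\infty\subseteq\mathscr X^\alpha$ and hence the claim. Note also that the resonant product the construction realizes in the limit is $-V^2$, not $0$; your suggestion of aiming for a vanishing resonant term while $\xi_n\to\eta\neq 0$ is actually working against the structure of the bilinear form (the sign of $\sigma$ makes the limit nonpositive), which is another indication the sketch as written would not go through without the paper's explicit family and the accompanying estimates.
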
 
\begin{proof}
To prove our equality set is sufficient to show that $\mathscr X^{\alpha}$ contain the space $\{0\}\times\mathscr C^{\infty}$. Let $X^{N}(x)=2^{N}cos(2^{N}\pi\langle z,x\rangle)$ for $x\in [0,1]^2$. It was proved in~\cite{cf} that 
\begin{enumerate}
\item $\|X^{N}\|_{\mathscr C^{\alpha}}\to^{N\to+\infty}0$
\item $\|X^{N}\circ\sigma(D)X^{N}+1\|_{\mathscr C^{2\alpha+2}}\to^{n\to+\infty}0$
\end{enumerate}
for all $\alpha<-1$. Let $V\in\mathscr C^{\infty}(\mathbb T^2)$ and $X^{N,V}=VX^{N}$, then is easy to see that 
$$
\|X^{N,V}\|_{\mathscr C^{\alpha}}\lesssim\|V\|_{\mathscr C^{\beta}}\|X^N\|_{\mathscr C^{\alpha}}\to^{N\to+\infty}0
$$ 
where $\beta>-\alpha$. Now we claim that $X^{N,V}\circ\sigma(D)X^{N,V}\to-V^2$ in $\mathscr C^{2\alpha+2}$. Indeed from the Bony estimate we can see that $\|V\circ X^{N}\|_{\mathscr C^{\alpha+\beta}}+\|V\succ X^N\|_{\mathscr C^{\alpha+\beta}}\to 0$ for all $\beta>-\alpha$ which in particularly imply that 
$$
(X^N\circ V+X^N\succ V)\circ \sigma(D)X^{N,V}\to^{N\to+\infty} 0
$$
in $\mathscr C^{2\alpha+2}$. On the other side Schauder estimate allow us to see that 
$$
X^{N,V}\circ\sigma(D)(X^N\circ V+X^N\succ V)\to^{N\to+\infty}0
$$  
in $\mathscr C^{2\alpha+2}$. Then we can conclude that 
$$
\|X^{N,V}\circ\sigma(D)X^{N,V}-(V\prec X^{N})\circ\sigma(D)(V\prec X^N)\|_{\mathscr C^{2\alpha+2}}\to^{N\to+\infty}0
$$
Moreover from the Proposition~\ref{schauder-estimate-2} is easy to show that 
$$
\|\sigma(D)(V\prec X^N)-V\prec\sigma(D)X^N\|_{\mathscr C^{2\alpha+2}}\lesssim \|V\|_{\mathscr C^\beta}\|X^N\|_{\mathscr C^\alpha}\to^{N\to+\infty} 0
$$
Therefore the proof of our convergence result is reduced to the study of  
$$
(V\prec X^N)\circ(V\prec\sigma(D)X^N)
$$ 
which can be handled by using the commutation Lemma~\ref{prop:commu}. Indeed we have the following expansion
\begin{equation*}
\begin{split}
(V\prec X^N)\circ(V\prec\sigma(D)X^N)&=V^2(X^{N+1}\circ\sigma(D)X^N)+\mathscr R(V,X^N,V\prec \sigma(D)X^N)\\&+V\mathscr R(V,X^N,\sigma(D)X^N)
\end{split}
\end{equation*}
which converge to $-V^2$ in the space $\mathscr C^{2\alpha+2}$. Then in particularly we have proved that $(0,c-V^2)\in\mathscr X^\alpha$ for every smooth function $V$ and every $c\in\mathbb R$ which finishes the proof.
\end{proof}


%
%

%
%

\begin{bibdiv}
 \begin{biblist}

\bib{laure-romain}{article}{
author={R. Allez},
author={ L. Dumaz}, 
title={ Tracy-Widom at high temperature}, 
journal={J. Stat. Phys, 6, 1146-1183 (2014)}, 
}

\bib{laure-sine}{article}{
author={ R. Allez}, 
author={ L. Dumaz.}, 
title={\it From Sine kernel to Poisson statistics},
Journal={Electronic journal of probability},
}

\bib{BCD-bk}{book}{
author={H. Bahouri},
author={J-Y. Chemin},
author={R. Danchin},
title={Fourier analysis and nonlinear partial differential equations.},
 series={Grundlehren der Mathematischen Wissenschaften [Fundamental Principles of Mathematical Sciences].},
 volume={343},
 publisher={Springer},
 place={Heidelberg},
 date={2011}
 }

\bib{bony}{article}{
author={J.-M. Bony}, 
title= {Calcul symbolique et propagation des singularit�s
pour les �quations aux d�riv�es partielles
non lin�aires}, 
journal={Ann. Sci. �cole Norm. Sup.}, 
number={14}, 
year={1981},
pages={209�246},
}

\bib{ism}{unpublished}{
author={I.Bailleul},
author={ F.BERNICOT},
title={HEAT SEMIGROUP AND SINGULAR PDES},
journal={arXiv:1501.06822.},
year={2015},
}

\bib{cc13}{unpublished}{
author={R.Catellier},
author={K.Chouk},
title={ Paracontrolled Distributions and the 3-dimensional Stochastic Quantization Equation}
journal={arXiv:1310.6869}
}
\bib{cf}{unpublished}{
author={K.Chouk},
author={P.K.Friz},
title={Support theorem for a singular semilinear stochastic partial differential equation},
journal={{arXiv}:1409.4250},
}

\bib{laure}{article}{
author={ L. Dumaz},
author={B. Vir\'ag}, 
title={The right tail exponent of the Tracy-Widom-beta distribution},
Journal={Ann. Inst. H. Poincar\'e Probab. Statist. 4, 915-933, (2013).},
}
\bib{davie}{article}{
author={E.B.Davies},
title={THE RATE OF RESOLVENT AND SEMIGROUP CONVERGENCE},
journal={quart.J.math.Oxford},
year={1984},
}

\bib{fukushima}{article}{
author={M. Fukushima} 
author={S. Nakao}
journal={Z. Wahrscheinlichkeitstheorie verw. Gebiete, 267-274 (1977)}
}

\bib{lloyd}{article}{
author={H. L. Frisch},
author={S. P. Lloyd},
title={Electron Levels in a One-Dimensional Random Lattice}, 
journal={Phys. Rev.1175-1189 (1960)},
}

\bib{kg}{article}{
author={J.G\"artner}
author={W.K\"onig}
author={Stanislav Molchanov}
titlel={Geometric characterization of intermittency in the parabolic Anderson model}
journal={Annal of Probability}
year={2007}
Volume={35}
number={2}
}

\bib{kg1}{article}{
author={J.G\"artner}
author={W.K\"onig}
title={The Parabolic Anderson Model}
journal={Interacting Stochastic Systems}
year={2005}

}

\bib{gip}{article}{
	title = {Paracontrolled distributions and singular PDEs, http://arxiv.org/abs/1210.2684 (v3 from July 2014)},
	url = {http://arxiv.org/abs/1210.2684},
	journal = {Forum of Mathematics, Pi},
	author = {Gubinelli, M.},
	author={Imkeller, P.},
	author={Perkowski, N.},
	year = {2015},
}

\bib{hairer}{article}{	
year={2014},
issn={0020-9910},
journal={Inventiones mathematicae},
doi={10.1007/s00222-014-0505-4},
title={A theory of regularity structures},
url={http://dx.doi.org/10.1007/s00222-014-0505-4},
publisher={Springer Berlin Heidelberg},
keywords={60H15; 81S20; 82C28},
author={Hairer, M.},
pages={1-236},
language={English}
}

\bib{hairer_weber_LDP}{article}{
	title = {Large deviations for white-noise driven, nonlinear stochastic PDEs in two and three dimensions},
	url = {http://arxiv.org/abs/1404.5863},
	journal = {Annales de la Faculté des Sciences de Toulouse},
	author = {Hairer, M.},
	author = {Weber, H.},
	year = {2014},
	}

\bib{HL}{unpublished}{
author={M.Hairer},
author={C.Labb\'e},
title={Multiplicative stochastic heat equations on the whole space},
journal={arXiv:1504.07162}
year={2015}
}

\bib{halperin}{article}{
author={B.I.Halperin},
title={Green's Functions for a Particle in a One-Dimensional Random Potential},
journal={Phys. Rev.A 104-A117 (1965)},
}

\bib{Hu}{unpublished}{
author={Y.Hu},
author={D.Nualart},
author={J.Huang},
title={On the intermittency front of stochastic heat equation driven by colored noises},
journal={arXiv:1506.04670}
}

\bib{HTN}{unpublished}{
author={Y.Hu},
author={D.Nualart},
author={J.Huang},
author={S.Tindel},
title={STOCHASTIC HEAT EQUATIONS WITH GENERAL MULTIPLICATIVE
GAUSSIAN NOISES: H\"OLDER CONTINUITY AND INTERMITTENCY},
journal={Preprint~arXiv:1402.2618},
}

\bib{janson}{book}{,
	title = {Gaussian Hilbert Spaces},
	isbn = {9780521561280},
	language = {en},
	publisher = {Cambridge University Press},
	author = {Janson, S.},
	year = {1997},
}

\bib{wk}{unpublished}{
author={W.K\"onig},
title={The Parabolic Anderson Model},
journal={Preprint},
url={https://www.wias-berlin.de/people/koenig/www/PAMsurveyBook.pdf},
}

\bib{mckean}{article}{
author={H. P. McKean},
journal={A Limit Law for the Ground State of Hill's Equation},
journal={J. Stat. Phys 1227 (1994)},
}

\bib{dp}{unpublished}{
author={D.J.Pr\"omel}
author={M.Trabs}
title={Rough differential equations on Besov spaces}
journal={arXiv:1506.03252}
}

\bib{virag-1}{article}{ 
author={J. A. Ram\'irez}, 
author={B. Rider},
author={ B. Vir\'ag},
title={Beta ensembles, stochastic Airy spectrum, and a diffusion},
journal={J. Amer. Math. Soc 919-944 (2011)},
}

\bib{valko}{article}{
author={B. Valk\'o},
author={ B. Vir\'ag},
title={Continuum limits of random matrices and the Brownian carousel},
journal={Invent. math.463-508 (2009)},
}

\end{biblist}
\end{bibdiv}

\end{document}